\numberwithin{equation}{section}
\newtheorem{theorem}{Theorem}[section]
\newtheorem{lemma}[theorem]{Lemma}
\newtheorem{proposition}[theorem]{Proposition}
\newtheorem{corollary}[theorem]{Corollary}
\newtheorem{remark}[theorem]{Remark}
\newtheorem{definition}[theorem]{Definition}
\newcounter{conj}
\newtheorem{conjecture}[conj]{Conjecture}
\newcommand{\BS}{\mathrm{BS}}
\newcommand{\cS}{\mathcal{S}}
\newcommand{\cA}{\mathcal{A}}
\newcommand{\cL}{\mathcal{L}}
\newcommand{\rd}{\mathrm{d}}
\newcommand{\cC}{\mathcal{C}}
\newcommand{\cB}{\mathcal{B}}
\newcommand{\cD}{\mathcal{D}}
\newcommand{\cF}{\mathcal{F}}
\newcommand{\SLE}{\mathrm{SLE}}
\newcommand{\CLE}{\mathrm{CLE}}
\newcommand{\QA}{\mathrm{QA}}
\newcommand{\BA}{\mathrm{BA}}
\newcommand{\BD}{\mathrm{BD}}
\newcommand{\LF}{\mathrm{LF}}
\newcommand{\bbA}{\mathbb{A}}
\renewcommand{\P}{\mathbb{P}}
\newcommand{\R}{\mathbb{R}}
\newcommand{\C}{\mathbb{C}}
\newcommand{\E}{\mathbb{E}}
\renewcommand{\H}{\mathbb{H}}
\newcommand{\D}{\mathbb{D}}
\newcommand{\ann}{\mathrm{ann}}
\newcommand{\disk}{\mathrm{disk}}
\newcommand{\cQ}{\mathcal{Q}}
\newcommand{\ball}{\mathrm{ball}}
\newcommand{\eps}{\varepsilon}
\newcommand{\ol}{\overline}
\newcommand{\wt}{\widetilde}
\def\alb#1\ale{\begin{align*}#1\end{align*}}
\newcommand{\eqb}{\begin{equation}}
\newcommand{\eqe}{\end{equation}}
\theoremstyle{definition}
\begin{document}
	
	\title{The moduli of annuli in random conformal geometry}
	\date{\today}

	\author{Morris Ang  \and Guillaume Remy \and Xin Sun}

	\begin{abstract}
		We  obtain   exact  formulae for three basic quantities in random conformal geometry that depend on the modulus of an annulus. The first is for the law of the  modulus of the Brownian annulus describing the scaling limit of uniformly sampled planar maps with annular topology, which is as  predicted from the ghost partition function in bosonic string theory. The second is for the law of the modulus of the annulus bounded by a loop of a simple conformal loop ensemble (CLE) on a disk   and the disk boundary.  The formula is as conjectured from the partition function of the O$(n)$ loop model on the annulus derived by {Saleur-Bauer (1989)} and Cardy (2006).
		The third is for the annulus partition function of the SLE$_{8/3}$ loop introduced by Werner (2008), {confirming another} prediction of Cardy (2006). The physics  principle underlying  our proofs is that  2D quantum gravity coupled with conformal matters can be decomposed into three conformal field theories (CFT): the matter CFT, the Liouville CFT, and the ghost CFT. At the technical level, we rely on  two types of integrability in Liouville quantum gravity, one from the scaling limit of random planar maps, the other from the  Liouville CFT.\\
		
		\noindent R\'ESUM\'E. Dans cet article
		nous obtenons des formules exactes pour trois quantités de base en géométrie conforme aléatoire qui dépendent du module d'un anneau. La première concerne la loi du module de l’anneau brownien décrivant la limite d'échelle des cartes planaires aléatoires uniformes avec la topologie de l'anneau, comme prédit par la fonction de partition de la théorie des cordes bosoniques. La seconde concerne la loi du module de l'anneau délimité par une boucle du ``conformal loop ensemble" (CLE) dans le disque et par le bord du disque. La formule {a \'et\'e conjectur\'ee}  à partir de la fonction de partition du modèle de boucle O$(n)$ sur l'anneau obtenue par Saleur-Bauer (1989) et Cardy (2006). Le troisième formule concerne la fonction de partition sur l'anneau de l'ensemble de boucle SLE$_{8/3}$ introduite par Werner (2008), et confirmant une autre prédiction de Cardy (2006). Le principe physique qui sous-tend nos preuves est que la gravité quantique 2D couplée à un champ de matière conforme peut être décomposée en trois théories conformes des champs (CFT) : la CFT des champs de matière, la CFT de Liouville et la   {CFT fant\^ome}. Les techniques de preuve utilisent deux types d'intégrabilité dans la gravité quantique de Liouville, l'une à partir de la limite d'échelle des cartes planaires aléatoires, et l'autre venant de la CFT de Liouville.
	\end{abstract}
	
	\maketitle

	\section{Introduction} 
	Over the last two decades, tremendous progress has been made in understanding random surfaces of simply connected topology. By~\cite{legall-uniqueness,miermont-brownian-map,bet-mier-disk} and their extensions, 
	the uniformly sampled random planar maps on the sphere and the disk converge as metric-measure spaces in the scaling limit. The limiting surfaces are known as the Brownian sphere and disk, respectively. Moreover, as proved in various senses~\cite{lqg-tbm1,lqg-tbm2,lqg-tbm3,gms-poisson-voronoi,hs-cardy-embedding}, once these surfaces are conformally embedded in the complex plane, the embedded random geometry is given by Liouville quantum gravity (LQG)~\cite{shef-kpz,DDDF-tightness,gm-uniqueness} with $\gamma=\sqrt{8/3}$.  Finally, as shown in~\cite{ahs-sphere,cercle-quantum-disk,AHS-SLE-integrability}, the law of the random field inducing the random  geometry is given by the Liouville conformal field theory on the sphere~\cite{dkrv-lqg-sphere} and the disk~\cite{hrv-disk}. For random planar maps decorated with statistical physics models,  the scaling limit is described by LQG surfaces  decorated  with Schramm Loewner evolution (SLE) and conformal loop ensemble (CLE).
	Although there are still major open questions concerning the scaling limit, the picture in the continuum is well understood, thanks to the theory of the quantum zipper~\cite{shef-zipper} and  the mating-of-trees~\cite{wedges}.
	For more background on this subject, see the surveys~\cite{ghs-mating-survey,Sheffield-ICM22}.
	
	\begin{figure}[ht!]
		\begin{center}

			\includegraphics[scale=0.65]{}
			\caption{{Conformal embedding of the Brownian annulus to the annulus $\mathbb A_\tau = \{ z \: : \: |z| \in (e^{-2\pi \tau}, 1)\}$ with (random) modulus $\tau$.} 
			} \label{fig-mod-ann}
		\end{center}
	\end{figure}

	For non-simply-connected surfaces, the key new feature is the non-uniqueness of the conformal structure. For example, if we conformally embed the Brownian annulus to the complex plane (Figure~\ref{fig-mod-ann}), the modulus of the planar annulus is random.
	Precise  descriptions of these surfaces, including the laws of the random moduli,  were conjectured in ~\cite{drv-torus} for the torus, in~\cite{grv-higher-genus} for higher genus surfaces, and in~\cite{Remy_annulus} by the second named author for the annulus. The conjecture is based on a fundamental principle that dates back to Polyakov's seminal work~\cite{Polyakov1981} on bosonic string theory. Namely, 2D quantum gravity coupled with conformal matter can be decomposed into three conformal field theories (CFT): the matter CFT, the Liouville CFT, and the ghost CFT. In this paper, we prove this conjecture for annulus in the case of  pure gravity   which corresponds to the Brownian annulus, and the cases where the matter corresponds to the simple CLE~\cite{shef-cle,shef-werner-cle} (i.e. $\CLE_\kappa$ with $\kappa\in (8/3,4]$) or Werner's $\SLE_{8/3}$ loop~\cite{werner-loops}.
	
	We also derive some exact formulae for SLE/CLE conjectured from physics.
	Since the discovery of SLE,
	it has been proved or widely conjectured that SLE/CLE describe the scaling limits of interfaces  in 2D lattice models such as percolation
	and the Ising model.
	Many scaling exponents/dimensions of such models predicted by CFT~\cite{BPZ1984} were derived rigorously from SLE/CLE (e.g.~\cite{LSW-one-arm}). Moreover, some exact formulae predicted via boundary CFT, such as Cardy's formula for percolation crossing, were rigorously derived from $\SLE$ (see e.g.~\cite{werner-notes}). 
	{Saleur-Bauer~\cite{SB89} and Cardy~\cite{cardy06} derived a formula for the partition function of the $O(n)$ loop model on the annulus, which provides crucial information on the CFT description of its scaling limit. Based on the conjectural relation between CLE and the $O(n)$ loop model (see e.g.~\cite{shef-cle}), this {partition function} corresponds to an exact formula for the loop statistics of CLE. Viewing self avoiding loop as the $O(0)$ model, Cardy~\cite{cardy06} further conjectured a formula for the annulus partition function of the $\SLE_{8/3}$ loop. Deriving these formulae for SLE/CLE has been an open question.  In this paper we prove them for simple CLE and the SLE$_{8/3}$ loop.}
	
	The  key to our proofs is an explicit relation  between the partition function of LQG surfaces and the law of their moduli. This is obtained from the conformal bootstrap of Liouville CFT on the annulus due to Wu~\cite{Wu22}. It can be viewed as a Knizhnik-Polyakov-Zamolodchikov (KPZ) relation~\cite{kpz-scaling} between the quantum and the Euclidean geometry  at the partition function level, whereas the traditional KPZ relation established in probability~\cite{shef-kpz}  
	is at the level of scaling exponents/dimensions. Our KPZ relation reduces the computation of an annular quantity depending on the   modulus to the same quantity for LQG surfaces, which is easier to obtain from the  random planar map perspective.
	
	Our work represents the first step towards understanding the relation between LQG in the random geometry framework and  measures on the moduli space of Riemann surfaces, although quantum gravity intuition has inspired important developments of the latter subject since 1980s. 
	Our work is also a further step towards uncovering the CFT content of CLE, after the proof of the imaginary DOZZ formula by the first and third named authors together with Gefei Cai and Baojun Wu~\cite{acsw-cle}.
	It continues to demonstrate the rich interplay between various kinds of integrability in conformal probability, a theme recently explored 
	in~\cite{AHS-SLE-integrability,ARS-FZZ,acsw-cle,acsw-loop}.
	We expect that methods in our paper together with the remarkable developments in the integrability of Liouville CFT~\cite{DOZZ_proof,GKRV_bootstrap} will result in further progress in this direction.
	
	In the rest of the introduction, we first state our result on the modulus of the Brownian annulus in Section~\ref{subsec:intro-BA}. Then we present the matter-Liouville-ghost decomposition for the Brownian annulus and the CLE decorated quantum annulus in Section~\ref{subsec:LQG}. We state our KPZ relation for annulus partition functions in Section~\ref{subsec:intro-KPZ} and
	results for simple CLE and the $\SLE_{8/3}$ loop in Sections~\ref{subsec:intro-CLE}
	and~\ref{subsec:intro-SAP}, respectively. In Section~\ref{subsec:intro-outlook} we discuss some subsequent projects and open questions concerning other models or topologies.

	\medskip
	\noindent {\bf Notation for the modulus.} Let $\mathbb C$ be the complex plane.
	For $\tau\in (0,\infty)$,  let $\bbA_\tau= \{z\in \mathbb C:  |z| \in (e^{-2\pi \tau},1) \}$. For a Riemann surface $A$ of the annular topology, there exists a unique $\tau$ such that $A$ and $\bbA_\tau$ are conformally equivalent. We call $\tau$ the \emph{modulus} of $A$.

	\subsection{Random moduli for the Brownian annulus}\label{subsec:intro-BA}

	There are various equivalent ways of introducing the Brownian annulus. For the convenience of describing its conformal structure, we will introduce it via the Brownian disk. For $a>0$, the \emph{Brownian disk with boundary length $a$} is a random metric-measure space with the disk topology, which is the Gromov-Hausdorff-Prokhorov (GHP) scaling limit of  random quadrangulations or triangulations of a large  polygon  under the critical Boltzmann weight;
	see Proposition~\ref{prop-brownian-disk} for a  precise scaling limit result. 
	If we reweight the law of a Brownian disk by its total area  and add an interior point according to the area measure, then we get a Brownian disk with one interior marked point which we denote by $(\cD,p, d,\mu)$. Here $p\in \cD$ is the marked point, $d$ is the metric and $\mu$ is the area measure. For $0<r<d(p,\partial \cD)$, let ${\cB}(p,r)=\{x\in \cD: d(p, x)\le r \}$ be the metric ball of radius $r$ centered at $p$. Then  each connected component of $\cD\setminus B_r(p,r)$ is locally absolutely continuous to a Brownian disk near the boundary, hence  has a notion of boundary length; see~\cite[Theorem 3]{legall-disk-snake}.
	\begin{definition}\label{def:BA}
		Fix $a>0$ and $b>0$. Let $(\cD,p, d,\mu)$ be a Brownian disk with boundary length $a$ and an interior marked point. On the event  $d(p,\partial \cD)>1$,  
		let $\cA$ be the (annular) connected component of $\cD \backslash {\cB}(p,1)$ whose boundary contains $\partial \cD$. Let  {$\cL$} be the length of the  boundary component of $\partial \cA$ other than $\partial \cD$. Let 
		{the probability measure} $\BA(a,b)^{\#}$ be the conditional law of the metric-measure space $(\cA,d,\mu)$ conditioning on ${\cL}=b$. We call a sample from $\BA(a,b)^{\#}$ a \emph{Brownian annulus of boundary lengths $a$ and $b$}.
	\end{definition}
	We give the full detail for Definition~\ref{def:BA} in Section~\ref{subsec:BA-def}. Although  we define $\BA(a,b)^{\#}$ through regular conditional probability, which is only specified for almost every $b$, it is possible to show that  $\BA(a,b)^{\#}$ is equivalent to various other definitions that make sense for all $b$. {For example, 
		Le Gall and Metz–Donnadieu~\cite{LeGall-Annulus} defines $\BA(a,b)^{\#}$ as by growing the metric ball centered at the origin until the inner boundary length of the complementary annulus reaches $b$; see~\cite[Theorem 3.1]{LeGall-Annulus}. This definition agrees with our Definition~\ref{def:BA} as they showed in~\cite[Proposition 8.1]{LeGall-Annulus}. Furthermore, they proved in~\cite[Theorem 7.1]{LeGall-Annulus}  that $\BA(a,b)^{\#}$ is the $n\to \infty$ GHP scaling limit of the critical Boltzmann weighted triangulation of an annulus with $\lfloor an \rfloor$ and $\lfloor bn \rfloor$ edges on the two boundaries, respectively.} Alternatively, $\BA(a,b)^{\#}$ can be defined directly in the continuum by some conditioned variants of the Brownian snake as in the sphere~\cite{legall-uniqueness,miermont-brownian-map} and disk~\cite{bet-mier-disk} cases; see \cite[Section 7]{Bet-Mier2022}. One can also give a mating-of-trees~\cite{wedges} description of $\BA(a,b)^{\#}$ based on the bijection in~\cite{bhs-site-perc}. Since our main curiosity lies in understanding the law of the modulus, we  will not elaborate on these aspects of the Brownian annulus in this paper but address them elsewhere.

	According to~\cite{lqg-tbm1,lqg-tbm2} which we recall in Section~\ref{subsec:BA-def}, given $(\cD,p,d,\mu)$ in Definition~\ref{def:BA},  we can construct a random metric $d_\H$ and a random measure $\mu_\H$ on the upper half plane $\H$ using $\sqrt{8/3}$-LQG  such that viewed as a (marked) metric-measure space, $(\H, i, d_\H,\mu_\H)$ equals $(\cD,p, d,\mu)$. We call  $(\H, i, d_\H,\mu_\H)$ a conformally embedded Brownian disk with boundary length $a$ and an interior marked point at $i$. On the event $d(p,\partial D)>1$, let $B_{d_\H}(i,1)=\{x: d_\H(i,x)\le 1  \}$ and let $A$ be the connected component of $\H\setminus B_{d_\H}(i,1)$ with $\partial \H$ on its boundary. Then $(A,d_\H,\mu_\H)$ is a conformal embedding of  $(\cA,d,\mu)$. Let $\mathrm{Mod}(\cA)$ be the modulus of the annulus $A$. We call $\mathrm{Mod}(\cA)$ the \emph{modulus} of $(\cA,d,\mu)$. By~\cite[{Theorem 1.4}]{lqg-tbm3}, $(d_\H,\mu_\H)$ is determined by $(\cD,p, d,\mu)$ modulo conformal symmetries. From this we can see that $\mathrm{Mod}(\cA)$ is determined by the metric-measure space $(\cA,d,\mu)$; see Lemma~\ref{lem:Mod-det}. Our theorem below describes the law of $\mathrm{Mod}(\cA)$ under $\BA(a,b)^{\#}$. 
	\begin{theorem}\label{thm:BA-mod}
		For $\tau>0$, let $X_\tau$ be a random variable such that \(\E[X^{i t}_\tau]= \frac{2\pi t e^{ -\frac{2\pi  \tau t^2}{3}}}{   3\sinh(\frac{2}{3} \pi t )} \) for $t\in \R$. Let   $\rho_\tau(\cdot)$ be the probability density function of $X_\tau$, which is supported on $(0,\infty)$. Then the law of  $\mathrm{Mod}(\cA)$ under $\BA(a,b)^{\#}$ is the probability measure on $(0,\infty)$ proportional to $1_{\tau>0}\eta(i2\tau)\rho_\tau(\frac{b}{a})\rd \tau$, where   $\eta(i\tau)=e^{-\frac{\pi\tau}{12}} \prod_{k=1}^\infty (1-e^{-2\pi k\tau})$ is the Dedekind eta function. 
	\end{theorem}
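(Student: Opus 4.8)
\emph{Proof proposal.} I would deduce the theorem from the matter--Liouville--ghost decomposition in the degenerate case in which the matter CFT has central charge $0$: for the Brownian annulus the matter contributes trivially, so the annulus partition function should be, up to the measure $\rd\tau$ on the one-dimensional moduli space of annuli, the product of a ghost factor and a Liouville factor. The first step is to move the left-hand side into Liouville quantum gravity. By the equivalence of the Brownian disk with the $\gamma=\sqrt{8/3}$ quantum disk and the Liouville CFT description of the latter \cite{lqg-tbm1,lqg-tbm2,lqg-tbm3,cercle-quantum-disk,ahs-sphere}, the Brownian disk $(\cD,p,d,\mu)$ of Definition~\ref{def:BA}, conformally embedded, is up to an explicit constant the Liouville field on the disk with a single bulk insertion of weight $\gamma$ at the image of $p$ and boundary length $a$; in particular it carries a well-defined conformal structure. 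Removing the $\sqrt{8/3}$-LQG metric ball of radius $1$ around the insertion and keeping the component $\cA$ with $\partial\cD$ on its boundary yields a quantum surface of annular topology with boundary lengths $(a,b)$, and using (a disk version of) the QLE$(8/3,0)$ Markov property of the $\sqrt{8/3}$-LQG metric of Miller--Sheffield, conditionally on $(a,b)$ this surface is a ``quantum annulus'' whose law does not otherwise record what was removed. Thus the law of $\mathrm{Mod}(\cA)$ under $\BA(a,b)^{\#}$ equals the law of the modulus of this quantum annulus conditioned on its two boundary lengths, and it remains to compute the latter. (This also explains why the Brownian-annulus case runs parallel to our $\SLE_{8/3}$-loop result.)

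The second step is the KPZ relation for annulus partition functions announced in Section~\ref{subsec:intro-KPZ}, which I would obtain from Wu's conformal bootstrap for Liouville CFT on the annulus \cite{Wu22} after passing from boundary-cosmological-constant data to fixed boundary lengths (an inverse-Laplace transform). It should express the joint law of (modulus, boundary lengths) of the quantum annulus above as
\begin{equation*}
C\,\eta(i2\tau)\,\mathcal{Z}_{\bbA_\tau}(a,b)\,\rd\tau\,\rd a\,\rd b ,
\end{equation*}
where $\eta(i2\tau)$ is the ghost-sector contribution on $\bbA_\tau$ (a regularised Laplacian determinant), the matter factor being trivial, $\mathcal{Z}_{\bbA_\tau}(a,b)$ is the Liouville CFT amplitude on $\bbA_\tau$ with a bulk $\gamma$-insertion and the two boundary lengths fixed to $a$ and $b$, and $C$ is a normalising constant. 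Since only the conditional law of $\tau$ given $(a,b)$ is wanted, $C$ and the precise $(a,b)$-law drop out and the theorem reduces to the identity $\mathcal{Z}_{\bbA_\tau}(a,b)=(\text{factor independent of }\tau)\cdot\rho_\tau(b/a)$.

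The third step is to compute $\mathcal{Z}_{\bbA_\tau}(a,b)$ by an explicit Gaussian multiplicative chaos calculation on $\bbA_\tau$. Splitting the Liouville field into the mode affine in $\log|z|$, a part with mean zero on each boundary circle, and the boundary data, one sees that the dependence on $(a,b)$ enters only through the two boundary GMC masses, which by scaling forces dependence on $b/a$ alone. I expect the Gaussian factor $e^{-2\pi\tau t^2/3}$ in $\E[X_\tau^{it}]$ to come from the affine mode, whose fluctuation has variance $\tfrac{\gamma^2}{4}\cdot 2\pi\tau=\tfrac{4\pi\tau}{3}$, and the factor $\tfrac{2\pi t}{3\sinh(\frac{2}{3}\pi t)}$ to come from the ratio of Liouville boundary reflection coefficients attached to the two boundaries in the presence of the bulk insertion --- a ratio of Barnes double-Gamma functions on the imaginary axis that collapses to the stated hyperbolic form at $\gamma^2=8/3$. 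Matching Mellin transforms then identifies $\mathcal{Z}_{\bbA_\tau}(a,b)$ with $\rho_\tau(b/a)$ up to a $\tau$-independent factor; a known area/boundary-length identity for the Brownian disk from the random planar map side can be used to fix normalisations and to exclude a stray $\tau$-dependence.

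I expect the main obstacle to be the interface between the two frameworks in Steps~1--2: showing that cutting out the radius-$1$ metric ball produces an honest quantum annulus \emph{whose conditional law given the boundary lengths is precisely the Liouville-CFT annulus that Wu's bootstrap evaluates}, and converting that bootstrap --- proved with boundary cosmological constants for the Liouville field proper --- into a statement about quantum surfaces with fixed boundary lengths and the specific bulk insertion created by the area-weighted marked point. This demands control of the law of the metric-ball boundary length and its conditional independence from the exterior field, of the exact insertion data and normalisation appearing on the annulus, and of the precise ghost contribution (the bare $\eta(i2\tau)$, with no extra modular prefactor). The GMC evaluation of $\mathcal{Z}_{\bbA_\tau}(a,b)$ in Step~3, though technical, should be a comparatively routine extension of known disk and sphere computations.
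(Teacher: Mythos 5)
There is a genuine gap at the heart of your Step 2: the factorization of the Brownian-annulus law as $(\text{ghost factor})\times(\text{Liouville annulus amplitude})\,\rd\tau$ is exactly the content of Theorem~\ref{thm:BAconj}, and your proposal assumes it rather than proves it. Wu's bootstrap~\cite{Wu22} is a statement about $\LF_\tau$ at \emph{fixed} $\tau$; it says nothing about how the modulus of $\BA$ is distributed, and no inverse Laplace transform of it produces the weight $\eta(2i\tau)$ unless you already know both (i) that conditionally on the modulus the embedded field is exactly the Liouville field $\LF_\tau$ (free-boundary GFF plus an independent Lebesgue constant, with no insertion and no Radon--Nikodym weighting), and (ii) an independent observable that pins down the $\tau$-marginal. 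In the paper, (i) is Lemma~\ref{lem-BA-m}, proved via the domain Markov property of Liouville fields together with the resampling characterization of $\LF_\tau$ (Proposition~\ref{prop:inv-unique}) and the symmetry $\BA(a,b)^{\#}=\BA(b,a)^{\#}$ coming from the Brownian sphere (Lemma~\ref{lem:BA-symmetry}); you flag this as ``the main obstacle'' but offer no mechanism. For (ii), the determining input is the explicit boundary-length law of the Brownian annulus, $|\BA(a,b)|\propto \frac{1}{\sqrt{ab}(a+b)}$ (Lemmas~\ref{lem-boltz-len} and~\ref{lem:ball-length}), which yields $\BA[L_1e^{-L_1}L_0^{ix}]=\frac{\pi\Gamma(1+ix)}{2\cosh(\pi x)}$; plugging this into the KPZ relation (Theorem~\ref{thm:KPZ}) and inverting the Laplace transform via~\eqref{eq:eta-L} is what \emph{produces} $\eta(2i\tau)$ as an output. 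In your plan $\eta(2i\tau)$ enters as an assumed regularised determinant from physics, and the planar-map input is only invoked to ``fix normalisations'' at fixed $\tau$ --- so nothing in your argument determines the $\tau$-density, which is the whole theorem.

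Two further points. First, a concrete error: the marked point lies inside the removed filled metric ball, so the annulus carries \emph{no} bulk insertion; its conditional law given the modulus is plain $\LF_\tau$, and the $\gamma$-insertion only affects the law of the inner boundary length through the capping disk (the factor $b|\mathrm{Ball}_1(b)|$ playing the role of $b|\LF^{\gamma,i}_\H(b)|$). Computing an annulus amplitude ``with a bulk $\gamma$-insertion'' would change the joint law of $(\tau,L_0,L_1)$ and give the wrong conditional density. Second, your Step 3 treats the identity $\E[e^{ix(\log L_1-\log L_0)}]=\frac{\pi\gamma^2 x\,e^{-\pi\gamma^2\tau x^2/4}}{4\sinh(\frac{\gamma^2}{4}\pi x)}$ as a ``comparatively routine'' GMC computation via reflection coefficients; this is precisely Theorem~\ref{thm:GMC-ratio}, which the paper only obtains through Wu's annulus bootstrap (the radial-mode Gaussian factor is indeed elementary, but the $\tau$-independence and exact law of the lateral ratio are not --- see Remark~\ref{rmk:circle}, where a direct route is mentioned but explicitly not carried out). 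So the correct skeleton is: (1) realize $\BA$ inside the Brownian sphere/disk and prove the symmetry and the conditional-independence structure; (2) prove $\BA=\LF_\tau(\rd\phi)m(\rd\tau)$ by the invariance/uniqueness argument; (3) compute $m$ by feeding the planar-map boundary-length statistics into the KPZ relation; your proposal supplies pieces of (1) and the analytic shape of the answer, but not the steps that actually force $m(\rd\tau)\propto\eta(2i\tau)\rd\tau$.
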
 
	Theorem~\ref{thm:BA-mod} yields statements  on the scaling limit of the conformal structure of uniformly random planar maps on the annulus. For example,
	the law of $\mathrm{Mod}(\cA)$ should describe the limiting distribution of the effective resistance of the critical Boltzmann triangulation with the given boundary lengths, viewed as an electric network. Proving this rigorously amounts to establishing  the quenched scaling limit of random walks  on such maps. The analogous statements should hold for other quantities that  encode the modulus of the annulus, such as the annulus crossing probability for the critical percolation or Ising model. The quenched scaling limit  needed to prove such statements is only known for  site percolation on triangulations~\cite{hs-cardy-embedding}.

	Theorem~\ref{thm:BA-mod} is a consequence of the matter-Liouville-ghost decomposition for the Brownian annulus (Theorem~\ref{thm:BAconj}), as we will explain in the next subsection.
	
	\subsection{Matter-Liouville-ghost decomposition for 2D quantum gravity}\label{subsec:LQG}
	The decomposition of 2D quantum gravity coupled with conformal matter was originally due to Polyakov~\cite{Polyakov1981} where the matter is the free boson.
	The ghost part of this decomposition only manifests itself for non-simply connected surfaces. 
	In the probabilistic framework of LQG, this was first explained in~\cite{drv-torus} for the torus case. The annulus case was treated by the second named author~\cite{Remy_annulus} in parallel to the torus case. For higher genus closed surfaces, it was shown in~\cite{grv-higher-genus} that for free boson matter this decomposition indeed produces a convergent theory envisioned in~\cite{Polyakov1981}. Based on the belief that random planar maps decorated with statistical physics models are discretizations  of matter-coupled 2D quantum gravity, precise scaling limit conjectures were formulated in each of the three papers~\cite{drv-torus,grv-higher-genus,Remy_annulus}.  
	In particular, applying the decomposition to the pure gravity on the annulus, a precise LQG description of the Brownian annulus was provided in~\cite[Section 4.3]{Remy_annulus}.
	We now recall this conjecture, which we prove as Theorem~\ref{thm:BAconj} below.
	
	We first introduce the free Brownian annulus with no constraint on the boundary lengths. Let 
	\begin{equation}\label{eq:free-BA}
	\BA=\iint_0^\infty \frac{1}{2\sqrt{ab}(a+b)} \BA(a,b)^{\#} \rd a \, \rd b.
	\end{equation}
	We call a sample from the infinite measure $\BA$ a \emph{free Brownian annulus}. Here the factor $\frac{1}{\sqrt{ab}(a+b)}$ is according to enumeration results for annular planar maps~\cite{bernardi-fusy}; {see Lemma~\ref{lem-boltz-len}}.  From the relation between $\sqrt{8/3}$-LQG and Brownian surfaces established in~\cite{lqg-tbm1,lqg-tbm2,lqg-tbm3}, a free Brownian annulus can be represented as $(\phi,\tau)$ where $\tau\in (0,\infty)$ gives the modulus {of the annulus where the Brownian annulus is conformally embedded}, and $\phi$ describes the variant of Gaussian free field on the annulus that determines the $\sqrt{8/3}$-LQG geometry.

	The first part of the conjecture in~\cite{Remy_annulus} is that the law of $\phi$ given $\tau$ can be described by the Liouville conformal field theory. To make the statement as clean as possible, we work on  the horizontal cylinder $\cC_\tau:= [0,\tau]\times[0,1]/{\sim}$ of unit
	circumference and length $\tau$, where $\sim$ means identifying $[0,\tau]\times \{0\}$ with $[0,\tau]\times \{1\}$; see Figure~\ref{fig-many} (left).  This way, the modulus of $\cC_\tau$ is $\tau$.\footnote{Our modular parameter $\tau$ is denoted by $\ell$ in~\cite{Remy_annulus}.} Let $\P_\tau$ be the law of free boundary Gaussian free field on $\cC_\tau$ defined as in~\cite{shef-gff}; {see Section~\ref{subsec:GFF}}. The \emph{Liouville field measure} $\LF_\tau$ on $\cC_\tau$ is the pushforward of $\P_\tau\times \rd c$  under $(h,c)\mapsto h+c$, where $\rd c$ is the Lebesgue measure on $\R$. The first part of the conjecture can be stated as follows. There exists a measure $m(\rd \tau)$ on $(0,\infty)$ such that $(\phi,\tau)$ sampled from  $\LF_\tau (d\phi)m(\rd\tau)$ gives  a free Brownian annulus conformally embedded onto  a horizontal cylinder. 
	
	The second and more mysterious part of the conjecture is the exact formula for $m(\rd \tau)$. According to matter-Liouville-ghost decomposition of pure gravity, the free Brownian annulus can be written as  
	\begin{equation}\label{eq:mlg-BA}
	\mathcal Z_{\textrm{GFF}}(\tau)\LF_\tau (\rd\phi)\times \mathcal Z_{\textrm{ghost}}(\tau) \rd \tau, \quad \textrm{with }\mathcal Z_{\textrm{GFF}}(\tau)=\frac{1}{\sqrt{2}\eta(2i\tau)}\textrm{ and }Z_{\textrm{ghost}}(\tau)=\eta(2i\tau)^2.
	\end{equation} 
	The measure $\mathcal Z_{\textrm{GFF}}(\tau)\LF_\tau (d\phi)$ is the base measure for the path integral of the Liouville action on $\cC_\tau$, where $\mathcal Z_{\textrm{GFF}}(\tau)$ is the free field partition function coming from this path integral.  The function $\mathcal Z_{\textrm{ghost}}(\tau)$ is the partition function on $\cC_\tau$ of the ghost field for bosonic string theory, which is the non-physical conformal field theory with central charge $c_{\mathrm{ghost}}=-26$   coming from conformal gauge fixing. See \cite[Section 5.2]{Remy_annulus} for more background on $\mathcal Z_{\textrm{GFF}}(\tau)$  and $\mathcal Z_{\textrm{ghost}}(\tau)$.
	Putting the two parts of the conjecture together we get:
	\begin{conjecture}[\cite{Remy_annulus}]\label{conj:BA}
		We have $\BA= 1_{\tau>0}2^{-1/2}\eta(2i\tau) \cdot  \LF_\tau (\rd\phi)\rd\tau$ in the sense that a pair $(\phi,\tau)$ sampled from~\eqref{eq:mlg-BA} gives a free Brownian annulus conformally embedded on $\cC_\tau$. 
	\end{conjecture}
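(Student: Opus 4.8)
The plan is to split Conjecture~\ref{conj:BA} into its two parts and treat them in turn: (i) that, conditionally on the modulus $\tau$, the conformally embedded free Brownian annulus has field law $\LF_\tau$ on $\cC_\tau$ up to a $\tau$-dependent normalization; and (ii) that the normalization is $2^{-1/2}\eta(2i\tau)$. Part (i) reduces Conjecture~\ref{conj:BA} to identifying a Borel measure $\nu$ on $(0,\infty)$ with $2^{-1/2}\eta(2i\tau)\,\rd\tau$, which I would then do by matching boundary-length marginals, with Wu's conformal bootstrap for Liouville CFT on the annulus~\cite{Wu22} as the essential integrable input.

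For part (i), I would realize $\BA$ as in Definition~\ref{def:BA}: a Brownian disk $(\cD,p,d,\mu)$ with interior marked point, carrying the boundary-length weight $\frac{1}{2\sqrt{ab}(a+b)}$, conformally embedded in $\H$ via $\sqrt{8/3}$-LQG~\cite{lqg-tbm1,lqg-tbm2,lqg-tbm3}. The area-reweighted, interior-marked Brownian disk of boundary length $a$, once embedded, has a field whose law is an explicit Liouville-type measure on the disk (a Liouville field with a bulk insertion at the marked point), by the Brownian-disk/quantum-disk correspondence together with the Liouville-CFT description of the quantum disk~\cite{cercle-quantum-disk,ahs-sphere,hrv-disk}. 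Removing the filled metric ball $\cB(p,1)$ splits this surface, by the spatial Markov property of the Brownian disk under metric growth (\cite{legall-disk-snake} with the Miller-Sheffield metric growth of $\sqrt{8/3}$-LQG), into the ball and the annular piece $\cA$, conditionally independent given the quantum length of $\partial\cB(p,1)$. Pushing the field on the embedded $\cA$ forward through the uniformizing map onto $\cC_\tau$ and using conformal covariance of $\LF$, one obtains that under $\BA$ the pair $(\phi,\tau)$ has law $\nu(\rd\tau)\,\LF_\tau(\rd\phi)$ for some $\nu$; since the metric and measure are determined by the field via $\sqrt{8/3}$-LQG, this is the same as saying $\BA=\int_0^\infty\nu(\rd\tau)\,\LF_\tau(\rd\phi)$ as measures on embedded metric-measure surfaces.

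For part (ii), I would match the law of the ordered pair of boundary lengths $(a,b)$. Under $\BA$ this marginal is $\frac{1}{2\sqrt{ab}(a+b)}\,\rd a\,\rd b$ by construction (Lemma~\ref{lem-boltz-len}). Under $\LF_\tau$, because the zero mode $c$ scales both boundary lengths by a common Lebesgue-distributed factor $e^{\gamma c/2}$, the $\sqrt{8/3}$-LQG lengths $(L_0,L_\tau)$ of the two boundary circles of $\cC_\tau$ have law $\frac{2}{\gamma}\,\frac{1}{L_0L_\tau}\,f_\tau(\log(L_\tau/L_0))\,\rd L_0\,\rd L_\tau$, where $f_\tau$ is the density of $\log L_\tau-\log L_0$ under the free-boundary Gaussian free field on $\cC_\tau$. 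Writing $r=b/a$ and using $\frac{\sqrt{ab}}{a+b}=\frac{\sqrt r}{1+r}$, the two marginals agree if and only if $\int_0^\infty\nu(\rd\tau)\,f_\tau(\log r)$ equals $\frac{\sqrt r}{1+r}$ up to an explicit constant, for every $r>0$. Taking the Mellin transform in $r$ turns the right side into a constant times $\frac{1}{\cos(\pi s)}$ and the left side into $\int_0^\infty\nu(\rd\tau)\,\E[(L_\tau/L_0)^s]$; the ratio moment $\E[(L_\tau/L_0)^s]$ is the annulus two-boundary amplitude of Liouville CFT, available in closed form from Wu's bootstrap~\cite{Wu22}, so substituting it and inverting the transform (first for $s$ in the strip where the underlying GMC moments are finite, then by analytic continuation) forces $\nu(\rd\tau)=2^{-1/2}\eta(2i\tau)\,\rd\tau$, with the classical modular identities for $\eta$ used to recognize the special function. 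This is the instance of the KPZ relation of Section~\ref{subsec:intro-KPZ} at play here; combined with part (i) it gives Conjecture~\ref{conj:BA}, and disintegrating over $(a,b)$ then yields Theorem~\ref{thm:BA-mod}.

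The main obstacle is part (ii): extracting the explicit annulus two-boundary amplitude $\E[(L_\tau/L_0)^s]$ from Wu's bootstrap with all normalizations in place, carrying out the Mellin inversion and analytic continuation cleanly, and --- just as delicate --- tracking every nonuniversal constant along the chain (Bernardi-Fusy enumeration) $\to$ (Brownian disk) $\to$ ($\sqrt{8/3}$-LQG) $\to$ (Liouville field), so that the overall factor $2^{-1/2}$ comes out correctly. This is exactly where the two integrabilities of the paper --- the scaling limit of random planar maps and Liouville CFT --- must be reconciled. Part (i), while it rests on the deep Brownian-disk/LQG dictionary, is more an assembly of existing structural results, though care is needed to run the correspondence in the direction that turns a sample of $\nu(\rd\tau)\LF_\tau(\rd\phi)$ into a genuine Brownian annulus.
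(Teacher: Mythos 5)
Your part (ii) is essentially the paper's own computation: matching the joint boundary-length law of $\BA$ (which is $\frac{1}{2\sqrt{ab}(a+b)}\,\rd a\,\rd b$ by the definition~\eqref{eq:free-BA}, so no delicate constant-chasing through Bernardi--Fusy is needed) against $\int \LF_\tau\,\nu(\rd\tau)$, and Fourier/Mellin-inverting with Wu's annulus amplitude as input, is exactly the KPZ relation of Theorem~\ref{thm:KPZ} (via Theorem~\ref{thm:GMC-ratio}) together with the inverse Laplace transform carried out in the proof of Theorem~\ref{thm-BA-main}. That half of your plan is sound.

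The genuine gap is in part (i). You assert that after splitting off the filled metric ball and uniformizing $\cA$ onto $\cC_\tau$, ``conformal covariance of $\LF$'' yields that the conditional law of the field given $\tau$ is $\LF_\tau$ up to normalization. Conformal covariance only tells you how a Liouville field transforms if you already know the law is Liouville; it cannot identify the law, and identifying it is the hard step. What the splitting actually gives is much weaker: conditionally on the ball and on $\tau$, the field on the annulus is a zero-boundary GFF plus harmonic extension on the side of the metric-ball boundary $\eta$, with free boundary data on $\partial\H$ --- a one-sided resampling (domain Markov) property, and even this needs the locality of the $\sqrt{8/3}$-LQG metric so that $\eta$ is a local set for the field (the second case of Lemma~\ref{lem:local}). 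One-sided resampling invariance does not pin down $\LF_\tau$. The paper's Step 1 (Lemma~\ref{lem-BA-m}, modeled on Proposition~\ref{prop:LF}) requires two further ingredients that your proposal never invokes: first, the characterization that $\LF_\tau$ truncated to boundary lengths in a compact interval is the \emph{unique} probability measure invariant under both one-sided resampling kernels $\Lambda_I$ and $\bar\Lambda_I$ (Proposition~\ref{prop:inv-unique}, proved via an irreducibility argument); second, invariance under the kernel that resamples near the metric-ball side, which does not follow from your disk picture at all --- near $\eta$ the field is entangled with the ball --- and is instead deduced from the symmetry $\BA(a,b)^{\#}=\BA(b,a)^{\#}$, obtained by realizing the Brownian annulus between two filled metric balls inside a two-pointed Brownian sphere (Lemmas~\ref{lem:sphere-ann} and~\ref{lem:BA-symmetry}), not merely inside a one-marked Brownian disk as in your setup. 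Without the uniqueness characterization and this sphere-based symmetry, your part (i) asserts the conclusion rather than proving it, and your judgment that it is ``an assembly of existing structural results'' misses precisely the step the paper had to invent.
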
 
	Our formulation of Conjecture~\ref{conj:BA} is an identity on two infinite measures while the formulation in~\cite{Remy_annulus} 
	is in terms of convergent functional integrals over these   measures, where the convergence is achieved by introducing cosmological constants. {In this paper we prove the following theorem.}
	\begin{theorem}\label{thm:BAconj}
		Conjecture~\ref{conj:BA} holds.
	\end{theorem}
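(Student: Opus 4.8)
The plan is to prove Theorem~\ref{thm:BAconj} by comparing two different computations of an annular LQG observable: one coming from the random-planar-map side, which gives the Brownian annulus $\BA$ with its canonical boundary-length weighting, and one coming from the Liouville CFT side, where the field law is $\LF_\tau(d\phi)$ and the only unknown is the modulus density $m(\rd\tau)$. Since Conjecture~\ref{conj:BA} is an identity of infinite measures, it suffices to check it after disintegrating over $\tau$ and after testing against a sufficiently rich family of functionals. The cleanest such family is given by the two boundary lengths $(a,b)$ together with the modulus $\tau$: both sides are, conditionally on $\tau$, Liouville fields on $\cC_\tau$, so the identity reduces to matching the joint law of $(a,b,\tau)$, i.e. to showing that the joint density of $(a,b,\tau)$ under $\BA$ equals $2^{-1/2}\eta(2i\tau)$ times the joint density of the two boundary lengths of $\LF_\tau$.

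\textbf{Step 1 (identify the field law given $\tau$).} First I would establish that a free Brownian annulus, conformally embedded on $\cC_\tau$, has conditional field law $\LF_\tau$ given its modulus $\tau$. This is the first (non-mysterious) half of the conjecture and should follow from the $\sqrt{8/3}$-LQG description of Brownian surfaces~\cite{lqg-tbm1,lqg-tbm2,lqg-tbm3} combined with the known field-law characterizations on simply connected domains~\cite{ahs-sphere,cercle-quantum-disk}: cutting the annulus along a geodesic or along a metric ball boundary reduces it to a quantum-disk-type surface whose embedding field is Liouville, and the free-boundary GFF on the cylinder is precisely what glues two such pieces. The output of this step is that $\BA = \int_0^\infty \LF_\tau(d\phi)\, m(\rd\tau)$ for some measure $m$, so the entire theorem is the determination $m(\rd\tau)=2^{-1/2}\eta(2i\tau)\,\rd\tau$.

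\textbf{Step 2 (compute the boundary-length law under $\LF_\tau$).} Next I would compute, for fixed $\tau$, the joint law of the two quantum boundary lengths $(\mu_h(\partial_1\cC_\tau),\mu_h(\partial_2\cC_\tau))$ of a sample $\phi$ from $\LF_\tau$. By the structure of $\LF_\tau$ (a free-boundary GFF plus a Lebesgue-distributed constant $c$), adding $c$ scales each boundary length by $e^{\frac{\gamma}{2}c}$, so the $c$-integral turns the joint law of the pair into a product of a ``total scale'' factor $\rd a\,\rd b/(a+b)$-type Haar piece and a ``shape'' factor depending only on the ratio $b/a$ and on $\tau$; the shape factor is a moment problem solved by the Gaussian multiplicative chaos on the circle, whose Mellin transform (in the ratio variable) is computable in closed form — this is where the factor $\frac{2\pi t e^{-2\pi\tau t^2/3}}{3\sinh(\frac23\pi t)}$ of Theorem~\ref{thm:BA-mod} will emerge, via the variance of the cylinder GFF and the Fyodorov--Bouchaud / GMC moment formulas. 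The total-mass normalization of $\LF_\tau$ over all boundary lengths contributes a $\tau$-dependent constant, which I expect to be exactly $\mathcal Z_{\textrm{GFF}}(\tau)^{-1}=\sqrt2\,\eta(2i\tau)$ up to a global constant, reproducing~\eqref{eq:mlg-BA}.

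\textbf{Step 3 (match with the planar-map side and pin down $m$).} Finally I would match this against the definition~\eqref{eq:free-BA} of $\BA$: there $\BA=\iint_0^\infty \frac{1}{2\sqrt{ab}(a+b)}\BA(a,b)^{\#}\rd a\,\rd b$, and Definition~\ref{def:BA} plus the Brownian-disk/LQG correspondence let one read off how the modulus $\mathrm{Mod}(\cA)$ and the boundary lengths $(a,b)$ are jointly distributed under this measure. Comparing the $(a,b)$-marginal weight $\frac{1}{\sqrt{ab}(a+b)}$ coming from the Bernardi--Fusy enumeration~\cite{bernardi-fusy} against the $(a,b)$-weight produced by the $c$-integral in Step~2 forces the $\tau$-density to be $m(\rd\tau)=2^{-1/2}\eta(2i\tau)\,\rd\tau$; the consistency of the ratio-dependence on both sides is then exactly the statement of Theorem~\ref{thm:BA-mod}, which is thus proved simultaneously.

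\textbf{Main obstacle.} I expect the crux to be Step~1 — rigorously transferring the Brownian-annulus measure, defined metrically via a metric ball in a Brownian disk, into a statement about a GFF-type field on $\cC_\tau$ with the precise free-boundary covariance, including getting the $\eta(2i\tau)$ normalization rather than merely an unknown $\tau$-density. The difficulty is that the metric ball boundary is not a priori a ``nice'' curve from the field perspective, so one must either invoke the full metric-measure determinism of~\cite{lqg-tbm3,gms-poisson-voronoi} together with a Markov/restriction property for the ball-complement, or bypass it by an independent Liouville-CFT input, namely Wu's conformal bootstrap on the annulus~\cite{Wu22}, which is precisely what the introduction flags as the ``KPZ relation at the partition function level.'' Steps~2 and~3 are, by comparison, computations with GMC and hypergeometric-type integrals that I would expect to be technically heavy but conceptually routine.
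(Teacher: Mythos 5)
Your high-level plan is the paper's own announced two-step strategy (write $\BA=\int\LF_\tau(\rd\phi)\,m(\rd\tau)$, then pin down $m$ by matching boundary-length statistics against the $\frac{1}{2\sqrt{ab}(a+b)}$ weight), but both of your key technical steps have genuine gaps. For Step 1 you only gesture at ``cutting along a metric ball boundary'' and the ball-complement Markov property; the actual mechanism in the paper is a resampling \emph{characterization} of $\LF_\tau$: the truncated measure $\LF_{\tau,I}$ is the unique probability measure invariant under the two kernels that resample the field on $\cC(0,\frac{2\tau}{3})$ and on $\cC(\frac{\tau}{3},\tau)$ given the rest (Proposition~\ref{prop:inv-unique}), and verifying this invariance for $\BA$ requires (i) the Miller--Sheffield correspondence together with the fact that the $\sqrt{8/3}$-metric is locally determined by the field, so the filled metric ball boundary is a local set (the second case of Lemma~\ref{lem:local}), (ii) the law of the ball's boundary length (Lemma~\ref{lem:ball-length}), and crucially (iii) the symmetry $\BA(a,b)^{\#}=\BA(b,a)^{\#}$, obtained by realizing the annulus inside a two-pointed Brownian sphere, which is what lets one apply the resampling from \emph{both} ends. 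None of these ingredients, in particular the uniqueness-of-invariant-measure idea and the need for the two-sided symmetry, appears in your sketch; and Wu's bootstrap cannot ``bypass'' this step, since it says nothing about the Brownian annulus --- it is purely a statement about $\LF_\tau$.

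The second gap is in your Step 2: the joint law of the two boundary lengths under $\LF_\tau$ does \emph{not} reduce to Fyodorov--Bouchaud/circle-GMC moment formulas. After removing the radial mode (which gives the Gaussian factor $e^{-\pi\gamma^2\tau x^2/4}$), the two boundary GMC masses of the lateral field are correlated for finite $\tau$, and the closed-form characteristic function $\frac{\pi\gamma^2x}{4\sinh(\frac{\gamma^2}{4}\pi x)}$ of the log-ratio --- in particular its surprising $\tau$-independence --- is exactly the hard integrable input; in the paper it is an output of Wu's annulus bootstrap (Theorem~\ref{thm:bdy-bootstrap} $\Rightarrow$ Theorems~\ref{thm:GMC-ratio} and~\ref{thm:KPZ}), and Remark~\ref{rmk:circle} explicitly flags the FB-based route as an unpursued alternative, not a routine computation. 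Relatedly, your claim that $\eta(2i\tau)$ arises as ``the total-mass normalization of $\LF_\tau$'' is off ($\LF_\tau$ has infinite mass and no such normalization); in the paper the Dedekind eta appears only at the very end, by inverting the Laplace transform in $\tau$ of $e^{-\pi\gamma^2x^2\tau/4}$ against $\BA[L_1e^{-L_1}L_0^{ix}]=\frac{\pi\Gamma(1+ix)}{2\cosh(\pi x)}$ via the identity \eqref{eq:eta-L}. Your Step 3 does capture that matching in spirit, but without the correct inputs from Steps 1--2 the argument does not close.
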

	
	Once we couple 2D quantum gravity with a conformal matter, the influence on the random geometry can be intuitively understood from the scaling limit of random planar maps. In this context, a conformal matter corresponds to a statistical physics model $M$ with a conformal invariant scaling limit. The 2D quantum gravity coupled with $M$ can be approximated by random planar maps decorated by model $M$, where the law of the planar map is the uniform measure weighted by the partition function of $M$. Now the conjectural description of the conformally embedded geometry for the scaling limit   with annular topology becomes 
	\begin{equation}\label{eq:mlg-QA}
	\mathcal Z_M(\tau)\times \mathcal Z_{\textrm{GFF}}(\tau)\LF_\tau (\rd\phi)\times \mathcal Z_{\textrm{ghost}}(\tau) \rd \tau,
	\end{equation}
	where $\mathcal Z_M(\tau)$ is the partition function of the matter $M$ on $\cC_\tau$. 
	{As explained in \cite{drv-torus,grv-higher-genus,Remy_annulus}, suppose the CFT describing the scaling limit of $M$ has central charge  $c_{\mathrm M}\le 1$.}
	Then the random geometry is induced by the Liouville CFT with central charge $c_{\mathrm L}=-c_{\mathrm{ghost}}-c_{\mathrm M}=26-c_{\mathrm M}$. Here $c_{\mathrm L}$ is related to the background charge $Q$ and the coupling parameter $\gamma$ via $c_{\mathrm L}=1+6Q^2$ and $Q=\frac{\gamma}{2}+\frac{2}{\gamma}$.
	
	For each $\gamma\in (\sqrt{8/3},2)$ (i.e.\ $c_{\mathrm M}\in (0,1)$), 
	the \emph{quantum annulus} is a quantum surface of annular topology which arises naturally in the context of $\gamma$-LQG coupled with $\CLE_{\gamma^2}$, as we recall in Section~\ref{sec:QA}.
	Intuitively, the quantum annulus describes the scaling limit of the random planar maps of annular topology decorated with a particular matter $M$. For example, for $\gamma=\sqrt{3}$,  the matter $M$ is the Ising model on the annulus with $+$ boundary condition on both sides and the extra condition that there is a $+$ arm crossing the annulus. Using the relation between the Ising model and $\CLE_3$~\cite{bh-Ising}, the scaling limit of $M$ is the $\CLE_3$ on the annulus without non-contractible loops. Similarly, for  $\gamma\in (\sqrt{8/3},2)$, the \emph{quantum annulus} $\QA^\gamma$ describes the natural LQG surface of annular  topology decorated by $\CLE_{\gamma^2}$ without non-contractible loops. {(CLE on an annulus without non-contractible loops was studied in~\cite{shef-watson-wu-simple}.)}
	Assuming the relation between $\CLE$ and the scaling limit of O$(n)$-loop model, {the conjectural formula from~\cite{SB89,cardy06}} for the O$(n)$-loop model (see~\eqref{eq:On} and Corollary~\ref{cor:nesting})  yields a formula for the annulus partition function $\mathcal Z_{\mathrm{M}}(\tau)$. Combined with~\eqref{eq:mlg-QA} we arrive at a conjectural description of $\QA^\gamma$, which we prove as the next theorem.
	\begin{theorem}\label{thm:QAconj}
		In the same sense as in Conjecture~\ref{conj:BA}, for $\gamma \in (\sqrt{8/3},2)$, $\QA^\gamma$ is given by
		\begin{equation}\label{eq:QAconj}
		\QA^\gamma=1_{\tau>0}\cdot {\cos(\pi (\frac4{\gamma^2}-1))} \cdot \frac{\gamma}{2\pi } \theta_1(\frac{\gamma^2}{8}, \frac{\gamma^2}{4}i\tau)\LF_\tau (\rd\phi) \rd\tau
		\end{equation}
		where \(\theta_{1}\!\left(z , i\tau \right) := -i {e}^{-\pi \tau / 4} \sum_{n=-\infty}^{\infty} {\left(-1\right)}^{n} {e}^{-n \left(n + 1\right)\pi  \tau} {e}^{(2 n + 1)\pi i z}\) is the Jacobi theta function.
	\end{theorem}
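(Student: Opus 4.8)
The plan is to prove Theorem~\ref{thm:QAconj} in two stages: first, to show that conditionally on its modulus $\tau$ the embedding field of $\QA^\gamma$ on the cylinder $\cC_\tau$ is the Liouville field $\LF_\tau$, so that $\QA^\gamma = 1_{\tau>0}\,\mathcal{Z}(\tau)\,\LF_\tau(\rd\phi)\,\rd\tau$ for some unknown density $\mathcal{Z}$; and second, to compute $\mathcal{Z}$ and identify it with the claimed theta-function expression. The first stage should follow from the construction of $\QA^\gamma$ recalled in Section~\ref{sec:QA}: $\QA^\gamma$ is extracted from a $\gamma$-LQG quantum disk carrying an independent $\CLE_{\gamma^2}$, as a suitable weighting of the annular region between the disk boundary and a $\CLE_{\gamma^2}$ loop surrounding a marked bulk point --- the absence of non-contractible loops being automatic, since the remaining loops of $\CLE_{\gamma^2}$ in this region do not surround the marked point. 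The embedding field then inherits its law from the boundary Liouville CFT description of the quantum disk (cf.~\cite{cercle-quantum-disk}), via the domain Markov property of $\CLE_{\gamma^2}$ and the fact that a $\CLE_{\gamma^2}$ loop, conditioned on its quantum length, cuts a quantum disk into independent quantum disks (a consequence of the conformal welding of $\CLE_{\gamma^2}$ and $\gamma$-LQG). Since the annular carpet carries no bulk or boundary insertion, uniformizing onto $\cC_\tau$ leaves a free-boundary Gaussian free field plus an independent additive constant --- that is, $\LF_\tau$.

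The substance is the second stage, for which I would use the annulus KPZ relation of Section~\ref{subsec:intro-KPZ}, itself derived from Wu's conformal bootstrap for Liouville CFT on the annulus~\cite{Wu22}. Let $F(\ell_1,\ell_2)$ be the joint density of the two boundary quantum lengths under $\QA^\gamma$, and let $\nu_\tau(\ell_1,\ell_2)$ be the corresponding density under $\LF_\tau$, whose spectral (Plancherel-type) expansion --- with kernel assembled from $e^{-2\pi\tau P^2}$, the Dedekind eta, and the boundary reflection coefficients --- is exactly what the bootstrap provides. The KPZ relation then reads
\[
F(\ell_1,\ell_2)=\int_0^\infty \mathcal{Z}(\tau)\,\nu_\tau(\ell_1,\ell_2)\,\rd\tau .
\]
Independently, I would compute $F$ from the CLE side: the joint law of (disk boundary length, loop length) is accessible through the conformal welding of quantum disks along a $\CLE_{\gamma^2}$ loop together with the boundary $\SLE$ integrability of~\cite{AHS-SLE-integrability,AS-CLE}. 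Because a $\CLE_{\gamma^2}$ loop realizes a distinguished (``degenerate'') boundary condition, the relevant Mellin transform of $F$ should be a finite combination of boundary states supported on a discrete family of momenta $P_n \propto \gamma(2n+1)$; substituting this into the displayed identity and inverting the $P$-transform forces $\mathcal{Z}(\tau)$ to be a sum of exponentials $e^{-\pi\gamma^2\tau(2n+1)^2/16}$ with coefficients $(-1)^n e^{(2n+1)\pi i\gamma^2/8}$, which is precisely $-i\,\theta_1(\tfrac{\gamma^2}{8},\tfrac{\gamma^2}{4}i\tau)$ up to a $\tau$-independent constant. Pinning down that constant gives the prefactor $\cos(\pi(\tfrac4{\gamma^2}-1))\cdot\tfrac{\gamma}{2\pi}$; as a consistency check one has $\cos(\pi(\tfrac4{\gamma^2}-1))=-\cos(\tfrac{4\pi}{\gamma^2})=n/2$, the $\mathrm{O}(n)$ fugacity demanded by the matter interpretation, and it vanishes at $\gamma=\sqrt{8/3}$, consistent with $\QA^\gamma$ being defined only for $\gamma>\sqrt{8/3}$.

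The hard part will be the CLE-side computation of $F$ and the ensuing transform inversion. One must obtain the quantum-length law of a nested $\CLE_{\gamma^2}$ loop on a quantum disk with exactly the weighting built into $\QA^\gamma$, match it with the correct combination of Liouville boundary states, and then argue that the integral equation for $\mathcal{Z}$ has a unique solution; pinning down the normalization $\tfrac{\gamma}{2\pi}$ in particular requires careful bookkeeping of the normalizations of $\LF_\tau$, the boundary GMC measures, and the $\CLE_{\gamma^2}$ loop measure. A secondary technical point is that both $\QA^\gamma$ and $\LF_\tau$ are infinite measures (through the additive-constant direction), so all transform manipulations should first be performed against exponential weights $e^{-\mu_1\ell_1-\mu_2\ell_2}$ to reduce to finite measures, with the passage back to infinite measures justified by monotone convergence.
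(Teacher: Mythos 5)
Your overall architecture is the paper's: (i) write $\QA^\gamma=\int \LF_\tau(\rd\phi)\,m(\rd\tau)$ for an unknown $m$, (ii) feed a boundary-length observable of $\QA^\gamma$ into the annulus KPZ relation extracted from Wu's bootstrap, (iii) invert the transform to identify $m$ with the theta-function density. Your Stage 2 is essentially the paper's argument, except that the ``hard CLE-side computation'' you anticipate is not hard at all: the joint law of the two boundary lengths under $\QA^\gamma$ is built into its definition via the welding result of \cite{AS-CLE} (Theorem~\ref{thm:QA-welding}), namely $|\QA^\gamma(a,b)|=\frac{\cos(\pi(\frac4{\gamma^2}-1))}{\pi}\cdot\frac{1}{\sqrt{ab}(a+b)}$, and the paper simply computes $\QA^\gamma[L_1e^{-L_1}L_0^{ix}]=\cos(\pi(\tfrac4{\gamma^2}-1))\,\Gamma(1+ix)/\cosh(\pi x)$ and inverts the resulting Laplace transform using~\eqref{eq:theta-L}. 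Your spectral reading (poles of $1/\cosh(\pi x)$ giving discrete momenta proportional to $\gamma(2n+1)$, hence Gaussians $e^{-\pi\gamma^2(2n+1)^2\tau/16}$ summing to $\theta_1(\frac{\gamma^2}{8},\frac{\gamma^2}{4}i\tau)$) is a correct way to see why $\theta_1$ appears, though the combination is an infinite, not finite, sum, and the rigorous step is just uniqueness of the Laplace transform.

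The genuine gap is Stage 1. You assert that since the annular carpet carries no insertion, uniformizing onto $\cC_\tau$ ``leaves a free-boundary Gaussian free field plus an independent additive constant, that is, $\LF_\tau$.'' That is exactly the nontrivial statement (Proposition~\ref{prop:LF} in the paper), and it does not follow from the quantum-disk description plus the CLE domain Markov property: after uniformizing, one boundary component of the annulus is the $\CLE_{\gamma^2}$ loop, along which the field is described only through the conformal welding and is in no evident sense free-boundary, and one must identify the conditional law of the field given the modulus with the infinite measure $\LF_\tau$; there is no general ``no insertion $\Rightarrow$ GFF plus constant'' principle to invoke. The paper's mechanism is a resampling characterization of $\LF_\tau$ (Proposition~\ref{prop:inv-unique}): restricted to a boundary-length window, $\LF_{\tau}$ is the \emph{unique} invariant probability measure of the kernels that resample the field on a sub-cylinder adjacent to either end with zero/free mixed boundary data. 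The domain Markov property of $\LF^{(\gamma,i)}_\H$ relative to the CLE loop viewed as a local set (Lemmas~\ref{lem:local},~\ref{lem:DMP1},~\ref{lem:DMP2}) yields invariance under resampling near the outer ($\partial\H$) end only; invariance at the loop end is obtained by flipping the cylinder using the symmetry $\QA^\gamma(a,b)=\QA^\gamma(b,a)$ (Lemma~\ref{lem:QA-symmetry}), itself a nontrivial input from \cite{AS-CLE}. Without this characterization-plus-symmetry step, or some substitute for it, your Stage 1 is an assertion rather than a proof, and the displayed identity $F=\int \mathcal Z(\tau)\nu_\tau\,\rd\tau$ on which Stage 2 rests is not available.
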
 
	Theorems~\ref{thm:BAconj} and~\ref{thm:QAconj} are proved using the same and widely applicable method as we outline below.

	\subsection{A KPZ relation for annulus partition functions} \label{subsec:intro-KPZ}

	Fix $\tau>0$. Let $h$ be the free boundary Gaussian free field (see e.g.~\cite{shef-gff}) on the  cylinder $\cC_\tau$. As an example of Gaussian multiplicative chaos, 
	for  $\gamma\in (0,2)$  the $\gamma$-LQG length measure  on $\partial\cC_\tau$  is defined by $\mathcal L^\gamma_h  = \lim_{\epsilon \to 0} \epsilon^{\frac{\gamma^2}{4}}   e^{\frac{\gamma}{2} h_{\epsilon}(z)} \rd z$,  where $\rd z$ is the Lebesgue measure on $\partial\cC_\tau$, and $h_{\epsilon}(z)$ is the  average of $h$ over $\{ w\in \cC_\tau: |w-z|=\epsilon \}$. Similarly, the $\gamma$-LQG area measure  on $ \cC_\tau$ is $\mathcal A^\gamma_\phi  = \lim_{\epsilon \to 0} \epsilon^{\frac{\gamma^2}{2}}   e^{\gamma  h_{\epsilon}(z)} \rd z$.
	Let $\partial_0 \cC_\tau=\{0\}\times [0,1]/{\sim}$ and $\partial_1 \cC_\tau=\{\tau\}\times [0,1]/{\sim}$. 
	In our previous work~\cite{ARS-FZZ}, based on the conformal bootstrap of boundary Liouville CFT, we gave a conjecture (\cite[Conjecture 1.4]{ARS-FZZ}) on the joint distribution of $\mathcal L^\gamma_\phi(\partial_0 \cC_\tau)$, $\mathcal L^\gamma_\phi(\partial_1 \cC_\tau)$ and $\mathcal A^\gamma_\phi(\cC_\tau)$  under the Liouville field measure $\LF_\tau$. This conjecture was recently proved by Wu~\cite{Wu22} based on  the rigorous bootstrap framework for Liouville CFT developed in~\cite{GKRV_bootstrap}.
	The starting  point of our proof of Theorems~\ref{thm:BAconj} and~\ref{thm:QAconj} is a corollary of Wu's theorem, which is interesting in its own right. It gives a simple description of the law of  $\frac{\mathcal L^\gamma_h(\partial_1\cC_\tau)}{\mathcal L^\gamma_h(\partial_0\cC_\tau)}$ where $h$ is a free boundary GFF. (Although the definition of free boundary GFF involves an additive constant as we recall in Section~\ref{sec:LCFT},  the ratio $\frac{\mathcal L^\gamma_h(\partial_1\cC_\tau)}{\mathcal L^\gamma_h(\partial_0\cC_\tau)}$ is canonically defined.)
	\begin{theorem}\label{thm:GMC-ratio}
		Let $\gamma\in (0,2)$ and $\tau>0$, the characteristic function of $\log\mathcal L^\gamma_h(\partial_1\cC_\tau)- \log\mathcal L^\gamma_h(\partial_0\cC_\tau)$ is 
		\begin{equation}\label{eq:GMC-ratio}
		\E[e^{i x(\log\mathcal L^\gamma_h(\partial_1\cC_\tau)- \log\mathcal L^\gamma_h(\partial_0\cC_\tau)) }]= \frac{\pi\gamma^2 x e^{ -\frac{\pi\gamma^2 \tau x^2}{4}}}{   4\sinh(\frac{\gamma^2}{4} \pi x )} \qquad \textrm{for }x\in \R.
		\end{equation}
	\end{theorem}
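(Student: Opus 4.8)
The plan is to deduce Theorem~\ref{thm:GMC-ratio} from Wu's theorem~\cite{Wu22} (the resolution of \cite[Conjecture 1.4]{ARS-FZZ}), which provides an explicit formula for the joint law under $\LF_\tau$ of the triple $\big(\mathcal{L}^\gamma_\phi(\partial_0\cC_\tau),\,\mathcal{L}^\gamma_\phi(\partial_1\cC_\tau),\,\mathcal{A}^\gamma_\phi(\cC_\tau)\big)$. Marginalizing out the area yields an explicit $\sigma$-finite law $\mathcal{N}_\tau$ for $\big(\mathcal{L}^\gamma_\phi(\partial_0\cC_\tau),\,\mathcal{L}^\gamma_\phi(\partial_1\cC_\tau)\big)$ on $(0,\infty)^2$; this step is harmless since the conditional law of the area given the two boundary lengths is a genuine probability measure. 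First I would record the scaling covariance: by definition $\LF_\tau$ is the pushforward of $\P_\tau\times dc$ under $(h,c)\mapsto h+c$, and $\mathcal{L}^\gamma_{h+c}(\partial_i\cC_\tau)=e^{\gamma c/2}\mathcal{L}^\gamma_h(\partial_i\cC_\tau)$. Hence in the coordinates $u=\tfrac12\log\!\big(\mathcal{L}^\gamma_\phi(\partial_0\cC_\tau)\,\mathcal{L}^\gamma_\phi(\partial_1\cC_\tau)\big)$ and $v=\log\mathcal{L}^\gamma_\phi(\partial_1\cC_\tau)-\log\mathcal{L}^\gamma_\phi(\partial_0\cC_\tau)$, the variable $v$ does not depend on $c$ while $u$ is shifted by $\tfrac{\gamma}{2}c$. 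Since $c$ is Lebesgue and independent of $h$, it follows that $\mathcal{N}_\tau$, expressed in the $(u,v)$ coordinates, is a product $\tfrac{2}{\gamma}\,du\otimes\rho^\gamma_\tau(dv)$, where $\rho^\gamma_\tau$ is precisely the law under $\P_\tau$ of $\log\mathcal{L}^\gamma_h(\partial_1\cC_\tau)-\log\mathcal{L}^\gamma_h(\partial_0\cC_\tau)$ --- which is well defined (independently of the additive-constant normalization of the free boundary GFF, as the ratio is scale invariant) and whose characteristic function is the object of the theorem.

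Given this, it remains to rewrite Wu's formula in the $(u,v)$ coordinates, which both confirms the product structure above (a useful consistency check on the formula) and exhibits $\rho^\gamma_\tau$ explicitly, and then to compute $\int_\R e^{ixv}\,\rho^\gamma_\tau(dv)$. Equivalently, for any probability density $g$ on $\R$ one has $\E\big[e^{ix(\log\mathcal{L}^\gamma_h(\partial_1\cC_\tau)-\log\mathcal{L}^\gamma_h(\partial_0\cC_\tau))}\big]=\tfrac{\gamma}{2}\iint g(u)\,e^{ixv}\,\mathcal{N}_\tau(du\,dv)$, into which one substitutes Wu's formula and integrates. I expect the two factors of~\eqref{eq:GMC-ratio} to have transparent origins: the Gaussian factor $e^{-\pi\gamma^2\tau x^2/4}$ should reflect the Gaussian fluctuation of the circle-average (Brownian) component of $h$ across the two ends of $\cC_\tau$, while $\tfrac{\pi\gamma^2 x/4}{\sinh(\pi\gamma^2 x/4)}$ --- the characteristic function of a logistic-type law --- should come from the ``lateral'' contributions entering Wu's formula through double-Gamma / boundary two-point structure constants, via a beta-integral or reflection-coefficient identity.

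The only genuine computation is this last Fourier transform. Since Wu's formula carries theta/eta factors in $\tau$ and double-sine (Barnes double-Gamma) structure constants in the two lengths, the main obstacle will be to identify the resulting integral with $\tfrac{\pi\gamma^2 x\,e^{-\pi\gamma^2\tau x^2/4}}{4\sinh(\tfrac{\gamma^2}{4}\pi x)}$; I anticipate this can be carried out by a contour-shift/residue evaluation or by a known Gaussian-against-double-sine integral identity, with the modular ($\tau$-dependent) part decoupling cleanly once the $u$-marginal has been integrated out, so that the derivation may in the end be quite short. A secondary, routine point is to make the disintegration rigorous --- checking $\sigma$-finiteness of $\mathcal{N}_\tau$ and justifying the successive marginalizations --- which is standard because all conditional laws involved are probability measures. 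As sanity checks one can verify that the claimed characteristic function is real and even (consistent with the symmetry exchanging $\partial_0\cC_\tau$ and $\partial_1\cC_\tau$) and equals $1$ at $x=0$.
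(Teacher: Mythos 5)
Your proposal follows essentially the same route as the paper's proof: start from Wu's annulus bootstrap formula (Theorem~\ref{thm:bdy-bootstrap}), use translation invariance of the $dc$ factor to split $\LF_\tau$ into a Lebesgue direction times the law of the GFF boundary-length ratio (your $(u,v)$-product decomposition is exactly Lemma~\ref{lem:GMC}), and then extract the characteristic function of the log-ratio by a Mellin/Fourier inversion of Wu's double Laplace transform. Two small remarks: since the bulk cosmological constant is zero, the structure constants $U(\alpha,\mu_i)$ are ordinary Gamma functions, so the final computation needs only the reflection formula and one Fourier inversion (no double-sine or contour work); and because Wu's formula comes with an unspecified $\tau$-dependent multiplicative constant, your ``sanity check'' at $x=0$ is not optional but is precisely the step that fixes this constant to $1$ and completes the proof.
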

	
	Theorem~\ref{thm:GMC-ratio} further demonstrates the elegant integrability of 1D Gaussian multiplicative chaos, which was first explored in~\cite{remy-fb-formula}.
	We recall Wu's theorem and prove Theorem~\ref{thm:GMC-ratio} in Section~\ref{sec:LF-PFN}. From this we derive what we call the \emph{KPZ relation 
		at the annulus partition function level}.
	
	\begin{theorem}\label{thm:KPZ}
		Fix $\gamma\in(0,2)$. Let $m(\rd \tau)$ be a measure on $(0,\infty)$. For a measurable function $f$, write $\langle  f(L_0,L_1)\rangle_{\gamma}:=\int  f(\mathcal L^\gamma_\phi(\partial_0\cC_\tau),\mathcal L^\gamma_\phi(\partial_1\cC_\tau)) \LF_\tau(\rd \phi) m(\rd \tau )$.  Then  $m(\rd \tau)$ satisfies   
		\begin{align}\label{eq:KPZ}
		\int_0^\infty e^{ -\frac{\pi\gamma^2 x^2\tau}{4}}m(\rd \tau )=\frac{2\sinh(\frac{\gamma^2}{4} \pi x )}{\pi\gamma x\Gamma(1+ix)} \langle  L_1 e^{-L_1} L_0^{ix} \rangle_\gamma \quad \textrm{for } x\in \R.
		\end{align} 
	\end{theorem}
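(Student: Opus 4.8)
\textbf{Proof proposal for Theorem~\ref{thm:KPZ}.}

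The plan is to compute the left-hand side by starting from the definition of $\langle f(L_0,L_1)\rangle_\gamma$ and disintegrating the Liouville field measure $\LF_\tau$ according to the total boundary length $L_0=\mathcal L^\gamma_\phi(\partial_0\cC_\tau)$, while keeping track of the ratio $R:=L_1/L_0$. The key structural input is that under $\LF_\tau$ the constant mode $c$ (recall $\LF_\tau$ is the pushforward of $\P_\tau\times dc$ under $(h,c)\mapsto h+c$) shifts all lengths multiplicatively by $e^{\gamma c/2}$: if $h$ is the free boundary GFF then $\mathcal L^\gamma_{h+c}(\partial_i\cC_\tau)=e^{\gamma c/2}\mathcal L^\gamma_h(\partial_i\cC_\tau)$. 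Consequently, writing $\ell_i:=\mathcal L^\gamma_h(\partial_i\cC_\tau)$ for the GFF lengths, one has $L_0=e^{\gamma c/2}\ell_0$, $L_1=e^{\gamma c/2}\ell_1$, and the ratio $R=L_1/L_0=\ell_1/\ell_0$ does not depend on $c$. Theorem~\ref{thm:GMC-ratio} gives exactly the law of $\log R=\log\ell_1-\log\ell_0$ via its characteristic function.

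The computation then proceeds as follows. First, change variables from $c$ to $L_0=e^{\gamma c/2}\ell_0$, so that $dc=\frac{2}{\gamma}\frac{dL_0}{L_0}$; this turns the $c$-integral into an integral over $L_0\in(0,\infty)$ against $\frac{2}{\gamma}L_0^{-1}\,dL_0$, with $L_1=R\,L_0$. Thus
\[
\langle L_1 e^{-L_1} L_0^{ix}\rangle_\gamma
=\int_0^\infty m(\rd\tau)\,\E_{\P_\tau}\!\Big[\int_0^\infty (R L_0) e^{-R L_0} L_0^{ix}\,\tfrac{2}{\gamma}L_0^{-1}\,dL_0\Big],
\]
where the inner expectation is over the GFF $h$ and $R=\ell_1/\ell_0$. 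The $L_0$-integral is a Gamma integral: $\int_0^\infty e^{-RL_0}L_0^{ix}\,dL_0=\Gamma(1+ix)R^{-1-ix}$, so the two explicit factors of $R$ cancel against the Jacobian and one is left with $\frac{2}{\gamma}\Gamma(1+ix)\,\E_{\P_\tau}[R^{-ix}]=\frac{2}{\gamma}\Gamma(1+ix)\,\E_{\P_\tau}[e^{-ix\log R}]$. Now apply Theorem~\ref{thm:GMC-ratio} with the substitution $x\mapsto -x$ (note the formula there is even in $x$, so this is harmless): $\E[e^{-ix(\log\ell_1-\log\ell_0)}]=\frac{\pi\gamma^2 x e^{-\pi\gamma^2\tau x^2/4}}{4\sinh(\frac{\gamma^2}{4}\pi x)}$. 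Substituting and integrating against $m(\rd\tau)$ yields
\[
\langle L_1 e^{-L_1} L_0^{ix}\rangle_\gamma
=\frac{2}{\gamma}\Gamma(1+ix)\cdot\frac{\pi\gamma^2 x}{4\sinh(\frac{\gamma^2}{4}\pi x)}\int_0^\infty e^{-\pi\gamma^2\tau x^2/4}\,m(\rd\tau),
\]
which rearranges to exactly~\eqref{eq:KPZ}.

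The main obstacle is not the algebra but the measure-theoretic bookkeeping required to justify the manipulations: one must check that $\LF_\tau$ genuinely disintegrates so that, under $\P_\tau$, the pair $(\log\ell_0,\log R)$ is well-defined and the Fubini/Tonelli interchange of the $c$-integral (equivalently the $L_0$-integral), the GFF expectation, and the $\tau$-integral against $m$ is legitimate — in particular because $m$ may be an infinite measure and the identity is asserted as an equality of (possibly infinite) quantities. The factor $L_1 e^{-L_1}$ is precisely what makes the $L_0$-integral absolutely convergent at both ends (the $e^{-L_1}$ controls large $L_0$, the extra $L_1=RL_0$ controls $L_0\to 0$ together with $\mathrm{Re}(1+ix)=1>0$), so the integrand is genuinely integrable for each fixed $\tau$ and GFF realization; the remaining point is uniform enough control in $\tau$ to pull the $m(\rd\tau)$-integral outside, which is why the statement is phrased with the explicit kernel $e^{-\pi\gamma^2 x^2\tau/4}$ on the left rather than as a pointwise-in-$\tau$ identity. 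I would also need to recall from Section~\ref{sec:LF-PFN} or from the definition of $\LF_\tau$ that $\mathcal L^\gamma_h(\partial_i\cC_\tau)\in(0,\infty)$ almost surely under $\P_\tau$, so that $R$ and $\log R$ make sense and Theorem~\ref{thm:GMC-ratio} applies verbatim.
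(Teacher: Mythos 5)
Your argument is correct and is essentially the paper's proof: the change of variables from the constant mode $c$ to $L_0$ is exactly Lemma~\ref{lem:GMC} (giving~\eqref{eq:fraction1}), and combining it with Theorem~\ref{thm:GMC-ratio} and integrating against $m(\rd\tau)$ yields~\eqref{eq:fraction} and hence~\eqref{eq:KPZ}. Your explicit remark that the characteristic function in Theorem~\ref{thm:GMC-ratio} is even in $x$, so that $\E[R^{-ix}]=\E[R^{ix}]$, is a small point the paper's display glosses over, but it changes nothing substantive.
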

	
	Given Theorem~\ref{thm:KPZ}, our proofs of Theorems~\ref{thm:BAconj} and~\ref{thm:QAconj} follow the same two-step strategy:
	\begin{enumerate}
		\item Show that $\BA $ and $\QA^\gamma$ can be written as $\LF_\tau(\rd \phi ) m(\rd \tau)$ for some measure $m(\rd \tau)$. We achieve this by proving a characterization of $\LF_\tau$ in~Section~\ref{sec:LCFT} and verifying this property for  $\BA $ and $\QA^\gamma$.
		\item The integral $\langle  L_1 e^{-L_1} L_0^{ix} \rangle_\gamma$ for $\BA$ and $\QA^\gamma$ can be easily extracted from the existing literature. Then computing $m(\rd\tau)$ from~\eqref{eq:KPZ} is accomplished via an inverse Laplace transform.
	\end{enumerate}
	
	We view Theorem~\ref{thm:KPZ} as a KPZ relation in the following sense.
	For many natural random surfaces such as $\BA$ and $\QA^\gamma$,   $\langle  L_1 e^{-L_1} L_0^{ix} \rangle_\gamma$ can be computed
	from the perspective that LQG describes the scaling limit of random planar maps. From this we can compute $m(\rd \tau)$ by~\eqref{eq:KPZ}. This reduces the computation of a difficult quantity depending on the conformal structure to an easier quantity in  LQG. This is the essence of the derivation of the scaling exponents/dimensions of random fractals using the traditional KPZ relation~\cite{shef-kpz}. 
	For example, for $\QA^\gamma$, the $\tau\to \infty$ asymptotic of $m(\rd \tau)$ given in~\eqref{eq:QAconj} encodes the dimension of the $\CLE$ carpet~\cite{ssw-radii}. Therefore, using~\eqref{eq:KPZ} to determine the  asymptotic of $m(\rd \tau)$ can be viewed as applying the traditional KPZ relation to determine the $\CLE$ carpet dimension. In this sense Theorem~\ref{thm:KPZ} upgrades the KPZ relation from the  exponents/dimensions level to the annular partition function level. 
	
	In principle our KPZ relation~\eqref{eq:KPZ} allows one to obtain the annulus partition $Z_{\mathrm{M}}(\tau)$ of any conformal matter $M$ from its quantum counterpart. In physics $Z_{\mathrm{M}}(\tau)$ is usually obtained as   $\lim_{\delta\to 0}Z^\delta_{\mathrm{M}}(\tau)$ where $Z^\delta_{\mathrm{M}}(\tau)$ is the partition function of $M$ on a lattice of mesh size $\delta$ that approximates $\cC_\tau$. When the scaling limit of $M$ is described by SLE/CLE, the function $Z_{\mathrm{M}}(\tau)$ is supposed to describe a natural SLE/CLE quantity that depends on the modular parameter.  For example, for $\QA^\gamma$, the corresponding $Z_{\mathrm{M}}(\tau)$ describes the law of the modulus of the annulus bounded by an outermost loop  of a $\CLE_{\gamma^2}$ on a disk and the disk boundary; see Theorem~\ref{thm:CLE-mod}. However, in many cases the convergence of $M$ to SLE/CLE is an outstanding problem. Theorem~\ref{thm:KPZ} allows us to bypass this difficulty and obtain $Z_{\mathrm M} (\tau)$ from SLE/CLE using quantum gravity .

	In Section~\ref{subsec:intro-CLE} we summarize our results  for simple CLE  based on the KPZ approach. In Section~\ref{subsec:intro-SAP} we apply the same approach to the case when $M$ is  the self avoiding {loop}.
	\subsection{Random moduli for simple CLE loops}\label{subsec:intro-CLE}
	$\CLE_\kappa$ with $\kappa\in (8/3,8)$ was first defined in~\cite{shef-cle} as a canonical conformally invariant probability measure on infinite collections of non-crossing loops, where each loop looks like an $\SLE_\kappa$ curve.  For $\kappa \in (8/3, 4]$, the loops are simple and $\CLE_\kappa$ can be characterized by a domain Markov property and conformal invariance~\cite{shef-werner-cle}. CLE has been proved or conjectured to be the scaling limit of many important 2D  loop models such as percolation, the Ising model, the $O(n)$-loop model, and the random cluster model; see e.g.~\cite{camia-newman-sle6,bh-Ising}. LQG coupled with $\CLE$
	are proved or conjectured to describe the scaling limit of random planar maps decorated with these models. 
	{Recently, this coupling  was used to derive results for CLE itself; see e.g.~\cite{MSW1,MSW2,acsw-cle, acsw-loop}.}
	As we recall in Section~\ref{sec:QA}, the quantum annulus $\QA^\gamma$ is defined by a symmetric modification of the law of the annulus on a quantum disk bounded by an outermost loop and the disk boundary. In fact choosing any loop in a CLE gives a natural LQG surface of annular topology. Using the KPZ relation~\eqref{eq:KPZ} we  get an analog of Theorem~\ref{thm:QAconj} for these surfaces. This yields exact formulae for the moduli   of the annuli bounded by CLE loops, as  we summarize below.
	
	Let $\D$ be the unit disk. For  a simple loop $\eta\subset \D$, let $A_\eta$ be the annulus bounded by $\eta$ and $\partial\D$ and let $\mathrm{Mod}(\eta)$  be the modulus of $A_\eta$. 
	For $\kappa\in (\frac{8}{3}, 4]$, let $\Gamma$ be a $\CLE_{\kappa}$ on $\D$. Let $\{\eta_j \}_{j\ge 1}$ be the sequence of loops in $\Gamma$ that surrounds the origin, ordered such that $\eta_{j+1}$ is surrounded by $\eta_j$; see Figure~\ref{fig-many} (middle). 
	Our first result is for the moment generating function of $\mathrm{Mod}(\eta_j)$.
	\begin{theorem}\label{thm:CLE-mod}
		For $j\ge 1$ and $\lambda> \frac{ 3\kappa}{32} + \frac{2}{\kappa} -1$, we have 
		\begin{equation}\label{eq:modn}
		\E[e^{-2\pi \lambda \mathrm{Mod}(\eta_j)}]= \frac{ (\frac{4}{\kappa}-1)\cos^j(\pi (\frac{4}{\kappa}-1))}{  \sin( \pi (1-\frac{\kappa}{4}))}\times    \frac{ \sin(\frac{\kappa}{4} \pi \sqrt{   (\frac{4}{\kappa}-1)^2 -\frac{8\lambda}{\kappa}  } )}{ \sqrt{   (\frac{4}{\kappa}-1)^2 -\frac{8\lambda}{\kappa}  }\cos^j(\pi \sqrt{   (\frac{4}{\kappa}-1)^2 -\frac{8\lambda}{\kappa}  })}.
		\end{equation} 
	\end{theorem}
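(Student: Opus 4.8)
We sketch the strategy, which follows the two‑step program behind Theorems~\ref{thm:BAconj} and~\ref{thm:QAconj}. Set $\gamma=\sqrt\kappa\in(\sqrt{8/3},2]$ (we treat the boundary case $\kappa=4$ by a limiting argument). Let $\mathcal{D}$ be the quantum disk with one interior marked point relevant to the construction of $\QA^\gamma$ in Section~\ref{sec:QA}, decorate it with an independent $\CLE_\kappa$, let $\eta_1\supset\eta_2\supset\cdots$ be its loops surrounding the marked point, and let $\mathcal{S}_j$ be the quantum surface lying between $\eta_j$ and $\partial\mathcal{D}$, with $\gamma$-LQG boundary lengths $L_0=\mathcal{L}^\gamma(\partial\mathcal{D})$ and $L_1=\mathcal{L}^\gamma(\eta_j)$. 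Because the law of $\CLE_\kappa$ depends only on the domain, conformally embedding $\mathcal{D}$ into $\D$ shows that the conformal modulus of $\mathcal{S}_j$ has exactly the law of $\mathrm{Mod}(\eta_j)$, and that this modulus is independent of $L_0$; so it suffices to determine the law of the modulus of $\mathcal{S}_j$.

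The first step is to show $\mathcal{S}_j=\LF_\tau(\rd\phi)\,m_j(\rd\tau)$ for a finite measure $m_j$ on $(0,\infty)$. For $j=1$ this is the content behind Theorem~\ref{thm:QAconj} — the ``symmetric modification'' defining $\QA^\gamma$ acts only on the two boundary components and leaves the modulus $\tau$ untouched, hence is harmless here. For general $j$ I would deduce it from the $j=1$ case together with the Markov property of $\CLE_\kappa$ on LQG surfaces: conditionally on $\eta_1$, the region enclosed by $\eta_1$ is again a marked quantum disk carrying a fresh $\CLE_\kappa$, so $\mathcal{S}_j$ arises by conformally welding $j$ i.i.d.\ copies of $\mathcal{S}_1$ along the intermediate loops $\eta_1,\dots,\eta_{j-1}$, with the welding determined by matching $\gamma$-LQG length. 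Combined with the characterization of $\LF_\tau$ from Section~\ref{sec:LCFT}, this yields the claimed representation with some $m_j$.

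Next I would plug $m_j$ into the KPZ relation of Theorem~\ref{thm:KPZ}:
\begin{equation*}
\int_0^\infty e^{-\pi\gamma^2 x^2\tau/4}\, m_j(\rd\tau)=\frac{2\sinh(\tfrac{\gamma^2}{4}\pi x)}{\pi\gamma x\,\Gamma(1+ix)}\,\Big\langle L_1 e^{-L_1}L_0^{ix}\Big\rangle_{\gamma}\,,
\end{equation*}
where $\langle\cdot\rangle_\gamma$ is now taken with respect to $m_j$. The right-hand side is the ``easy'' quantum side: by the Markov property the ratio $\mathcal{L}^\gamma(\eta_j)/L_0$ is a product of $j$ i.i.d.\ copies of $\mathcal{L}^\gamma(\eta_1)/L_0$, and the law of the latter (equivalently its Mellin transform) is available from the coupling of $\CLE_\kappa$ with $\gamma$-LQG used to define $\QA^\gamma$. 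Hence $\langle L_1 e^{-L_1}L_0^{ix}\rangle_\gamma$ is explicit and depends on $j$ through an explicit $j$-th power of a ratio of trigonometric/Gamma factors; an inverse Laplace transform then produces $m_j(\rd\tau)$ as a theta-type series whose coefficients carry this $j$-th power. Finally, since under $\LF_\tau$ the variable $\log L_0$ is Lebesgue on $\R$ independently of $\tau$ and of the rest of the field, conditioning $\mathcal{S}_j$ on $\{L_0=1\}$ makes the $\tau$-marginal equal to $m_j/m_j((0,\infty))$, and by the independence noted above this is the law of $\mathrm{Mod}(\eta_j)$. Therefore $\E[e^{-2\pi\lambda\mathrm{Mod}(\eta_j)}]=\int_0^\infty e^{-2\pi\lambda\tau}m_j(\rd\tau)\big/\int_0^\infty m_j(\rd\tau)$, and summing the resulting geometric-type series gives~\eqref{eq:modn}; the constraint $\lambda>\tfrac{3\kappa}{32}+\tfrac{2}{\kappa}-1$ is precisely the abscissa of convergence of this Laplace integral, i.e.\ the value at which $\cos\big(\pi\sqrt{(\tfrac4\kappa-1)^2-\tfrac{8\lambda}\kappa}\big)$ first vanishes, matching the known exponential tail of $\mathrm{Mod}(\eta_j)$ governed by the CLE carpet dimension~\cite{ssw-radii}.

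\textbf{Main obstacle.} The delicate point is the first step: showing $\mathcal{S}_j=\LF_\tau(\rd\phi)m_j(\rd\tau)$ with the correct $j$-dependent $m_j$. Beyond making the $\LF_\tau$-characterization apply to a surface whose modulus is random and a priori correlated with the boundary-length data — the same difficulty already present in Theorems~\ref{thm:BAconj} and~\ref{thm:QAconj} — one has to control how the conformal welding of the $j$ annular layers interacts with the modulus. The modulus is only superadditive under such welding (Grötzsch's inequality), so one cannot literally add moduli; instead one must show that at the level of the partition functions entering~\eqref{eq:KPZ} the random welding homeomorphisms contribute trivially (the partition-function incarnation of layer-by-layer composition), so that the $j$ layers compose multiplicatively. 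The remaining ingredients — the Mellin transform of $\mathcal{L}^\gamma(\eta_1)/L_0$, the inverse Laplace transform, the summation of the theta series, and the check that the symmetric modification is irrelevant for the modulus — should be routine given these inputs.
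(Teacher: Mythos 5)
Your overall two-step KPZ strategy matches the paper's (a Liouville-field representation $\LF_\tau(\rd\phi)\,m_j(\rd\tau)$ for the annular quantum surfaces via the resampling characterization of $\LF_\tau$, then the Mellin/Laplace computation from Theorem~\ref{thm:KPZ}), but your final identification of the law of $\mathrm{Mod}(\eta_j)$ contains a genuine error, and it changes the answer. The object whose law is $\LF_\tau(\rd\phi)\,m_j(\rd\tau)$ is the quantum annulus $\QA^\gamma_j$, which by construction (see~\eqref{eq:QA-weld} and Definition~\ref{def:QA}) is the law of the annular region \emph{reweighted by the reciprocal of} $f(\mathcal L^\gamma_\phi(\eta_j))$, where $f(b)=b\,|\LF^{(\gamma,i)}_\H(b)|\propto b^{1-4/\gamma^2}$ is, up to constants, the partition function of the quantum disk enclosed by $\eta_j$. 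If your $\mathcal S_j$ denotes the \emph{unweighted} law of the region between $\eta_j$ and $\partial\mathcal D$ under (quantum disk)$\times$(independent $\CLE_\kappa$), then it is not of the form $\LF_\tau(\rd\phi)\,m_j(\rd\tau)$ at all: it equals $f(L_1)\,\LF_\tau(\rd\phi)\,m_j(\rd\tau)$ up to constants, and the weight $f(L_1)$ cannot be absorbed into a function of $\tau$. If instead $\mathcal S_j=\QA^\gamma_j$, then conditioning on $\{L_0=1\}$ does give a $\tau$-marginal proportional to $m_j$, but this is \emph{not} the CLE law of $\mathrm{Mod}(\eta_j)$, because the reweighting by $1/f(L_1)$ used to define $\QA^\gamma_j$ is correlated with the modulus.

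To recover the CLE law one must undo that reweighting, i.e.\ compute $\LF_\tau[L_1e^{-L_1}f(L_0)]$, which by~\eqref{eq:fraction} (equivalently, Theorem~\ref{thm:GMC-ratio} evaluated at the imaginary argument $ix=1-\frac{4}{\gamma^2}$) produces the extra factor $e^{\frac{\pi\kappa}{4}(\frac4\kappa-1)^2\tau}$; this is exactly what the paper does in~\eqref{eq:m1}--\eqref{eq:moment}, concluding that the law of $\mathrm{Mod}(\eta_j)$ is proportional to $e^{\frac{\pi\kappa}{4}(\frac4\kappa-1)^2\tau}m_j(\rd\tau)$, not to $m_j(\rd\tau)$. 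Your formula $\E[e^{-2\pi\lambda\,\mathrm{Mod}(\eta_j)}]=\int_0^\infty e^{-2\pi\lambda\tau}m_j(\rd\tau)\big/\int_0^\infty m_j(\rd\tau)$ misses this tilt; the untilted Laplace transform of $m_j$ yields trigonometric functions of $\sqrt{8\lambda/\kappa}$ rather than of $\sqrt{(\frac4\kappa-1)^2-\frac{8\lambda}{\kappa}}$, so it cannot give~\eqref{eq:modn} (your own remark that the abscissa of convergence is where $\cos\bigl(\pi\sqrt{(\frac4\kappa-1)^2-\frac{8\lambda}{\kappa}}\bigr)$ vanishes is consistent only with the tilted integral, so the sketch is internally inconsistent at this point). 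The remaining ingredients are essentially the paper's: the $j$-layer structure is handled not by a literal i.i.d.\ product of length ratios but by the recursive Mellin identity of Lemma~\ref{lem:QA-Mellin-k}, and the representation $\QA^\gamma_j=\LF_\tau(\rd\phi)\,m_j(\rd\tau)$ is obtained by rerunning the $j=1$ resampling argument (Proposition~\ref{prop:QA-LF-k}); no superadditivity-of-moduli issue arises because moduli of the layers are never composed.
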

	
	\begin{figure}[ht!]
		\begin{center}				\includegraphics[scale=0.4]{}
			\caption{ 
				\textbf{Left:} The cylinder $\cC_\tau$ is obtained by identifying the top and bottom edges of the rectangle $[0,\tau] \times [0,1]$.   \textbf{Middle:}
				With $\eta_j$  the $j^{\text{th}}$ nested loop around the origin, {Theorem~\ref{thm:CLE-mod}} identifies the law of the modulus of the annulus bounded by $\partial \D$ and $\eta_j$. 
				\textbf{Right:} For $\tau>0$,  Corollary~\ref{cor:nesting} gives the law of the number $\mathcal N$  of non-contractible loops for $\CLE$ in $\mathbb A_\tau$. 
			} \label{fig-many}
		\end{center}
	\end{figure}
	
	Let $\mathrm{CR}(\eta_j)$ be the conformal radius of $\eta_j$ viewed from 0. Namely, $\mathrm{CR}(\eta_j)=|\psi'(0)|$ where $\psi$ is a conformal map  from the region surrounded by $\eta_j$ to $\D$ that fixes the origin.  The law of $\mathrm{CR}(\eta_j)$ is obtained  in ~\cite{ssw-radii}:
	\begin{equation}\label{eq:SSW}
	\E[\mathrm{CR}(\eta_j)^\lambda]=\left( \frac{-\cos(4\pi\kappa)}{\cos(\pi \sqrt{(\frac4{\kappa}-1)^2-\frac{8\lambda}{\kappa}} )} \right)^j\quad \textrm{ for }\lambda > \frac{3\kappa}{32} + \frac2\kappa-1.
	\end{equation} Our next theorem gives the joint law of $\mathrm{Mod}(\eta_j)$ and $\mathrm{CR}(\eta_j)$ for $\kappa\in (\frac{8}{3},4]$ and $j\ge 1$.
	\begin{theorem}\label{thm:mod-CR}
		For $j\ge 1$, $\mathrm{Mod}(\eta_j) $  and $e^{2\pi\mathrm{Mod}(\eta_j) }\mathrm{CR}(\eta_j)$ are independent. 
	\end{theorem}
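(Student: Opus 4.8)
The plan is to reduce Theorem~\ref{thm:mod-CR} to the factorization of a joint transform, and then to compute that transform with the same Liouville quantum gravity input used for Theorems~\ref{thm:BAconj} and~\ref{thm:QAconj}. Write $M:=\mathrm{Mod}(\eta_j)$ and $Y:=e^{2\pi M}\mathrm{CR}(\eta_j)$, so that $\mathrm{CR}(\eta_j)=e^{-2\pi M}Y$. Since a probability measure on $(0,\infty)^2$ is determined by $\E[e^{-s_1X_1}X_2^{s_2}]$ on any open set where it is finite, $M$ and $Y$ are independent if and only if
\[
  \E\big[e^{-2\pi\lambda M}\,\mathrm{CR}(\eta_j)^{\mu}\big]=\E\big[e^{-2\pi(\lambda+\mu)M}\big]\cdot\E\big[Y^{\mu}\big]
\]
for $(\lambda,\mu)$ ranging over a suitable open set on which all three quantities are finite (for instance $\mu$ near $0$ and $\lambda,\lambda+\mu$ large), after which one concludes by analytic continuation. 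The first marginal on the right is given by Theorem~\ref{thm:CLE-mod} and the second by~\eqref{eq:SSW}, so the real task is the joint quantity $H_j(\lambda,\mu):=\E[e^{-2\pi\lambda\,\mathrm{Mod}(\eta_j)}\mathrm{CR}(\eta_j)^{\mu}]$. Note that $\mathrm{CR}$ is multiplicative under the conformal nesting underlying~\eqref{eq:SSW} while the modulus is only superadditive (Gr\"otzsch's inequality), so one should not expect $H_j$ to follow from $H_1$ by a naive recursion in $j$; the plan is to compute $H_j$ directly.

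To compute $H_j$ I would build on the argument that establishes Theorem~\ref{thm:CLE-mod}. Realize $\CLE_\kappa$ on $\D$ and its loops $\eta_1,\eta_2,\dots$ around $0$ (with $\eta_{i+1}$ surrounded by $\eta_i$) on a quantum disk with an interior marked point, decorated by an independent $\CLE_{\gamma^2}$ with $\gamma=\sqrt\kappa$; the field is auxiliary for the law of the loops but serves as the organizing device for the computation. Cutting along $\eta_j$ splits this surface into a $j$-layer quantum annulus $\mathcal A_j$ (outer boundary the disk boundary, inner boundary $\eta_j$), whose law is controlled by iterating Theorem~\ref{thm:QAconj} exactly as in the proof of Theorem~\ref{thm:CLE-mod}, and an inner quantum disk $\mathcal D_j$ carrying the marked point; given the quantum length of $\eta_j$ these two pieces are conditionally independent. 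Two observations then locate $(\mathrm{Mod}(\eta_j),\mathrm{CR}(\eta_j))$ within the decomposition: (i) $\mathrm{Mod}(\eta_j)$ is the conformal modulus of the abstract annulus $\mathcal A_j$, hence a measurable function of $\mathcal A_j$ alone; (ii) $\mathrm{CR}(\eta_j)^{-1}$ is the derivative at the marked point of the conformal map $\psi$ from (the region inside $\eta_j$, with the full field) onto $\D$ taking the marked point to $0$, so that inserting $\mathrm{CR}(\eta_j)^{\mu}$ in the expectation amounts, through the Liouville coordinate-change rule $h\mapsto h\circ\psi^{-1}+Q\log|(\psi^{-1})'|$ evaluated at the marked point, to shifting the bulk insertion weight of $\mathcal D_j$ there by an amount depending linearly on $\mu$. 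Combining (i)--(ii) with the conditional independence and the explicit laws of the interface length, the areas, and the inner disk with shifted insertion weight, one writes $H_j(\lambda,\mu)$ as an integral that is evaluated by the same inverse-Laplace-transform step as in the proofs of Theorems~\ref{thm:BAconj} and~\ref{thm:QAconj}.

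Inspecting the resulting closed form should then reveal the product structure: a factor depending only on $\lambda+\mu$ (necessarily $\E[e^{-2\pi(\lambda+\mu)\mathrm{Mod}(\eta_j)}]$) times a factor depending only on $\mu$ (necessarily $\E[Y^{\mu}]$), the latter moreover carrying no $j$-dependence. This is the sought factorization, whence the independence (and, as a byproduct, that the law of $e^{2\pi\mathrm{Mod}(\eta_j)}\mathrm{CR}(\eta_j)$ is the same for every $j$). I expect the main obstacle to be step (ii): pinning down exactly which insertion-weight shift $\mathrm{CR}(\eta_j)^{\mu}$ produces through the quantum-surface decomposition, and controlling it --- together with the integrability of $H_j$ over the relevant range of $(\lambda,\mu)$ --- uniformly in $j$; step (i) is already implicit in the proof of Theorem~\ref{thm:CLE-mod} and should add nothing new.
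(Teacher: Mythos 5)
Your plan is essentially the paper's proof: the paper reweights the loop law by $|\psi_\eta'(i)|^{2\Delta_\alpha-2}$ (a conformal-radius power insertion), identifies this via Proposition~\ref{prop:weld-CR} with replacing the $\gamma$-insertion inner disk by an $\alpha$-insertion one in the welding, and then reads off the factorization of the joint law of $\mathrm{Mod}(\eta_j)$ and $e^{2\pi\mathrm{Mod}(\eta_j)}\mathrm{CR}(\eta_j)$ from the explicit formula for $\LF_\tau[L_1e^{-L_1}L_0^{\frac{2}{\gamma}(\alpha-Q)}]$ together with the multi-layer measures $m_j$ — exactly your steps (i)--(ii) plus the closed-form inspection, with the $j\ge 2$ and $\kappa=4$ cases handled as you anticipate. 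The only cosmetic discrepancies are that the insertion change is governed by $\mu=2\Delta_\alpha-2$ (so the shift in $\alpha$ is not linear in $\mu$), and no inverse Laplace transform is needed: the identity $e^{\frac{\pi\tau\gamma^2}{4}(\frac{2}{\gamma}(Q-\alpha))^2}e^{2\pi(2\Delta_\alpha-2)\tau}=e^{\pi\tau(Q^2-4)}$ already exhibits the product structure directly.
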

	The case $\kappa=4,j=1$ for Theorems~\ref{thm:CLE-mod} and~\ref{thm:mod-CR}  was obtained in  \cite{ALS20} via the level set coupling of $\CLE_4$ and the Gaussian free field. As shown in \cite{ALS20, ssw-radii}, the law of $\mathrm{Mod}(\eta_1)$ for $\kappa=4$ and $\mathrm{CR}(\eta_1)$ for general $\kappa$ can both be  described in terms of hitting times of Brownian motion. This is also possible for $\mathrm{Mod}(\eta_j)$ for general $\kappa$ and $m$; see Remark~\ref{rmk:BM}. The independence between  $\mathrm{Mod}(\eta_j) $  and $e^{2\pi\mathrm{Mod}(\eta_j) }\mathrm{CR}(\eta_j)$ does not appear to be a simple consequence of the domain Markov property of $\CLE$. Our proof relies on the method 
	in~\cite{AHS-SLE-integrability,ARS-FZZ,acsw-cle,acsw-loop}
	which solves the law of conformal radii of SLE/CLE loops using  LCFT.  
	
	{For a fixed $n>0$, the O($n$) loop model  on a graph is a statistical physics model where a configuration $\mathcal L$ is a collection of disjoint loops (i.e.\ simple cycles). Given the  temperature  $T>0$,  the weight of each configuration is $T^{-\mathcal V(\mathcal L)} n^{\mathcal N(\mathcal L)}$, where $\mathcal V(\mathcal L)$ is the total number of vertices occupied by all loops in $\mathcal L$ and $\mathcal N(\mathcal L)$ is the total number of loops in $\mathcal L$.  When $n\in (0,2]$, for the O($n$) loop model on a planar  lattice, there exists a critical temperature $T_c>0$ such that at $T_c$ the scaling limit is conjectured to be $\CLE_\kappa$ for $\kappa\in (\frac{8}{3},4]$; see~\cite[{Section 2.3}]{shef-cle} for more background.}
	The parameters $\kappa$ and $n$ related as follows:
	\begin{equation}\label{eq:On}
	g=\frac{4}{\kappa}\in [1,\frac32), \quad \chi= (1-g)\pi\in (-\frac\pi2,0], \quad\textrm{and}\quad n=2\cos \chi\in (0,2].
	\end{equation}
	The central charge of {the CFT describing the scaling limit}  is given by: 
	\begin{equation}\label{eq:central}
	c=1-\frac{6(1-g)^2}{g}=1-6(\frac{2}{\sqrt{\kappa}} -\frac{\sqrt{\kappa}}{2})^2 \in (0,1].
	\end{equation}
	
	Consider the $O(n)$ loop model on a lattice approximation of the cylinder $\cC_\tau$. Let  
	\begin{equation}
	q=e^{-\pi /\tau}, \quad  \tilde q=e^{-2\pi \tau}, \quad \textrm{and}\quad  n'=2\cos\chi'\textrm{ for }\chi'\in \R.
	\end{equation}
	Now suppose each contractible loop is assigned weight $n$ while each non-contractible loop is assigned weight $n'$ instead. 
	{Then as predicted in~\cite{SB89,cardy06}, in the continuum limit at $T_c$ the partition function is}
	\begin{align}\label{eq:On}
	Z(\tau, \kappa, \chi')  & =q^{\frac{-c}{24}}\prod_{r=1}^\infty(1- q^r)^{-1}\sum_{p\in \mathbb Z} \frac{\sin(p+1)\chi'}{\sin \chi'}  q^{\frac{gp^2}{4}-\frac{(1-g)p}{2}}\\
	\label{eq:ZOn-tilde}
	&= (\frac{2}{g})^{1/2} \tilde q^{-\frac{c}{12}} \prod_{r=1}^\infty(1-\tilde q^{2r})^{-1}\sum_{m\in \mathbb Z} \frac{\sin((\chi'+2m\pi)/g ) }{\sin \chi'} \tilde q^{\frac{(\chi'+2\pi m)^2}{2\pi^2 g}-\frac{(1-g)^2}{2g}}.  
	\end{align}
	
	Equations~\eqref{eq:On} and~\eqref{eq:ZOn-tilde} were derived in~\cite{SB89} via the quantum group method, and derived  in~\cite{cardy06} via the Coulomb gas method. 
	As pointed out in~\cite{cardy06}, it would be interesting to derive $Z(\tau, \kappa, \chi')$ rigorously in the SLE framework. 
	As a consequence of Theorem~\ref{thm:mod-CR}, we solve this problem. 
	\begin{theorem}\label{thm:CLE-moduli}
		For any non-negative measurable function $f$ on $(0,\infty)$, we have 
		\begin{equation}\label{eq:CLE-main}
		\E\left[ \sum_{j=1}^{\infty} \left(\frac{n'}{n}\right)^{j-1} f(\mathrm{Mod}(\eta_j))\right] =\frac{\sqrt{\kappa }(\frac{4}{\kappa}-1)\cos(\pi (\frac{4}{\kappa}-1))}
		{ \sqrt 2\sin( \pi (1-\frac{\kappa}{4})) }\int_0^\infty f(\tau) e^{\frac{\kappa\pi}{4}
			(\frac{4}{\kappa}-1)^2 \tau}Z(\tau,\kappa,\chi' )\eta(2i\tau)\rd \tau,
		\end{equation}
		where   $\eta(i\tau)=e^{-\frac{\pi\tau}{12}} \prod_{k=1}^\infty (1-e^{-2\pi k\tau})$ is the Dedekind eta function.
	\end{theorem}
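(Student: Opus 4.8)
The plan is to combine the two exact descriptions already obtained — Theorem~\ref{thm:mod-CR} (independence of $\mathrm{Mod}(\eta_j)$ and $e^{2\pi\mathrm{Mod}(\eta_j)}\mathrm{CR}(\eta_j)$), together with Theorem~\ref{thm:CLE-mod} and the Schramm--Sheffield--Wilson radii formula~\eqref{eq:SSW} — to pin down the joint law of $(\mathrm{Mod}(\eta_j),\mathrm{CR}(\eta_j))$, and then to recognize the resulting $j$-summed generating function as the Saleur--Bauer--Cardy partition function~\eqref{eq:On}--\eqref{eq:ZOn-tilde} up to the explicit prefactor. First I would fix notation: write $g=4/\kappa$, $\chi=(1-g)\pi$, $n=2\cos\chi$, and introduce $\chi'$ through $n'=2\cos\chi'$. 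From Theorem~\ref{thm:mod-CR}, for each $j$ the law of $\mathrm{Mod}(\eta_j)$ is characterized by its Laplace transform~\eqref{eq:modn}, and the law of $e^{2\pi\mathrm{Mod}(\eta_j)}\mathrm{CR}(\eta_j)$ is then forced by dividing the SSW formula~\eqref{eq:SSW} by the $e^{2\pi\lambda\mathrm{Mod}(\eta_j)}$-moment coming from~\eqref{eq:modn}; this gives me full access to $\E[F(\mathrm{Mod}(\eta_j))\,\mathrm{CR}(\eta_j)^\lambda]$ for suitable $\lambda$ and, in particular, to $\E[f(\mathrm{Mod}(\eta_j))]$ via~\eqref{eq:modn}.

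The next step is to extract a density for $\mathrm{Mod}(\eta_j)$ from~\eqref{eq:modn}. Setting $\lambda = \tfrac{\kappa}{8}\big((\tfrac4\kappa-1)^2 - s^2\big)$ so that $\sqrt{(\tfrac4\kappa-1)^2 - 8\lambda/\kappa}=s$, the right-hand side of~\eqref{eq:modn} becomes a ratio of $\sin(\tfrac{\kappa\pi}{4}s)$ and $s\cos^j(\pi s)$ times a $j$-dependent constant; multiplying by the Gaussian factor $e^{\frac{\kappa\pi}{4}(\frac4\kappa-1)^2\tau}$ absorbs the shift, so I am left with an inverse Laplace / Fourier transform of $\sin(\tfrac{\kappa\pi}{4}s)/(s\cos^j(\pi s))$ against $e^{-\frac{\kappa\pi}{4}s^2\tau}$. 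I would evaluate this by expanding $1/\cos^j(\pi s)$ as a series (partial fractions in $\cos$, or the standard expansion $\sec^j$), performing the resulting Gaussian integrals term by term, and recognizing the output as a theta-type series in $\tilde q = e^{-2\pi\tau}$ — precisely the sum over $m\in\mathbb Z$ appearing in~\eqref{eq:ZOn-tilde} with $\chi'$ reintroduced through the $\sin((\chi'+2\pi m)/g)/\sin\chi'$ coefficients. Summing over $j\ge 1$ with weight $(n'/n)^{j-1}$ converts $\sum_j (n'/n)^{j-1}\cos^{-j}(\pi s)$ into a geometric series $\big(\cos(\pi s) - \tfrac{n'}{n}\big)^{-1}$, and the $\chi'$-dependence enters exactly so as to reproduce the $p$-sum in~\eqref{eq:On}; the Dedekind eta factor $\eta(2i\tau)$ and the $\prod_r(1-\tilde q^{2r})^{-1}$ in~\eqref{eq:ZOn-tilde} together produce the $\tilde q^{-c/12}$ and the $(2/g)^{1/2}$ normalization, and matching the overall constant is a bookkeeping check against~\eqref{eq:central}.

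The main obstacle I expect is purely computational: correctly performing the geometric resummation over $j$ and the term-by-term Gaussian integration so that the answer lands on the exact form~\eqref{eq:On}--\eqref{eq:ZOn-tilde} rather than a superficially different but equal expression. In particular, identifying the $\chi'$-twist — i.e.\ seeing that the combination of the SSW exponent, the $\cos^{-j}$ resummation, and the definition $n'=2\cos\chi'$ conspires to give exactly $\sin((\chi'+2\pi m)/g)/\sin\chi'$ and $\sin((p+1)\chi')/\sin\chi'$ — requires care with the residue/pole structure of $\sec^j$, and the modular transformation relating the $q$-series~\eqref{eq:On} to the $\tilde q$-series~\eqref{eq:ZOn-tilde} (which is a known identity for these characters, provable via Poisson summation on the Gaussian sum) must be invoked to present the result in both forms. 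Everything else — the existence of a density, the interchange of sum and expectation (justified by monotone convergence since $f\ge 0$ and $n'/n$ may be taken in a suitable range first, then extended), and the reduction to Theorem~\ref{thm:mod-CR} — is routine.
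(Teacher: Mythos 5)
Your proposal follows essentially the same route as the paper: sum the Laplace transforms from Theorem~\ref{thm:CLE-mod} geometrically over $j$ and identify the result with $Z(\tau,\kappa,\chi')\eta(2i\tau)$ through a Laplace/theta-series computation resting on the modular transform of $\eta$ (the paper packages this as the forward Laplace transform in Lemma~\ref{lem:Cardy-Lap} rather than your term-by-term inversion of $\sec^j$, but it is the same calculation read in the other direction, including the series identity behind the $\frac{1}{\cosh-\cos\chi'}$ structure). Two small corrections: Theorem~\ref{thm:mod-CR} and the conformal-radius formula~\eqref{eq:SSW} are not needed, since only~\eqref{eq:modn} enters; and the geometric resummation of~\eqref{eq:modn} produces the denominator $\cos(\pi s)-\frac{n'}{n}\cos\bigl(\pi(\tfrac4\kappa-1)\bigr)=\cos(\pi s)-\cos\chi'$, not $\cos(\pi s)-\frac{n'}{n}$, which is precisely how the $\chi'$-twist of $Z$ emerges.
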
  
	A more transparent way to see the geometric meaning of $Z(\tau, \kappa, \chi')$ is to consider CLE on the annulus.
	Given $\Gamma$, we choose $\eta$ from the counting measure of $\{\eta_j\}_{j\ge 1}$ and let $A_\eta$ be the annulus bounded by $\eta$ and $\partial \D$. Let $\CLE^\tau_\kappa$ be the conditional law of the loops of $\Gamma$ inside $A_\eta$ conditioning on $\mathrm{Mod(\eta)=\tau}$. Then we use conformal invariance to extend the definition of $\CLE^\tau_\kappa$ to any annulus of modulus $\tau$. This gives one natural definition of $\CLE_\kappa$ on an annulus that allows non-contractible loops; see Figure~\ref{fig-many} (right). In fact, if $\eta=\eta_j$, then the number of non-contractible loops is $j-1$. Therefore Theorem~\ref{thm:CLE-moduli} has  the following corollary. 
	\begin{corollary}\label{cor:nesting}
		For $\tau>0$, let $\mathcal N$ be the number of non-contractible loops of a $\CLE^\tau_\kappa$. Then 
		\begin{equation}\label{eq:nesting}
		\E\left[  \left(\frac{n'}{n}\right)^{\mathcal N} \right] = \frac{Z(\tau,\kappa,\chi' )}{Z(\tau,\kappa,\chi)}
		\end{equation}
		where $\chi=(1-\frac{\kappa}4)\pi $, $n=2\cos \chi$, and  $n'=2\cos \chi'$ for $\chi'\in \R $.
	\end{corollary}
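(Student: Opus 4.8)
The plan is to deduce Corollary~\ref{cor:nesting} from Theorem~\ref{thm:CLE-moduli}, by disintegrating the identity~\eqref{eq:CLE-main} over the modulus and then dividing the instance for a general $\chi'$ by the instance for $\chi'=\chi$.

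First I would record the combinatorial identity behind the definition of $\CLE^\tau_\kappa$. If $\eta=\eta_j$, then inside the annulus $A_{\eta_j}$ the non-contractible loops of $\Gamma$ are exactly the loops of $\Gamma$ separating the two boundary components, i.e.\ those surrounding $\eta_j$; since $\{\eta_k\}_{k\ge1}$ is, by construction, the complete sequence of loops of $\Gamma$ around the origin with $\eta_{k+1}$ surrounded by $\eta_k$, these are precisely $\eta_1,\dots,\eta_{j-1}$, so $\mathcal N=j-1$. (Here one also uses $\mathcal N<\infty$ almost surely, which follows from superadditivity of the modulus under nesting of annuli.) Let $\mu(\rd\tau):=\E\big[\sum_{j\ge1}\delta_{\mathrm{Mod}(\eta_j)}\big](\rd\tau)$ be the law of $\mathrm{Mod}(\eta)$ when $\eta$ is sampled from the counting measure on $\{\eta_j\}$; by definition $\CLE^\tau_\kappa$ is the disintegration over the modulus of the $\sigma$-finite measure $\E\big[\sum_{j\ge1}\delta_{(\mathrm{Mod}(\eta_j),\,\Gamma\cap A_{\eta_j})}\big]$. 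Combining these, for every bounded measurable $f$ on $(0,\infty)$,
\[
\E\Big[\sum_{j\ge1}\big(\tfrac{n'}{n}\big)^{j-1}f(\mathrm{Mod}(\eta_j))\Big]=\int_0^\infty \E_{\CLE^\tau_\kappa}\!\big[(n'/n)^{\mathcal N}\big]\,f(\tau)\,\mu(\rd\tau).
\]
By Theorem~\ref{thm:CLE-moduli} the left-hand side equals $C_\kappa\int_0^\infty f(\tau)\,e^{\frac{\kappa\pi}{4}(\frac4\kappa-1)^2\tau}Z(\tau,\kappa,\chi')\,\eta(2i\tau)\,\rd\tau$ with $C_\kappa=\frac{\sqrt\kappa(\frac4\kappa-1)\cos(\pi(\frac4\kappa-1))}{\sqrt2\,\sin(\pi(1-\frac\kappa4))}$; specializing to $\chi'=\chi$, where $n'=n$ and $(n'/n)^{\mathcal N}\equiv1$, identifies $\mu(\rd\tau)=C_\kappa\,e^{\frac{\kappa\pi}{4}(\frac4\kappa-1)^2\tau}Z(\tau,\kappa,\chi)\,\eta(2i\tau)\,\rd\tau$. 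Plugging this back in and using that $f$ is arbitrary, I cancel the common strictly positive factor to get $\E_{\CLE^\tau_\kappa}[(n'/n)^{\mathcal N}]=Z(\tau,\kappa,\chi')/Z(\tau,\kappa,\chi)$ for Lebesgue-a.e.\ $\tau>0$, which is~\eqref{eq:nesting} up to the passage from a.e.\ to every $\tau$.

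The remaining points are all about making that passage clean and do not require new ideas. One checks $Z(\tau,\kappa,\chi)>0$ for all $\tau>0$ (say from the $\tilde q$-expansion~\eqref{eq:ZOn-tilde}), so that the cancellation above is legitimate; one verifies measurability of $\tau\mapsto\mathcal N$; and one obtains the everywhere statement by choosing a version of $\CLE^\tau_\kappa$ that depends continuously on $\tau$, or by invoking real-analyticity of $\tau\mapsto Z(\tau,\kappa,\chi')/Z(\tau,\kappa,\chi)$. A related subtlety is that $n'=2\cos\chi'$ may exceed $n$ in absolute value, so when $|n'/n|\ge1$ one should first restrict to $f$ with enough exponential decay that $\sum_j(n'/n)^{j-1}f(\mathrm{Mod}(\eta_j))$ converges absolutely---this is controlled by the moment generating function of $\mathrm{Mod}(\eta_j)$ from Theorem~\ref{thm:CLE-mod}---and then extend the identity of measures. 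The genuine content is Theorem~\ref{thm:CLE-moduli} together with the elementary observation $\mathcal N=j-1$; accordingly I expect the write-up to be short, with the absolute-convergence bookkeeping the only mildly technical step.
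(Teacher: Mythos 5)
Your proposal is correct and follows essentially the same route as the paper, which derives Corollary~\ref{cor:nesting} directly from Theorem~\ref{thm:CLE-moduli} together with the observation that $\mathcal N=j-1$ when $\eta=\eta_j$, i.e.\ by disintegrating~\eqref{eq:CLE-main} over the modulus and dividing by the $\chi'=\chi$ case. The extra bookkeeping you flag (positivity of $Z(\tau,\kappa,\chi)$, absolute convergence when $|n'/n|\ge 1$, and the a.e.-to-every-$\tau$ passage via a good version of the conditional law) is sensible but not part of the paper's argument, which treats the corollary as immediate.
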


	Annulus partition functions are fundamental in  the CFT framework of 2D statistical physics. The annulus  partition function of a CFT 
	in term of the $q$ variable is of the form $q^{-c/24} \sum_{s \in \cS_{\mathrm {bdy}}} c(s) K_s (q)$ as in~\eqref{eq:On}, where {$c$ is the central charge and} $\cS_{\mathrm {bdy}}$ is the so called boundary spectrum, and $K_s (q)$ is a universal $s$-dependent function called the Virasoro character. This summation is called the conformal bootstrap in the open string channel.  When $\cS_{\mathrm {bdy}}$ is finite, the  CFT is called rational. For example, the Ising model partition function equals $Z(\tau,\kappa,\chi )$ with $n=1$. It can be  written as $Z_{++}+Z_{+-}$, where $Z_{++}$ (resp.,  $Z_{+-}$) is the annulus partition function with $++$ (resp., $+-$) boundary condition. Both $Z_{++}$ and $Z_{+-}$ are examples of Virasoro characters.
	In terms of the $\tilde q$ variable, annulus partition functions are of the form 
	$\tilde q^{-c/12} \sum_{s \in \cS_{\mathrm {bul}}} |c(s)|^2 K_s (\tilde q^2)$. The set $\cS_{\mathrm {bul}}$ is called the bulk spectrum and the summation is called the conformal bootstrap  in the closed string channel.  The equality between~\eqref{eq:On} and~\eqref{eq:ZOn-tilde} is called modular invariance or  open-closed  duality. Conformal bootstrap formulas were also derived for torus partition functions of various models; see e.g.~\cite{dFSZ87}. The spectrum is given by $\cS_{\mathrm{bul} }$ and a similar modular invariance holds. As a cornerstone for 2D CFT,  modular invariance imposes strong constraints on these theories, which is crucial to the classification of {an important class of rational CFTs called} the minimal models. 
	The annulus and torus partition functions can {often} be derived for lattice models by methods such as algebraic CFT principle, Coulomb gas techniques, and transfer matrix, and along the way one obtains {$c$}, $\cS_{\mathrm{bdy}}$ and $\cS_{\mathrm{bul}}$.  See~\cite{Yellow}  for more background on CFT and 2D statistical physics.
	
	The spectra $\cS_{\mathrm{bdy}}$ and $\cS_{\mathrm{bul}}$  determine the allowed boundary and bulk scaling dimensions for the corresponding model. Many scaling dimensions  were later rigorously derived in the SLE framework as the exponents of certain arm events, or as the dimensions of certain fractals. Our paper provides a method to derive the full annulus partition function, strengthening the connection between {the SLE and the CFT}  approaches  to 2D statistical physics. Although the connection between CFT and SLE has been studied intensively from the martingale observable approach (e.g.~\cite{BB-CFT-SLE}), it appears challenging to derive results for CLE using this approach. 
	In the next subsection we demonstrate how our method applies to another model: the self avoiding loop.

	\subsection{Annulus partition function of the SLE$_{8/3}$ loop}\label{subsec:intro-SAP}  
	Werner~\cite{werner-loops} constructed an infinite conformally invariant measure on simple loops on each Riemann surface which we call the \emph{$\SLE_{8/3}$ loop measure}. Modulo a multiplicative constant, the $\SLE_{8/3}$ loop measure is  uniquely characterized by  the conformal restriction property. It describes the conjectural scaling limit of self avoiding polygons~\cite[Conjecture 1]{werner-loops}. A planar map version of this conjecture was 
	proved in~\cite{AHS-loop} based on~\cite{gwynne-miller-saw}.
	The construction of the  $\SLE_{8/3}$ loop measure in~\cite{werner-loops} is based on the Brownian loop measure.
	Let $\mu_\C$ be an $\SLE_{8/3}$ loop measure  on the complex plane $\C$. 
	In Section~\ref{sec:SAP} we will recall a construction of $\mu_\C$  due to Zhan~\cite{zhan-loop-measures}. 
	
	Given a domain $D\subset \C$, let $\mu_D$ be the restriction of $\mu_\C$ to the set of loops that are contained in $D$. Let $\psi$ be a conformal map between two domains $D$ and $D'$. Then  the  conformal restriction property of the $\SLE_{8/3}$ loop measure asserts that $\mu_D$ is the pushforward of $\mu_{D'}$ under $\psi$; see Theorem~\ref{thm:conf-res}.  
	Given an annulus $A\subset \C$ of modulus $\tau$, let  $Z_{8/3}(\tau)$ be the $\mu_\C$-mass of loops in $A$ that are {not contractible}. Then the conformal restriction property implies that  $Z_{8/3}(\tau)$ depends on $A $ through its modulus $\tau$. 
	We call $Z_{8/3}(\tau)$ the annulus partition function of the  SLE$_{8/3}$ loop. It is supposed to be the scaling limit of $\sum  \nu_c^{-\textrm{loop length}}$ where the summation runs over {non-contractible} loops of a lattice approximation of the cylinder $\cC_\tau$, and $\nu_c$ is the so-called  connectivity constant of the lattice. 
	
	The sharp asymptotic of  $Z_{8/3}(\tau)$  was obtained in \cite[Section 7]{werner-loops}, where {deriving} an exact formula for $Z_{8/3}(\tau)$ was left as an open question. A formula  was conjectured by Cardy~\cite[Eq (5)]{cardy06} based on the perspective  that self-avoiding loop can be viewed as $O(n)$ loop model with $n=0$. We rigorously prove Cardy's formula for $Z_{8/3}(\tau)$ as the next theorem.

	\begin{theorem}\label{thm-werner-partition}
		There exists a constant $C>0$ such that $Z_{8/3}(\tau)=C Z_{\mathrm{Cardy}}(\tau) $ where 
		\begin{equation}
		Z_{\mathrm{Cardy}} (\tau)= \prod_{r=1}^\infty(1-  q^r)^{-1} \sum_{k\in \mathbb Z} k (-1)^{k-1}   q^{\frac{3k^2}2-k+\frac18}\quad \textrm{for } \tau>0\textrm{ and }q = e^{-\pi/\tau}.
		\end{equation}
	\end{theorem}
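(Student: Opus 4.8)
\medskip
\noindent\textbf{Proof strategy.}
The plan is to run the two-step strategy of Section~\ref{subsec:intro-KPZ} — the one behind Theorems~\ref{thm:BAconj} and~\ref{thm:QAconj} — with the matter now being Werner's self-avoiding loop, for which $c_{\mathrm M}=0$ and $\gamma=\sqrt{8/3}$. The first task is to attach the random modulus to $Z_{8/3}(\tau)$ by building a curve-decorated quantum annulus $\QA^{8/3}$ in the spirit of Section~\ref{sec:QA}: start from a $\sqrt{8/3}$-quantum disk with a marked interior point, sample the $\SLE_{8/3}$ loop around that point from Werner's measure $\mu_{\C}$ (using the construction of \cite{zhan-loop-measures} recalled in Section~\ref{sec:SAP}, together with the usual area/length reweighting that makes the ``loop around a marked point'' measure finite), and take the quantum surface lying between this loop and the disk boundary, after the same symmetric modification used to define $\QA^\gamma$. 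By conformal restriction of $\mu_{\C}$, the matter partition function of this model on $\cC_\tau$ is precisely $Z_{8/3}(\tau)$ — it is the $\mu_{\C}$-mass of loops non-contractible in $\bbA_\tau$, a conformal invariant of the annulus — so the matter--Liouville--ghost heuristic \eqref{eq:mlg-QA}, with $\mathcal Z_{\mathrm{GFF}}\mathcal Z_{\mathrm{ghost}}=2^{-1/2}\eta(2i\tau)$, predicts
\[
\QA^{8/3}=1_{\tau>0}\,C'\,Z_{8/3}(\tau)\,\eta(2i\tau)\,\LF_\tau(\rd\phi)\,\rd\tau
\]
for a constant $C'>0$. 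Theorem~\ref{thm-werner-partition} is then equivalent to establishing this identity together with a closed form for $Z_{8/3}$, with the constant $C$ absorbing $C'$ and the normalization ambiguity of $\mu_{\C}$.

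\smallskip
For \emph{Step 1} I would show that, after forgetting the loop, $\QA^{8/3}$ has the form $\LF_\tau(\rd\phi)\,m(\rd\tau)$ for some $\sigma$-finite $m$ on $(0,\infty)$, by verifying the characterization of $\LF_\tau$ proved in Section~\ref{sec:LCFT}. As in the proof of Theorem~\ref{thm:QAconj} this rests on a conformal welding statement: cutting the quantum disk along the $\SLE_{8/3}$ loop yields the inner quantum disk together with the outer annular surface (whose modified law is $\QA^{8/3}$), glued along their common boundary; for $\gamma=\sqrt{8/3}$ this welding is governed by the identification of the $\SLE_{8/3}$ loop via welding of $\sqrt{8/3}$-quantum disks from \cite{AHS-loop} combined with the quantum zipper / mating-of-trees framework. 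From this welding the curve-decorated measure inherits exactly the conformal covariance and Markov-type properties that the $\LF_\tau$-characterization requires, reducing the theorem to computing $m$.

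\smallskip
For \emph{Step 2} I would feed $\QA^{8/3}$ into the KPZ relation \eqref{eq:KPZ}; its right-hand side needs $\langle L_1 e^{-L_1}L_0^{ix}\rangle_{\sqrt{8/3}}$, which the welding picture expresses as an integral over the quantum length of the $\SLE_{8/3}$ loop of a product of boundary-length-moment factors of $\sqrt{8/3}$-quantum disks — the same $\FZZ$-type integrals evaluated in \cite{ARS-FZZ,AHS-SLE-integrability,AS-CLE}; alternatively this quantity is the $\kappa\downarrow 8/3$ limit of the corresponding $\QA^\gamma$ quantity, after the renormalization relating $\CLE_\kappa$ to $\mu_{\C}$. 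Plugging this into \eqref{eq:KPZ} and inverting the Laplace transform $x\mapsto\tau$ yields $m(\rd\tau)=C' Z_{8/3}(\tau)\eta(2i\tau)\,\rd\tau$ with $Z_{8/3}$ presented as a lattice sum in $\tilde q=e^{-2\pi\tau}$, i.e.\ the $n=0$ specialization of the closed-string expansion \eqref{eq:ZOn-tilde}. A Jacobi imaginary transformation (Poisson summation) — the open--closed duality identity \eqref{eq:On}$=$\eqref{eq:ZOn-tilde} at $n=0$ — then recasts it in Cardy's variable $q=e^{-\pi/\tau}$ as $C\cdot Z_{\mathrm{Cardy}}(\tau)$, finishing the proof.

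\smallskip
I expect the main obstacle to be Step 1, specifically keeping track of multiplicative constants: $\mu_{\C}$ is an infinite measure defined only up to a constant, and ``the $\SLE_{8/3}$ loop around a marked point'' must be cut out of the quantum disk with a carefully chosen regularization (by quantum length or area) so that it couples cleanly with the $\sqrt{8/3}$-quantum-disk welding; threading this normalization through the disk partition functions, the symmetric modification, and the $\LF_\tau$-characterization is where the delicate bookkeeping lies. A secondary difficulty is the special-function endgame of Step 2: the inverse Laplace transform naturally outputs the $\tilde q$-series, and proving rigorously that it equals $Z_{\mathrm{Cardy}}(\tau)$ in the stated $q$-variable form — including the exponent $\tfrac{3k^2}{2}-k+\tfrac18$ that encodes $c_{\mathrm M}=0$ and the dilute-phase dimensions — amounts to verifying the $n=0$ case of the modular identity \eqref{eq:On}$=$\eqref{eq:ZOn-tilde}, which the physics literature asserts without proof.
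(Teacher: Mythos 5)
Your high-level plan (glue two annular $\sqrt{8/3}$-LQG surfaces along an $\SLE_{8/3}$ loop, run the KPZ relation~\eqref{eq:KPZ}, invert a Laplace transform, and use conformal restriction to bring in $Z_{8/3}$) is the right one, and your Step 2 is essentially sound. But your Step 1 has a genuine gap. You propose to build a loop-decorated quantum annulus by sampling, independently of a quantum disk with a marked interior point, an $\SLE_{8/3}$ loop ``around that point'' from $\mu_\C$, and then to cut out the surface between the loop and the disk boundary. Two problems: first, the welding input you invoke from \cite{AHS-loop} is a \emph{sphere} statement (welding two quantum disks along their full boundaries produces a quantum sphere decorated by an independent $\SLE_{8/3}$ loop, Theorem~\ref{thm:loop-zipper}); the disk analogue you need — an independent $\mu_\C$-loop surrounding a marked point of a quantum disk, with a clean finite-mass normalization (the $\mu_\C$-mass of loops surrounding a fixed point in a bounded domain is infinite) — is not available off the shelf, and your ``usual area/length reweighting'' does not specify how to produce it. Second, and more fundamentally, even granting such a construction, your route never rigorously ties the resulting modulus density to $Z_{8/3}(\tau)$: the assertion ``by conformal restriction the matter partition function of this model is $Z_{8/3}(\tau)$'' is exactly the heuristic~\eqref{eq:mlg-QA}, not a proof step, because in your construction the loop is a \emph{boundary} of the annulus rather than a non-contractible loop \emph{inside} a pre-specified annulus, so the restriction property gives you nothing about the mass $Z_{8/3}(\tau)=\mu_\C[E_\tau]$. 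Moreover, re-running the $\LF_\tau$-characterization for your $\QA^{8/3}$ would require a boundary-swap symmetry (the analogue of Lemma~\ref{lem:QA-symmetry}) that you would have to prove separately.

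The paper's mechanism avoids all of this: it realizes the Brownian annulus as the region between the two filled metric balls of radius $1$ around the two marked points of the loop-decorated Brownian sphere (Theorem~\ref{thm:loop-zipper}, Lemmas~\ref{lem:BA-sphere}--\ref{lem:reweight}), so that, conditionally on that annulus $A$, the $\hat\mu_\C$-mass of the event $\{\eta\subset A,\ \eta$ non-contractible$\}$ is \emph{literally} $Z_{8/3}(\mathrm{Mod}(A))$ by conformal restriction (Theorem~\ref{thm:conf-res}); combined with the already-proved modulus law $m(\rd\tau)=2^{-1/2}\eta(2i\tau)\rd\tau$ of $\BA$ (Theorem~\ref{thm-BA-main}), this reweights the modulus density by $Z_{8/3}(\tau)$ with no new characterization argument. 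The boundary-length observable entering~\eqref{eq:KPZ} is then $|\BA_2(a,b)|=\int_0^\infty|\BA(a,\ell)|\,\ell\,|\BA(\ell,b)|\rd\ell$ (two Brownian annuli glued along the loop, via Lemma~\ref{lem:SAP-length}), whose transform factorizes as in Lemma~\ref{lem:QA-Mellin-k}. Finally, on the endgame: the inverse Laplace transform in the paper lands \emph{directly} in Cardy's variable $q=e^{-\pi/\tau}$, since $\cL^{-1}[e^{-a\sqrt s}/\sqrt s](t)=e^{-a^2/4t}/\sqrt{\pi t}$ produces Gaussians in $1/\tau$; only the modular property of the Dedekind eta function is used. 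So the open--closed duality step you flag as a possible obstruction is unnecessary (and in any case, at $n=0$ it is just Poisson summation, so it is not the unproved physics identity you fear).
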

	Since $\mu_\C$ is  characterized up to a multiplicative constant, the constant $C$ in Theorem~\ref{thm-werner-partition} is not canonical.
	We prove Theorem~\ref{thm-werner-partition} in Section~\ref{sec:SAP}. The starting point of our proof is the following fact {proved in~\cite{AHS-loop}}: gluing together two Brownian disks with equal boundary lengths gives a Brownian sphere decorated with an $\SLE_{8/3}$ loop. From this we can construct the $\SLE_{8/3}$-loop-decorated quantum annulus by gluing together two Brownian annuli. Then the two-step strategy outlined below~\eqref{eq:KPZ} gives the law of its modulus. Finally, the conformal restriction property of the $\SLE_{8/3}$ loop  allows us to derive $Z_{8/3}(\tau)$ from the law of the modulus.

	\subsection{Perspectives and outlook}\label{subsec:intro-outlook}We discuss a few directions that a subset of the authors and their collaborators are planning to investigate in subsequent works, and some open questions. 
	\begin{enumerate}
		\item Our KPZ method for proving Theorems~\ref{thm:QAconj} and~\ref{thm-werner-partition} can extend to other models, such as percolation. In a subsequent {work~\cite{Annulus-Per} with Xu and Zhuang, the third named author applied this method to prove  Cardy's crossing formula for percolation on the annulus from~\cite{Cardy2002crossing,cardy06}.} 
		
		\item We can consider variants of~\eqref{eq:KPZ} where we have one boundary marked point on the annulus. Combining with the conformal bootstrap results for Liouville CFT in~\cite{Wu22}, this will give interesting relations between the 1-point torus conformal block~\cite{GRSS-block} and SLE/CLE observables. 
		
		\item We can also consider variants of~\eqref{eq:KPZ} where the underlying domain is a disk with four boundary marked points or with  one bulk and two boundary points. This will give access to SLE observables such as crossing probabilities and the one-point Green's function. 
		
		\item We will consider the variant of~\eqref{eq:KPZ} in the pair of pants case, where we expect to obtain the analogous results  for the Brownian pair of pants and for CLE on a disk with two holes. As a limiting case, we expect to obtain an exact formula for certain two-point correlation functions for CLE on the disk.
		
		\item At the high level, our two-step strategy below~\eqref{eq:KPZ}  can extend to closed surfaces such as the torus. But in the second step,  we would need to find other observables than the boundary length to   obtain the law of the modulus. This could still be tractable for the torus, but we consider it as a major open question to extend our results to arbitrary Riemann surfaces. Essential new ideas are required to understand the much more complicated moduli space and the integrability of Liouville CFT on these surfaces. See~\cite[Section 5]{grv-higher-genus} for precise conjectures in the closed surface case. 
		
		\item Determining the full CFT content of CLE and related discrete models  is an  active topic in both mathematics and physics. See~\cite{Rilbault2024} for a summary of the state of the art  in physics for the $Q$-Potts model. In mathematics, the so-called imaginary DOZZ formula was proved
		in~\cite{acsw-cle}, and the annulus partition function is proved in this paper.  It would be a breakthrough to carry out a functional analytic conformal bootstrap program for CLE similar to the Liouville case~\cite{GKRV_bootstrap}. It would already be very interesting to give a purely CLE-based proof of  results in our Section~\ref{subsec:intro-CLE}, as for the $\kappa=4$ case in~\cite{ALS20}. Such a proof might be extended to prove the torus counterpart  of Corollary~\ref{cor:nesting} as predicted in physics~\cite{dFSZ87}. If knowledge on the conformal matter can be obtained without LQG, then it can be used as a substitute for the boundary length observable in~\eqref{eq:KPZ} to determine the law of the modulus.
		
		\item 
		It would be interesting to understand the coupling of the ghost field with the matter CFT and Liouville CFT at the probabilistic level, {which goes beyond the factorization of partition functions}. 
	\end{enumerate}

	\subsection*{Organization of the paper.} 
	In Section~\ref{sec:LCFT} we give a characterization of $\LF_\tau$. In Section~\ref{sec:LF-PFN} we prove Theorems~\ref{thm:GMC-ratio} and~\ref{thm:KPZ}. In Section~\ref{sec:QA} we carry out the two-step strategy outlined below~\eqref{eq:KPZ} to prove Theorem~\ref{thm:QAconj}. In Section~\ref{sec:CLE} we prove results  for CLE from Section~\ref{subsec:intro-CLE}. In Section~\ref{sec:BA} we prove Theorem~\ref{thm:BAconj} using the same two-step strategy, which implies Theorem~\ref{thm:BA-mod}. In Section~\ref{sec:SAP} we prove Theorem~\ref{thm-werner-partition}.

	\medskip
	\noindent\textbf{Acknowledgements.} 
	We thank Baojun Wu for communicating with us his work~\cite{Wu22}. 
	{We thank Jesper Jacobsen and Hubert Saleur for useful comments on an earlier version of this paper.}
	M.A. was partially supported by NSF grant DMS-1712862 and by the Simons Foundation as a Junior Fellow at the Simons Society of Fellows. 
	G.R.\ was supported by an NSF mathematical sciences postdoctoral research fellowship, NSF Grant DMS-1902804, and a fellowship from the Institute for Advanced Study (IAS) at Princeton. 
	X.S.\ was partially supported by the NSF Career award 2046514, a start-up grant from the University of Pennsylvania, and a fellowship from IAS.

	\section{Liouville fields and their domain Markov property}\label{sec:LCFT}
	In this section we introduce Liouville fields  on the annulus and the disk, and prove their domain Markov property  (Propositions~\ref{prop:Markov-LF} and~\ref{prop:Markov-LFH}) that we will use throughout the paper. We then give a characterization of the Liouville field on the annulus (Proposition~\ref{prop:inv-unique}). 
	In the rest of the paper, we let $\H=\{z\in \C: \Im z>0 \}$ be the upper half plane.  
	For $x<y$, let $\cC(x,y)= [x,y]\times[0,1]/{\sim}$ be a finite horizontal cylinder so that $\cC_\tau=\cC(0,\tau)$. Let $\partial_0 \cC(x,y)=\{x\}\times [0,1]/{\sim}$ and $\partial_1 \cC(x,y)=\{y\}\times [0,1]/{\sim}$. For infinite measures, we will still adopt the probabilistic terminologies such as random variable and  law.

	\subsection{Gaussian free field}\label{subsec:GFF}
	We first gather some facts about two dimensional Gaussian free field (GFF).  See e.g.~\cite{shef-gff} for more background. For concreteness, let $D$ be a planar domain that is conformally equivalent of a disk or an annulus. {Throughout this section we assume that}
	\begin{equation}\label{eq:rho}
	\textrm{$\rho(\rd x)$ is a compactly supported probability measure such that  }\iint -\log |x-y| \, \rho(\rd x) \rho (\rd y)<\infty.
	\end{equation}
	{Our discussion could extend to a broader class of measures but~\eqref{eq:rho} is sufficient for us. For a measure $\rho$ as in~\eqref{eq:rho} supported on $D$,} let $H(D;\rho)$ be the Hilbert closure of $\{f\in C^\infty(D): \int_D f(x) \rho(\rd x)=0 \}$ under the inner product $(f,g)_\nabla= (2\pi)^{-1} \int_D \left(\nabla f \cdot \nabla g \right) \rd x$. Let $(f_n)_{n\ge 1}$ be an orthonormal basis for  $H(D;\rho)$ and $(\alpha_n)_{n\ge 1}$ be a sequence of independent standard normal random variables. Then the random series $\sum_{n=1}^\infty \alpha_n f_n$ converges almost surely as a random generalized function. We  call  a random generalized function on $D$ with the law of $\sum_{n=1}^\infty \alpha_n f_n$ a \emph{free boundary Gaussian free field} on $D$ normalized to have average zero over $\rho(\rd x)$.
	
	Now suppose $A$ is conformally equivalent to an annulus, with the two boundary components denoted by $\partial_1 A$ and $\partial_2 A$.  Let $H(A)$ be the Hilbert closure of $\{f\in C^\infty(A): \mathrm{supp}(f)\cap \partial_1 A=\emptyset\}$ under the inner product $(f,g)_\nabla= (2\pi)^{-1} \int_A \left(\nabla f \cdot \nabla g \right) \rd x$. Here $\mathrm{supp}(f)$ means the support of $f$. Let $(g_n)_{n\ge 1}$ be an orthonormal basis for $H(A)$.
	Let  $(\alpha_n)_{n\ge 1}$ be a sequence of independent standard normal random variables. Then $\sum_{n=1}^\infty \alpha_n g_n$ converges almost surely as a random generalized function. We call a random generalized function on $A$ with the law of $\sum_{n=1}^\infty \alpha_n g_n$ a  Gaussian free field on $A$ with zero-boundary condition on $\partial_1 A$ and free boundary condition on $\partial_2 A$. 
	
	We  will need an instance of the domain  Markov property for the GFF. Let $D$ be a domain conformally equivalent to an annulus or disk and such that $\partial D$ is smooth. Let $\eta$ be a Jordan curve (namely simple closed curve) inside $D$ such that {we can find a connected component $A$ of  $D\setminus \eta$ that is}  conformally equivalent to an annulus. We let $\partial_1 A$ be the boundary component $\eta$ and $\partial_2 A$ be the other boundary component of $A$. Let $\rho(\rd x)$ be a {probability}  measure {on $D\setminus A$ satisfying~\eqref{eq:rho}}. Let  $ \mathrm{Harm}(D;\rho; A)$ be the space of functions in $H(D;\rho)$ that are harmonic  on $A$ and have normal derivative zero on $\partial_2 A$.  As explained in \cite[Theorem 2.17]{shef-gff}, we have the  orthogonal decomposition
	\begin{equation}\label{eq:orthogonal}
	H(D,\rho)=H(A) \oplus \mathrm{Harm}(D;\rho; A),
	\end{equation}
	which yields the following domain Markov property  of the free boundary GFF.
	\begin{lemma}\label{lem:Markov-GFF}
		With $D,A,\rho$ defined {in the paragraph right} above,  let $h$ be a free boundary GFF on $D$ normalized by {$\int h(x)\rho (\rd x)=0$}. Let $h^{\mathrm{har}}$ be the harmonic extension of $h|_{D\setminus A}$ onto $A$ with zero normal derivative on $\partial_2 A$.  Let $h_{A} = h   - h^{\mathrm{har}}$. Then
		$h_{A}$ is a GFF on $A$ with zero boundary condition on $\partial_1 A$ {(namely $\eta$)}, and free boundary condition on $\partial_2 A$. Moreover,  $h_{A}$ is independent of  $h^{\mathrm{har}}$. 
	\end{lemma}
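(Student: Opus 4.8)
The plan is to deduce the lemma directly from the orthogonal decomposition \eqref{eq:orthogonal} together with the series definitions of the GFFs recalled above. First I would fix an orthonormal basis $(g_n)_{n\ge 1}$ of $H(A)$ and an orthonormal basis $(p_m)_{m\ge 1}$ of $\mathrm{Harm}(D;\rho;A)$; by \eqref{eq:orthogonal} their union is an orthonormal basis of $H(D;\rho)$, where $H(A)$ is viewed as a subspace of $H(D;\rho)$ via extension by zero on $D\setminus A$ (legitimate since a function in $H(A)$ vanishes near $\partial_1 A=\eta$, and $\int f\,\rho(\rd x)=0$ because $\rho$ is supported in $D\setminus A$). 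Taking independent standard Gaussians $(\alpha_n)_n$ and $(\beta_m)_m$, the random generalized function $h:=\sum_n\alpha_n g_n+\sum_m\beta_m p_m$ has the law of a free boundary GFF on $D$ normalized by $\int h\,\rho(\rd x)=0$, so it suffices to verify the assertions for this particular coupling.

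Next I would set $h_A:=\sum_n\alpha_n g_n$ and $h':=\sum_m\beta_m p_m$, so that $h=h_A+h'$. By the definition recalled above, $h_A$ is exactly a GFF on $A$ with zero boundary condition on $\partial_1 A=\eta$ and free boundary condition on $\partial_2 A$, so the only remaining point is to identify $h'$ with $h^{\mathrm{har}}$. Since each $g_n$ is supported in $\overline A$ and vanishes on $\partial_1 A$, the field $h_A$ vanishes on $D\setminus A$, hence $h$ and $h'$ have the same restriction to $D\setminus A$; and since each $p_m$ is harmonic on $A$ with vanishing normal derivative on $\partial_2 A$, the restriction $h'|_A$ is a.s.\ harmonic with vanishing normal derivative on $\partial_2 A$. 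Uniqueness for this mixed Dirichlet--Neumann problem then forces $h'|_A$ to be the harmonic extension of $h|_{D\setminus A}$ into $A$ with zero normal derivative on $\partial_2 A$, i.e.\ $h'=h^{\mathrm{har}}$ and $h_A=h-h^{\mathrm{har}}$. Independence is then immediate: $h_A$ is measurable with respect to $(\alpha_n)_n$ and $h^{\mathrm{har}}=h'$ with respect to $(\beta_m)_m$, and these two families are independent.

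I expect the only genuine subtleties — to be dispatched with standard GFF technology rather than spelled out — to be the following. First, the decomposition \eqref{eq:orthogonal} itself: that $H(A)$ and $\mathrm{Harm}(D;\rho;A)$ are closed, mutually orthogonal (an integration by parts that uses precisely the support condition on $\partial_1 A$ and the Neumann condition on $\partial_2 A$ to annihilate the boundary term), and jointly spanning (given $f\in H(D;\rho)$, subtract from $f$ the mixed harmonic extension into $A$ of $f|_{D\setminus A}$). Second, making the words ``restriction'', ``harmonic extension'' and ``uniqueness'' precise for the generalized functions involved, which one handles as usual either by mollifying $h$ or by identifying the analytically defined $h^{\mathrm{har}}$ with the orthogonal projection of $h$ onto $\mathrm{Harm}(D;\rho;A)$, and $h_A$ with the projection onto $H(A)$. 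Since \eqref{eq:orthogonal} is already taken as established in the statement, the argument above is essentially just the translation of that Hilbert-space splitting into the probabilistic statement; I would also remark in passing that the free boundary condition of $h_A$ on $\partial_2 A$ is simply inherited from that of $h$ on $\partial D\supset\partial_2 A$.
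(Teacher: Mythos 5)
Your proposal is correct and is essentially the paper's own argument written out in full: the paper's proof simply observes that $h^{\mathrm{har}}$ is the projection of $h$ onto $\mathrm{Harm}(D;\rho;A)$ and invokes the orthogonal decomposition~\eqref{eq:orthogonal} together with the standard basis-expansion argument from~\cite{shef-gff}, which is exactly what you carry out. The details you flag (closedness and orthogonality of the two subspaces, identification of the projection with the mixed Dirichlet--Neumann harmonic extension) are the same points the paper delegates to the cited standard argument, so there is no substantive difference in route.
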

	\begin{proof}
		Since   $h^{\mathrm{har}}$ is the projection of $h$ onto $\mathrm{Harm}(D;\rho; A) $, we obtain Lemma~\ref{lem:Markov-GFF} from~\eqref{eq:orthogonal} via the standard argument from~\cite[{Section 2.6}]{shef-gff}.
	\end{proof}
	\subsection{Liouville fields on the annulus and the disk}
	We first define Liouville fields on annuli. For convenience we focus on the horizontal cylinder $\cC_\tau=\cC(0,\tau)=[0,\tau]\times [0,1]/{\sim}$. 
	\begin{definition}\label{def:LF-tau}
		For $\tau>0$, let $\rho$ be a {probability} measure on $\cC_\tau$ {satisfying~\eqref{eq:rho}} and $\P_{\tau,\rho}$ be the law of the free boundary Gaussian free field on $\cC_\tau$ with $\int_{\cC_\tau} h(x)\rho(\rd x) =0$.  	Sample $(h, \mathbf c)$ from $\P_{\tau,\rho}\times dc$ and set $\phi =  h +\mathbf c$. 
		We write  $\LF_{\tau}$ as the law of $\phi$, and call $\phi$  a  \emph{Liouville field on $\cC_\tau$}.
	\end{definition}
	\begin{lemma}\label{lem:ind-rho}
		The measure $\LF_\tau$ does not depend on the choice of the measure $\rho$. 
	\end{lemma}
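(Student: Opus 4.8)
The plan is to fix two probability measures $\rho_1,\rho_2$ on $\cC_\tau$ satisfying~\eqref{eq:rho} and show that the law of $\phi=h+\mathbf c$ produced from $\P_{\tau,\rho_1}$ equals the one produced from $\P_{\tau,\rho_2}$. The underlying point is that changing the normalizing measure only shifts the free boundary GFF by a \emph{random constant}, and this constant is absorbed by the Lebesgue integration over $\mathbf c$.

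The key intermediate step is the comparison statement: if $h\sim\P_{\tau,\rho_1}$, then $X:=\int h\,\rho_2(\rd x)$ is almost surely a finite Gaussian random variable, and $h-X\sim\P_{\tau,\rho_2}$. To prove this I would pick an orthonormal basis $(f_n)$ of $H(\cC_\tau;\rho_1)$ consisting of smooth functions (apply Gram--Schmidt to a countable dense subset of the smooth $\rho_1$-mean-zero functions), so $h\overset{d}{=}\sum_n\alpha_n f_n$. Using~\eqref{eq:rho} together with the standard bound $G_{\rho_1}(x,y)=-\log|x-y|+O(1)$ for the covariance kernel $G_{\rho_1}$ of $\P_{\tau,\rho_1}$, one gets $\sum_n\left(\int f_n\,\rho_2(\rd x)\right)^2=\iint G_{\rho_1}(x,y)\,\rho_2(\rd x)\rho_2(\rd y)<\infty$, so $X=\sum_n\alpha_n\int f_n\,\rho_2(\rd x)$ converges almost surely and coincides with the distributional pairing $\int h\,\rho_2(\rd x)$. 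Now set $\hat f_n:=f_n-\int f_n\,\rho_2(\rd x)$; these are smooth and $\rho_2$-mean-zero, and since $(\cdot,\cdot)_\nabla$ annihilates constants, $(\hat f_n)$ is orthonormal in $H(\cC_\tau;\rho_2)$. A short argument shows its span is dense there: given a smooth $\rho_2$-mean-zero $g$, approximate $g-\int g\,\rho_1(\rd x)\in H(\cC_\tau;\rho_1)$ by finite sums $\sum a_n f_n$ in the Dirichlet norm; subtracting the ($\rho_2$-mean, a constant) preserves this convergence and gives $\sum a_n\hat f_n\to g$. Hence $(\hat f_n)$ is an orthonormal basis of $H(\cC_\tau;\rho_2)$, and $h-X=\sum_n\alpha_n\hat f_n$ has law $\P_{\tau,\rho_2}$.

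Granting this, I would conclude as follows. Let $T(h,c)=(h-X(h),\,c+X(h))$ with $X(h)=\int h\,\rho_2(\rd x)$, a measurable map on (field space)$\times\R$. For $F\ge 0$, by Fubini and translation invariance of Lebesgue measure in $c$, $\int F(T(h,c))\,\P_{\tau,\rho_1}(\rd h)\,\rd c=\int\left(\int F(h-X(h),c)\,\rd c\right)\P_{\tau,\rho_1}(\rd h)$, and the inner integral depends on $h$ only through $h-X(h)$. By the previous paragraph the pushforward of $\P_{\tau,\rho_1}$ under $h\mapsto h-X(h)$ is $\P_{\tau,\rho_2}$, so this equals $\int F\,\rd(\P_{\tau,\rho_2}\times\rd c)$; thus $T$ pushes $\P_{\tau,\rho_1}\times\rd c$ forward to $\P_{\tau,\rho_2}\times\rd c$. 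Since $T$ preserves the sum, i.e.\ $(h-X)+(c+X)=h+c$, the law of $\phi=h+\mathbf c$ is identical whether $(h,\mathbf c)$ is sampled from $\P_{\tau,\rho_1}\times\rd c$ or $\P_{\tau,\rho_2}\times\rd c$, which is the claim.

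The only real work is in the comparison step: verifying via~\eqref{eq:rho} that $\int h\,\rho_2(\rd x)$ makes sense almost surely for a $\P_{\tau,\rho_1}$-field (finiteness of $\iint G_{\rho_1}\,\rho_2\,\rho_2$, using that the covariance has only a logarithmic diagonal singularity), and checking that $(\hat f_n)$ is genuinely an orthonormal \emph{basis} of the new space and not merely an orthonormal system. Both are routine, but they are precisely where hypothesis~\eqref{eq:rho} enters; everything else is bookkeeping with the constant mode.
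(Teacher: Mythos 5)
Your argument is correct and is essentially the paper's own proof, which simply invokes translation invariance of the Lebesgue measure $dc$: your comparison step (that changing the normalization shifts $h$ by the a.s.\ finite random constant $X=\int h\,\rho_2(\rd x)$, which is then absorbed into $\mathbf c$) is exactly the content left implicit there. The extra details you supply — finiteness of $X$ via~\eqref{eq:rho} and the orthonormal-basis bookkeeping — are sound and merely make the one-line proof explicit.
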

	\begin{proof}
		This  follows from the   translation invariance of the Lebesgue measure $dc$. 
	\end{proof}
	Lemma~\ref{lem:Markov-GFF} and~\ref{lem:ind-rho} yield a domain Markov property for $\LF_\tau$. To state it, we recall that $\partial_0 \cC_\tau=\{0\}\times [0,1]/{\sim}$ and $\partial_1 \cC_\tau=\{\tau\}\times [0,1]/{\sim}$ are the two boundary components of $\cC_\tau$.
	\begin{proposition}\label{prop:Markov-LF}
		Let $\eta\subset \cC_\tau$ be a non-contractible Jordan curve so that  both components of $\cC_\tau\setminus \eta$ are of annular topology. Let $A_\eta$  be the component bounded by $\eta$ and $\partial_0 \cC_\tau$.  
		Let $\phi$ be a Liouville field on  $\cC_\tau$. Let $\phi^{\mathrm{har}}$ be the harmonic extension of $\phi|_{\cC_\tau\setminus A_\eta}$ onto $A_\eta$ with zero normal derivative on $\partial_0 \cC_\tau$.  Let $\phi_{A_\eta} = \phi   - \phi^{\mathrm{har}}$. Then conditioning on $\phi|_{\cC_\tau\setminus A_\eta}$, the conditional law of 
		$\phi_{A_\eta}$ is a GFF on $A_\eta$ with zero boundary condition on $\eta$ and free boundary condition on $\partial_0 \cC_\tau$. If  $A_\eta$ is  bounded by $\eta$ and $\partial_1 \cC_\tau$ instead, the same  holds with $\partial_1 \cC_\tau$ in place of $\partial_0 \cC_\tau$.
	\end{proposition}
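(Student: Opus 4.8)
The plan is to reduce everything to the Gaussian free field domain Markov property (Lemma~\ref{lem:Markov-GFF}) by choosing the normalization measure $\rho$ conveniently, and then to account for the additive constant $\mathbf c$ by hand. Since by Lemma~\ref{lem:ind-rho} the law $\LF_\tau$ is independent of the choice of $\rho$, I would fix $\rho$ to be any measure as in~\eqref{eq:rho} supported in the interior of the component $\cC_\tau\setminus A_\eta$ (this component is a nondegenerate annulus because $\eta$ is non-contractible, so such a $\rho$ exists). Write $\phi=h+\mathbf c$ with $h$ a free boundary GFF on $\cC_\tau$ normalized by $\int h\,\rho=0$ and $\mathbf c$ an independent Lebesgue sample. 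Now apply Lemma~\ref{lem:Markov-GFF} with $D=\cC_\tau$, $A=A_\eta$, $\partial_1 A=\eta$, $\partial_2 A=\partial_0\cC_\tau$ (valid: $\cC_\tau$ has smooth boundary, $\eta$ is a Jordan curve cutting off the annulus $A_\eta$, and $\rho$ is supported in $\cC_\tau\setminus A_\eta$). This gives $h=h_{A_\eta}+h^{\mathrm{har}}$, where $h^{\mathrm{har}}$ is the harmonic extension of $h|_{\cC_\tau\setminus A_\eta}$ to $A_\eta$ with zero normal derivative on $\partial_0\cC_\tau$, $h_{A_\eta}$ is a GFF on $A_\eta$ with zero boundary data on $\eta$ and free boundary on $\partial_0\cC_\tau$, and $h_{A_\eta}$ is independent of $h^{\mathrm{har}}$.

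Next I would observe that a constant is harmonic with vanishing normal derivative, so the harmonic extension $\phi^{\mathrm{har}}$ of $\phi|_{\cC_\tau\setminus A_\eta}=h^{\mathrm{har}}|_{\cC_\tau\setminus A_\eta}+\mathbf c$ with zero normal derivative on $\partial_0\cC_\tau$ is precisely $h^{\mathrm{har}}+\mathbf c$; hence $\phi_{A_\eta}=\phi-\phi^{\mathrm{har}}=h_{A_\eta}$ (extended by $0$ off $A_\eta$). Since $\mathbf c$ was sampled independently of $h$ and $h_{A_\eta}$ is independent of $h^{\mathrm{har}}$, the pair $(\phi_{A_\eta},\phi^{\mathrm{har}})=(h_{A_\eta},\,h^{\mathrm{har}}+\mathbf c)$ has independent components, with $\phi_{A_\eta}$ distributed as the asserted GFF. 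Finally, $\phi|_{\cC_\tau\setminus A_\eta}$ is a measurable function of $\phi^{\mathrm{har}}$ (by restriction) and conversely (by solving the mixed boundary value problem), so conditioning on $\phi|_{\cC_\tau\setminus A_\eta}$ is the same as conditioning on $\phi^{\mathrm{har}}$ and leaves the law of $\phi_{A_\eta}$ unchanged. The case where $A_\eta$ is bounded by $\eta$ and $\partial_1\cC_\tau$ is identical after swapping the roles of $\partial_0\cC_\tau$ and $\partial_1\cC_\tau$, equivalently after applying the reflection $(t,\theta)\mapsto(\tau-t,\theta)$ of $\cC_\tau$.

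I do not expect a serious obstacle. The only points needing minor care are (i) checking that $\cC_\tau\setminus A_\eta$ has nonempty interior so an admissible $\rho$ can be placed there, which is immediate, and (ii) the bookkeeping that the constant $\mathbf c$ is absorbed entirely into $\phi^{\mathrm{har}}$ and therefore into the conditioning, leaving $\phi_{A_\eta}$ untouched. All the substantive content is the orthogonal decomposition~\eqref{eq:orthogonal} underlying Lemma~\ref{lem:Markov-GFF}.
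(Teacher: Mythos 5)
Your proposal is correct and follows essentially the same route as the paper's proof: choose the normalization measure $\rho$ supported in the component of $\cC_\tau\setminus\eta$ other than $A_\eta$, write $\phi=h+\mathbf c$, apply Lemma~\ref{lem:Markov-GFF}, note that the constant $\mathbf c$ is absorbed into $\phi^{\mathrm{har}}$ so that $\phi_{A_\eta}=h_{A_\eta}$, and conclude by independence, with the other boundary case handled by symmetry. The only point to phrase carefully is the final conditioning step: the cleanest statement (as in the paper) is that $\phi|_{\cC_\tau\setminus A_\eta}$ is determined by $(h|_{\cC_\tau\setminus A_\eta},\mathbf c)$, of which $h_{A_\eta}$ is independent, which is exactly what your argument amounts to once $\phi^{\mathrm{har}}$ is understood as the full projection agreeing with $\phi$ off $A_\eta$.
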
	
	Since $\LF_\tau$ is an infinite measure, the word ``conditioning'' above requires a proper interpretation. 
	\begin{definition}\label{def:Kernel}
		Suppose $(\Omega,\cF)$ and $(\Omega',\cF')$ are two measurable spaces. 
		We say $\Lambda:\Omega \times \cF'    \to [0,1]$ is a Markov kernel if $\Lambda(\omega, \cdot)$ is a probability measure on $(\Omega',\cF')$ for each $\omega\in \Omega$ and  $\Lambda(\cdot,A)$ is $\cF$-measurable for each $A\in \cF'$. If $(X,Y)\in \Omega\times \Omega'$ is a sample from $\Lambda (\omega, \rd \omega') \mu (\rd \omega)$ for some measure $\mu$ on $(\Omega,\cF)$, we say that the conditional law of $Y$ given $X$ is $\Lambda(X, \cdot)$.
	\end{definition}
	\begin{proof}[Proof of Proposition~\ref{prop:Markov-LF}]
		We focus on the case where $A_\eta$ is bounded by $\eta$ and $\partial_0\cC_\tau$ as the other case follows by symmetry. Let $\rho$ be a {probability} measure on the connected component of $\cC_\tau\setminus \eta$ other than $A_\eta$ {satisfying~\eqref{eq:rho}}. Let $\P_{\tau,\rho}$ be the law of the free boundary GFF on $\cC_\tau$ with $\int_{\cC_\tau} h(x)\rho(\rd x) =0$.  Sample $(h, \mathbf c)$ from $\P_{\tau,\rho}\times dc$ and set $\phi =  h +\mathbf c$. Then   $\phi$ is a Liouville field on $\cC_\tau$ by definition. 
		
		Let   $h^{\mathrm{har}}$ be the harmonic extension of $h|_{\cC_\tau\setminus A_\eta}$ onto $A_\eta$ with zero normal derivative on $\partial_0 \cC_\tau$.  Let $h_{A_\eta} =h   - h^{\mathrm{har}}$.  Then  $\phi^{\mathrm{har}}= h^{\mathrm{har}}+\mathbf c$ and $\phi_{A_\eta}=h_{A_\eta}$. By Lemma~\ref{lem:Markov-GFF}, conditioning on $(h|_{\cC_\tau\setminus A_\eta},\mathbf c)$, the conditional law of 
		$h_{A_\eta}$ is a GFF on $A_\eta$ with zero boundary condition on $\eta$ and free boundary condition on $\partial_0 \cC_\tau$. This gives the desired conditional law of  
		$\phi_{A_\eta}$ given  $\phi|_{\cC_\tau\setminus A_\eta}$.
	\end{proof}
	
	We now recall the Liouville field on the disk. We focus on the upper half plane $\H$. 
	\begin{definition}\label{def:LF-H} Fix $\gamma\in (0,2)$ and $Q=\frac{\gamma}{2}+\frac{2}{\gamma}$. 
		Let $\P_{\H}$ be the law of the free boundary Gaussian free field on $\H$ with average zero over the uniform measure on the semi-circle $\{z\in \H: |z|=1\}$.  	Sample $(h, \mathbf c)$ from $\P_{\H}\times e^{-Qc} \rd c$ and set $\phi (z)=  h(z) +\mathbf c- 2Q\log |z|_+$, where $|z|_+ = \max\{|z| , 1\}$.  
		We write  $\LF_{\H}$ as the law of $\phi$, and call $\phi$  a  \emph{Liouville field on $\H$}.
	\end{definition}
	Here we omit the dependence of $\LF_{\H}$ in $\gamma$ because it will be a globally fixed parameter whenever we consider $\LF_{\H}$. 
	We recall Liouville fields on $\H$ with one bulk insertion, following~\cite[{Section 2.2}]{ARS-FZZ}.
	\begin{definition}\label{def:LF-alpha} 
		Fix $\alpha\in \R$ and $z_0\in \H$. 
		Sample $(h, \mathbf c)$ from $(2\mathrm{Im} z_0)^{-\alpha^2/2} |z_0|_+^{2\alpha(Q-\alpha)}\P_{\H}\times e^{(\alpha-Q)c} \rd c$. Set $\phi (z)=  h(z) +\mathbf c-2Q\log |z|_+ + \alpha G_\H (z,z_0)$ where $G_\H(z,w)=-\log (|z-w||z-\bar w|)+2\log|z|_+ + 2\log |w|_+$. We write  $\LF^{(\alpha,z_0)}_{\H}$ as the law of $\phi$, and call $\phi$  a  \emph{Liouville field on $\H$ with an $\alpha$-insertion at $z_0$}.
	\end{definition}
	
	Unlike the case of the annulus, the choice of normalization of the GFF $h$ affects the multiplicative constant of the Liouville field {on the disk}, a phenomenon called the \emph{Weyl anomaly} 
	\cite{dkrv-lqg-sphere, hrv-disk}. The following computation was carried out in \cite[Section 3.3]{hrv-disk}; we rephrase it here and give a quick proof. 
	\begin{proposition}\label{prop-weyl}
		Let $\rho$ be a  {probability} measure on $\H$ {satisfying~\eqref{eq:rho}.} Set
		\[Z_\rho :=  \exp(\frac{Q^2}2 \iint (-\log|z-w|-\log|z-\ol w|)\rho(\rd z)\rho(\rd w)). \]
		Let $P_\rho$ denote the law of the GFF $h$ on $\H$ normalized so $(h, \rho) = 0$.  Sample $(h, \mathbf c)$ from $P_\rho \times [Z_\rho e^{-Qc}\, dc]$. Then the law of $h + \mathbf c - Q G_\rho(\cdot, \infty)$ is $\LF_\H$, where $G_\rho{(\cdot, \cdot)}$ is the {covariance kernel} for $P_\rho$. 
	\end{proposition}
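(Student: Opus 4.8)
The plan is to reduce to the definition of $\LF_\H$ (Definition~\ref{def:LF-H}), which is precisely the $\rho=\rho_0$ case of the proposition for $\rho_0$ the uniform probability measure on $\{z\in\H:|z|=1\}$: for this $\rho_0$ one computes directly that $Z_{\rho_0}=1$ and $G_{\rho_0}(\cdot,\infty)=2\log|\cdot|_+$, using $G_{\rho_0}(z,w)=-\log|z-w|-\log|z-\ol w|+2\log|z|_++2\log|w|_+$ and the fact that this kernel has vanishing $\rho_0$-average in each variable. So it remains to show that the law of $h+\mathbf c-QG_\rho(\cdot,\infty)$, with $(h,\mathbf c)\sim P_\rho\times[Z_\rho e^{-Qc}\,dc]$, agrees with the $\rho_0$ version for every probability measure $\rho$ satisfying~\eqref{eq:rho}. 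This is the Weyl anomaly computation of~\cite[Section 3.3]{hrv-disk}; I will recast it in the present notation, working against bounded measurable test functionals throughout since $\LF_\H$ is an infinite measure.

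Step one is a change of normalization for the GFF. Given $h\sim P_\rho$, set $Z:=\int h\,\rho_0$ and $h_0:=h-Z$; then $h_0\sim P_{\rho_0}$, $Z$ is a centered Gaussian, and the change-of-normalization formula for the GFF covariance gives explicit expressions for $\mathrm{Var}(Z)$ and for the function $z\mapsto\mathrm{Cov}(Z,h_0(z))$ in terms of the kernels $\iint(-\log|u-v|-\log|u-\ol v|)\,\rho\,\rho_0$ and $\iint(-\log|u-v|-\log|u-\ol v|)\,\rho\,\rho$. Writing $h+\mathbf c-QG_\rho(\cdot,\infty)=h_0+(Z+\mathbf c)-QG_\rho(\cdot,\infty)$, I would integrate out $Z$: conditionally on $h_0$, the push-forward of $[\text{law of }Z]\times[Z_\rho e^{-Qc}\,dc]$ under $(Z,c)\mapsto Z+c$ equals $Z_\rho\exp\!\big(Q\,\E[Z\mid h_0]+\tfrac{Q^2}{2}\mathrm{Var}(Z\mid h_0)\big)e^{-Qc}\,dc$ by the Gaussian moment formula.

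The factor $\exp(Q\,\E[Z\mid h_0])$ reweights the law of $h_0$; by the Cameron--Martin theorem for the GFF it equals $\exp(\tfrac{Q^2}{2}\mathrm{Var}(\E[Z\mid h_0]))$ times the law of $h_0$ translated by $\xi(z):=Q\,\mathrm{Cov}(Z,h_0(z))$ -- one checks $\xi$ lies in the Cameron--Martin space, which is where~\eqref{eq:rho} enters. Combining the two Gaussian-variance constants via $\mathrm{Var}(Z\mid h_0)+\mathrm{Var}(\E[Z\mid h_0])=\mathrm{Var}(Z)$, the $\rho$-construction becomes $Z_\rho\exp(\tfrac{Q^2}{2}\mathrm{Var}(Z))$ times the law of $h_0+\mathbf c'+\big(\xi(\cdot)-QG_\rho(\cdot,\infty)\big)$ with $h_0\sim P_{\rho_0}$ and $\mathbf c'\sim e^{-Qc}\,dc$. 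A direct computation with the explicit formulas (and with $G_\rho(z,\infty)=\lim_{w\to\infty}G_\rho(z,w)$) shows $\xi(z)-QG_\rho(z,\infty)=-2Q\log|z|_++\Delta$ for a $z$-independent constant $\Delta$.

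The last step is the observation that, for a field built from the weight $e^{-Qc}\,dc$, translating by a constant $\Delta$ multiplies the measure by $e^{Q\Delta}$ (substitute $\mathbf c'\mapsto\mathbf c'+\Delta$). Thus the $\rho$-construction equals $Z_\rho\exp(\tfrac{Q^2}{2}\mathrm{Var}(Z))\,e^{Q\Delta}$ times the $\rho_0$-construction, i.e.\ that constant times $\LF_\H$; substituting the explicit values of $Z_\rho$, $\mathrm{Var}(Z)$ and $\Delta$ one finds the prefactor is exactly $1$. I expect the only delicate point to be precisely this cancellation: the drift $\xi(\cdot)-QG_\rho(\cdot,\infty)$ does \emph{not} on its own match $-2Q\log|\cdot|_+$, nor is the multiplicative constant on its own equal to $1$; the additive discrepancy in the drift and the multiplicative discrepancy are exchanged against each other through the non-translation-invariance of $e^{-Qc}\,dc$, and only their combination reproduces $\LF_\H$. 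The rest -- the Gaussian push-forward, the Girsanov step, and the change-of-normalization identity -- is routine.
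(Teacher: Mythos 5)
Your proof is correct, and the one quantitative point you flagged does check out: writing $K(z,w)=-\log|z-w|-\log|z-\ol w|$, $a=\iint K\,\rho\,\rho$ and $m=-2\int\log|x|_+\,\rho(\rd x)$, one finds $\mathrm{Var}(Z)=a-2m$ and $\xi(z)-QG_\rho(z,\infty)=-2Q\log|z|_+ +Q(m-a)$, so the accumulated prefactor is $Z_\rho\,e^{\frac{Q^2}{2}\mathrm{Var}(Z)}\,e^{Q\Delta}=\exp\bigl(\tfrac{Q^2}{2}a+\tfrac{Q^2}{2}(a-2m)+Q^2(m-a)\bigr)=1$, exactly as claimed; and the $\rho_0$ case is indeed Definition~\ref{def:LF-H} verbatim since $Z_{\rho_0}=1$ and $G_{\rho_0}(\cdot,\infty)=2\log|\cdot|_+$.

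Your route is the same Weyl-anomaly/Girsanov computation as the paper's, but run in the opposite direction and with different bookkeeping. The paper starts from $\LF_\H$ in its defining ($\rho_0$) normalization, makes the single substitution $h_0=h-Q\int G_\H(\cdot,x)\rho(\rd x)$, $\mathbf c'=\mathbf c+(h,\rho)-Q\int G_\H(y,\infty)\rho(\rd y)$, verifies the pointwise identity $\phi=h_0-(h_0,\rho)+\mathbf c'-QG_\rho(\cdot,\infty)$ via~\eqref{eq:Green}, and applies Girsanov once: the change of variables in $c$ produces the tilt $e^{Q(h,\rho)}$, whose normalization is absorbed exactly by $Z_\rho=\E_{P_\H}[e^{Q(h,\rho)}]e^{-Q^2\int G_\H(y,\infty)\rho(\rd y)}$. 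This avoids any conditioning: there is no $\E[Z\mid h_0]$, no law-of-total-variance step, and no three-way cancellation of constants at the end. Your version instead decomposes $h=h_0+Z$ with $Z=(h,\rho_0)$, integrates $Z$ into the $c$-variable conditionally on $h_0$ via the Gaussian moment formula, then uses Cameron--Martin and a constant shift of $\mathbf c$; the cost is the extra conditional-Gaussian bookkeeping and the final numerical cancellation, the benefit is that it makes explicit how the additive drift discrepancy and the multiplicative constants trade off through the non-translation-invariance of $e^{-Qc}\,\rd c$. Either organization is a complete proof.
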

	\begin{proof}
		{In Definition~\ref{def:LF-H}, the covariance kernel for $P_\H$ is $G_\H$  and $G_\H(\cdot, \infty)=2\log|z|_+$. Therefore 
			\begin{align}
			G_\rho(z,w)&=G_\H(z,w) -\int G_\H(z,x) \rho(d x)    -\int G_\H(y,w) \rho(\rd y)+ \iint G_\H(x,y) \rho(\rd x)\rho(\rd y);\nonumber\\
			G_\rho(z,\infty)&=G_\H(z,\infty) -\int G_\H(z,x) \rho(d x)    -\int G_\H(y,\infty) \rho(\rd y)+ \iint G_\H(x,y) \rho(\rd x)\rho(\rd y). \label{eq:Green}
			\end{align}
			Sample $(h, \mathbf c)$ from $P_\H (dh) e^{-Qc}\,dc$. 
			Let $\phi = h + \mathbf c - Q G_\H(\cdot, \infty)$ so that the law of $\phi$ is $\LF_\H$.  Let $h_0 =h-Q \int G_\H(\cdot, x)\rho(\rd x)$ and $\mathbf c' = (h, \rho) - {Q \int G_\H(y,\infty) \rho(\rd y)}  + \mathbf c$. Then 
			$\phi = h - (h, \rho) + \mathbf c' - Q G_\H(\cdot, \infty) + {Q \int G_\H(y,\infty) \rho(\rd y)}$
			hence by~\eqref{eq:Green} we have	$\phi = h_0 - (h_0, \rho) + \mathbf c' - Q G_\rho(\cdot, \infty)$. By Girsanov's theorem
			the law of $(h_0, \mathbf c')$ is  $ Z_\rho P_\H\times e^{-Qc'}\rd c'$, where we used that $Z_\rho$ defined above equals $\E_{P_\H}[e^{Q(h,\rho)}] e^{-Q^2\int G_\H(y,\infty) \rho(\rd y)}$.
			Since the law of $(h_0 - (h_0, \rho),\mathbf c')$ is $P_\rho \times [Z_\rho e^{-Qc'}\, \rd c']$ , we are done.}
	\end{proof}
	We have the following analog of Proposition~\ref{prop:Markov-LF}. 
	\begin{proposition}\label{prop:Markov-LFH}
		Let $\eta\subset \H$ be a Jordan curve. Let $A_\eta$  be the component of $\H\setminus \eta$ bounded by $\eta$ and $\partial \H=\R$.   Let $\phi$ be a Liouville field on  $\H$. Let $\phi^{\mathrm{har}}$ be the harmonic extension of $\phi|_{\H\setminus A_\eta}$ onto $A_\eta$ with zero normal derivative on $\partial \H$ and with $\lim_{|z| \to \infty} \phi^\mathrm{har}(z)/\log|z| = -2Q$.  
		Let $\phi_{A_\eta} = \phi   - \phi^{\mathrm{har}}$. Then conditioning on $\phi|_{\H\setminus A_\eta}$, the conditional law of  $\phi_{A_\eta}$ is a GFF on $A_\eta$ with zero boundary condition on $\eta$ and free boundary condition on $\partial\H$.
	\end{proposition}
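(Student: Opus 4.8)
The plan is to run the argument of Proposition~\ref{prop:Markov-LF}, with the normalization subtleties of the disk (the Weyl anomaly) absorbed by Proposition~\ref{prop-weyl}. Since $\eta$ is a Jordan curve in $\H$, the complement $\H\setminus\eta$ has two components: the bounded topological disk $U$ enclosed by $\eta$, and $A_\eta$, which is conformally an annulus with inner boundary $\eta$ and outer boundary the circle $\partial\H\cup\{\infty\}$.

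First I would fix a probability measure $\rho$ with compact support inside $U$ and satisfying~\eqref{eq:rho} (possible since $U$ is a bounded open subset of $\H$), and use Proposition~\ref{prop-weyl} to realize a Liouville field on $\H$ as $\phi=h+\mathbf c-QG_\rho(\cdot,\infty)$ with $(h,\mathbf c)\sim P_\rho\times[Z_\rho e^{-Qc}\,\rd c]$, where $P_\rho$ is the law of the free boundary GFF on $\H$ normalized by $(h,\rho)=0$ and $G_\rho$ is its covariance kernel. Because $\rho$ charges only $U$, a short computation starting from $G_\rho(z,\infty)=G_\H(z,\infty)-\int G_\H(z,x)\rho(\rd x)+\mathrm{const}$ and $G_\H(z,\infty)=2\log|z|_+$ gives $G_\rho(z,\infty)=\int\bigl(\log|z-x|+\log|z-\bar x|\bigr)\rho(\rd x)+\mathrm{const}$ (the $2\log|z|_+$ terms cancel), so on $A_\eta$ the function $G_\rho(\cdot,\infty)$ is harmonic, has zero normal derivative on $\partial\H$, and satisfies $G_\rho(z,\infty)/\log|z|\to 2$ as $z\to\infty$.

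Next I would apply Lemma~\ref{lem:Markov-GFF} with $D=\H$, $A=A_\eta$ and this $\rho$: decomposing $h=h_{A_\eta}+h^{\mathrm{har}}$ on $A_\eta$, where $h^{\mathrm{har}}$ is the harmonic extension of $h|_{\H\setminus A_\eta}$ with zero normal derivative on $\partial\H$, one has that $h_{A_\eta}$ is a GFF on $A_\eta$ with zero boundary condition on $\eta$ and free boundary condition on $\partial\H$, independent of $h^{\mathrm{har}}$ (and of $\mathbf c$). I then claim the function $\phi^{\mathrm{har}}$ from the statement equals $h^{\mathrm{har}}+\mathbf c-QG_\rho(\cdot,\infty)$ on $A_\eta$: the right-hand side is harmonic on $A_\eta$, has zero normal derivative on $\partial\H$ (each summand does; for $h^{\mathrm{har}}$ the zero-Neumann condition on the full outer circle $\partial\H\cup\{\infty\}$ also kills any logarithmic mode, so $h^{\mathrm{har}}$ is bounded near $\infty$), satisfies $\phi^{\mathrm{har}}(z)/\log|z|\to -2Q$ by the growth of $G_\rho(\cdot,\infty)$, and agrees with $\phi$ on $\eta$; these properties characterize the harmonic extension in the statement. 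Subtracting, $\phi_{A_\eta}=\phi-\phi^{\mathrm{har}}=h-h^{\mathrm{har}}=h_{A_\eta}$.

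Finally, since $(h,\rho)=0$ and $\mathrm{supp}\,\rho\subset\H\setminus A_\eta$, one recovers $\mathbf c=\int\phi(x)\,\rho(\rd x)$ and hence $h|_{\H\setminus A_\eta}=\phi|_{\H\setminus A_\eta}-\mathbf c+QG_\rho(\cdot,\infty)|_{\H\setminus A_\eta}$ from $\phi|_{\H\setminus A_\eta}$, so conditioning on $\phi|_{\H\setminus A_\eta}$ is the same as conditioning on $(h|_{\H\setminus A_\eta},\mathbf c)$; the conclusion then follows from Lemma~\ref{lem:Markov-GFF}, since $\phi_{A_\eta}=h_{A_\eta}$ and $\mathbf c$ is independent of $h$. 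The only genuinely delicate point is the bookkeeping around the unbounded domain $A_\eta$: that the mixed Dirichlet (on $\eta$) / Neumann (on $\partial\H\cup\{\infty\}$) extension is well posed, that Lemma~\ref{lem:Markov-GFF} deposits all the logarithmic growth at infinity into the deterministic part rather than into $h_{A_\eta}$, and that the prescribed asymptotics $\phi^{\mathrm{har}}(z)/\log|z|\to -2Q$ is produced entirely by the $-QG_\rho(\cdot,\infty)$ term. Everything else is routine and nearly identical to the proof of Proposition~\ref{prop:Markov-LF}.
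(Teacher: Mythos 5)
Your proposal is correct and follows essentially the same route as the paper: choose $\rho$ supported in $\H\setminus A_\eta$, write $\phi=h+\mathbf c-QG_\rho(\cdot,\infty)$ via Proposition~\ref{prop-weyl}, check that $G_\rho(z,\infty)\sim 2\log|z|$ so that $\phi^{\mathrm{har}}=h^{\mathrm{har}}+\mathbf c-QG_\rho(\cdot,\infty)$ and $\phi_{A_\eta}=h_{A_\eta}$, and conclude with Lemma~\ref{lem:Markov-GFF}. The extra details you supply (the explicit cancellation of the $2\log|z|_+$ terms, the characterization of the harmonic extension, and recovering $\mathbf c=\int\phi\,\rd\rho$, which is exact since $\int G_\rho(x,\infty)\rho(\rd x)=0$) are correct refinements of steps the paper leaves implicit.
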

	\begin{proof}
		{The same argument for the proof of Proposition~\ref{prop:Markov-LF} works here.}	
		Let $\rho$ be a {probability} measure supported in $\H \backslash A_\eta$ {satisfying~\eqref{eq:rho}}.   Write $\phi=h+\mathbf c -Q G_\rho(\cdot, \infty)$ as in Proposition~\ref{prop-weyl}. 
		Let $h^{\mathrm{har}}$ be the harmonic extension of $h|_{\H\setminus A_\eta}$ onto $A_\eta$ with zero normal derivative on $\partial_0 \H$.  Let $h_{A_\eta} =h   - h^{\mathrm{har}}$. {By~\eqref{eq:Green}, $G_\H (z,\infty)-G_\rho(z,\infty)=o(\log|z|)$ as $z\to \infty$. Since $G_\H (z,\infty)=2\log |z|_+$, we have $\lim_{z\to\infty}\frac{G_\rho(z,\infty)}{\log |z|}=2$. Therefore} $\phi^{\mathrm{har}}= h^{\mathrm{har}}+\mathbf c-Q G_\rho(\cdot, \infty)$ and $\phi_{A_\eta}=h_{A_\eta}$. By Lemma~\ref{lem:Markov-GFF}, conditioning on $(h|_{\H\setminus A_\eta},\mathbf c)$, the conditional law of 
		$h_{A_\eta}$ is a GFF on $A_\eta$ with zero boundary condition on $\eta$ and free boundary condition on $\partial \H$. This gives the desired conditional law of  $\phi_{A_\eta}$ given $\phi|_{\H\setminus A_\eta}$.
	\end{proof}
	
	\begin{lemma}\label{lem:Markov-LFH}
		Fix $\alpha\in \R$ and $z_0\in \H$. 
		Let $\eta\subset \H$ be a Jordan curve surrounding $z_0$. Then the statement of Proposition~\ref{prop:Markov-LFH} holds with  $\LF^{(\alpha,z_0)}_{\H}$ in place of $\LF_{\H}$. 
	\end{lemma}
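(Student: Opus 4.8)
The plan is to transcribe the proof of Proposition~\ref{prop:Markov-LFH}, the one new ingredient being an insertion analogue of the Weyl-anomaly reformulation in Proposition~\ref{prop-weyl}, together with the topological observation that, because $\eta$ surrounds $z_0$, the point $z_0$ lies in $\H\setminus A_\eta$ rather than in $A_\eta$; consequently the $\alpha$-insertion sits entirely inside the region we condition on.

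First I would establish the following analogue of Proposition~\ref{prop-weyl}: for any probability measure $\rho$ on $\H$ satisfying~\eqref{eq:rho} with $z_0\notin\mathrm{supp}(\rho)$, there is a constant $\wt Z_\rho=\wt Z_\rho(\alpha,z_0)>0$ such that if $(h,\mathbf c)$ is sampled from $P_\rho\times[\wt Z_\rho e^{(\alpha-Q)c}\,\rd c]$, with $P_\rho$ and $G_\rho$ as in Proposition~\ref{prop-weyl}, then $h+\mathbf c-QG_\rho(\cdot,\infty)+\alpha G_\rho(\cdot,z_0)$ has law $\LF^{(\alpha,z_0)}_\H$. This follows from Definition~\ref{def:LF-alpha} by the same Girsanov computation as in Proposition~\ref{prop-weyl}: one writes $\phi=h+\mathbf c-QG_\H(\cdot,\infty)+\alpha G_\H(\cdot,z_0)$ with $h$ a $\P_\H$-normalized GFF, Cameron--Martin shifts $h$ by $(\alpha-Q)\int G_\H(\cdot,x)\rho(\rd x)$, and uses the double-centering identity~\eqref{eq:Green} to rewrite $-QG_\H(\cdot,\infty)+\alpha G_\H(\cdot,z_0)+(\alpha-Q)\int G_\H(\cdot,x)\rho(\rd x)$ as $-QG_\rho(\cdot,\infty)+\alpha G_\rho(\cdot,z_0)$ up to an additive constant (the $\log|z|_+$ contributions cancel). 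Girsanov's theorem then identifies the law of the shifted field and the shifted value of $\mathbf c$, and $\wt Z_\rho$ combines the Cameron--Martin correction with the prefactor $(2\,\mathrm{Im}\,z_0)^{-\alpha^2/2}|z_0|_+^{2\alpha(Q-\alpha)}$ of Definition~\ref{def:LF-alpha}; this is essentially~\cite[Section 3.3]{hrv-disk} and~\cite{ARS-FZZ}.

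Given this, fix $\eta$ surrounding $z_0$ and pick $\rho$ supported in $\H\setminus A_\eta$ and bounded away from $z_0$ --- possible precisely because $z_0\notin A_\eta$. Write $\phi=h+\mathbf c+g$ with $h\sim P_\rho$ and $g:=-QG_\rho(\cdot,\infty)+\alpha G_\rho(\cdot,z_0)$. The function $g$ is, by the computation above, a finite linear combination of terms $\log|z-w|$ with $w$ ranging over $z_0$, $\bar z_0$, and points of $\mathrm{supp}(\rho)\cup\overline{\mathrm{supp}(\rho)}$, plus a constant; hence it is harmonic on $A_\eta$ (all of its singularities lie outside $A_\eta$), has zero normal derivative on $\partial\H=\R$ by the symmetry $z\mapsto\bar z$, and satisfies $g(z)/\log|z|\to-2Q$ as $|z|\to\infty$ since $G_\rho(z,\infty)\sim 2\log|z|$ while $G_\rho(z,z_0)=O(1)$ there. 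Applying Lemma~\ref{lem:Markov-GFF} with $D=\H$ and $A=A_\eta$, decompose $h=h^{\mathrm{har}}+h_{A_\eta}$; then $\phi^{\mathrm{har}}=h^{\mathrm{har}}+\mathbf c+g$ satisfies all the defining requirements of the harmonic extension in Proposition~\ref{prop:Markov-LFH} (harmonic on $A_\eta$, matching $\phi$ on $\H\setminus A_\eta$, zero normal derivative on $\R$, and with $\phi^{\mathrm{har}}(z)/\log|z|\to-2Q$, using $h^{\mathrm{har}}(z)=o(\log|z|)$ at $\infty$ exactly as in the proof of Proposition~\ref{prop:Markov-LFH}), so $\phi_{A_\eta}=\phi-\phi^{\mathrm{har}}=h_{A_\eta}$. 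By Lemma~\ref{lem:Markov-GFF}, conditionally on $h|_{\H\setminus A_\eta}$ --- equivalently on $\phi|_{\H\setminus A_\eta}$, which is a measurable function of $(h|_{\H\setminus A_\eta},\mathbf c)$ --- the field $h_{A_\eta}$ is a GFF on $A_\eta$ with zero boundary condition on $\eta$ and free boundary condition on $\partial\H$, as claimed.

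I expect no serious obstacle: the argument is a routine adaptation of the proof of Proposition~\ref{prop:Markov-LFH}. The only points needing care are the bookkeeping in the insertion analogue of the Weyl anomaly --- tracking the Cameron--Martin correction together with the explicit prefactor of Definition~\ref{def:LF-alpha} so that $\wt Z_\rho$ comes out right --- and the (purely topological) fact that ``$\eta$ surrounds $z_0$'' is exactly what forces $z_0\notin A_\eta$, hence makes $G_\rho(\cdot,z_0)$ harmonic on $A_\eta$; were $z_0$ to lie in $A_\eta$, the conditional field would acquire an $\alpha$-log-singularity at $z_0$ and the statement would fail.
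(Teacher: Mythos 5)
Your argument is correct, but it takes a different route from the paper. The paper's proof is two lines: by \cite[Lemma 2.2]{ARS-FZZ}, $\LF^{(\alpha,z_0)}_{\H}(\rd\phi)=\lim_{\epsilon\to 0}\epsilon^{\alpha^2/2}e^{\alpha\phi_\epsilon(z_0)}\LF_\H(\rd\phi)$, and since $\eta$ surrounds the circle $\{|z-z_0|=\epsilon\}$ for small $\epsilon$, the tilting factor is a measurable function of $\phi|_{\H\setminus A_\eta}$ alone; reweighting by such a factor cannot change the conditional law of $\phi_{A_\eta}$ given $\phi|_{\H\setminus A_\eta}$, so the lemma is an immediate corollary of Proposition~\ref{prop:Markov-LFH}. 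You instead re-run the proof of Proposition~\ref{prop:Markov-LFH} from scratch, first proving an insertion analogue of the Weyl anomaly (Proposition~\ref{prop-weyl}) so that $\LF^{(\alpha,z_0)}_\H$ is realized as $h+\mathbf c-QG_\rho(\cdot,\infty)+\alpha G_\rho(\cdot,z_0)$ with $h\sim P_\rho$, and then checking that the deterministic part is harmonic on $A_\eta$, Neumann on $\R$, and has the right $-2Q\log|z|$ growth; your Girsanov bookkeeping and the use of \eqref{eq:Green} to cancel the $\log|z|_+$ terms are right, and choosing $\rho$ supported in $D_\eta$ away from $z_0$ is legitimate because $z_0\in\H\setminus A_\eta$. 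What your version buys is self-containedness (no appeal to \cite[Lemma 2.2]{ARS-FZZ}) and an explicit insertion Weyl anomaly; what the paper's version buys is brevity and a reweighting principle that it reuses almost verbatim later (e.g.\ in Lemma~\ref{lem:DMP2}), namely that tilting by a functional of $\phi|_{\H\setminus A_\eta}$ leaves the conditional kernel untouched. Both proofs hinge on the same topological fact you flag, that $\eta$ surrounding $z_0$ places the insertion inside the conditioned-on region; note also that since the conclusion concerns only a Markov kernel in the sense of Definition~\ref{def:Kernel}, your careful tracking of the constant $\wt Z_\rho$ is harmless but not actually needed.
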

	\begin{proof}
		By~\cite[Lemma 2.2]{ARS-FZZ}, we have $\LF^{(\alpha,z_0)}_{\H}(\rd \phi )=\lim_{\epsilon\to 0 }
		\epsilon^{\alpha^2/2} e^{\alpha \phi_\epsilon(z)} \LF_\H(\rd \phi)$ where $\phi_\epsilon$ is the average of $\phi$ over $\{ z\in \H: |z-z_0|=\epsilon \}$. Since $\eta$ surrounds  $\{ z\in \H: |z-z_0|=\epsilon \} $ for small enough $\epsilon$, Lemma~\ref{lem:Markov-LFH} follows from Proposition~\ref{prop:Markov-LFH}. 
	\end{proof} 
	
	We need the domain Markov property for $\LF^{(\alpha,z_0)}_{\H}$ to some local sets {in the sense of~\cite{ss-contour}.We restrict ourselves to  two concrete cases needed later; see~\cite[Section 3]{ss-contour} for the general background.}
	\begin{lemma}\label{lem:local}
		Fix $\alpha\in \R$ and $z_0\in \H$.  
		Suppose $(\phi, \eta)$ satisfies one of the following conditions:
		\begin{itemize}
			\item The law of $(\phi,\eta)$  is a product measure $\LF^{(\alpha,z_0)}_{\H} \times \mathbb P$ where $\mathbb P$ is a probability measure on {Jordan curves} in $\H$ {surrounding $z_0$}.
			\item The law of $\phi$ is $\LF^{(\alpha,z_0)}_{\H}$, and $\eta\subset \H$ is a Jordan curve {surrounding $z_0$} such that for each deterministic open set $U\subset \H$, the event $\{D \subset U\}$ is measurable w.r.t. $\phi|_U$, where $D$ is the bounded {simply-}connected component of $\H\backslash \eta$. 
		\end{itemize}
		Then {with $\phi_{A_\eta}$ defined as in Proposition~\ref{prop:Markov-LFH},} conditioning on $(\phi|_{\H\setminus A_\eta},\eta)$, the conditional law of  $\phi_{A_\eta}$ is the GFF  on $A_\eta$ {with zero boundary condition on $\eta$ and free boundary condition on $\partial\H$.}
	\end{lemma}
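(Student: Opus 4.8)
The plan is to deduce both cases from Lemma~\ref{lem:Markov-LFH}, which treats a \emph{deterministic} Jordan curve surrounding $z_0$. Write $\mathcal G_A$ for the law of the GFF on an annular domain $A\subset\H$ with zero boundary condition on its inner boundary and free boundary condition on $\partial\H$; by Lemma~\ref{lem:Markov-LFH} this is the conditional law of $\phi_{A_\eta}$ when $\eta$ is a fixed such curve, and it does not depend on the field data being conditioned on. For the first bullet, since $\eta$ is independent of $\phi$, I would disintegrate $\LF^{(\alpha,z_0)}_{\H}\times\mathbb P$ over $\eta$: for $\mathbb P$-almost every value $\eta=c$, a deterministic Jordan curve surrounding $z_0$, Lemma~\ref{lem:Markov-LFH} says that conditionally on $\phi|_{\H\setminus A_c}$ the field $\phi_{A_c}$ has law $\mathcal G_{A_c}$, and $c\mapsto\mathcal G_{A_c}$ is a measurable family of Markov kernels. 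Reassembling by Fubini — legitimate because $\mathbb P$ is a probability measure, so the infiniteness of $\LF^{(\alpha,z_0)}_{\H}$ causes no extra difficulty — gives the conclusion in the sense of Definition~\ref{def:Kernel}.

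For the second bullet I would run the standard local-set approximation (cf.\ \cite[Section~3]{ss-contour}), arranged so as to reduce to Lemma~\ref{lem:Markov-LFH} at each finite stage. Let $D$ be the bounded simply connected component of $\H\setminus\eta$. Fix nested dyadic grids $\mathcal Q_n$ of mesh $2^{-n}$, and let $U_n$ be the simply connected hull in $\H$ — filled so that $\H\setminus U_n$ is annular — of the union of the closed $\mathcal Q_n$-squares meeting $\overline D$; put $\eta_n=\partial U_n\cap\H$. Then $U_n$ is a Jordan domain surrounding $z_0$, $U_{n+1}\subseteq U_n$, $\bigcap_n U_n=\overline D$, and $\eta_n$ takes values in a countable family of deterministic Jordan curves. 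The crucial observation is that for each deterministic value $V$ of $U_n$ the event $\{U_n=V\}$ is $\phi|_V$-measurable: it is a finite boolean combination of events $\{D\subseteq W\}$ with $W\subseteq V$ open (take $W=V$ and $W=V\setminus\overline Q$ over the finitely many dyadic squares $Q$ meeting $V$), each of which is $\phi|_W$-measurable, hence $\phi|_V$-measurable, by hypothesis. Consequently, on $\{U_n=V\}$, Lemma~\ref{lem:Markov-LFH} applied to the deterministic curve $\partial V$ shows that conditionally on $\phi|_{U_n}$ the field $\phi_{A_{\eta_n}}$ has law $\mathcal G_{A_{\eta_n}}$; summing over $V$, the same holds conditionally on the pair $(U_n,\phi|_{U_n})$.

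It then remains to let $n\to\infty$. Since $U_n\downarrow\overline D$, the annular domains $A_{\eta_n}=\H\setminus U_n$ increase to $A_\eta$, so the harmonic extensions $\phi^{\mathrm{har}}_{A_{\eta_n}}$ converge locally uniformly on $A_\eta$, whence $\phi_{A_{\eta_n}}\to\phi_{A_\eta}$ as distributions on $A_\eta$, and $\mathcal G_{A_{\eta_n}}\to\mathcal G_{A_\eta}$ by monotone convergence of the mixed Dirichlet--Neumann Green's functions. Since the conditioning $\sigma$-algebras $\sigma(U_n,\phi|_{U_n})$ decrease to $\sigma(\eta,\phi|_{\H\setminus A_\eta})$, testing the finite-stage identity against characteristic functionals $F(\psi)=e^{i\langle\psi,g\rangle}$ with $g\in C_c^\infty(A_\eta)$ and passing to the limit via reverse martingale convergence yields the asserted conditional law. (The additive constant $\mathbf c$ of Definition~\ref{def:LF-alpha} cancels in $\phi_{A_\eta}=\phi-\phi^{\mathrm{har}}$, so the infinite-measure bookkeeping can be handled by working with the finite measure $\mathrm{Law}(\phi-\mathbf c)$, or by restricting to a finite-mass event and invoking $\sigma$-finiteness.) I expect the main obstacle to be precisely this limiting step — obtaining $\phi_{A_{\eta_n}}\to\phi_{A_\eta}$ and $\mathcal G_{A_{\eta_n}}\to\mathcal G_{A_\eta}$ with enough uniformity to interchange the limit with the conditional expectation, and verifying that $\bigcap_n\sigma(U_n,\phi|_{U_n})$ is exactly $\sigma(\eta,\phi|_{\H\setminus A_\eta})$; the discretization measurability $\{U_n=V\}\in\sigma(\phi|_V)$ is the other delicate point, though as indicated it is immediate from the hypothesis.
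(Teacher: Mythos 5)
Your proposal is correct and takes essentially the same route as the paper: the paper's proof simply notes that both cases are instances of local sets and invokes the standard argument of \cite[Lemma 3.9]{ss-contour} upgrading the Markov property of Lemma~\ref{lem:Markov-LFH} to a strong Markov property, which is exactly the disintegration/Fubini step (first bullet) and the dyadic-hull approximation with a limiting step (second bullet) that you write out. The technical points you flag are handled by that cited argument and are not obstacles — e.g.\ $\{U_n=V\}$ can be expressed via countably many events $\{D\subseteq W\}$ (or by defining $U_n$ with slightly enlarged squares), and you do not need $\bigcap_n\sigma(U_n,\phi|_{U_n})$ to equal $\sigma(\eta,\phi|_{\H\setminus A_\eta})$ exactly, only the containment of the latter in the former together with the tower property, since the limiting conditional law $\mathcal G_{A_\eta}$ is $\sigma(\eta)$-measurable.
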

	\begin{proof}
		Both of the two cases are examples of local sets as defined in~\cite[Section 3]{ss-contour}. Using the standard argument from~\cite[Lemma 3.9]{ss-contour} that extends the Markov property to the strong Markov property, we obtain Lemma~\ref{lem:local} from Lemma~\ref{lem:Markov-LFH}.
	\end{proof}

	\subsection{A resampling characterization of $\LF_\tau$}\label{subsec:resampling}
	Let $H^1(\cC_\tau)$ be the Sobolev space on $\cC_\tau$ with inner product $\int_{\cC_\tau} (fg + \nabla f\cdot \nabla g) \rd x$. Let $H^{-1}(\cC_\tau)$ be the continuous dual of $H^1(\cC_\tau)$ and let $\cF$ be its Borel $\sigma$-algebra. Then both the GFF on $\cC_\tau$  and $\LF_\tau$ can be viewed as measures on $(H^{-1}(\cC_\tau), \cF)$.
	Let $\phi^{\mathrm{har}}$ be the harmonic extension of $\phi|_{\cC(\frac{2\tau}{3},\tau)}$ onto $\cC(0,\frac{2\tau}{3})$ with zero normal derivative on $\partial_0 \cC_\tau$. We define the Markov kernel $\Lambda: H^{-1}(\cC_\tau) \times \cF \to [0,1]$ by letting  $\Lambda(\phi,\cdot)$ be the law of  $h+\phi^{\mathrm{har}}$ where $h$ is a GFF on $\cC(0,\frac{2\tau}{3})$ with zero boundary condition on $\{\frac{2\tau}{3}\} \times [0,1]/{\sim_\gamma}$ and free boundary condition on $\partial_0 \cC_\tau$. By Proposition~\ref{prop:Markov-LF} $\LF_\tau$ is an invariant measure of the Markov kernel $\Lambda$, namely 
	$\LF_\tau = \int \Lambda(\phi,\cdot)\LF_\tau(\rd \phi)$.
	
	Let $\bar \Lambda$ be the Markov kernel defined in the same way as $\Lambda$ in the flipped direction. More precisely, 
	let $\phi^{\mathrm{har}}$ be the harmonic extension of $\phi|_{\cC(0,\frac{\tau}{3})}$ onto $\cC(\frac{\tau}{3},\tau)$ with zero normal derivative on $\partial_1 \cC_\tau$.  Let $\bar \Lambda( \phi,\cdot)$ be the law of  $h+\phi^{\mathrm{har}}$ where $h$ is a GFF on $\cC(\frac{\tau}{3},\tau)$ with zero boundary condition on $\{\frac{\tau}{3}\} \times [0,1]/{\sim_\gamma}$ and free boundary condition on $\partial_1 \cC_\tau$. Then by symmetry  $\LF_\tau$ is an invariant measure of $\bar \Lambda$ as well. We now explain that modulo a multiplicative constant $\LF_\tau$ is the unique measure that is invariant under both $\Lambda$ and $\bar \Lambda$. For both its proof and applications, it will be convenient for us to truncate  $\LF_\tau$  by the quantum length.
	
	We first recall the notion of \emph{quantum length}.   Let $h$ be a free boundary GFF on a domain $D$ which contains a straight line segment $L$. Fix $\gamma\in (0,2)$. Then the $\gamma$-LQG length measure  on $L$  is defined by $\mathcal L^\gamma_h  = \lim_{\epsilon \to 0} \epsilon^{\frac{\gamma^2}{4}}   e^{\frac{\gamma}{2} h_{\epsilon}(z)} dz$,  where $dz$ is the Lebesgue measure on $L$, and $h_{\epsilon}(z)$ is the  average of $h$ over $\{ w\in D: |w-z|=\epsilon \}$. {This is} an example of Gaussian multiplicative chaos (GMC); see e.g.~\cite[Section 3]{berestycki-lqg-notes}, the limit exists in probability in the weak topology of measures on $L$.  Suppose $\phi$ is a Liouville field on $\cC_\tau$, we can define $\mathcal L^\gamma_\phi= \lim_{\epsilon \to 0} \epsilon^{\frac{\gamma^2}{4}}   e^{\frac{\gamma}{2} \phi_{\epsilon}(z)} dz$ as well. Let $I$ be a finite interval in $(0,\infty)$. Let $\Omega_I\subset H^{-1}(\cC_\tau)$ be the event that $\mathcal L^\gamma_\phi (\partial_0\cC_\tau)\subset I$ and let $\ol \Omega_I \subset H^{-1}$ be the event that  $  \mathcal L^\gamma_\phi (\partial_1\cC_\tau) \subset I$. 
	By the following lemma, $\LF_{\tau}(\Omega_I \cap \ol \Omega_I)<\infty$.
	\begin{lemma}\label{lem:GMC}
		Fix $\gamma\in (0,2)$ and $\tau>0$. Suppose we are in the setting of Definition~\ref{def:LF-tau} where $h$ is a free boundary GFF on the horizontal cylinder $\cC_\tau$ with a certain normalization. Then for non-negative measurable functions $f$ and $g$ on $(0,\infty)$,  we have 
		\begin{equation}\label{eq:GMC}
		\LF_\tau[f(\mathcal L^\gamma_\phi (\partial_0\cC_\tau)) g(\mathcal L^\gamma_\phi (\partial_1\cC_\tau))]=  \frac{2}{\gamma}\int_0^\infty \ell^{-1}f(\ell) \E\left[g\left(\ell \frac{\mathcal L^\gamma_h(\partial_1\cC_\tau)}{\mathcal L^\gamma_h(\partial_0\cC_\tau)}\right)\right]\rd\ell.
		\end{equation}
		Note that the ratio $\frac{\mathcal L^\gamma_h(\partial_1\cC_\tau)}{\mathcal L^\gamma_h(\partial_0\cC_\tau)}$ does not depend on normalization of $h$.
	\end{lemma}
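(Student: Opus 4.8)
The plan is a direct change-of-variables computation using the definition of $\LF_\tau$ as the pushforward of $\P_{\tau,\rho}\times dc$ under $(h,\mathbf c)\mapsto h+\mathbf c$, combined with the elementary scaling of Gaussian multiplicative chaos under a constant shift of the field.

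First I would record the two standard GMC inputs. For $\gamma\in(0,2)$ the boundary length masses $\mathcal{L}^\gamma_h(\partial_0\cC_\tau)$ and $\mathcal{L}^\gamma_h(\partial_1\cC_\tau)$ are well defined, and almost surely finite and strictly positive; moreover the map sending a field to this pair of numbers is Borel measurable on $H^{-1}(\cC_\tau)$ (see e.g.~\cite{RV_review}). Next, writing $\phi=h+\mathbf c$ with $\mathbf c\in\R$ deterministic, the circle averages satisfy $\phi_\epsilon=h_\epsilon+\mathbf c$, so $\mathcal{L}^\gamma_\phi=e^{\frac{\gamma}{2}\mathbf c}\mathcal{L}^\gamma_h$ on every boundary segment; in particular $\mathcal{L}^\gamma_\phi(\partial_i\cC_\tau)=e^{\frac{\gamma}{2}\mathbf c}\mathcal{L}^\gamma_h(\partial_i\cC_\tau)$ for $i=0,1$.

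Then, abbreviating $L_i=\mathcal{L}^\gamma_h(\partial_i\cC_\tau)$ for the free boundary GFF $h$ of Definition~\ref{def:LF-tau}, the definition of $\LF_\tau$ and Tonelli's theorem (both $f$ and $g$ are nonnegative) give
\[
\LF_\tau\big[f(\mathcal{L}^\gamma_\phi(\partial_0\cC_\tau))\,g(\mathcal{L}^\gamma_\phi(\partial_1\cC_\tau))\big]
=\int_\R \E\Big[f\big(e^{\frac{\gamma}{2}c}L_0\big)\,g\big(e^{\frac{\gamma}{2}c}L_1\big)\Big]\,\rd c .
\]
Fixing a realization of $h$ with $L_0,L_1\in(0,\infty)$ and substituting $\ell=e^{\frac{\gamma}{2}c}L_0$, so that $\rd c=\frac{2}{\gamma}\ell^{-1}\rd\ell$ and $e^{\frac{\gamma}{2}c}L_1=\ell L_1/L_0$, the inner integral becomes $\frac{2}{\gamma}\int_0^\infty \ell^{-1}f(\ell)\,g(\ell L_1/L_0)\,\rd\ell$. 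Taking expectation over $h$ and applying Tonelli once more yields~\eqref{eq:GMC}. The closing remark of the lemma is then immediate: changing the normalization of $h$ replaces it by $h$ plus a random additive constant, which cancels in the ratio $\mathcal{L}^\gamma_h(\partial_1\cC_\tau)/\mathcal{L}^\gamma_h(\partial_0\cC_\tau)$, so the law of that ratio — and hence the right-hand side of~\eqref{eq:GMC} — is normalization independent, consistent with Lemma~\ref{lem:ind-rho}.

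I do not expect a genuine obstacle. The only points that need care are the measurability of the GMC functionals and the almost sure finiteness and positivity of $L_0,L_1$, which make the substitution $\ell=e^{\frac{\gamma}{2}c}L_0$ legitimate for a.e.\ $h$; both are standard for $\gamma\in(0,2)$. As a consistency check, taking $f=g=\mathbf{1}_I$ for a finite interval $I\subset(0,\infty)$ gives $\LF_\tau(\Omega_I)\le \frac{2}{\gamma}\int_I \ell^{-1}\,\rd\ell<\infty$, which is exactly the finiteness statement invoked right after the lemma.
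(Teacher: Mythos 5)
Your proof is correct and follows essentially the same route as the paper: unfold the definition of $\LF_\tau$ as $\P_{\tau,\rho}\times dc$, use $\mathcal L^\gamma_{h+\mathbf c}=e^{\frac{\gamma}{2}\mathbf c}\mathcal L^\gamma_h$, and substitute $\ell=e^{\frac{\gamma}{2}c}\mathcal L^\gamma_h(\partial_0\cC_\tau)$ with $\rd c=\frac{2}{\gamma}\ell^{-1}\rd\ell$. The extra remarks on measurability, a.s.\ positivity of the boundary lengths, and Tonelli are fine but not needed beyond what the paper's two-line argument already uses.
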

	\begin{proof}By definition 
		$\LF_\tau[f(\mathcal L^\gamma_\phi (\partial_0\cC_\tau)) g(\mathcal L^\gamma_\phi (\partial_1\cC_\tau))]=\int_\R \E[f(e^{\frac{\gamma c}{2}}\mathcal L^\gamma_h(\partial_0\cC_\tau))g(e^{\frac{\gamma c}{2}}\mathcal L^\gamma_h(\partial_1\cC_\tau))]dc.$
		Setting $\ell=e^{\frac{\gamma c}{2}}\mathcal L^\gamma_h(\partial_0\cC_\tau)$ and use $dc=\frac{2}{\gamma} \ell^{-1} \rd\ell$ we conclude the proof. 
	\end{proof}
	
	\begin{proposition}\label{prop:inv-unique}
		{Let $I\subset (0,\infty)$ be a compact interval. Let $\LF_{\tau,I}$  be  the probability measure proportional to the restriction of $\LF_{\tau}$ to $\Omega_I \cap \ol \Omega_I$.}
		For $\phi\in H^{-1}(\cC_\tau)$ and $A\in \cF$, let $\Lambda_I(\phi,A )=\frac{\Lambda(\phi,A)}{\Lambda( \phi,  \Omega_I)}$ and  $\bar \Lambda_I(\phi,A)=\frac{\bar \Lambda(\phi,A)}{\bar \Lambda( \phi, \ol \Omega_I )}$. 
		Then $\LF_{\tau,I}$ is the unique  probability measure on $\Omega_I$ that is invariant under both $\Lambda_I $ and $\ol \Lambda_I$, i.e.\ the only probability measure $M$ with $M = \int \Lambda(\phi, \cdot) M(\rd \phi) = \int \ol \Lambda(\phi, \cdot) M(\rd \phi)$.
	\end{proposition}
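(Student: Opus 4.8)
The plan is to show that conversely any probability measure on $\Omega_I$ invariant under both $\Lambda_I$ and $\ol\Lambda_I$ equals $\mu:=\LF_{\tau,I}$. First note $\mu$ is itself such a measure: by Lemma~\ref{lem:GMC} we have $0<\LF_\tau(\Omega_I)<\infty$, and by Proposition~\ref{prop:Markov-LF} (applied to a non-contractible circle inside $\cC(\tfrac{2\tau}{3},\tau)$, resp.\ $\cC(0,\tfrac{\tau}{3})$) together with Bayes' rule, $\mu$ is invariant under $\Lambda_I$ and $\ol\Lambda_I$. The first step is to reinterpret the kernels as conditional expectations under $\mu$. Let $\cF^-=\sigma(\phi|_{\cC(0,\tau/3)})$ and $\cF^+=\sigma(\phi|_{\cC(2\tau/3,\tau)})$ be the $\sigma$-algebras of the field on the two ``frozen'' subcylinders of $\Lambda$ and $\ol\Lambda$. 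Using Proposition~\ref{prop:Markov-LF} and Bayes' rule (and the fact that, since $\mathcal{L}^\gamma_\phi(\partial_0\cC_\tau)$ has a continuous law, the resampled boundary length lies in $I$ with positive probability), one checks that under $\mu$ the conditional law of $\phi|_{\cC(0,2\tau/3)}$ given $\cF^+$ is exactly $\Lambda_I(\phi,\cdot)$; hence, as operators on $L^2(\mu)$, $\Lambda_I=\E_\mu[\,\cdot\mid\cF^+]$, so $\mu$ is reversible for $\Lambda_I$, and symmetrically $\ol\Lambda_I=\E_\mu[\,\cdot\mid\cF^-]$ with $\mu$ reversible for $\ol\Lambda_I$. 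The same computation shows that an arbitrary probability measure $\nu$ on $\Omega_I$ invariant under $\Lambda_I$ must have the property that, conditionally on $z:=\phi|_{\cC(2\tau/3,\tau)}$, the law of $\phi|_{\cC(0,\tau/3)}$ under $\nu$ is the \emph{same} kernel $Q(z,\cdot)$ as under $\mu$ (namely the $\cC(0,\tau/3)$-marginal of $\Lambda_I$), and invariance under $\ol\Lambda_I$ forces the conditional law of $\phi|_{\cC(\tau/3,\tau)}$ given $\cF^-$ to agree with that under $\mu$.

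The second and main step is the mutual absolute continuity of the family $\{Q(z,\cdot)\}$. For admissible $z_1,z_2$ (realizations of $\phi|_{\cC(2\tau/3,\tau)}$), by Definition~\ref{def:LF-tau} and the construction of $\Lambda$ the measure $Q(z_i,\cdot)$ is the law of $(h+\psi_{z_i})|_{\cC(0,\tau/3)}$ conditioned on $\{\mathcal{L}^\gamma_\phi(\partial_0\cC_\tau)\in I\}$, where $h$ is a GFF on $\cC(0,2\tau/3)$ with zero boundary condition on $\{2\tau/3\}\times[0,1]/{\sim}$ and free boundary condition on $\partial_0\cC_\tau$, and $\psi_{z_i}$ is the harmonic extension of $z_i$ onto $\cC(0,2\tau/3)$ with vanishing normal derivative on $\partial_0\cC_\tau$. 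Since $\cC(0,\tau/3)$ lies at distance $\tau/3>0$ from $\{2\tau/3\}\times[0,1]/{\sim}$, the difference $\psi_{z_1}-\psi_{z_2}$ is harmonic and smooth on a neighbourhood of $\overline{\cC(0,\tau/3)}$, hence has finite Dirichlet energy there and lies in the Cameron--Martin space of $h|_{\cC(0,\tau/3)}$; therefore the laws of $(h+\psi_{z_1})|_{\cC(0,\tau/3)}$ and $(h+\psi_{z_2})|_{\cC(0,\tau/3)}$ are mutually absolutely continuous. As the conditioning event $\{\mathcal{L}^\gamma_\phi(\partial_0\cC_\tau)\in I\}$ has positive probability under each (a consequence of $0<\LF_\tau(\Omega_I)<\infty$ and nondegeneracy of the GMC length, cf.\ Lemma~\ref{lem:GMC}), conditioning preserves mutual absolute continuity. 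Thus all $Q(z,\cdot)$ share a single measure class, which is also the class of the law $\mu_0$ of $\phi|_{\cC(0,\tau/3)}$ under $\mu$.

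Given these ingredients the conclusion is soft. Let $\nu$ be a probability measure on $\Omega_I$ invariant under $\Lambda_I$ and $\ol\Lambda_I$. Writing the $\cC(0,\tau/3)$-marginals of $\nu$ and $\mu$ as mixtures $\int Q(z,\cdot)\,(\cdot)(dz)$ over their respective $\cF^+$-marginals and using that all $Q(z,\cdot)$ lie in the class of $\mu_0$, we get that the $\cC(0,\tau/3)$-marginal of $\nu$ is absolutely continuous with respect to $\mu_0$; since the conditional law of $\phi|_{\cC(\tau/3,\tau)}$ given $\cF^-$ is the same kernel under $\nu$ and $\mu$, it follows that $\nu\ll\mu$, and symmetrically $\mu\ll\nu$. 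Let $f=d\nu/d\mu$, positive $\mu$-a.s. Invariance of $\nu$ under $\ol\Lambda_I$ and reversibility of $\ol\Lambda_I$ w.r.t.\ $\mu$ give $f=\E_\mu[f\mid\cF^-]$ $\mu$-a.s., so $f$ is $\cF^-$-measurable; likewise $f$ is $\cF^+$-measurable. Finally, any $\cF^-\cap\cF^+$-measurable function is $\mu$-a.s.\ constant: if $g=g^+(\phi|_{\cC(2\tau/3,\tau)})=g^-(\phi|_{\cC(0,\tau/3)})$ $\mu$-a.s., then conditionally on $z$ the function $g^-$ is $Q(z,\cdot)$-a.s.\ equal to the constant $g^+(z)$, and mutual absolute continuity of the $Q(z,\cdot)$ forces $g^-$ to be $\mu_0$-a.s.\ constant. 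Hence $f$ is $\mu$-a.s.\ constant, and being a probability density $f\equiv1$, i.e.\ $\nu=\mu$.

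I expect the main obstacle to be the second step — the mutual absolute continuity of $\{Q(z,\cdot)\}$ — and inside it the two analytic inputs: that the harmonic extension of rough (GFF-trace) boundary data is smooth with locally finite Dirichlet energy on subdomains at positive distance from the data, so that Cameron--Martin/Girsanov applies to the difference $\psi_{z_1}-\psi_{z_2}$; and that the GMC boundary-length event $\{\mathcal{L}^\gamma_\phi(\partial_0\cC_\tau)\in I\}$ has positive probability and that conditioning on it preserves absolute continuity. Both are standard facts about the GFF and one-dimensional GMC, but require care; the rest of the argument is Hilbert-space and elementary measure theory.
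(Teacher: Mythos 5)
Your argument is correct, but it takes a genuinely different route from the paper. The paper shows that $\LF_{\tau,I}$ is invariant for both truncated kernels, then proves that the composed kernel $K=\Lambda_I\cdot\ol\Lambda_I$ is irreducible with respect to $\LF_{\tau,I}$ (using exactly the absolute-continuity input you use: the laws of a mixed-boundary GFF and of the GFF plus a smooth perturbation supported away from the Dirichlet circle are mutually absolutely continuous when restricted to a sub-cylinder at positive distance, citing the proof of \cite[Proposition 2.9]{ig4}), and then invokes the general-state-space Markov chain theorem of Meyn--Tweedie that an irreducible kernel admits at most one invariant probability measure. You instead avoid the Markov chain machinery entirely: you identify $\Lambda_I$ and $\ol\Lambda_I$ as the conditional expectations $\E_\mu[\cdot\mid\cF^+]$ and $\E_\mu[\cdot\mid\cF^-]$ (so $\mu=\LF_{\tau,I}$ is reversible for both), use the mutual absolute continuity of the conditional kernels $Q(z,\cdot)$ to get $\nu\sim\mu$ for any invariant $\nu$, and then show that $f=d\nu/d\mu$, being both $\cF^-$- and $\cF^+$-measurable, is forced to be constant by the same equivalence of the $Q(z,\cdot)$. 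The two proofs thus rest on the same Cameron--Martin/Girsanov fact, but yours is self-contained at the cost of extra bookkeeping (positivity of $\Lambda(\phi,\Omega_I)$, measurability of the boundary-length event with respect to $\sigma(\phi|_{\cC(0,\tau/3)})$, and the disintegration step giving $\nu\sim\mu$), while the paper's is shorter by outsourcing uniqueness to the irreducibility theorem; your technical caveats (smoothness and finite Dirichlet energy of the harmonic difference away from the Dirichlet circle, full support of the GMC boundary length) are standard and are implicitly assumed in the paper's argument as well.
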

	\begin{proof}
		We say that a Markov kernel $K: H^{-1}(\cC_\tau) \times \cF \to [0,1]$ is irreducible if there exists a positive measure $\rho$ on $H^{-1}(\cC_\tau)$  such that the following holds. For every $x \in H^{-1}(\cC_\tau)$ and $A \in \cF$ with $\rho(A)>0$ there exists a positive integer $n$ (which may depend on $x$ and $A$) such that  $K^n(x,A) > 0$, {where $K^n$ denotes the Markov kernel corresponding to $n$ steps of the Markov chain}. It is known {(from e.g.~\cite[Propositions 4.2.1 and 10.1.1, and Theorem 10.0.1]{meyn-tweedie})} that if an irreducible  Markov kernel has an invariant probability measure, then it has only one  invariant probability measure. 
		
		By Proposition~\ref{prop:Markov-LF}, we see that  $\LF_{I,\tau}$ is an invariant measure of both $\Lambda_I$ and $\ol \Lambda_I$.  Now we claim that the Markov kernel $K$ corresponding to the composition of $\ol \Lambda_I$ then $\Lambda_I$ 
		is irreducible, which will imply the uniqueness of the invariant measure. It is well known that if $h$ is a GFF in $\cC(\frac\tau3, \tau)$ with zero boundary conditions on $\{\frac\tau3\}\times[0,1]/{\sim}$ and free boundary conditions on $\{\tau\} \times [0,1]/{\sim_\gamma}$ and $g$ is a smooth function in $\cC(\frac\tau3, \tau)$ {with support bounded away from $\{\frac\tau3\} \times [0,1]/{\sim}$ and} with normal derivative zero on $\{\tau\}\times [0,1] /{\sim}$, then the laws of $h|_{\cC(\frac{2\tau}3, \tau)}$ and $(h+g)|_{\cC(\frac{2\tau}3, \tau)}$ are mutually absolutely continuous, see e.g.\ the {proof} of \cite[Proposition 2.9]{ig4}. 
		Therefore, for any $x \in H^{-1}(\cC_\tau)$, the 
		probability  measures $\ol \Lambda(x, \cdot)|_{\cC(\frac{2\tau}3, \tau)}$ and $\LF_{\tau, I}|_{\cC(\frac{2\tau}3, \tau)}$ are mutually absolutely continuous, where we write $M|_{\cC(\frac{2\tau}3, \tau)}$ to denote the law of $\phi|_{\cC(\frac{2\tau}3, \tau)}$ where $\phi \sim M$. Since $\Lambda(\phi, \cdot)$ depends only on $\phi|_{\cC(\frac{2\tau}3, \tau)}$, we conclude that for any $x \in H^{-1}(\cC_\tau)$ the measures $K(x, \cdot)$ and $\LF_{\tau, I}$ are mutually absolutely continuous. We can take $\rho = \LF_{\tau, I}$ to conclude that $K$ is irreducible as desired. 
	\end{proof}

	\section{The annulus boundary partition function for Liouville CFT}\label{sec:LF-PFN}

	In this section we prove Theorem~\ref{thm:GMC-ratio} for the free boundary GFF on $\cC_\tau$, and the KPZ relation in Theorem~\ref{thm:KPZ}. 
	We first recall {the result we need from~\cite{Wu22}} for annulus partition function of Liouville CFT.
	
	{
		\subsection{Input from~\cite{Wu22}  on Liouville CFT on the annulus}\label{subsec:Wu}
		The result we need for the proofs of  Theorems~\ref{thm:GMC-ratio} and~\ref{thm:KPZ} is the following.
		\begin{theorem}\label{thm:bdy-bootstrap}
			Fix $\tau>0$. For $\gamma \in (0,2)$ and $\mu_0, \mu_1>0$, let $L_0=\mathcal L^\gamma_\phi (\partial_0\cC_\tau)$ and $L_1=\mathcal L^\gamma_\phi (\partial_1\cC_\tau)$. Then  there is a positive constant $C=C(\tau)$ such that
			\begin{align}\label{eq:bdy-bootstrap}
			\LF_\tau[L_0L_1 e^{- \mu_0 L_0 - \mu_1 L_1}] = \frac{{C(\tau)}}{2 \pi} \int_{\mathbb{R}}   \partial_{\mu_0}U(Q + i P, \mu_0)  \partial_{\mu_1}U(Q - i P, \mu_1) e^{- \pi \tau P^2}\, \rd P,
			\end{align}
			where $U(\alpha, \mu_i) = \frac{2}{\gamma} \Gamma(\frac{2(\alpha - Q)}{\gamma}) \mu_i^{\frac{2(\alpha - Q)}{\gamma}} \Gamma(\frac{\gamma \alpha}{2} -\frac{\gamma^2}{4}) \Gamma(1 - \frac{\gamma^2}{4})^{\frac{2(Q - \alpha)}{\gamma}}$.
		\end{theorem}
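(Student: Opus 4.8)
The plan is to establish this via a rigorous conformal bootstrap for Liouville CFT on the annulus, in the probabilistic Segal-axiom framework of~\cite{GKRV_bootstrap,GKRV-Segal}; this is Wu's route~\cite{Wu22}, and I would follow it. The first step is to recognize the left-hand side of~\eqref{eq:bdy-bootstrap} as a genuine (convergent) Liouville correlation function on $\cC_\tau$. Each factor $L_i=\mathcal L^\gamma_\phi(\partial_i\cC_\tau)$ is the integral over $\partial_i\cC_\tau$ of the weight-one boundary cosmological operator, so $\LF_\tau[L_0 L_1 e^{-\mu_0 L_0-\mu_1 L_1}]$ is the annulus amplitude with no bulk insertions, one integrated marginal boundary insertion on each boundary circle, and boundary cosmological constant $\mu_i$ on $\partial_i\cC_\tau$; morally it is $\partial_{\mu_0}\partial_{\mu_1}$ of the cosmological-constant partition function, and these two $L_i$ insertions are exactly what makes the zero-mode integral in Definition~\ref{def:LF-tau} converge (each $L_i$ carries $e^{\gamma\mathbf c/2}$, forcing decay as $\mathbf c\to-\infty$, while $e^{-\mu_i L_i}$ gives super-exponential decay as $\mathbf c\to+\infty$). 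The two $\partial_{\mu_i}U$ in the answer will come precisely from these insertions.

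The second step is the gluing. Cut $\cC_\tau$ along the middle circle $C:=\{\tau/2\}\times[0,1]/{\sim}$ into two half-cylinders isometric to $\cC_{\tau/2}$, the $i$-th carrying $\partial_i\cC_\tau$ together with $C$. Using the Markov decomposition of the free boundary GFF across $C$ (the analogue of Lemma~\ref{lem:Markov-GFF} for splitting a cylinder into two cylinders) together with the explicit Gaussian law of the field trace on $C$, the correlation function becomes an $L^2$-pairing $\langle \mathcal A^{(0)}_{\mu_0},\mathcal A^{(1)}_{\mu_1}\rangle$ of the two half-cylinder amplitudes, each viewed as a vector in the Liouville Hilbert space of functionals of the field on the unit circle $C$; this is the Segal gluing axiom. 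Diagonalizing the Liouville Hamiltonian $H$ on $C$, whose generalized eigenstates are the primaries $V_{Q+iP}$, $P\in\R$, of conformal weight $\Delta_{Q+iP}=\tfrac{Q^2+P^2}{4}$ (together with their Virasoro descendants), the half-cylinder of length $\tau/2$ acts by the propagator $e^{-\tfrac{\tau}{2}H}$, which on the momentum-$P$ primary sector is multiplication by $e^{-\pi\tau P^2/2}$ up to a $P$-independent factor; the two halves together produce the Gaussian weight $e^{-\pi\tau P^2}$ in~\eqref{eq:bdy-bootstrap}, while the central-charge shift $e^{\pi\tau/6}$, the $P$-independent descendant (character) factor, and the Weyl-anomaly and GFF-partition-function normalizations are all absorbed into $C(\tau)$. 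Finally, the overlap of $\mathcal A^{(0)}_{\mu_0}$ with the primary $V_{Q+iP}$ on $C$ (resp.\ of $\mathcal A^{(1)}_{\mu_1}$ with $V_{Q-iP}$) is, by conformal invariance, a disk amplitude with one bulk insertion at the point onto which $C$ is contracted, cosmological constant $\mu_i$, and one integrated marginal boundary insertion; since that insertion is itself a boundary length $L$, the overlap equals $\LF_\H^{(Q+iP, i)}[L\,e^{-\mu_0 L}]=-\partial_{\mu_0}\LF_\H^{(Q+iP, i)}[e^{-\mu_0 L}]$ (and the analogue with $Q-iP,\mu_1$), which by the probabilistic FZZ formula of~\cite{ARS-FZZ} (continued to $\alpha=Q\pm iP$) equals $\partial_{\mu_i}U(Q\pm iP,\mu_i)$ up to an explicit $P$-dependent constant. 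Reassembling the pieces, with all constants collected into $C(\tau)$, yields~\eqref{eq:bdy-bootstrap}.

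The hard part, where essentially all the work lies, is making the gluing rigorous at the probabilistic level. One needs the construction of the Liouville Hilbert space and Hamiltonian, and crucially the spectral resolution of $H$ with its continuous spectrum $\{\Delta_{Q+iP}:P\ge0\}$ and the attendant Plancherel-type formula, which is the core of~\cite{GKRV_bootstrap}; one needs the half-cylinder amplitudes to lie in the appropriate weighted-$L^2$ space and the gluing identity to hold with the measure-theoretic subtleties on the cutting circle $C$ (regularization of the circle-GFF, the correct Weyl-anomaly factors), adapting~\cite{GKRV-Segal} to surfaces with boundary and to a cosmological-constant boundary term; and one needs the identification of the internal amplitude with $U$, which requires controlling the operator-product (conformal-welding) degeneration as $C$ shrinks to a boundary point. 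Granting the spectral theory of $H$, the remaining analytic point is the absolute convergence of the resulting $\int_\R\cdots\,\rd P$, which follows from the large-$|P|$ decay of $\partial_{\mu_i}U(Q+iP,\mu_i)$ against the Gaussian factor $e^{-\pi\tau P^2}$.
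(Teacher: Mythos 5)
The paper does not prove this statement itself: it is imported verbatim (as a reformulation of Theorem 1.3 of \cite{Wu22}), with the unspecified constant $C(\tau)$ only pinned down to $1$ later, inside the proof of Theorem~\ref{thm:GMC-ratio}. Your outline — Segal-type gluing along a middle circle, spectral resolution of the Liouville Hamiltonian from \cite{GKRV_bootstrap,GKRV-Segal} giving the $e^{-\pi\tau P^2}$ weight, and identification of the half-cylinder amplitudes with the FZZ-type disk one-point function $U(\alpha,\mu)$ of \cite{remy-fb-formula,ARS-FZZ} — is exactly the route of Wu's proof, so your proposal matches the (cited) argument, with the hard analytic steps correctly attributed to that framework rather than redone.
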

		Theorem~\ref{thm:bdy-bootstrap} is one instance of the various conformal bootstrap formulae for annulus partition function of Liouville CFT obtained in~\cite{Wu22}. To extract~\eqref{eq:bdy-bootstrap} from~\cite{Wu22},  we first note that  \cite[Theorem 1.3]{Wu22} can be formulated in our notation as follows:
		\begin{align}\label{eq:bdy-bootstrap2}
		\LF_\tau[L_0L_1 e^{- \mu_0 L_0 - \mu_1 L_1-\mu A}] = \frac{{C(\tau)}}{2 \pi} \int_{\mathbb{R}}   \partial_{\mu_0}U(Q + i P, \mu_0,\mu)  \partial_{\mu_1}U(Q - i P, \mu_1,\mu) e^{- \pi \tau P^2}\, \rd P.
		\end{align}
		Here $\mu>0$ is the bulk cosmological constant and $A$ is the total quantum area for the Liouville field, and 
		$U(\alpha, \mu_i,\mu)$ is for the one-point bulk structure constant for Liouville CFT on the disk with bulk cosmological constant $\mu$ and boundary cosmological constant $\mu_i$, which was computed in~\cite{ARS-FZZ}.
		We note that~\cite{Wu22} expresses $C(\tau)$ in terms of a determinant of Laplacian operator and a free field partition function; our proof of Theorem~\ref{thm:GMC-ratio} below will in fact yield that $C(\tau)=1$ for all $\tau$.

		As explained in~\cite[Remark 1.4]{Wu22}, when $\mu=0$, \cite[Theorem 1.3]{Wu22} still holds with the same proof. Indeed, the starting point of the proof of the bootstrap formula~\eqref{eq:bdy-bootstrap2} is the spectrum resolution of Liouville Hamiltonian in~\cite{GKRV_bootstrap}. For $\mu>0$, the precise input needed is~\cite[Proposition 6.9]{GKRV_bootstrap}. Based on~\cite[Proposition 6.9]{GKRV_bootstrap}, the conformal ward identity yields that $\LF_\tau[L_0L_1 e^{- \mu_0 L_0 - \mu_1 L_1-\mu A}]$ can be written as an integral involving structure constants (in this case $U(\alpha, \mu_i,\mu)$) and a term only depending on the Virasoro symmetry called the annulus conformal block. The main contribution of~\cite{Wu22} is a fine analysis of the annulus conformal block based on a rigorous realization of Cardy's method on boundary CFT. This analysis gives that although the initial output of the spectrum resolution and conformal Wald identity is a complicated bootstrap expression for  $\LF_\tau[L_0L_1 e^{- \mu_0 L_0 - \mu_1 L_1-\mu A}]$, the   final output is simply~\eqref{eq:bdy-bootstrap2}.

		For the case of Theorem~\ref{thm:bdy-bootstrap} where $\mu=0$, the starting point is still the spectrum resolution. The input~\cite[Proposition 6.9]{GKRV_bootstrap} is replaced by~\cite[Proposition 4.9]{GKRV_bootstrap} (also see \cite[(4.14)]{Wu22}), which concerns the free field Hamiltonian. Now the structure constant is $U(\alpha, \mu_i,0) = U(\alpha, \mu_i)$. The expression of $U(\alpha, \mu_i)$ in Theorem~\ref{thm:bdy-bootstrap}  was derived in~\cite{remy-fb-formula}; the result is stated there as a GMC moment computation, but see \cite[Theorem 1.6]{RZ_boundary} for the formulation in terms of $U(\alpha, \mu_i)$. The analysis in~\cite{Wu22} for the annulus conformal block based on the conformal Wald identity and Cardy's method still applies here. As a consequence, 
		\eqref{eq:bdy-bootstrap2} still holds with $\mu=0$ hence our Theorem~\ref{thm:bdy-bootstrap} holds. 
	}

	\subsection{Proof of Theorems \ref{thm:GMC-ratio} and~\ref{thm:KPZ}}\label{subsec:2proof}
	
	\begin{proof}[Proof of Theorem~\ref{thm:GMC-ratio}]
		Starting from \eqref{thm:bdy-bootstrap}, by using the above formula for $U(\alpha, \mu_i)$, the identity $\Gamma(x) \Gamma(1-x) = \frac{\pi}{\sin(\pi x)}$, and setting $\mu_1=1$  we have:
		\begin{align*}
		\LF_\tau[L_1 L_0 e^{- L_1 - \mu_0 L_0}] &=  -\frac{2 \pi {C(\tau)}}{\mu_0 \gamma^2 }  \int_\R   \frac{\mu_0^{- \frac{2 i P}{\gamma}}}{\sin(\frac{2\pi i P}{\gamma})  \sin(\frac{\gamma\pi i P}{2})} e^{- \pi \tau P^2} P^2 \, \rd P \\
		&=  -\frac{\gamma \pi {C(\tau)}}{4\mu_0 }  \int_\R   \frac{\mu_0^{iu}}{\sin(u \pi i)  \sin(\frac{\gamma^2}4 u \pi i)} e^{- \frac{\pi\gamma^2}4 \tau u^2} u^2 \, \rd u.
		\end{align*}
		{with $C(\tau)$ as in Theorem~\ref{thm:bdy-bootstrap}.} By Lemma \ref{lem:GMC}, for all real $x$:
		\begin{align}\label{eq:fraction1}
		\LF_\tau[L_1e^{-L_1} L_0^{ix}]=  \frac{2}{\gamma}\int_0^\infty \ell^{ix}e^{-\ell}  \rd\ell
		\E\left[\left( \frac{\mathcal L^\gamma_h(\partial_1\cC_\tau)}{\mathcal L^\gamma_h(\partial_0\cC_\tau)}\right)^{ix}\right]= \frac{2}{\gamma} \Gamma(1+ix) \E\left[\left( \frac{\mathcal L^\gamma_h(\partial_1\cC_\tau)}{\mathcal L^\gamma_h(\partial_0\cC_\tau)}\right)^{ix}\right].
		\end{align}
		Using furthermore $\Gamma(1 - ix) L_0^{ix -1} = \int_{0}^{\infty}  \lambda^{-ix } e^{- \lambda L_0} \, \rd\lambda = \int_{\mathbb{R}}  e^{(1 -ix) \omega} e^{- e^{\omega} L_0}\, \rd \omega$, we obtain:
		\begin{align*}
		\E\left[\left( \frac{\mathcal L^\gamma_h(\partial_1\cC_\tau)}{\mathcal L^\gamma_h(\partial_0\cC_\tau)}\right)^{ix}\right] &= \frac{\gamma}{2 \Gamma(1 + ix) \Gamma( 1 -ix) } \int_{- \infty}^{\infty}  e^{(1-ix) \omega} \LF_\tau[L_0 L_1e^{-L_1} e^{- e^{\omega} L_0}] \, \rd \omega \\
		&= -\frac{\gamma^2  \sin (\pi ix) {C(\tau)}}{8 ix} \iint_{-\infty}^\infty \frac{e^{i  \omega u}}{\sin(u \pi i) \sin(\frac{\gamma^2}4 u \pi i)} e^{- \frac{\pi\gamma^2}4 \tau u^2} u^2 \, \rd u \, e^{-ix\omega}\, \rd \omega.
		\end{align*}
		Using the Fourier inversion formula $f(x) = \frac{1}{2\pi} \int_{\mathbb{R}}  e^{- i x \omega} \int_{\mathbb{R}} e^{i \omega u} f(u) \,\rd u \,\rd \omega$, we obtain:
		\begin{align}\label{eq:sec3.1}
		\E\left[\left( \frac{\mathcal L^\gamma_h(\partial_1\cC_\tau)}{\mathcal L^\gamma_h(\partial_0\cC_\tau)}\right)^{ix}\right] 
		={-\frac{\gamma^2  \sin (\pi ix) C(\tau)}{8 ix}\times \frac{2\pi e^{- \frac{\pi\gamma^2}4 \tau x^2} x^2}{\sin(x \pi i) \sin(\frac{\gamma^2}4 x \pi i)}
			=} {C(\tau)}\frac{\pi \gamma^2 x e^{-  \frac{\pi \tau \gamma^2 x^2}{4} }}{4 \sinh(\frac{\gamma^2 \pi x}{4})}.
		\end{align}
		Sending $x\to0$ we get   {$C(\tau)=1$ for all $\tau>0$ as desired.}
	\end{proof}

	\begin{remark}\label{rmk:circle}We can decompose $h$ into $h = R + Y$ where $R$ is the radial part of the field given by $R(x) = \int_0^1 h(x + i y) dy$, and $Y = h - R$ has zero average along the $y$-axis for every $x$. It is well known that $Y$ and $R$ are independent. From here it is straightforward to decompose our functional of interest as
		\begin{align*}
		\E[ \left(\mathcal L^\gamma_h(\partial_1\cC_\tau) \right)^{ix} (\mathcal L^\gamma_h(\partial_0\cC_\tau))^{-ix} ] &= \E[ e^{\frac{i \gamma t}{2} (R(\tau) -R(0))} ]  \E[ \left(\mathcal L^\gamma_Y(\partial_1\cC_\tau) \right)^{ix} (\mathcal L^\gamma_Y(\partial_0\cC_\tau))^{-ix} ],
		\end{align*}
		where here $\mathcal L^\gamma_Y(\partial_0\cC_\tau)$ has the same definition as $\mathcal L^\gamma_h(\partial_0\cC_\tau)$ with $h$ replaced by $Y$, and similarly for $\mathcal L^\gamma_Y(\partial_1\cC_\tau)$. By direct computation on the covariance of $R$, one can obtain that $\E[ e^{\frac{i \gamma x}{2} (R(\tau) -R(0))} ] = e^{- \frac{\gamma^2 x^2 \pi \tau}{4}} $. By Theorem~\ref{thm:GMC-ratio}, we have $\E[ \left(\mathcal L^\gamma_Y(\partial_1\cC_\tau) \right)^{ix} (\mathcal L^\gamma_Y(\partial_0\cC_\tau))^{-ix} ]=\frac{\pi\gamma^2 x }{   4\sinh(\frac{\gamma^2}{4} \pi x )}$. Therefore, the law of $\mathcal L^\gamma_Y(\partial_1\cC_\tau)/\mathcal L^\gamma_Y(\partial_0\cC_\tau)$ is a log-logistic distribution.
		We observe that interestingly the law of this ratio does not depend on  $\tau$. Moreover, as $\tau\to\infty$,  $\mathcal L^\gamma_Y(\partial_1\cC_\tau)$ and $\mathcal L^\gamma_Y(\partial_0\cC_\tau)$ become independent and both of them converge in law to the total mass of the Gaussian multiplicative chaos on  the circle explicitly solved in~\cite{remy-fb-formula}.  {Justifying the independence in $\tau$ directly would provide an alternative proof of} Theorem~\ref{thm:GMC-ratio} without relying on Theorem~\ref{thm:bdy-bootstrap}, but we do not pursue it in this paper.
	\end{remark}

	\begin{proof}[Proof of Theorem~\ref{thm:KPZ}]
		By~\eqref{eq:fraction1} and Theorem~\ref{thm:GMC-ratio}, we have 
		\begin{equation} \label{eq:fraction}
		\LF_\tau[L_1e^{-L_1} L_0^{ix}]=  \frac{2}{\gamma}\int_0^\infty \ell^{ix}e^{-\ell}  \rd\ell
		\E\left[\left( \frac{\mathcal L^\gamma_h(\partial_1\cC_\tau)}{\mathcal L^\gamma_h(\partial_0\cC_\tau)}\right)^{ix}\right]=  \frac{\pi\gamma x e^{ - \frac{\pi\gamma^2 \tau x^2}{4}}}{   2\sinh(\frac{\gamma^2}{4} \pi x  )}\Gamma(1+ix).
		\end{equation}
		Since $\langle L_1e^{-L_1} L_0^{ix}\rangle_\gamma=\int_0^\infty\LF_\tau[L_1e^{-L_1} L_0^{ix}] m(\rd \tau)$ by definition, we get~\eqref{eq:KPZ}.
	\end{proof}

	We note for later use that by following the computation~\eqref{eq:fraction} with the change of variables $y = ix$ gives
	\begin{equation} \label{eq:fraction-real}
	\LF_\tau[L_1e^{-L_1} L_0^y]=   \frac{\pi\gamma y e^{  \frac{\pi\gamma^2 \tau y^2}{4}}}{   2\sin(\frac{\gamma^2}{4} \pi y  )}\Gamma(1+y) \quad \text{ for }y \in (-1,\frac4{\gamma^2}).
	\end{equation}
	The constraints on $y$ arise as follows: we need $y \geq -1$ for the gamma function integral in~\eqref{eq:fraction1}  to converge, and we need $|y| < \frac4{\gamma^2}$ for $\E\left[\left( \frac{\mathcal L^\gamma_h(\partial_1\cC_\tau)}{\mathcal L^\gamma_h(\partial_0\cC_\tau)}\right)^{y}\right]$ to be finite; indeed  Theorem~\ref{thm:GMC-ratio} implies $-\log \cL_h^\gamma(\partial_1 \cC_\tau) + \log \cL_h^\gamma(\partial_0 \cC_\tau)$ has the law of a logistic random variable with scale $s = \frac{\gamma^2}4$ plus an independent Gaussian, and for a logistic random variable $X$ with scale $s$ we have $\E[e^{tX}] < \infty$ when $|t|< \frac1s$.

	\newcommand{\Md}{\mathcal{M}^{\mathrm{disk}}_1}
	\section{Quantum annulus: Liouville field description and the random modulus}\label{sec:QA}
	
	The quantum annulus is a quantum surface  introduced in \cite{acsw-cle,acsw-loop} in the context of the quantum disk coupled with CLE. {We denote its law by $\QA^\gamma$. In Section~\ref{sub:QAbackground} we recall the definition and some fundamental properties of $\QA^\gamma$.}

	The main goal of this section is to prove Theorem~\ref{thm:QAconj} which 
	describes the matter-Liouville-ghost decomposition of $\QA^\gamma$. Namely,  when $(\tau, \phi)$ is sampled from 
	\begin{equation}
	1_{\tau>0}\cdot {\cos(\pi (\frac4{\gamma^2}-1))} \cdot \frac{\gamma}{2 \pi } \theta_1(\frac{\gamma^2}{8}, \frac{\gamma^2}{4}i\tau)\LF_\tau (\rd\phi) \rd\tau,    
	\end{equation}
	the law of the quantum surface $(\cC_\tau,\phi)/{\sim_\gamma}$ is $\QA^\gamma$. 
	In Section~\ref{subsec:QA-LF} we prove that  $\QA^\gamma$ can be written in the form of $ \LF_\tau (d\phi) m(\rd\tau)$ for some $m(\rd \tau)$. This is based on the domain Markov property of Liouville fields from Section~\ref{sec:LCFT}. Then in Section~\ref{subsec:QA-Z} we identify $m(\rd \tau)$ as given in Theorem~\ref{thm:QAconj} {using the KPZ relation Theorem~\ref{thm:KPZ}. In Section~\ref{sec:CLE}, we will prove the CLE results stated in Section~\ref{subsec:intro-CLE} using Theorems~\ref{thm:QAconj} and~\ref{thm:QA-welding}.}
	
	\subsection{{Background on quantum annulus}}\label{sub:QAbackground}
	
	We first recall the definition of quantum surfaces.  
	Fix $\gamma\in (0,2)$ and $Q=\frac{\gamma}{2}+\frac{2}{\gamma}$. 
	Consider pairs of the form  $(D,h)$   where $D$ is a planar domain and $h$ is a
	generalized function on $D$. We say that  $(D,h) \sim_\gamma(\tilde D,\tilde h)$  if there exists a conformal map $\psi: \tilde D\to D$ such that  $\tilde h=h \circ \psi +Q\log |\psi'|$.  A quantum surface in $\gamma$-LQG  is an equivalence class under $\sim_\gamma$.  We write $(D, h)/{\sim_\gamma}$ as the quantum surface corresponding to $(D, h)$. An embedding of a quantum
	surface is a choice of its representative. 
	We extend $\sim_\gamma$ to quantum surfaces with decorations/markings. For example, for triples $(D,h,z)$ where $z\in D$, we  further require  $z=\psi (\tilde  z)$ when defining $(D,h,z) \sim_\gamma(\tilde D,\tilde h,\tilde z)$ and call $(D,h,z)/{\sim_\gamma}$ a quantum surface with an interior marked point. We can similarly define quantum surfaces decorated by curves. 
	
	The notion of quantum length is intrinsic to quantum surfaces.
	Suppose $(D,h) \sim_\gamma(\tilde D,\tilde h)$ through the conformal map $\psi: \tilde D\to D$. {Recall the GMC definition of quantum length above Lemma~\ref{lem:GMC}.}
	If $D$ and $\tilde D$ each contain line segments which are related by $\psi$, then $\mathcal L^\gamma_{\tilde h}$ is  the pushforward of $\mathcal L^\gamma_h$ under $\psi$.  Therefore, we can use this coordinate change to consistently define $\mathcal L^\gamma_h$ on $\partial D$ even if it does not contain a line segment. We now recall the Liouville field description of the quantum disk with an $\alpha$ bulk insertion {defined in~\cite{ARS-FZZ}.} 
	\begin{definition}\label{def:QD}
		For $\gamma\in (0,2)$ and $\alpha>\frac{\gamma}{2}$,  let $\Md(\alpha)$ be the law of the   quantum surface $(\H,\phi, i)/{\sim_\gamma}$ where $\phi$ is sampled from $\LF^{(\alpha,i)}_\H$. Let $\LF^{(\alpha,i)}_\H(\ell)$ be the disintegration  $\LF^{(\alpha,i)}_\H=\int_0^\infty \LF^{(\alpha,i)}_\H (\ell) \rd\ell$ such that samples from $\LF^{(\alpha,i)}_\H(\ell)$ have boundary length $\ell$. Let $\Md(\alpha;\ell)$ be   the law of $(\H,\phi, i)/{\sim_\gamma}$ where $\phi$ is sampled from $\LF^{(\alpha,i)}_\H(\ell)$.
	\end{definition} 
	In this section we only use $\Md(\gamma;\ell)$ but we will use $\Md(\alpha;\ell)$ in the next one. 
	By \cite [Theorem 3.4]{ARS-FZZ}, $\frac{\gamma}{2\pi(Q-\gamma)^2}\Md(\gamma;\ell)$ is the law of a quantum disk with boundary length $\ell$ and one interior marked point as defined in \cite[{Section 4.5}]{wedges}. 
	The reason we require $\alpha>\frac{\gamma}{2}$ is that the total mass 
	$|\LF^{(\alpha,i)}_\H (\ell)|$ of $\LF^{(\alpha,i)}_\H (\ell)$ (hence also  $\Md(\alpha;\ell)$) is finite in this case. The precise value of $|\LF^{(\alpha,i)}_\H (\ell)|$ was computed in~\cite{remy-fb-formula} {as explained in~\cite[Lemma 2.7 and Proposition 2.8]{ARS-FZZ}:}
	\begin{equation}
	\label{eq:LFl-explicit}
	|\LF^{(\alpha,i)}_\H (\ell)| = \frac{2}{\gamma} 2^{\frac{\alpha^2}{2}-\alpha Q} \cdot \left( \frac{2\pi}{\Gamma(1-\frac{\gamma^2}4)} \right)^{\frac2\gamma(Q-\alpha)} 
	\Gamma( \frac{\gamma\alpha}2-\frac{\gamma^2}4)\times \ell^{\frac{2}{\gamma}{(\alpha-Q)}-1} \quad \textrm{for } \alpha>\frac{\gamma}{2} .
	\end{equation}
	
	{Next, we give the definition of the quantum annulus. This is a quantum surface first defined in the unpublished article ``Integrability of the conformal loop ensemble'' of the first and third authors, the main accomplishment of which  was a proof that for $8/3 < \kappa \leq 4$ the nesting loop statistics of CLE$_\kappa$ agree with the imaginary DOZZ formula.
		This unpublished article has been superseded by the two papers \cite{acsw-cle, acsw-loop} written jointly with Gefei Cai and Baojun Wu, which treat the full parameter range $\kappa \in (8/3,8)$, establish the Delfino-Viti conjecture on the three-point function of critical 2D percolation, and derive a number of other CLE observables. Our subsequent exposition will reference \cite{acsw-cle, acsw-loop}.
	}
	
	Let $\kappa\in (\frac83,4)$ and $\gamma=\sqrt{\kappa}$.  Let $\Gamma$ be a  $\CLE_\kappa$ on $\H$ and $h$ be a free boundary GFF on $\H$ independent of $\Gamma$. Let $\eta$ be a loop in $\Gamma$. Let $A_\eta$ and $D_\eta$ be the two connected components of $\H\setminus \eta$ where $D_\eta$ is simply connected and $A_\eta$ is annular. By the theory of quantum zipper~\cite{shef-zipper},  the quantum length measures of the quantum surfaces $(A_\eta,h)/{\sim_\gamma}$ and $(D_\eta, h)/{\sim_\gamma}$ agree on $\eta$. This defines a quantum length measure on $\eta$. If $h$ is replaced by a Liouville field, the quantum length measure on $\eta$ can also be defined. We recall the definition of the quantum annulus from \cite[Definition 5.2]{acsw-loop}.
	{
		\begin{definition}\label{def:QA} 
			For $a>0$, let $(\phi,\Gamma)$  be a sample from $\LF_\H^{\gamma,i}(a)\times \CLE_\kappa^\H$ where $\CLE^\H_\kappa$ is the law of a  $\CLE_\kappa$ on $\H$. Let $\eta$ be the outermost loop of $\Gamma$ surrounding $i$. Let $\wt \QA(a)$ denote the law of the quantum surface $(A_\eta, \phi)/{\sim_\gamma}$, and let $\wt \QA(a,b)$ be the disintegration of $\wt \QA(a)$ with respect to the quantum length of the inner boundary component, so $\wt \QA(a) = \int_0^\infty \wt \QA(a,b)\, \rd b$, and each measure $\wt \QA(a,b)$ is supported on the space of annular quantum surfaces with quantum boundary lengths $a$ and $b$. Let 
			\[
			\QA^\gamma(a,b)=\frac{1}{b |\mathcal M_1^\mathrm{disk}(\gamma;b)|}\wt\QA^\gamma(a,b).
			\]
			Let $\QA^\gamma=\int_0^\infty\int_0^\infty\QA^\gamma(a,b) \, \rd a \rd b$. 
			We call a sample of $\QA^\gamma$ a  \emph{quantum annulus}.
		\end{definition}
	}
	The measure $\QA^\gamma(a,b)$ is finite for all $a,b>0$ but $\QA^\gamma$ is infinite; see Proposition~\ref{prop-qa-mass} below.
	As explained in \cite[Remark 5.3]{acsw-loop}, the above definition in terms of disintegration is only well-specified for almost every 
	$b\in (0,\infty)$. This ambiguity does not affect the definition of $\QA^\gamma$ and the proof of  Theorem~\ref{thm:QAconj} below. After we prove Theorem~\ref{thm:QAconj}, it can be used to canonically define $\QA^\gamma(a,b)$ for every $(a,b)$. Therefore we omit this temporary ambiguity. 
	
	{
		The reason we work with $\QA^\gamma(a,b)$ rather than $\wt \QA^\gamma(a,b)$ is because $\QA^\gamma(a,b)$ is natural in the context of conformal welding, as demonstrated in Theorem~\ref{thm:QA-welding} below. Theorem~\ref{thm:QA-welding} was implicitly proved in \cite{MSW1}, and a detailed explanation is given in \cite[Section 4.4]{acsw-loop}.}
	\begin{theorem}[{\cite[Theorem 4.2]{acsw-loop}}]\label{thm:QA-welding}
		In the setting of Definition~\ref{def:QA}, the joint law of $(A_\eta, \phi)/{\sim_\gamma}$ and $(D_\eta, \phi)/{\sim_\gamma}$ is
		\begin{equation}\label{eq:QA-weld}
		\int_0^\infty b \QA^\gamma(a,b)\times \Md(\gamma;b)\rd b.
		\end{equation}
	\end{theorem}
	The intuitive meaning of the factor $b$ in~\eqref{eq:QA-weld} is that the number of ways of gluing together $\QA^\gamma(a,b)$ and $\Md(\gamma;b)$ is proportional to the boundary length $b$ of their interface.

	{
		Thus far, we have introduced $\QA^\gamma$ and explained the motivation for its definition. We now discuss some fundamental properties, starting with the formula for the partition function. 
		\begin{proposition}[{\cite[Proposition 5.5]{acsw-loop}}] \label{prop-qa-mass}
			The total mass of $\QA^\gamma(a,b)$ is 
			\begin{equation}\label{eq-qa-mass}
			|\QA^\gamma(a,b)|=\frac{\cos(\pi (\frac4{\gamma^2}-1))}{\pi \sqrt{ab} (a+b)}.
			\end{equation}
		\end{proposition}
		Here are some high level ideas for the proof. 
		By definition, proving Proposition~\ref{prop-qa-mass} is equivalent to solving for the probability density function of the quantum length of the outermost loop around a marked point $\eta$. This is carried out in \cite{acsw-loop} using several inputs. First, \cite{MSW1} describes the joint law of the quantum lengths of all outermost CLE loops via the jumps of a growth-fragmentation process. Second, \cite{bbck-growth-frag} shows that this growth-fragmentation process describes the scaling limit of outermost loop lengths in $O(n)$-decorated quadrangulations. Third, \cite{ccm-perimeter-cascade} gives a L\'evy process description of the scaling limit of outermost loop lengths in $O(n)$-decorated quadrangulations. Together, these inputs yield a L\'evy process description of the quantum lengths of all outermost CLE loops \cite[Proposition 4.1]{acsw-loop}. A careful analysis of this L\'evy process gives Proposition~\ref{prop-qa-mass}, see \cite[Section 5]{acsw-loop} for details. 
	}
	
	Finally, Proposition~\ref{prop-qa-mass} implies that $|\QA^\gamma(a,b)| = |\QA^\gamma (b,a)|$, but we have the following stronger symmetry:
	\begin{proposition}[{\cite[Proposition 7.6]{acsw-loop}}]\label{prop:QA-symmetry}
		For each $a,b>0$, we have $\QA^\gamma(a,b)=\QA^\gamma(b,a)$. {In other words, the quantum annulus is invariant in law under  reordering of boundary components.}
	\end{proposition}
	{For the reader's convenience, we sketch here the proof of Proposition~\ref{prop:QA-symmetry}.
		\cite{wedges} introduced a canonical quantum surface called the \emph{two-pointed quantum sphere}; let $(\C, \phi, 0, \infty)$ be an embedding of this quantum surface in $\C$ where the first (resp.\ second) marked point is sent to $0$ (resp.\ $\infty$). 
		Let $\Gamma$ be an independent whole-plane $\CLE_\kappa$, let $\Gamma^{\mathrm{sep}} \subset \Gamma$ be the set of loops separating $0$ and $\infty$, and let $\eta \in \Gamma^\mathrm{sep}$ be sampled according to counting measure on $\Gamma^\mathrm{sep}$. Let $\eta' \in \Gamma^\mathrm{sep}$ be the outermost loop surrounded by $\eta_0$. Let $D_\eta$ be the domain surrounded by $\eta$, and $A_\eta$ the annular domain with boundary $\eta \cup \eta'$.
		By \cite[Theorem 1.1]{acsw-loop} $\eta_0$ is an SLE loop, so one can apply \cite[Theorem 1.1]{AHS-loop} to show that $(D_\eta, \phi, 0)/{\sim_\gamma}$ is a quantum disk. Consequently, Definition~\ref{def:QA} implies $(A_\eta, \phi)/{\sim_\gamma}$ is a quantum annulus. 
		Since the two-pointed quantum sphere is invariant in law under reordering of its marked points \cite[Proposition A.13]{wedges}, and CLE is invariant in law under inversion \cite[Theorem 1]{kw-cle}, we conclude that the quantum annulus is invariant in law under reordering of its boundary components. 
	}

	\subsection{Identification of the Liouville field via the domain Markov property}\label{subsec:QA-LF}
	
	\begin{proposition}  \label{prop:LF}
		There exists a measure $m(\rd \tau)$ on $(0,\infty)$ such that if
		we sample $(\tau, \phi)$ from $ \LF_\tau (d\phi) m(\rd \tau)$, then the law of the quantum surface $(\cC_\tau,\phi)/{\sim_\gamma}$ is $\QA^\gamma$. 
	\end{proposition}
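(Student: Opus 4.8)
The plan is the following. A sample from $\QA^\gamma$ is an annular quantum surface; embedding it onto $\cC_\tau$, where $\tau$ is its modulus, produces a pair $(\tau,\phi)$, and I would write the law of this pair as a mixture $\int_0^\infty \nu_\tau(\rd\phi)\,\tilde m(\rd\tau)$, where $\tilde m$ is the law of $\tau$ under $\QA^\gamma$ and $\nu_\tau$ is the conditional law of $\phi$ given $\tau$. (The embedding is only determined up to a rotation of $\cC_\tau$, which one removes by symmetrizing; this is harmless since $\LF_\tau$ and the kernels $\Lambda,\bar\Lambda$ are rotation invariant.) One then wants $\nu_\tau=c(\tau)\LF_\tau$ for $\tilde m$-a.e.\ $\tau$, so that $m(\rd\tau):=c(\tau)\,\tilde m(\rd\tau)$ works. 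The identification will come from Proposition~\ref{prop:inv-unique}: for each compact $I\subset(0,\infty)$, if the normalized restriction of $\nu_\tau$ to $\Omega_I$ is invariant under both $\Lambda_I$ and $\bar\Lambda_I$, then it equals $\LF_{\tau,I}$; letting $I\uparrow(0,\infty)$ and checking that the proportionality constants do not depend on $I$ gives $\nu_\tau=c(\tau)\LF_\tau$ on $\{0<L_0,L_1<\infty\}$, a set of full measure for both $\nu_\tau$ and $\LF_\tau$. Since $\QA^\gamma$ restricted to $\{L_0\in I,\,L_1\in I\}$ has total mass $\int_I\int_I\frac{\cos(\pi(\frac4{\gamma^2}-1))}{\pi\sqrt{ab}(a+b)}\,\rd a\,\rd b<\infty$, these truncations are finite measures and the disintegrations make sense.

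The heart of the matter is the resampling invariance of $\nu_\tau$, which I would obtain from the domain Markov property of Liouville fields (Section~\ref{sec:LCFT}) together with the invariance of $\QA^\gamma$ under interchanging its two boundary components, established in~\cite{AS-CLE}. Comparing Definition~\ref{def:QA} with Theorem~\ref{thm:QA-welding}, $\QA^\gamma$ is obtained from the law $\mathcal M_0$ of $(A_\eta,\phi)/{\sim_\gamma}$ under $\LF_\H^{\gamma,i}\times\CLE_\kappa^\H$, with $\eta$ the outermost loop surrounding $i$, by a reweighting: both measures have the same conditional law $\QA^\gamma(a,b)^\#$ given the boundary length pair $(a,b)$, while the density of $(a,b)$ passes from $\frac{\cos(\pi(\frac4{\gamma^2}-1))}{\pi}\cdot\frac{b|\LF_\H^{\gamma,i}(b)|}{\sqrt{ab}(a+b)}$ to $\frac{\cos(\pi(\frac4{\gamma^2}-1))}{\pi\sqrt{ab}(a+b)}$. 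Hence $\rd\QA^\gamma/\rd\mathcal M_0=\big(b|\LF_\H^{\gamma,i}(b)|\big)^{-1}$ depends only on the quantum length $b$ of the loop boundary $\eta$; fixing the embedding so that $\partial_1\cC_\tau$ corresponds to $\eta$ and $\partial_0\cC_\tau$ to $\partial\H$, it is a function of $L_1$ alone.

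Next I would check that $\mathcal M_0$ is invariant under the kernel $\Lambda$ that freezes $\phi|_{\cC(2\tau/3,\tau)}$ and resamples $\phi|_{\cC(0,2\tau/3)}$. Let $\eta_*\subset A_\eta$ be the preimage of $\{2\tau/3\}\times[0,1]/{\sim}$ under the embedding map; it is a Jordan curve surrounding $i$ which is a deterministic function of $\eta$, hence independent of $\phi$, so Lemma~\ref{lem:local} applies to $(\phi,\eta_*)$. Since the harmonic extension in Proposition~\ref{prop:Markov-LFH} depends on $\phi$ only through its boundary values on $\eta_*$, while the GFF part there is independent of $\phi|_{\H\setminus A_{\eta_*}}$, the conditional law of $\phi|_{A_{\eta_*}}$ given $\phi$ on the collar between $\eta_*$ and $\eta$ is exactly the harmonic extension of those boundary values, with zero normal derivative on $\partial\H$ and $-2Q\log|z|$ growth at infinity, plus an independent GFF with zero boundary data on $\eta_*$ and free boundary data on $\partial\H$; transported to $\cC_\tau$ via $h\mapsto h\circ\psi+Q\log|\psi'|$ this is precisely $\Lambda$. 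So $\mathcal M_0$ is $\Lambda$-invariant, and since the reweighting factor above is a function of $L_1$, which $\Lambda$ leaves unchanged, $\QA^\gamma$ is $\Lambda$-invariant as well. Running the same argument with the additional conditioning on $\{L_0\in I\}$—which is measurable with respect to the resampled region—shows that $\QA^\gamma$ restricted to $\{L_0\in I,\,L_1\in I\}$ is sent to itself by $\Lambda_I$, and disintegrating over $\tau$ gives the $\Lambda_I$-invariance of the normalized restriction of $\nu_\tau$ to $\Omega_I$. Finally, the boundary-swap symmetry of $\QA^\gamma$ corresponds to the reflection $x\mapsto\tau-x$ of $\cC_\tau$, which conjugates $\Lambda_I$ into $\bar\Lambda_I$, so $\bar\Lambda_I$-invariance follows; Proposition~\ref{prop:inv-unique} then finishes the proof as above.

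The main obstacle I anticipate is the bookkeeping in transferring the $\H$-side Markov property to the precise cylinder kernels $\Lambda,\bar\Lambda$ of Section~\ref{subsec:resampling}: one must verify that the frozen and resampled regions correspond correctly under the conformal map, that the Dirichlet data on $\eta_*$ and the Neumann (free) data on $\partial\H$ match the zero- and free-boundary conditions on $\{2\tau/3\}\times[0,1]/{\sim}$ and $\partial_0\cC_\tau$ in the definition of $\Lambda$, and that the constant mode is tracked correctly so that the resampled field has law $\LF_\tau$ on the nose. The appeal to the boundary-swap symmetry of $\QA^\gamma$ is what lets one avoid proving a Markov property of $\QA^\gamma$ along its CLE-loop boundary directly: $\mathcal M_0$ does not enjoy such a property, since near $\eta$ its field retains information about the part of $\phi$ inside $\eta$ that has been discarded in passing to the quantum surface.
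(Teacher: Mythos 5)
Your proposal is correct and follows essentially the same route as the paper: you realize $\QA^\gamma$ as a reweighting of the law of $(A_\eta,\phi)/{\sim_\gamma}$ under $\LF^{\gamma,i}_\H\times\CLE^\H_\kappa$ by a density depending only on the frozen boundary length, transfer the domain Markov property of Lemma~\ref{lem:local} (product-measure case) to the cylinder kernel $\Lambda$, use the symmetry $\QA^\gamma(a,b)=\QA^\gamma(b,a)$ from~\cite{AS-CLE} for $\bar\Lambda$, and conclude via the truncated uniqueness statement of Proposition~\ref{prop:inv-unique} after rotationally symmetrizing the embedding. This is precisely the paper's argument (Lemmas~\ref{lem:DMP1}, \ref{lem:DMP2}, \ref{lem:QA-symmetry} and the proof of Proposition~\ref{prop:LF}), including the bookkeeping of boundary conditions under the conformal map that you flag as the remaining technical step.
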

	Our proof of  Proposition~\ref{prop:LF} based on the domain Markov property will be used multiple times to prove similar statements in later sections. 
	We recall the notions in Section~\ref{sec:LCFT}.
	Let $\cC(x,y)$ be the cylinder $(x,y) \times [0,1]/{\sim}$ where $\sim$ is the identification of $[x,y]\times \{0\}$ and $[x,y]\times\{1\}$. We will write the circle $\{z\} \times[0,1]/{\sim}$ as  $\{z\} \times[0,1]$ for simplicity. 
	Let $\cC_\tau=\cC(0,\tau)$ for each $\tau>0$. 
	{We prove the needed domain Markov property through the following two lemmas.} 
	\begin{lemma}\label{lem:DMP1}
		Let $(\phi,\Gamma)$ be a sample from $\LF^{\gamma,i}_\H\times \CLE_\kappa^\H$. Let $\eta$ be the outermost loop of $\Gamma$ surrounding $i$ and $A_\eta$ be the component of $\H \setminus \eta$ bounded by $\eta$ and $\partial \H$. Let $\hat\tau$ be such that $A_\eta$ is conformally equivalent to $\cC_{\hat \tau}$. {Fix $x\in  \{0\}\times [0,1]$} and let $\psi: \cC_{\hat\tau} \to A_\eta$  be the conformal map such that $\psi(x)=0$. Let $\hat \phi(z)=\phi \circ \psi(z) +Q\log |\psi(z)'|$ for $z\in \cC_{\hat\tau}$. Let $\hat\phi^{\mathrm{har}}$ be the harmonic extension of $\hat \phi|_{\cC(\frac{2\hat \tau}{3}, \hat \tau)}$ onto $\cC(0, \frac{2\hat \tau}{3})$ with zero normal derivative on $\{0\}\times  [0,1]$. Then conditioning on $\hat \tau$ and $\hat \phi|_{\cC(\frac{2\hat \tau}{3}, \hat \tau)}$, the conditional law of 
		$\hat \phi   - \hat \phi^{\mathrm{har}}$ is a GFF on $\cC(0, \frac{2\hat \tau}{3})$  with zero boundary condition on $\{ \frac{2\hat \tau}{3} \}\times [0,1]$ and free boundary condition on $\{0\}\times  [0,1]$.
	\end{lemma}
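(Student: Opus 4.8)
\emph{Overall plan.} The idea is to transport the domain Markov property of the Liouville field $\LF^{\gamma,i}_\H$ (Lemma~\ref{lem:Markov-LFH}, the analogue of Proposition~\ref{prop:Markov-LFH} for $\LF^{(\gamma,i)}_\H$; equivalently Lemma~\ref{lem:local}) from the half-plane picture to the cylinder picture through the conformal map $\psi$, and then to descend the conditioning from the full data $(\Gamma,\phi|_{\H\setminus A_\sigma})$ to the coarser data $(\hat\tau,\hat\phi|_{\cC(\frac{2\hat\tau}{3},\hat\tau)})$. Since $\LF^{\gamma,i}_\H$ is infinite but $\sigma$-finite, ``conditioning'' is always meant in the Markov-kernel sense of Definition~\ref{def:Kernel}, and one may reduce to finite measures by first restricting to the event that the two boundary quantum lengths lie in a fixed compact interval. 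The key object is the curve $\sigma:=\psi(\{\tfrac{2\hat\tau}{3}\}\times[0,1])\subset A_\eta$. Because $\{\tfrac{2\hat\tau}{3}\}\times[0,1]$ lies in the interior of $\cC_{\hat\tau}$ and is preserved by every rotation of $\cC_{\hat\tau}$, the curve $\sigma$ does not depend on the normalization $\psi(x)=0$; hence $\sigma$ is a measurable function of $\Gamma$ alone and is independent of $\phi$. Moreover $\sigma$ surrounds $i$ (the bounded component of $\H\setminus\sigma$ contains $D_\eta\ni i$), the component $A_\sigma$ of $\H\setminus\sigma$ bounded by $\sigma$ and $\partial\H$ has modulus $\tfrac{2\hat\tau}{3}$ (so $\hat\tau$ is a function of $\sigma$), and $\psi$ restricts to a conformal map $\psi_0\colon\cC(0,\tfrac{2\hat\tau}{3})\to A_\sigma$ sending $\{\tfrac{2\hat\tau}{3}\}\times[0,1]$ onto $\sigma$ and $\{0\}\times[0,1]$ onto $\partial\H$, with a single point $p\in\{0\}\times[0,1]$ mapped to $\infty$.

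\emph{Markov property in $\H$.} First I would establish the Markov property in the half-plane along $\sigma$. Conditioning on $\Gamma$ leaves the conditional law of $\phi$ equal to $\LF^{\gamma,i}_\H$ by independence, so Lemma~\ref{lem:Markov-LFH} applied to the (then deterministic) Jordan curve $\sigma$ surrounding $i$ shows that, conditionally on $(\Gamma,\phi|_{\H\setminus A_\sigma})$, the field $\phi-\phi^{\mathrm{har}}_\sigma$ is a GFF on $A_\sigma$ with zero boundary condition on $\sigma$ and free boundary condition on $\partial\H$; here $\phi^{\mathrm{har}}_\sigma$ denotes the harmonic extension of $\phi|_{\H\setminus A_\sigma}$ to $A_\sigma$ with zero normal derivative on $\partial\H$ and growth $\phi^{\mathrm{har}}_\sigma(z)/\log|z|\to -2Q$ as $z\to\infty$. (Alternatively, apply Lemma~\ref{lem:local} directly and use $\Gamma\perp\phi$.) In particular this conditional law depends on the conditioning only through the conformal type of $A_\sigma$, that is, only through $\hat\tau$.

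\emph{Transport through $\psi_0$.} Next I would push this through $\psi_0$. Using the coordinate change $\hat\phi=\phi\circ\psi+Q\log|\psi'|$, the identity to prove is
\[
\hat\phi^{\mathrm{har}}=\phi^{\mathrm{har}}_\sigma\circ\psi_0+Q\log|\psi'|\qquad\text{on }\cC(0,\tfrac{2\hat\tau}{3}),
\]
since then $\hat\phi-\hat\phi^{\mathrm{har}}=(\phi-\phi^{\mathrm{har}}_\sigma)\circ\psi_0$, which by conformal invariance of the GFF is a GFF on $\cC(0,\tfrac{2\hat\tau}{3})$ with zero boundary condition on $\{\tfrac{2\hat\tau}{3}\}\times[0,1]$ and free boundary condition on $\{0\}\times[0,1]$, of law depending on $\Gamma$ only through $\hat\tau$. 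To obtain the displayed identity I would check that the right-hand side: (i) is harmonic on $\cC(0,\tfrac{2\hat\tau}{3})$, the only subtlety being at $p$, where the $+\infty$ logarithmic singularity of $Q\log|\psi'|$ cancels the $-\infty$ singularity coming from the $-2Q\log|z|$ growth of $\phi^{\mathrm{har}}_\sigma$ at $\infty$, so the sum extends harmonically across $p$; (ii) has zero normal derivative on $\{0\}\times[0,1]$, since the Neumann condition is conformally invariant for $\phi^{\mathrm{har}}_\sigma\circ\psi_0$, and since $\psi_0$ maps the circle $\{0\}\times[0,1]$ into the line $\partial\H$, which forces $\psi_0'$ to take values in $i\R$ and hence $\arg\psi_0'$ to be constant along the circle, so that the normal derivative of $\log|\psi_0'|$, equal to the tangential derivative of $\arg\psi_0'$, vanishes there; and (iii) agrees with $\hat\phi$ on $\{\tfrac{2\hat\tau}{3}\}\times[0,1]$, because $\phi^{\mathrm{har}}_\sigma=\phi$ on $\sigma$. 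By uniqueness of the harmonic extension with properties (i)--(iii) the identity follows; reflecting across $\{0\}\times[0,1]$, under which both sides are symmetric, is a convenient device for (i) and (ii), in particular for the behaviour at $p$.

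\emph{Descending the conditioning, and the main obstacle.} Finally I would descend the conditioning: the previous two paragraphs show that, conditionally on $(\Gamma,\phi|_{\H\setminus A_\sigma})$, the field $\hat\phi-\hat\phi^{\mathrm{har}}$ is a GFF whose law is a measurable function of $\hat\tau$ only. Since $\hat\tau$ and $\hat\phi|_{\cC(\frac{2\hat\tau}{3},\hat\tau)}$ are both measurable with respect to $\sigma(\Gamma,\phi|_{\H\setminus A_\sigma})$ while $\hat\tau$ is already measurable with respect to $\sigma(\hat\tau,\hat\phi|_{\cC(\frac{2\hat\tau}{3},\hat\tau)})$, the tower property gives that the conditional law of $\hat\phi-\hat\phi^{\mathrm{har}}$ given $(\hat\tau,\hat\phi|_{\cC(\frac{2\hat\tau}{3},\hat\tau)})$ is the same GFF, which is the assertion of the lemma. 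I expect the main obstacle to be the transport step: checking carefully that the coordinate-changed harmonic function really is the harmonic extension named in the statement --- with the correct Neumann condition, including at the point $p$ sent to $\infty$ --- is the delicate point, the remainder being conditional-independence bookkeeping and routine measure theory for the infinite measure $\LF^{\gamma,i}_\H$.
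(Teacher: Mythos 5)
Your proposal is correct and follows essentially the same route as the paper's proof: you introduce the image of the $\{\frac{2\hat\tau}{3}\}\times[0,1]$ circle (your $\sigma$, the paper's $\hat\eta$) as a curve determined by $\Gamma$ alone and independent of $\phi$, apply the Liouville-field domain Markov property (Lemma~\ref{lem:Markov-LFH}/Lemma~\ref{lem:local}) in $\H$, transport it through $\psi$ via the identity $\hat\phi^{\mathrm{har}}=\phi^{\mathrm{har}}\circ\psi+Q\log|\psi'|$, and descend the conditioning by measurability/tower property, exactly as in the paper. The only difference is cosmetic and lies in checking the Neumann condition of the transported harmonic function: you argue directly on the line-to-line map via constancy of $\arg\psi'$ and Cauchy--Riemann, handling the boundary point sent to $\infty$ by reflection and the cancellation with the $-2Q\log$ growth, whereas the paper factors $\psi=\psi_1\circ\psi_2$ through the unit disk; both verifications encode the same cancellation, and your direct treatment of the point at infinity is if anything slightly more careful.
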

	\begin{proof}
		Let $\hat \eta\subset {\H}$ be the image of  $\{ \frac{2\hat \tau}{3} \}\times [0,1]$ under $\psi$. Then the law of $(\phi,{\hat\eta})$ is a product measure hence satisfies the first condition in Lemma~\ref{lem:local}. Let $D_{\hat\eta}$ and $A_{\hat\eta}$ be the disk and annulus component of $\H\setminus \hat\eta$, respectively. Let $\phi^{\mathrm{har}}$ be the harmonic extension of $\phi|_{D_{\hat\eta}}$ onto $A_{\hat\eta}$ with zero normal derivative on $\partial \H$ and satisfying $\lim_{|z| \to \infty} \phi^\mathrm{har}(z) / \log |z| = -2Q$.  
		Let $\phi_{A_{\hat \eta}} = \phi   - \phi^{\mathrm{har}}$. Then by Lemma~\ref{lem:local}, conditioning on $\eta$ and $\phi|_{D_{\hat\eta}}$, the conditional law of  $\phi_{A_{\hat\eta}}$ is a GFF on $A_{\hat\eta}$ with zero boundary condition on ${\hat\eta}$ and free boundary condition on $\partial\H$. {Since $(\hat\tau,  \hat\phi|_{\cC(\frac{2\hat \tau}{3}, \hat \tau)})$ is determined by $(\eta,\phi|_{D_{\hat\eta}})$, by the conformal invariance of GFF, conditioning on $(\hat\tau, \phi|_{\cC(\frac{2\hat \tau}{3}, \hat \tau)})$ the conditional law of 
			$\phi_{A_{\hat\eta}} \circ \psi$ is a GFF on $\cC(0, \frac{2\hat \tau}{3})$  with zero boundary condition on $\{ \frac{2\hat \tau}{3} \}\times [0,1]$ and free boundary condition on $\{0\}\times  [0,1]$. 
			
			Note that $\hat \phi- (\phi^{\mathrm{har}} \circ \psi +Q \log|\psi'|)= \phi \circ \psi - \phi^{\mathrm{har}}\circ \psi=\phi_{A_{\hat\eta}} \circ \psi$, which is zero on $\cC(\frac{2\hat \tau}{3},\hat\tau)$.
			{We just need} to show that $\hat\phi^{\mathrm{har}}=\phi^{\mathrm{har}} \circ \psi +Q \log|\psi'|$. To see this,  note that  $\phi^{\mathrm{har}} \circ \psi +Q \log|\psi'|$ is harmonic inside  $\cC(0,\frac{2\hat \tau}{3})$ since $\psi$ is conformal. It remains to show that $\phi^{\mathrm{har}} \circ \psi +Q \log|\psi'|$ has the desired boundary condition on $\{0\}\times [0,1]$. Let $\psi_1(z)=\frac{z-1}{i(z+1)}$ which maps the unit disk $\D$ to $\H$. Then the normal derivative of $\phi^{\mathrm{har}}_1:=\phi^{\mathrm{har}} \circ \psi_1+Q\log |\psi'_1|$ on $\partial \D$ is zero everywhere. For $w\in \cC_{\hat\tau}$, let $\psi_2(w)=\psi_1^{-1}(\psi(w))$ so that $\psi=\psi_1\circ \psi_2$. Then $\phi^{\mathrm{har}} \circ \psi +Q \log|\psi'|=\phi_1^{\mathrm{har}} \circ \psi_2+Q\log |\psi'_2|$. Since $\psi_2$ maps $\partial \D$ to  $\{0\}\times [0,1]$,  the normal derivative of $\phi^{\mathrm{har}} \circ \psi +Q \log|\psi'|$ is zero everywhere on  $\{0\}\times [0,1]$ as desired.}
	\end{proof}

	\begin{figure}[ht!]
		\begin{center}				
			
			\includegraphics[scale=0.7]{}
			\caption{ 
				Illustration of objects in the proof of Lemma~\ref{lem:DMP1}.
			} \label{fig-QA}
		\end{center}
	\end{figure}

	\begin{lemma}\label{lem:DMP2}
		For a sample $(\phi,\Gamma)$ from $\LF^{\gamma,i}_\H\times \CLE_\kappa^\H$, let $\mathcal L^\gamma_\phi(\eta)$ be the quantum length of the outermost loop $\eta$ surrounding $i$. Let $f(b)=b|\LF^{\gamma, i}_\H(b)|$ and consider the reweighted measure  $M= \frac{1}{f(\mathcal L^\gamma_\phi(\eta) )}\cdot\LF^{\gamma,i}_\H\times \CLE_\kappa^\H$. {Let $A_\eta$ be the component of $\H \setminus \eta$ bounded by $\eta$ and $\partial \H$.} Then the $M$-law of $(A_\eta, \phi)/{\sim_\gamma}$ is $\QA^\gamma$. Moreover, the statement {in Lemma~\ref{lem:DMP1}} on  the conditional law of 
		$\hat \phi   - \hat \phi^{\mathrm{har}}$ given 
		$(\hat \tau,\hat \phi|_{\cC(\frac{2\hat \tau}{3}, \hat \tau)})$ still holds under $M$.
	\end{lemma}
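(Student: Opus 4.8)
The plan is to obtain both assertions from Theorem~\ref{thm:QA-welding}, Definition~\ref{def:QA}, and Lemma~\ref{lem:DMP1}; there is no real analytic content here, only the bookkeeping of how the reweighting by $1/f(\mathcal L^\gamma_\phi(\eta))$ interacts with disintegration and with conditioning. The pivotal observation I would isolate first is that the reweighting variable $\mathcal L^\gamma_\phi(\eta)$ equals the quantum length $\mathcal L^\gamma_{\hat\phi}(\partial_1\cC_{\hat\tau})$ of the boundary circle $\partial_1\cC_{\hat\tau}=\{\hat\tau\}\times[0,1]$ in the $\cC_{\hat\tau}$-coordinate. Indeed $\psi$ sends $\{0\}\times[0,1]\subset\partial_0\cC_{\hat\tau}$ to $\partial\H\subset\partial A_\eta$, hence $\partial_1\cC_{\hat\tau}$ to $\eta$, and quantum length is covariant under the coordinate change $\hat\phi=\phi\circ\psi+Q\log|\psi'|$. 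Since $\partial_1\cC_{\hat\tau}\subset\cC(2\hat\tau/3,\hat\tau)$ and quantum length is a local functional, it follows that $\mathcal L^\gamma_\phi(\eta)$ is determined by $\hat\tau$ together with $\hat\phi|_{\cC(2\hat\tau/3,\hat\tau)}$.

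For the first assertion I would disintegrate $\LF^{\gamma,i}_\H=\int_0^\infty\LF^{\gamma,i}_\H(a)\,\rd a$ over the outer boundary length $a=\mathcal L^\gamma_\phi(\partial\H)$ as in Definition~\ref{def:QD}. For fixed $a$, Theorem~\ref{thm:QA-welding} gives that the pushforward of $\LF^{\gamma,i}_\H(a)\times\CLE_\kappa^\H$ under $\mathcal L^\gamma_\phi(\eta)$ has density $\frac{\cos(\pi(\frac{4}{\gamma^2}-1))}{\pi}\cdot\frac{b\,|\LF^{\gamma,i}_\H(b)|}{\sqrt{ab}(a+b)}$ in $b$, and that the conditional law of $(A_\eta,\phi)/{\sim_\gamma}$ given $\mathcal L^\gamma_\phi(\eta)=b$ is $\QA^\gamma(a,b)^{\#}$ --- this is exactly how $\QA^\gamma(a,b)^{\#}$ is defined in Definition~\ref{def:QA}. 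Pushing this disintegration forward under $(A_\eta,\phi)/{\sim_\gamma}$ and then reweighting by $1/f(\mathcal L^\gamma_\phi(\eta))=1/(b\,|\LF^{\gamma,i}_\H(b)|)$ --- which is legitimate because $\mathcal L^\gamma_\phi(\eta)$ is the inner boundary length of $(A_\eta,\phi)/{\sim_\gamma}$, hence a function of this quantum surface, so reweighting and then pushing forward gives the same measure as pushing forward and then reweighting --- the factor $b\,|\LF^{\gamma,i}_\H(b)|$ cancels and one is left with $\int_0^\infty\frac{\cos(\pi(\frac{4}{\gamma^2}-1))}{\pi}\cdot\frac{\QA^\gamma(a,b)^{\#}}{\sqrt{ab}(a+b)}\,\rd b=\int_0^\infty\QA^\gamma(a,b)\,\rd b$. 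Integrating over $a$ and invoking $\QA^\gamma=\iint\QA^\gamma(a,b)\,\rd a\,\rd b$ then identifies the $M$-law of $(A_\eta,\phi)/{\sim_\gamma}$ with $\QA^\gamma$.

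For the second assertion I would observe that $M$ is a reweighting of $\LF^{\gamma,i}_\H\times\CLE_\kappa^\H$ by $1/f(\mathcal L^\gamma_\phi(\eta))$, and that by the first paragraph this factor is measurable with respect to the conditioning $\sigma$-algebra $\sigma\big(\hat\tau,\hat\phi|_{\cC(2\hat\tau/3,\hat\tau)}\big)$ of Lemma~\ref{lem:DMP1}. Reweighting a measure by a function of the conditioning variables leaves the conditional law of anything else unchanged, so the conclusion of Lemma~\ref{lem:DMP1} --- that conditionally on $(\hat\tau,\hat\phi|_{\cC(2\hat\tau/3,\hat\tau)})$ the field $\hat\phi-\hat\phi^{\mathrm{har}}$ is a GFF on $\cC(0,\frac{2\hat\tau}{3})$ with zero boundary condition on $\{\frac{2\hat\tau}{3}\}\times[0,1]$ and free boundary condition on $\{0\}\times[0,1]$ --- holds verbatim under $M$.

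I do not expect a genuine obstacle. The only point needing care is the dual role of the reweighting variable: for the first assertion one needs $1/f(\mathcal L^\gamma_\phi(\eta))$ to factor through the map $\phi\mapsto(A_\eta,\phi)/{\sim_\gamma}$, and for the second one needs it to be measurable with respect to $(\hat\tau,\hat\phi|_{\cC(2\hat\tau/3,\hat\tau)})$, and both reduce to the identification $\mathcal L^\gamma_\phi(\eta)=\mathcal L^\gamma_{\hat\phi}(\partial_1\cC_{\hat\tau})$ with $\partial_1\cC_{\hat\tau}\subset\cC(2\hat\tau/3,\hat\tau)$. One should also note that all measures here are $\sigma$-finite (though typically infinite), so the disintegrations and the interchange of pushforward with integration are valid, and that the usual null-set ambiguity in $\QA^\gamma(a,b)^{\#}$ is harmless since only integrated statements are used.
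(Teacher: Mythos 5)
Your proposal is correct and follows essentially the same route as the paper: the first assertion is the paper's one-line appeal to the welding identity~\eqref{eq:QA-weld} (which you simply unpack via Theorem~\ref{thm:QA-welding} and Definition~\ref{def:QA}, with the weight $1/f(b)$ cancelling the density $b|\LF_\H^{\gamma,i}(b)|$), and the second is the same observation the paper makes, that the reweighting factor is measurable with respect to data held fixed in the conditioning, so conditional laws are unchanged. The only cosmetic difference is that you verify this measurability directly against $(\hat\tau,\hat\phi|_{\cC(2\hat\tau/3,\hat\tau)})$ via $\mathcal L^\gamma_\phi(\eta)=\mathcal L^\gamma_{\hat\phi}(\partial_1\cC_{\hat\tau})$, whereas the paper checks it against the larger conditioning data $(\hat\eta,\phi|_{D_{\hat\eta}})$ used inside the proof of Lemma~\ref{lem:DMP1}; both are valid.
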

	\begin{proof}
		By definition $\LF_\H^{\gamma, i} = \int_0^\infty \LF_\H^{\gamma, i}(a)\,\rd a$, so by~\eqref{eq:QA-weld} and our choice of reweighting, the  $M$-law of $(A_\eta, \phi)/{\sim_\gamma}$ is $\QA^\gamma$. For the assertion on the conditional law of 
		$\hat \phi   - \hat \phi^{\mathrm{har}}$, we note that the most crucial {point} in the proof of Lemma~\ref{lem:DMP1} is   the following. Conditioning on $\eta$ and $\phi|_{D_{\hat\eta}}$, the conditional law of  $\phi_{A_{\hat\eta}}$ is a mixed boundary GFF on $A_{\hat\eta}$. Since the quantum length $\mathcal L^\gamma_\phi(\eta)$ of $\eta$ is determined by $(\eta,\phi|_{D_{\hat\eta}})$,  under the reweighted measure $M$,  the conditional law of  $\phi_{A_{\hat\eta}}$  given $( \eta,\phi|_{D_{\hat\eta}})$ does not change. The rest of the proof of Lemma~\ref{lem:DMP1} now gives the {last} assertion in Lemma~\ref{lem:DMP2}.
	\end{proof}

	\begin{proof}[Proof of Proposition~\ref{prop:LF}]
		{Throughout this proof, we let $(\mathring \phi, \mathring \tau)$ be an  embedding of $\QA^\gamma$ such that  the law of $\mathring\phi$ is invariant under rotations against the axis of the cylinder $\cC(0, \mathring \tau)$. Since rotations are the only conformal automorphisms of a finite cylinder, the law of  $(\mathring \phi, \mathring \tau)$ is unique. We call $(\mathring \phi, \mathring \tau)$ the rotational invariant embedding of $\QA^\gamma$. We will show that the law of $(\mathring \phi, \mathring \tau)$ is $\LF(\rd\phi) m(\rd \tau)$ for some $m(\rd\tau)$, which will prove Proposition~\ref{prop:LF}.}
		
		Let $I$ be a finite interval on $(0,\infty)$. Given a quantum surface of annular topology, let $E_I$ be the event that both boundary lengths are in $I$. Since  $|\QA^\gamma(a,b)|$ is proportional to $\frac{1}{\sqrt{ab}(a+b)}$, we have  $\QA^\gamma(E_I)=\int_{I\times I} |\QA^\gamma (a,b)|\rd a\rd b<\infty$. 
		Let $\P_I$ be the probability measure describing the law of $(\mathring \phi,\mathring \tau)$ 
		conditioning on $E_I$. For $\tau>0$, let $P_{I,\tau}$ be the regular conditional probability of $\mathring\phi$ under $\P_I$ given $\{\mathring\tau=\tau\}$. 
		
		{Note that the domain Markov property in Lemmas~\ref{lem:DMP1} and~\ref{lem:DMP2} holds for each fixed $x\in \{0\}\times [0,1]$ in the definition of the conformal map $\psi$ from $\cC_{\hat \tau}$ to $A_\eta$. Therefore Lemmas~\ref{lem:DMP1} and~\ref{lem:DMP2} still hold if we choose $x$ uniformly at random on $\{0\}\times [0,1]$. This way, under the measure $M$ in Lemma~\ref{lem:DMP2} $(\hat \phi,\hat \tau)$ defined there is a rotational invariant embedding of $\QA^\gamma$, hence equal in law to $(\mathring \phi, \mathring \tau)$. Therefore by the domain Markov property in} Lemma~\ref{lem:DMP2}, $P_{I,\tau}$ is an invariant probability measure of the kernel $\Lambda_{I}$ in Proposition~\ref{prop:inv-unique}.

		By the symmetry in Proposition~\ref{prop:QA-symmetry}, 
		$P_{I,\tau}$ is also an invariant probability measure of  the kernel $\bar \Lambda_{I}$. Hence the uniqueness of invariant measure in Proposition~\ref{prop:inv-unique} gives   $P_{I,\tau}=\LF_{\tau,I}$. More precisely, for each $I$,  $\LF_{\tau,I}$ is a version of the regular conditional probability of $\mathring \phi$ given $\{\mathring\tau=\tau\}$. We now write the assertion of Proposition~\ref{prop:LF} as 
		$\QA^\gamma= \int \LF_\tau m (d\tau)$ for simplicity. Then $P_{I,\tau}=\LF_{\tau,I}$ implies that {restricted to $E_I$}, we have  $\QA^\gamma= \int \LF_\tau m_I (\rd \tau)$  for some measure $m_I(d\tau)$ on $(0,\infty)$. {Setting $m (\rd \tau)=m_I(\rd \tau)$ on each interval $I$ we get a well-defined measure $m (\rd \tau)$ on $(0,\infty)$. Then    $\QA^\gamma=\int \LF_\tau m (\rd \tau)$.}
	\end{proof}

	\subsection{Identification of the law of the modulus by comparing the boundary lengths}\label{subsec:QA-Z}
	By Theorem~\ref{thm:KPZ} and Proposition~\ref{prop:LF}, the measure $m(\rd \tau) $ satisfies the following: for $x\in \R$  
	\begin{align}\label{eq:cm-simple}
	\int_0^\infty e^{ -\frac{\pi\gamma^2 x^2\tau}{4}}m(\rd \tau )=
	\frac{2\sinh(\frac{\gamma^2}{4} \pi x  )}{\pi\gamma x\Gamma(1+ix)}\QA^\gamma [L_1 e^{-L_1} L_0^{ix} ].
	\end{align} 
	Here we abuse notion and use $L_0$ and $L_1$ to represent the quantum lengths of the two boundaries of a sample from $\QA^\gamma$.
	We now identify the measure $m$ hence proving Theorem~\ref{thm:QAconj}. 
	\begin{lemma}\label{lem:QA-Mellin}
		For $\gamma\in (\sqrt{\frac{8}{3}},2)$, we have
		\begin{equation}\label{eq:QA-Mellin} 
		\QA^\gamma [L_1 e^{-L_1} L_0^{ix} ]={\cos(\pi (\frac4{\gamma^2}-1))} \cdot \frac{\Gamma(1+ix)}{\cosh(\pi x)}\quad \textrm{for }x\in \R.
		\end{equation}
	\end{lemma}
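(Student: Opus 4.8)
The plan is to compute $\QA^\gamma[L_1 e^{-L_1} L_0^{ix}]$ directly from the welding identity~\eqref{eq:QA-weld}, which expresses the joint law of the annular piece $(A_\eta,\phi)/{\sim_\gamma}$ and the simply-connected piece $(D_\eta,\phi)/{\sim_\gamma}$ coming from an outermost CLE loop as $\int_0^\infty b\,\QA^\gamma(a,b)\times\Md(\gamma;b)\,\rd b$. Integrating out the $\Md(\gamma;b)$ factor (whose total mass $|\Md(\gamma;b)|$ is a known explicit power of $b$ by~\eqref{eq:LFl-explicit} with $\alpha=\gamma$, namely $|\LF_\H^{(\gamma,i)}(b)|=c_\gamma\, b^{\frac2\gamma(\gamma-Q)-1}=c_\gamma\, b^{-\gamma^2/2-1}$ for an explicit constant $c_\gamma$), one recovers the statement of Theorem~\ref{thm:QA-welding}: marginally over $a$, the quantity $\QA^\gamma[L_1 e^{-L_1}L_0^{ix}]$ is expressible in terms of the law of the interface length $\mathcal L^\gamma_\phi(\eta)$ under $\LF_\H^{\gamma,i}(a)\times\CLE^\H_\kappa$, reweighted by $f(b)=b|\LF_\H^{\gamma,i}(b)|$ as in Lemma~\ref{lem:DMP2}. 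Concretely, since under the measure $M$ of Lemma~\ref{lem:DMP2} the surface $(A_\eta,\phi)/{\sim_\gamma}$ has law $\QA^\gamma$, with $L_0$ the boundary length $\mathcal L^\gamma_\phi(\partial\H)$ and $L_1=\mathcal L^\gamma_\phi(\eta)$, one has
\begin{equation}
\QA^\gamma[L_1 e^{-L_1}L_0^{ix}]=\LF_\H^{\gamma,i}\times\CLE_\kappa^\H\!\left[\frac{\mathcal L^\gamma_\phi(\eta)\,e^{-\mathcal L^\gamma_\phi(\eta)}\,(\mathcal L^\gamma_\phi(\partial\H))^{ix}}{\mathcal L^\gamma_\phi(\eta)\,|\LF_\H^{\gamma,i}(\mathcal L^\gamma_\phi(\eta))|}\right].
\end{equation}
Here I use that $\LF_\H^{\gamma,i}$ disintegrates over the boundary length $a=L_0$ and that $(\mathcal L^\gamma_\phi(\partial\H))^{ix}=a^{ix}$ is measurable with respect to that length.

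The second step is to evaluate the right-hand side using the explicit description in Theorem~\ref{thm:QA-welding} of the law of $\mathcal L^\gamma_\phi(\eta)$: under $\LF_\H^{\gamma,i}(a)\times\CLE_\kappa^\H$, the interface length $b=\mathcal L^\gamma_\phi(\eta)$ has density $\frac{\cos(\pi(\frac4{\gamma^2}-1))}{\pi}\cdot\frac{b\,|\LF_\H^{\gamma,i}(b)|}{\sqrt{ab}(a+b)}$. Combining this with the disintegration $\LF_\H^{\gamma,i}=\int_0^\infty\LF_\H^{\gamma,i}(a)\,\rd a$ (whose $a$-density of total mass is again the explicit power $|\LF_\H^{\gamma,i}(a)|=c_\gamma a^{-\gamma^2/2-1}$), the whole computation reduces to a double integral over $(a,b)\in(0,\infty)^2$ of an explicit integrand: the $|\LF_\H^{\gamma,i}(b)|$ factors cancel against the reweighting denominator, leaving
\begin{equation}
\QA^\gamma[L_1 e^{-L_1}L_0^{ix}]=\frac{\cos(\pi(\frac4{\gamma^2}-1))}{\pi}\iint_0^\infty a^{ix}\,e^{-b}\,|\LF_\H^{\gamma,i}(a)|\,\frac{1}{\sqrt{ab}\,(a+b)}\,\rd a\,\rd b,
\end{equation}
up to bookkeeping of the $c_\gamma$ constant; in fact the cleanest route is to avoid plugging in $|\LF_\H^{\gamma,i}(a)|$ prematurely and instead recognize that $\int_0^\infty \frac{e^{-b}}{\sqrt{b}(a+b)}\,\rd b$ is (up to elementary factors) an incomplete-gamma / confluent-hypergeometric expression in $a$, after which the remaining $\rd a$ integral is a Mellin–Barnes type integral that evaluates in closed form via the beta integral $\int_0^\infty \frac{a^{s-1}}{(1+a)}\,\rd a=\frac{\pi}{\sin(\pi s)}$ and its variants. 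I expect the $\Gamma(1+ix)$ to emerge from a $\Gamma$-function identity and the $\cosh(\pi x)$ in the denominator from a reflection-formula combination $\frac{1}{\sin(\pi(\tfrac12+\tfrac{ix}{?}))}$-type factor collapsing to $\frac{1}{\cosh(\pi x)}$.

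The main obstacle will be the exact evaluation of this double integral and, in particular, getting all the explicit constants (the powers of $2$, the $\Gamma(1-\gamma^2/4)$ factors in~\eqref{eq:LFl-explicit}, and the $\cos(\pi(\frac4{\gamma^2}-1))$ prefactor from Theorem~\ref{thm:QA-welding}) to combine into the clean answer $\cos(\pi(\frac4{\gamma^2}-1))\cdot\frac{\Gamma(1+ix)}{\cosh(\pi x)}$, with in particular all $\gamma$-dependence except the $\cos$ prefactor canceling. A safer organizing principle, which I would actually follow, is to compute the full family $\QA^\gamma[L_1^s e^{-L_1} L_0^{ix}]$ (or its Laplace transform in $L_1$) — these integrals are of the exact type handled in~\cite{ARS-FZZ} and~\cite{AS-CLE} using the known moments of $\Md(\gamma;\cdot)$ and of $|\LF_\H^{(\alpha,i)}(\ell)|$ — and then specialize; this reduces the problem to citing or lightly adapting the beta-integral identities already established there. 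One should also double-check the convergence range $\gamma\in(\sqrt{8/3},2)$ is exactly what makes $\alpha=\gamma>\gamma/2$ admissible and the relevant $\Gamma$-arguments positive, i.e.\ the restriction is inherited from the finiteness of $|\LF_\H^{(\gamma,i)}(\ell)|$ and not an artifact of the computation.
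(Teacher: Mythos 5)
There are two concrete problems with your proposal. First, the bookkeeping in your reduction is wrong, and the detour it comes from is unnecessary. By the very definition of $\QA^\gamma$, the measure $\QA^\gamma(a,b)$ is the explicit constant $\frac{\cos(\pi(\frac4{\gamma^2}-1))}{\pi\sqrt{ab}(a+b)}$ times the probability measure $\QA^\gamma(a,b)^{\#}$, so the joint law of the two boundary lengths under $\QA^\gamma$ is explicitly $\frac{\cos(\pi(\frac4{\gamma^2}-1))}{\pi\sqrt{ab}(a+b)}\,\rd a\,\rd b$ and the lemma is one elementary double integral; no welding input (Theorem~\ref{thm:QA-welding}, Lemma~\ref{lem:DMP2}) is needed here, and this is exactly how the paper argues. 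Your route can be unwound to the same integral (your first displayed identity is correct), but your second display is not: Theorem~\ref{thm:QA-welding} states the density of $\mathcal L^\gamma_\phi(\eta)$ for the \emph{unnormalized} measure $\LF_\H^{\gamma,i}(a)\times\CLE_\kappa^\H$, so after integrating against $\rd a$ from the disintegration $\LF^{\gamma,i}_\H=\int_0^\infty\LF^{\gamma,i}_\H(a)\,\rd a$ and cancelling $f(b)=b\,|\LF^{\gamma,i}_\H(b)|$, the integrand is $\frac{\cos(\pi(\frac4{\gamma^2}-1))}{\pi}\cdot\frac{b\,e^{-b}\,a^{ix}}{\sqrt{ab}(a+b)}$. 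The extra factor $|\LF^{\gamma,i}_\H(a)|$ you insert double-counts the $a$-marginal, and it is not a harmless constant: $|\LF^{\gamma,i}_\H(a)|=c_\gamma a^{-4/\gamma^2}$ (not $c_\gamma a^{-\gamma^2/2-1}$ as you wrote), so keeping it would change both the $x$- and the $\gamma$-dependence of the answer; you have also dropped the factor $b$ coming from $L_1e^{-L_1}$. Evaluated as written, your integral does not give \eqref{eq:QA-Mellin}.

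Second, the closed-form evaluation, which is the actual content of the lemma, is only conjectured in your proposal, and the route you sketch (integrating out one variable to get incomplete-gamma or confluent hypergeometric expressions, then a Mellin--Barnes integral, or citing the moment computations of \cite{ARS-FZZ,AS-CLE}) is much heavier than needed. In the correct integral, use the symmetry of $\frac{1}{\sqrt{ab}(a+b)}$ to write it as $\iint_0^\infty\frac{a e^{-a} b^{ix}}{\sqrt{ab}(a+b)}\,\rd a\,\rd b$ and substitute $b=at$: it factorizes as $\int_0^\infty a^{ix}e^{-a}\,\rd a\cdot\int_0^\infty\frac{t^{ix-\frac12}}{1+t}\,\rd t=\Gamma(1+ix)\cdot\frac{\pi}{\sin(\pi(\frac12+ix))}=\frac{\pi\,\Gamma(1+ix)}{\cosh(\pi x)}$, and multiplying by the prefactor $\frac{\cos(\pi(\frac4{\gamma^2}-1))}{\pi}$ yields \eqref{eq:QA-Mellin}; all $\gamma$-dependence other than the cosine cancels automatically because the boundary-length density contains no other $\gamma$-dependence. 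Finally, the restriction $\gamma\in(\sqrt{8/3},2)$ is not about admissibility of $\alpha=\gamma>\frac{\gamma}{2}$ (that holds for every $\gamma\in(0,2)$); it is inherited from the definition of $\QA^\gamma$ through simple $\CLE_{\gamma^2}$, i.e.\ $\kappa=\gamma^2\in(\frac83,4)$, while the integral identity itself holds for all $x\in\R$ once the boundary-length density is as above.
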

	\begin{proof} 
		By Proposition~\ref{prop-qa-mass},
		the total mass $|\QA^\gamma(a, b)|$ of $\QA^\gamma(a, b)$ is given by
		\begin{equation}
		|\QA^\gamma(a, b)| = \frac{\cos(\pi (\frac4{\gamma^2}-1))}\pi  \cdot \frac {1 }{\sqrt{ab} (a+b)} \quad \textrm{for }a>0\textrm{ and }b>0.
		\end{equation}
		Therefore
		\begin{align*}
		\QA^\gamma [L_1 e^{-L_1} L_0^{ix} ]=\iint_0^\infty ae^{-a} b^{ix} |\QA^\gamma(a,b)|\,\rd a\,\rd b=\frac{\cos(\pi (\frac4{\gamma^2}-1))}\pi \iint_0^\infty \frac {ae^{-a} b^{ix}}{\sqrt{ab} (a+b)}\,\rd a\,\rd b.
		\end{align*} 
		Setting $b=at$ and using $\int_0^\infty a^{ix} e^{-a}\,\rd a=\Gamma(1+ix)$ and $\int_0^\infty \frac { t^{ix-\frac12} }{1+t}\,\rd t=\frac{\pi}{\cosh(\pi x)}$,  we get
		\begin{align}\label{eq:integral}
		\iint_0^\infty   \frac {ae^{-a} b^{ix} }{\sqrt{ab} (a+b)}\,\rd a\,\rd b=\iint_0^\infty  \frac {  ae^{-a}(at)^{ix} }{a\sqrt t(1+t)}\,\rd a\,\rd t=\int_0^\infty e^{-a} a^{ix} \,\rd a \int_0^\infty \frac { t^{ix-\frac12} }{1+t}\, \rd t=\frac{\pi\Gamma(1+ix)}{\cosh(\pi x)}.
		\end{align}
		This gives \eqref{eq:QA-Mellin}.
	\end{proof}
	
	\begin{proof}[Proof of Theorem~\ref{thm:QAconj}]
		By~\eqref{eq:cm-simple} and Lemma~\ref{lem:QA-Mellin} we have 
		\begin{equation}
		\label{eq:cm-Lap}
		\int_0^\infty e^{ -\frac{\pi\gamma^2 x^2\tau}{4}} m(\rd\tau)= 
		{\cos(\pi (\frac4{\gamma^2}-1))} \cdot \frac{2\sinh(\frac{\gamma^2}{4} \pi x  )}{\pi \gamma x\cosh(\pi x)} \quad \textrm{for } x\in \R.
		\end{equation}
		By \eqref{eq:theta-L} in  Appendix~\ref{app:theta}, we have
		\begin{equation}\label{eq:theta1-L}
		\int_0^{\infty}   \theta_1(\frac{\gamma^2}{8}, \frac{\gamma^2}{4}i\tau)  e^{ -\frac{\pi\gamma^2 x^2\tau}{4}} d\tau= 
		\frac{4\sinh(\frac{\gamma^2}{4} \pi x )}{\gamma^2 x\cosh(\pi x)} \quad \textrm{for }x\in \R.
		\end{equation}
		Taking the inverse Laplace transform,  we get \( m(\rd \tau)={\cos(\pi (\frac4{\gamma^2}-1))} \cdot \frac{\gamma}{2\pi } \theta_1(\frac{\gamma^2}{8}, \frac{\gamma^2}{4}i\tau)\,  \rd \tau\) on $(0,\infty)$.
	\end{proof}

	\section{The random moduli for  CLE loops}\label{sec:CLE}
	
	In this section we prove the three results on $\CLE_\kappa$ presented  in Section~\ref{subsec:intro-CLE}.  We will work with $(\H,i)$ instead of $(\D,0)$. Namely, let  $\kappa \in (8/3,4]$.
	We assume that $\Gamma$ is a $\CLE_\kappa$ on the upper half plane $\H$. Given $\Gamma$,  we let $\{\eta_j\}_{j\ge1}$ be the sequence of loops surrounding the point $i$ ordered such that $\eta_{j+1}$ is surrounded by $\eta_j$.  Moreover, let $D_j$ be the Jordan domain bounded by $\eta_j$, and $A_j$ be the annulus bounded by $\eta_j$ and $\partial\H$.
	Finally, let $\mathrm{Mod}(\eta_j)$ be the modulus of $A_j$.
	\subsection{Random modulus of a single loop: proof of Theorem~\ref{thm:CLE-mod}}\label{subsection:single}
	We first prove the $j=1$ case of Theorem~\ref{thm:CLE-mod} 
	{in the case $\kappa\in (\frac83,4)$.}
	\begin{proposition}\label{prop:outermost}
		For $\kappa\in (\frac83,4)$ and $\gamma=\sqrt{\kappa}$, let $m(\rd \tau )=1_{\tau>0}\cdot {\cos(\pi (\frac4{\gamma^2}-1))} \cdot \frac{\gamma}{2\pi } \theta_1(\frac{\gamma^2}{8}, \frac{\gamma^2}{4}i\tau) \rd\tau$. Then  the law of $\mathrm{Mod}(\eta_1)$ has a density proportional to $e^{\frac{\pi\kappa}{4} (\frac{4}{\kappa}-1)^2 \tau}  m(\rd\tau)$. Moreover,
		\begin{equation}\label{eq:moment-1}
		\E[e^{-2\pi \lambda \mathrm{Mod}(\eta_1)}]=\frac{ (\frac{4}{\kappa}-1)\cos(\pi (\frac{4}{\kappa}-1))}{  \sin( \pi (1-\frac{\kappa}{4}))} \times       \frac{  \sin(\frac{\kappa}{4} \pi \sqrt{   (\frac{4}{\kappa}-1)^2 -\frac{8\lambda}{\kappa}  } )}{ \sqrt{   (\frac{4}{\kappa}-1)^2 -\frac{8\lambda}{\kappa}  }\cos(\pi \sqrt{   (\frac{4}{\kappa}-1)^2 -\frac{8\lambda}{\kappa}  })}
		\end{equation}
		for \(\lambda >\frac{ 3\kappa}{32}+\frac{2}{\kappa} -1\). 
	\end{proposition}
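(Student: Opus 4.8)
The plan is to play the two descriptions of the quantum annulus $\QA^\gamma$ against each other. On the one hand, Theorem~\ref{thm:QAconj} gives $\QA^\gamma=\LF_\tau(\rd\phi)\,m(\rd\tau)$ with $m(\rd\tau)=1_{\tau>0}\cos(\pi(\tfrac4{\gamma^2}-1))\tfrac{\gamma}{2\pi}\theta_1(\tfrac{\gamma^2}{8},\tfrac{\gamma^2}{4}i\tau)\,\rd\tau$; in particular the modulus of a $\QA^\gamma$-sample is exactly $\tau$. On the other hand, Lemma~\ref{lem:DMP2}, rearranged, says that the pushforward of $\LF_\H^{\gamma,i}(\rd\phi)\,\CLE^\H_\kappa(\rd\Gamma)$ under $(\phi,\Gamma)\mapsto (A_{\eta_1},\phi)/{\sim_\gamma}$ equals $f(L_1)\,\QA^\gamma=f(L_1)\,\LF_\tau(\rd\phi)\,m(\rd\tau)$, where $f(b)=b\,|\LF_\H^{\gamma,i}(b)|$, $L_1$ is the quantum length of the $\eta_1$-boundary, and $L_0$ is the quantum length of the $\partial\H$-boundary. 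Since the modulus of $(A_{\eta_1},\phi)/{\sim_\gamma}$ is $\mathrm{Mod}(\eta_1)$, projecting this identity to the modulus would formally give the law of $\mathrm{Mod}(\eta_1)$ under $\LF_\H^{\gamma,i}\times\CLE^\H_\kappa$ as $\LF_\tau[f(L_1)]\,m(\rd\tau)$. Both sides are infinite, so this must be done after truncation.

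The truncation step uses that $L_0$ is a function of $\phi$ alone. Fix a compact $I\subset(0,\infty)$ and restrict to $\{L_0\in I\}$. Because $\phi\perp\Gamma$ under $\LF_\H^{\gamma,i}\times\CLE^\H_\kappa$ and $\mathrm{Mod}(\eta_1)$ is $\Gamma$-measurable, the measure $1_{L_0\in I}\,\LF_\H^{\gamma,i}\times\CLE^\H_\kappa$ factorizes, and its $\mathrm{Mod}(\eta_1)$-marginal is $\bigl(\int_I|\LF_\H^{\gamma,i}(a)|\,\rd a\bigr)\cdot\nu$, where $\nu$ denotes the $\CLE^\H_\kappa$-law of $\mathrm{Mod}(\eta_1)$ (the object of the proposition). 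Matching with the previous paragraph yields
$$\nu(\rd\tau)\ \propto\ \LF_\tau\bigl[1_{L_0\in I}\,f(L_1)\bigr]\,m(\rd\tau),$$
with a finite, $\tau$-independent constant.

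It remains to evaluate $\LF_\tau[1_{L_0\in I}f(L_1)]$. By \eqref{eq:LFl-explicit}, $|\LF_\H^{\gamma,i}(b)|=c_\gamma b^{-4/\gamma^2}$, so $f(b)=c_\gamma b^{1-4/\gamma^2}$, and Lemma~\ref{lem:GMC} (with $g=1_I$, $h(y)=y^{1-4/\gamma^2}$) gives
$$\LF_\tau[1_{L_0\in I}f(L_1)]\ =\ \tfrac{2c_\gamma}{\gamma}\Bigl(\int_I\ell^{-4/\gamma^2}\,\rd\ell\Bigr)\,\E\bigl[R^{\,1-4/\gamma^2}\bigr],\qquad R:=\frac{\mathcal L^\gamma_h(\partial_1\cC_\tau)}{\mathcal L^\gamma_h(\partial_0\cC_\tau)},$$
with $h$ a free boundary GFF; the bracket is a constant. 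The moment $\E[R^{1-4/\gamma^2}]$ is obtained by analytically continuing the characteristic function of Theorem~\ref{thm:GMC-ratio}, $\E[R^{ix}]=\frac{\pi\gamma^2 x\,e^{-\pi\gamma^2\tau x^2/4}}{4\sinh(\gamma^2\pi x/4)}$, to $x=i(\tfrac4{\gamma^2}-1)$; since $\gamma^2\in(8/3,4)$ this value lies in the strip of analyticity and off the poles, giving $\E[R^{1-4/\gamma^2}]=\frac{\pi(4-\kappa)}{4\sin(\pi(1-\kappa/4))}\,e^{\frac{\pi\kappa}{4}(\frac4\kappa-1)^2\tau}$. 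Hence $\nu(\rd\tau)\propto e^{\frac{\pi\kappa}{4}(\frac4\kappa-1)^2\tau}\,m(\rd\tau)$, which is the first assertion of Proposition~\ref{prop:outermost}.

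For the moment generating function, write $\E[e^{-2\pi\lambda\mathrm{Mod}(\eta_1)}]$ as the ratio of $\int_0^\infty e^{-2\pi\lambda\tau}e^{c\tau}\theta_1(\tfrac{\gamma^2}{8},\tfrac{\gamma^2}{4}i\tau)\,\rd\tau$ (with $c=\tfrac{\pi\kappa}{4}(\tfrac4\kappa-1)^2$) over its value at $\lambda=0$. Substituting $\tfrac{\pi\gamma^2 x^2}{4}=2\pi\lambda-c$, i.e.\ $x=i\sqrt{(\tfrac4\kappa-1)^2-\tfrac{8\lambda}{\kappa}}$, both integrals are evaluated by \eqref{eq:theta1-L} in Appendix~\ref{app:theta}; simplifying with $\sinh(iy)=i\sin y$ and $\cosh(iy)=\cos y$ produces \eqref{eq:moment-1}, and the integrals converge exactly when $\lambda>\tfrac{3\kappa}{32}+\tfrac2\kappa-1$, matching the hypothesis. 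The step requiring the most care is the evaluation of the negative GMC-ratio moment: one must verify that $\E[R^{1-4/\gamma^2}]$ is finite (a one-dimensional GMC tail estimate, valid since $1-4/\gamma^2>-1/2$) and that it equals the continued characteristic function — this is where $\kappa<4$ is used, keeping $x=i(\tfrac4{\gamma^2}-1)$ away from $x=0$. The measure-theoretic bookkeeping in the first two paragraphs (measurability of $\mathrm{Mod}(\eta_1)$, the disintegrations over the modulus, finiteness of the truncated masses) is routine but should be spelled out.
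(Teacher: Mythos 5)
Your argument is correct and follows essentially the same route as the paper's proof: combine Theorem~\ref{thm:QAconj} with the welding input (eq.~\eqref{eq:QA-weld}, equivalently Lemma~\ref{lem:DMP2}) to write the law of $\mathrm{Mod}(\eta_1)$ as an $\LF_\tau$-observable times $m(\rd\tau)$, extract the $\tau$-dependence $e^{\frac{\pi\kappa}{4}(\frac4\kappa-1)^2\tau}$ from the analytically continued GMC-ratio formula of Theorem~\ref{thm:GMC-ratio}, and then evaluate the Laplace transform via \eqref{eq:theta1-L}, fixing the constant by setting $\lambda=0$. The only difference is cosmetic: you regularize the infinite masses by truncating $L_0$ to a compact interval and exploiting the independence of $\phi$ and $\Gamma$, whereas the paper weights by $ae^{-a}$ and uses \eqref{eq:fraction} to compute $\LF_\tau[L_1e^{-L_1}f(L_0)]$; both versions need the same analytic continuations (in the moment exponent and in $\lambda$), which you correctly flag and which the paper carries out explicitly.
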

	\begin{proof}
		For $\tau>0$, define $\LF_\tau(a,b)$ by the disintegration
		\begin{equation}\label{eq:LF-ab}
		\LF_\tau = \iint_0^\infty \LF_\tau(a,b) \rd a \rd b 
		\end{equation}
		{where $\LF_\tau(a, b)$ is supported on the event that the quantum boundary lengths of  $\{0\}\times [0,1]$ and $\{\tau\}\times [0,1]$  are $a$ and $b$, respectively.}
		Let $C_\gamma\in (0,\infty)$ be a  $\gamma$-dependent constant that can vary from line to line.  Let $f(\ell)= \ell |\LF_{\H}^{(\gamma, i)}(\ell)|$, which equals $C_\gamma {\ell^{1-\frac4{\gamma^2}}}$ for some $C_\gamma$.
		By~\eqref{eq:QA-weld} and Theorem~\ref{thm:QAconj}, for a non-negative measurable function $\rho$ on $(0,\infty)$ we have 
		\begin{align*}
		\LF_{\H}^{(\gamma, i)}(a)\times \CLE_\kappa^\H[\rho(\mathrm{Mod}(\eta_1))]= \int_0^\infty \int_0^\infty \rho(\tau) |\LF_\tau(a,b)| f(b) \,\rd b \, m (\rd\tau).
		\end{align*}
		{For a sample from $ \LF_{\H}^{(\gamma, i)}$ let $L$ be the quantum boundary length of $\partial \H$.} Integrating over $ae^{-a}\rd a$, we have 
		\begin{equation}\label{eq:m1}
		\LF_{\H}^{(\gamma, i)}[{L e^{-L}}] \times  \CLE_\kappa^\H[\rho(\mathrm{Mod}(\eta_1))]=  \int_0^\infty \rho(\tau) \LF_\tau[L_1e^{-L_1}f(L_0)] 
		\,m (\rd\tau).
		\end{equation}
		Recall from~\eqref{eq:fraction-real} that \(\LF_\tau[L_1e^{-L_1} L_0^{y}]=    \frac{\pi\gamma y\Gamma(1+y)}{   2\sin(\frac{\gamma^2}{4} \pi y  )}e^{ \frac{\pi\gamma^2 y^2\tau}{4}}\) for $y\in \R$. This gives  
		\[
		\frac{\LF_\tau[L_1 e^{-L_1} 
			f(L_0)]}{\LF_{\H}^{(\gamma, i)} [{L e^{-L}} ] }=C_\gamma e^{\frac{\pi\gamma^2 \tau}{4} \cdot (1-\frac{4}{\gamma^2})^2}=C_\gamma 
		e^{\frac{\pi\kappa}{4} \cdot (\frac{4}{\kappa}-1)^2 \tau}. 
		\]
		Therefore the law of $\mathrm{Mod}(\eta_1)$  is $C_\gamma 
		e^{\frac{\pi\kappa}{4} \cdot (\frac{4}{\kappa}-1)^2 \tau} m(\rd \tau)$ as desired. 
		Consequently,  
		\begin{equation}\label{eq:moment}
		\E[e^{-2\pi \lambda \mathrm{Mod}(\eta_1)}] =C_\gamma\int_0^\infty  e^{-2\pi \lambda \tau}  
		e^{\frac{\pi\kappa}{4} (\frac{4}{\kappa}-1)^2 \tau}  m(\rd\tau).
		\end{equation}
		By~\eqref{eq:theta} the function $\tau \mapsto \theta_1(\frac{\gamma^2}8, \frac{\gamma^2}4 i \tau)$ decays as $O(e^{-\frac{\pi \gamma^2}{16} \tau})$ as $\tau \to \infty$. Therefore $\E[e^{-2\pi \lambda \mathrm{Mod}(\eta_1)}]$ is finite and analytic {in $\lambda$} on a complex neighborhood of $(\frac{3\kappa}{32} + \frac2\kappa-1, \infty)$. By~\eqref{eq:moment} and~\eqref{eq:theta-L}, for $\lambda$ {sufficiently large,  we can evaluate the integral on the right side of~\eqref{eq:moment-C} to get}
		\begin{equation}\label{eq:moment-C}
		\E[e^{-2\pi \lambda \mathrm{Mod}(\eta_1)}]=    \frac{C_\gamma  \sin(\frac{\kappa}{4} \pi \sqrt{   (\frac{4}{\kappa}-1)^2 -\frac{8\lambda}{\kappa}  } )}{ \sqrt{   (\frac{4}{\kappa}-1)^2 -\frac{8\lambda}{\kappa}  }\cos(\pi \sqrt{   (\frac{4}{\kappa}-1)^2 -\frac{8\lambda}{\kappa}  })}.
		\end{equation}
		By the analyticity in $\lambda$~\eqref{eq:moment-C} holds for  all $\lambda \in (\frac{3\kappa}{32} + \frac2\kappa-1, \infty)$. Finally, setting $\lambda=0$ we see that {the constant $C_\gamma$ in~\eqref{eq:moment} equals} $\frac{ (\frac{4}{\kappa}-1)\cos(\pi (\frac{4}{\kappa}-1))}{  \sin( \pi (1-\frac{\kappa}{4}))}$, completing the proof.
	\end{proof}
	
	Our derivation for $\eta_1$ works exactly the same way for $\eta_j$ with $j\ge2$. We first introduce the analog of $\QA^\gamma$ following Section~\ref{sec:QA}. 
	\begin{definition}
		Fix $j\ge2$. For $a>0$, let $(\phi,\Gamma$)  be a sample from $\LF_\H^{\gamma,i}(a)\times \CLE_\kappa^\H$ and let $\eta_j$ be the $j$-th outermost loop of $\Gamma$ surrounding $i$.
		For $b>0$, let $\QA_j^\gamma(a,b)^{\#}$ be the  conditional law of the quantum surface $(A_j, \phi)/{\sim_\gamma}$ conditioning on $\mathcal L^\gamma_\phi(\eta) = b$. 
	\end{definition}
	\begin{proposition}\label{prop:QAk}
		Let $\QA^\gamma_1=\QA^\gamma$. For $j\ge2$,  let  $\QA^\gamma_j(a,b)= |\QA^\gamma_j(a,b)| \QA^\gamma_j(a,b)^{\#}$ where the total mass $|\QA_j^\gamma(a,b)|$ of $\QA_j^\gamma(a,b)$ is given by 
		\begin{equation}\label{eq:def-QAn}
		|\QA_j^\gamma(a,b)|=\int_0^\infty \ell  |\QA^\gamma(a,\ell)| |\QA^\gamma_{j-1}(\ell,b)| \rd \ell.
		\end{equation}
		Then the joint law of $(A_j,h)/{\sim_\gamma}$ and $(D_j,\phi,0)/{\sim_\gamma}$ is
		\begin{equation}\label{eq:QA-weld-j}
		\int_0^\infty b {\QA^\gamma_j}(a,b)\times \Md(\gamma;b) \rd b \quad \textrm{for }j\ge 1.
		\end{equation}
	\end{proposition}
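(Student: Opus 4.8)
The plan is to prove Proposition~\ref{prop:QAk} by induction on $j$, and in fact to establish the stronger loop-decorated statement $(\star_j)$: under $\LF_\H^{\gamma,i}(a)\times\CLE_\kappa^\H$ the joint law of the quantum annulus $(A_j,\phi)/{\sim_\gamma}$ and the \emph{loop-decorated} quantum disk $(D_j,\phi,i,\Gamma|_{D_j})/{\sim_\gamma}$ equals $\int_0^\infty b\,\QA^\gamma_j(a,b)\times\big(\Md(\gamma;b)\text{ decorated by an independent }\CLE_\kappa\big)\,\rd b$, where $\QA^\gamma_j(a,b)$ is the measure with total mass \eqref{eq:def-QAn} and normalized version $\QA^\gamma_j(a,b)^\#$. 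Proposition~\ref{prop:QAk} is then $(\star_j)$ with the loops inside $D_j$ forgotten. The base case $(\star_1)$ is \eqref{eq:QA-weld} (a restatement of Theorem~\ref{thm:QA-welding}) together with the observation that, by the renewal property of $\CLE_\kappa$ and the independence of $\phi$ and $\Gamma$, conditionally on $\mathcal L^\gamma_\phi(\eta_1)$ the loops of $\Gamma$ inside $D_1$ form a fresh $\CLE_\kappa$ on $D_1$ that is independent of $(A_1,\phi)/{\sim_\gamma}$ and of $(D_1,\phi,i)/{\sim_\gamma}$.

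For the inductive step I would take a sample of $\LF_\H^{\gamma,i}(a)\times\CLE_\kappa^\H$ and cut along the outermost loop $\eta_1$ around $i$. By Theorem~\ref{thm:QA-welding} and the renewal argument just mentioned, conditionally on $\mathcal L^\gamma_\phi(\eta_1)=\ell$ the decorated disk $(D_1,\phi,i,\Gamma|_{D_1})/{\sim_\gamma}$ is a sample of $\Md(\gamma;\ell)$ decorated by an independent $\CLE_\kappa$, independent of $(A_1,\phi)/{\sim_\gamma}\sim\QA^\gamma(a,\ell)^\#$, with the factor recording that the number of gluings along the length-$\ell$ interface $\eta_1$ is proportional to $\ell$, as in \eqref{eq:QA-weld}. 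Since $\Md(\gamma;\ell)$ is by definition the law of $(\H,\psi,i)/{\sim_\gamma}$ with $\psi\sim\LF_\H^{(\gamma,i)}(\ell)$, this decorated surface is precisely the input of $(\star_{j-1})$ with $a$ replaced by $\ell$; moreover $\eta_j$ is the $(j-1)$-th outermost loop around $i$ among the loops of $\Gamma$ in $D_1$, $D_j$ is the Jordan domain it bounds, and $A_j\cap D_1$ is the annulus between $\eta_1$ and $\eta_j$. Applying $(\star_{j-1})$ inside $D_1$ then gives that, conditionally on $\mathcal L^\gamma_\phi(\eta_1)=\ell$, the pair $\big((A_j\cap D_1,\phi)/{\sim_\gamma},\,(D_j,\phi,i,\Gamma|_{D_j})/{\sim_\gamma}\big)$ has law $\int_0^\infty b\,\QA^\gamma_{j-1}(\ell,b)\times\big(\Md(\gamma;b)\text{ decorated by an independent }\CLE_\kappa\big)\,\rd b$, independently of $(A_1,\phi)/{\sim_\gamma}$.

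It remains to recombine. The annular quantum surface $(A_j,\phi)/{\sim_\gamma}$ is obtained from $(A_1,\phi)/{\sim_\gamma}$ and $(A_j\cap D_1,\phi)/{\sim_\gamma}$ by conformally welding along their common boundary $\eta_1$, of quantum length $\ell$ on both sides; this is a measurable deterministic operation on quantum surfaces (cf.\ \cite{wedges,AS-CLE}), carried out independently of the $D_j$-side. Integrating out $\ell$ and keeping the weight $\ell$, the conditional law of $(A_j,\phi)/{\sim_\gamma}$ given $\mathcal L^\gamma_\phi(\eta_j)=b$ is the probability measure proportional to $\int_0^\infty \ell\,|\QA^\gamma(a,\ell)|\,|\QA^\gamma_{j-1}(\ell,b)|\,\big(\text{weld of }\QA^\gamma(a,\ell)^\#\text{ and }\QA^\gamma_{j-1}(\ell,b)^\#\big)\,\rd\ell$; this measure is $\QA^\gamma_j(a,b)^\#$ by definition, its normalizing constant is exactly \eqref{eq:def-QAn}, and tracking the remaining weight $b$ yields $(\star_j)$. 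Forgetting the loops inside $D_j$ gives \eqref{eq:QA-weld-j}.

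I expect the main obstacle to be the bookkeeping in the cutting and recombining steps. Two points need care: first, that the $\CLE_\kappa$ loops inside $D_1$ really remain an \emph{independent} $\CLE_\kappa$ after one conditions on the interface quantum lengths $\mathcal L^\gamma_\phi(\eta_1)$ and $\mathcal L^\gamma_\phi(\eta_j)$ — this uses $\phi\perp\Gamma$ and $\CLE$ renewal at $\eta_1$ together with the disintegration formalism of \cite{AS-CLE}; second, that the ``number of gluings'' weights compose correctly under the two successive cuts at $\eta_1$ and $\eta_j$, so that the single factor $b$ in \eqref{eq:QA-weld-j} is produced and the $\ell$-integral in \eqref{eq:def-QAn} carries the right weight — this is the associativity of quantum conformal welding, again as in \cite{wedges,AS-CLE}. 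As a consistency check one verifies $\int_0^\infty b\,|\QA^\gamma_j(a,b)|\,|\Md(\gamma;b)|\,\rd b=|\LF_\H^{\gamma,i}(a)|$, which follows by induction from the $j=1$ case, itself immediate from the mass formula in Theorem~\ref{thm:QA-welding}.
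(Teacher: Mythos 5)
Your proposal is correct and matches the paper's own argument: the paper proves this by noting that, by the Markov property of $\CLE_\kappa$, the loops inside a nested loop form a fresh $\CLE_\kappa$, and then iteratively applying~\eqref{eq:QA-weld} — exactly the induction (with the loop-decorated strengthening and mass bookkeeping) that you spell out in more detail.
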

	\begin{proof}
		By the Markov property of $\CLE_\kappa$, conditioning on $\eta_j$, the law of the loops of inside $D_j$ is a $\CLE_\kappa$ on $D_j$.  Iteratively applying~\eqref{eq:QA-weld} we get~\eqref{eq:QA-weld-j}.
	\end{proof}
	We now extend  Theorem~\ref{thm:QAconj} to  $\QA^\gamma_j$.
	\begin{proposition}\label{prop:QA-LF-k}
		Fix $j\ge2$. Let $\QA^\gamma_j=\iint_0^\infty \QA^\gamma_j(a,b) \rd a \rd b$. Let  $m_j(\rd\tau)$ be the unique  measure on $(0,\infty)$ such that
		\begin{equation}
		\label{eq:cj-Lap-k}
		\int_0^\infty e^{ -\frac{\pi\gamma^2 x^2\tau}{4}} m_j(\rd\tau)= {\cos^{j}(\pi (\frac4{\gamma^2}-1))} 
		\cdot \frac{2\sinh(\frac{\gamma^2}{4} \pi x  )}{\pi \gamma x\cosh^j(\pi x)} \quad \textrm{for } x\in \R.
		\end{equation}
		Sample $(\tau, \phi)$ from $\LF_\tau (\rd\phi) m_j(\rd \tau)$. Then the law of the quantum surface $(\cC_\tau,\phi)/{\sim_\gamma}$ is $\QA^\gamma_j$.
	\end{proposition}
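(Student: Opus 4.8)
The plan is to run the same two-step argument used for Theorem~\ref{thm:QAconj}. \textbf{Step A}: show that $\QA^\gamma_j$ can be written as $\int_0^\infty\LF_\tau(\rd\phi)\,\mu(\rd\tau)$ for some measure $\mu$ on $(0,\infty)$, by identifying the conditional law of the field given the modulus via the resampling characterization of $\LF_\tau$ (Proposition~\ref{prop:inv-unique}). \textbf{Step B}: compute $\QA^\gamma_j[L_1e^{-L_1}L_0^{ix}]$ in closed form, feed it into the KPZ relation (Theorem~\ref{thm:KPZ}) to see that $\mu$ satisfies~\eqref{eq:cj-Lap-k}, and conclude $\mu=m_j$ by uniqueness of the Laplace transform.

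For Step A, first observe that $\QA^\gamma_j$ is the law of $(A_j,\phi)/{\sim_\gamma}$ under $M_j:=\frac{1}{f(\mathcal L^\gamma_\phi(\eta_j))}\,\LF^{\gamma,i}_\H\times\CLE^\H_\kappa$, where $f(b)=b|\LF^{\gamma,i}_\H(b)|$: indeed, by Proposition~\ref{prop:QAk} the joint law of $(A_j,\phi)/{\sim_\gamma}$ and the inner disk is $\int_0^\infty b\,\QA^\gamma_j(a,b)\times\Md(\gamma;b)\,\rd b$, so marginalizing out the disk component and reweighting by $1/f(b)$ (using $|\Md(\gamma;b)|=|\LF^{\gamma,i}_\H(b)|$) returns $\QA^\gamma_j$. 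Since $\eta_j$ is a function of $\Gamma$ and hence independent of $\phi$, and since $\mathcal L^\gamma_\phi(\eta_j)$ is measurable with respect to $\eta_j$ together with $\phi|_{D_{\eta_j}}$, the domain Markov arguments of Lemmas~\ref{lem:DMP1} and~\ref{lem:DMP2} apply verbatim with $\eta_j$ in place of $\eta$. Passing to the rotationally invariant embedding $(\mathring\phi,\mathring\tau)$ of $\QA^\gamma_j$, this shows that for every compact $I\subset(0,\infty)$ the regular conditional law of $\mathring\phi$ given $\{\mathring\tau=\tau\}$, restricted and normalized on $E_I=\{L_0,L_1\in I\}$, is invariant under the kernel $\Lambda_I$ of Proposition~\ref{prop:inv-unique}. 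A short induction on $j$ using~\eqref{eq:def-QAn} and the symmetry $\QA^\gamma(a,b)=\QA^\gamma(b,a)$ of Lemma~\ref{lem:QA-symmetry} gives $\QA^\gamma_j(a,b)=\QA^\gamma_j(b,a)$, so the same conditional law is invariant under $\bar\Lambda_I$ as well; Proposition~\ref{prop:inv-unique} then forces it to equal $\LF_{\tau,I}$. Letting $I\uparrow(0,\infty)$ yields a well-defined $\mu$ with $\QA^\gamma_j=\int_0^\infty\LF_\tau(\rd\phi)\,\mu(\rd\tau)$.

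For Step B, set $G_j(x):=\QA^\gamma_j[L_1e^{-L_1}L_0^{ix}]=\iint_0^\infty a^{ix}be^{-b}|\QA^\gamma_j(a,b)|\,\rd a\,\rd b$. From $|\QA^\gamma(a,\ell)|=\frac{\cos(\pi(4/\gamma^2-1))}{\pi}\cdot\frac{1}{\sqrt{a\ell}\,(a+\ell)}$ and the standard integral $\int_0^\infty\frac{s^{ix-1/2}}{1+s}\,\rd s=\frac{\pi}{\cosh(\pi x)}$ one obtains $\int_0^\infty a^{ix}|\QA^\gamma(a,\ell)|\,\rd a=\frac{\cos(\pi(4/\gamma^2-1))}{\cosh(\pi x)}\,\ell^{ix-1}$; inserting this into the recursion~\eqref{eq:def-QAn} and using Fubini (valid since all masses involved are finite, already in the $j=1$ case) gives $G_j(x)=\frac{\cos(\pi(4/\gamma^2-1))}{\cosh(\pi x)}\,G_{j-1}(x)$. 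Since $G_1(x)=\cos(\pi(4/\gamma^2-1))\,\frac{\Gamma(1+ix)}{\cosh(\pi x)}$ by Lemma~\ref{lem:QA-Mellin}, induction yields $G_j(x)=\cos^j(\pi(4/\gamma^2-1))\,\frac{\Gamma(1+ix)}{\cosh^j(\pi x)}$. Substituting into~\eqref{eq:KPZ} with $m=\mu$ produces exactly the right-hand side of~\eqref{eq:cj-Lap-k}; in particular a measure with property~\eqref{eq:cj-Lap-k} exists, and it is unique by injectivity of the Laplace transform (substitute $u=\tfrac{\pi\gamma^2}{4}\tau$). Hence $\mu=m_j$, and the law of $(\cC_\tau,\phi)/{\sim_\gamma}$ under $\LF_\tau(\rd\phi)\,m_j(\rd\tau)$ is $\QA^\gamma_j$, as claimed.

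I expect no serious obstacle here: relative to the $j=1$ case of Theorem~\ref{thm:QAconj}, the only genuinely new inputs are that the construction of $\QA^\gamma_j$ ``restarts'' at the loop $\eta_j$ --- immediate from the $\CLE_\kappa$ domain Markov property already used in Proposition~\ref{prop:QAk} together with the independence of $\eta_j$ and the Liouville field --- and the inductive symmetry $\QA^\gamma_j(a,b)=\QA^\gamma_j(b,a)$. The points needing the most care are bookkeeping: verifying that $1/f(\mathcal L^\gamma_\phi(\eta_j))$ is measurable with respect to the ``inside'' data so that the domain Markov step is unaffected, and justifying the Fubini interchanges in the recursion for $G_j$; both reduce to finiteness of the relevant masses on $E_I$.
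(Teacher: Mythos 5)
Your proposal is correct and follows essentially the same route as the paper: the paper proves the existence of $m_j$ by invoking ``the identical argument as for Proposition~\ref{prop:LF}'' (your Step A, including the flip symmetry of $\QA^\gamma_j$ inherited from Lemma~\ref{lem:QA-symmetry} through the chain decomposition of Proposition~\ref{prop:QAk}), and then identifies $m_j$ via the KPZ relation together with Lemmas~\ref{lem:QA-Mellin} and~\ref{lem:QA-Mellin-k}. Your Step B recursion $G_j(x)=\frac{\cos(\pi(4/\gamma^2-1))}{\cosh(\pi x)}G_{j-1}(x)$ is exactly the content of the paper's Lemma~\ref{lem:QA-Mellin-k}, derived by the same change of variables $b=\ell t$ and the integral $\int_0^\infty\frac{t^{ix-1/2}}{1+t}\,\rd t=\frac{\pi}{\cosh(\pi x)}$.
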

	{To prove Proposition~\ref{prop:QA-LF-k},} we first prove the analog of Lemma~\ref{lem:QA-Mellin}.
	\begin{lemma}\label{lem:QA-Mellin-k}
		For $j\ge2$, we have
		\begin{align}
		\label{eq:QAn-Mellin-k} 
		\QA^\gamma_j [L_1 e^{-L_1} L_0^{ix} ]= \frac{\cos(\pi (\frac4{\gamma^2}-1))}{\cosh(\pi x)}\QA^\gamma_{j-1}[L_1 e^{-L_1} L_0^{ix} ] \quad \textrm{for } x\in \R.
		\end{align}
	\end{lemma}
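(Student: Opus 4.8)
The plan is to unwind the recursive definition~\eqref{eq:def-QAn} of $\QA^\gamma_j$ and reduce~\eqref{eq:QAn-Mellin-k} to one-variable Gamma and Beta integrals, using the scaling covariance of all the measures in play. First I would record a homogeneity property: the total mass $|\QA^\gamma_j(a,b)|$ is homogeneous of degree $-2$ in $(a,b)$, i.e. $|\QA^\gamma_j(\lambda a,\lambda b)|=\lambda^{-2}|\QA^\gamma_j(a,b)|$ for all $\lambda>0$. For $j=1$ this is immediate from $|\QA^\gamma(a,b)|=\frac{\cos(\pi(\frac{4}{\gamma^2}-1))}{\pi}\cdot\frac1{\sqrt{ab}(a+b)}$, and for $j\ge2$ it follows by induction from~\eqref{eq:def-QAn} after the substitution $\ell\mapsto\lambda\ell$. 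In particular $|\QA^\gamma_j(\ell,\ell t)|=\ell^{-2}|\QA^\gamma_j(1,t)|$. I would also note $\QA^\gamma_j[L_1e^{-L_1}]<\infty$: the $x=0$ instance of the computation below has nonnegative integrands, so Tonelli applies unconditionally and gives $\QA^\gamma_j[L_1e^{-L_1}]=\cos(\pi(\frac{4}{\gamma^2}-1))\,\QA^\gamma_{j-1}[L_1e^{-L_1}]$; combined with $\QA^\gamma_1[L_1e^{-L_1}]=\cos(\pi(\frac{4}{\gamma^2}-1))$ (the $x=0$ case of Lemma~\ref{lem:QA-Mellin}) this gives $\QA^\gamma_j[L_1e^{-L_1}]=\cos^j(\pi(\frac{4}{\gamma^2}-1))<\infty$. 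Since $|L_0^{ix}|=1$ for real $x$, every integral below then converges absolutely and Fubini is justified.

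Next, writing $L_1$ for the outer boundary length (that of $\partial\H$) and $L_0$ for the inner one (that of $\eta_j$), consistently with the proof of Lemma~\ref{lem:QA-Mellin}, I would expand via~\eqref{eq:def-QAn}
\[
\QA^\gamma_j[L_1e^{-L_1}L_0^{ix}]=\int_0^\infty\!\!\int_0^\infty\!\!\int_0^\infty ae^{-a}\,b^{ix}\,\ell\,|\QA^\gamma(a,\ell)|\,|\QA^\gamma_{j-1}(\ell,b)|\,\rd a\,\rd\ell\,\rd b.
\]
Substituting $b=\ell t$ and using homogeneity $|\QA^\gamma_{j-1}(\ell,\ell t)|=\ell^{-2}|\QA^\gamma_{j-1}(1,t)|$, the powers of $\ell$ (namely $\ell\cdot\ell^{ix}\cdot\ell^{-2}$ from the integrand times $\ell$ from $\rd b$) combine to $\ell^{ix}$, and since the integrand becomes a product of a function of $(a,\ell)$ and a function of $t$,
\[
\QA^\gamma_j[L_1e^{-L_1}L_0^{ix}]=\Big(\int_0^\infty\!\!\int_0^\infty ae^{-a}\,\ell^{ix}\,|\QA^\gamma(a,\ell)|\,\rd a\,\rd\ell\Big)\Big(\int_0^\infty t^{ix}\,|\QA^\gamma_{j-1}(1,t)|\,\rd t\Big).
\]
Applying the same substitution $b=at$ together with homogeneity to $\QA^\gamma_{j-1}[L_1e^{-L_1}L_0^{ix}]=\iint ae^{-a}b^{ix}|\QA^\gamma_{j-1}(a,b)|\,\rd a\,\rd b$ yields $\QA^\gamma_{j-1}[L_1e^{-L_1}L_0^{ix}]=\Gamma(1+ix)\int_0^\infty t^{ix}|\QA^\gamma_{j-1}(1,t)|\,\rd t$, so the second factor above equals $\Gamma(1+ix)^{-1}\QA^\gamma_{j-1}[L_1e^{-L_1}L_0^{ix}]$.

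Finally I would evaluate the first factor: plugging in the explicit $|\QA^\gamma(a,\ell)|$ and substituting $\ell=as$,
\[
\int_0^\infty\!\!\int_0^\infty ae^{-a}\,\ell^{ix}\,|\QA^\gamma(a,\ell)|\,\rd a\,\rd\ell=\frac{\cos(\pi(\tfrac{4}{\gamma^2}-1))}{\pi}\Big(\int_0^\infty a^{ix}e^{-a}\,\rd a\Big)\Big(\int_0^\infty\frac{s^{ix-1/2}}{1+s}\,\rd s\Big),
\]
where $\int_0^\infty a^{ix}e^{-a}\,\rd a=\Gamma(1+ix)$ and, by the Beta integral $\int_0^\infty\frac{s^{\alpha-1}}{1+s}\,\rd s=\frac{\pi}{\sin\pi\alpha}$ with $\alpha=\tfrac12+ix$, $\int_0^\infty\frac{s^{ix-1/2}}{1+s}\,\rd s=\frac{\pi}{\cosh(\pi x)}$. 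Combining the two factors gives exactly~\eqref{eq:QAn-Mellin-k}. Iterating the recursion and using Lemma~\ref{lem:QA-Mellin} as the base case then yields $\QA^\gamma_j[L_1e^{-L_1}L_0^{ix}]=\cos^j(\pi(\tfrac{4}{\gamma^2}-1))\,\frac{\Gamma(1+ix)}{\cosh^j(\pi x)}$, which is what Proposition~\ref{prop:QA-LF-k} needs. There is no serious obstacle here: the only points requiring care are the homogeneity bookkeeping that makes the triple integral factor and the integrability justification for Fubini, both of which are handled in the first paragraph.
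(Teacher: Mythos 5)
Your proposal is correct, and the final formula (including the iterated closed form $\cos^j(\pi(\frac4{\gamma^2}-1))\Gamma(1+ix)/\cosh^j(\pi x)$) matches what the paper needs for Proposition~\ref{prop:QA-LF-k}. The underlying mechanism is the same as in the paper — unwind the convolution~\eqref{eq:def-QAn} and reduce to the Beta integral $\int_0^\infty t^{ix-\frac12}(1+t)^{-1}\,\rd t=\pi/\cosh(\pi x)$ — but your execution differs in a way worth noting. The paper writes the triple integral with the explicit one-step kernel sitting next to the $b^{ix}$ variable, i.e.\ it uses $|\QA^\gamma_{j-1}(a,\ell)|\,|\QA^\gamma(\ell,b)|$ (legitimate because the composition of the symmetric kernels is order-independent, consistent with Lemma~\ref{lem:QA-symmetry}); then the single substitution $b=\ell t$ factors out $\pi/\cosh(\pi x)$ and the remaining double integral is literally $\QA^\gamma_{j-1}[L_1e^{-L_1}L_0^{ix}]$, so no homogeneity lemma and no Gamma-function cancellation are needed. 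You instead follow the definition's ordering, with the one-step kernel attached to the $ae^{-a}$ side, which forces the degree $-2$ homogeneity bookkeeping, a Mellin factorization of the $(j-1)$ functional via a second substitution, and the cancellation of $\Gamma(1+ix)$ (harmless, since $\Gamma(1+ix)\neq 0$ for real $x$); along the way your first factor reproduces Lemma~\ref{lem:QA-Mellin}. The trade-off: the paper's arrangement is shorter, while your version sticks to the stated definition and, unlike the paper, explicitly records the Tonelli/Fubini justification for the rearrangements, which is a welcome addition.
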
 
	\begin{proof}
		By definition, $\QA^\gamma_j [L_1 e^{-L_1} L_0^{ix} ]=\iiint_0^\infty ae^{-a} b^{ix} \ell |\QA^\gamma_{j-1}(a,\ell)||\QA(\ell,b)| \rd\ell \rd a\rd b$, which equals  
		\begin{align*}
		\frac{\cos(\pi (\frac4{\gamma^2}-1))}\pi  \iiint_0^\infty \frac {ae^{-a} b^{ix}\ell|\QA^\gamma_{j-1}(a,\ell)|}{\sqrt{\ell b} (\ell+b)} \rd\ell \rd a \rd b.
		\end{align*}
		Setting $b=\ell t$ and using  $\int_0^\infty \frac { t^{ix} }{\sqrt{t}(1+t)}dt=\frac{\pi}{\cosh(\pi x)}$,  we get
		\begin{align*}
		&\iiint_0^\infty \frac {ae^{-a} b^{ix}\ell|\QA^\gamma_{j-1}(a,\ell)|}{\sqrt{\ell b} (\ell+b)}\rd\ell \rd a \rd b=
		\iiint_0^\infty \frac {ae^{-a} (\ell t)^{ix} |\QA^\gamma_{j-1}(a,\ell)|}{\sqrt{ t} (1+t)}\rd\ell \rd a\rd t\\
		=& \frac{\pi}{\cosh(\pi x)}\iint_0^\infty
		ae^{-a}\ell^{ix} |\QA^\gamma_{j-1}(a,\ell)|\rd\ell \rd a=\frac{\pi}{\cosh(\pi x)}\QA^\gamma_{j-1}[L_1 e^{-L_1} L_0^{ix} ]. \qedhere
		\end{align*} \end{proof}
	\begin{proof}[Proof of Proposition~\ref{prop:QA-LF-k}]
		By the identical argument as for Proposition~\ref{prop:LF}, there exists a measure $m_j(\rd \tau)$ on $(0,\infty)$ such that if
		we sample $(\tau, \phi)$ from $ \LF_\tau (d\phi) m_j(\rd \tau)$, then the law of the quantum surface $(\bbA_\tau,\phi)/{\sim_\gamma}$ is $\QA^\gamma_j$. 
		Similar as for the $j=1$ case in~\eqref{eq:cm-simple}, by Theorem~\ref{thm:KPZ} we have 
		\begin{align}\label{eq:cj-simple-k}
		\int_0^\infty e^{ -\frac{\pi\gamma^2 x^2\tau}{4}}m_j(\rd \tau )=\frac{2\sinh(\frac{\gamma^2}{4} \pi x  )}{\pi\gamma x\Gamma(1+ix)}\QA^\gamma_j [L_1 e^{-L_1} L_0^{ix}] \quad \textrm{for } x\in \R.
		\end{align}
		By Lemmas~\ref{lem:QA-Mellin} and~\ref{lem:QA-Mellin-k}, we conclude the proof.
	\end{proof}
	
	\begin{proof}[Proof of Theorem~\ref{thm:CLE-mod}]
		{First fix $\kappa=\gamma^2\in (\frac83,4)$.} By the same calculation as for~\eqref{eq:moment} we have
		\begin{equation}\label{eq:moment-k}
		\E[e^{-2\pi \lambda \mathrm{Mod}(\eta_j)}] =C_\gamma\int_0^\infty  e^{-2\pi \lambda \tau}  
		e^{\frac{\pi\kappa}{4} (\frac{4}{\kappa}-1)^2 \tau}  m_j(\rd\tau).
		\end{equation}
		for some $\gamma$-dependent constant $C_\gamma$. By Proposition~\ref{prop:QA-LF-k} and the argument for~\eqref{eq:moment},
		we have
		\begin{equation}\label{eq:moment-Ck}
		\E[e^{-2\pi \lambda \mathrm{Mod}(\eta_j)}]=    \frac{C_\kappa  \sin(\frac{\kappa}{4} \pi \sqrt{   (\frac{4}{\kappa}-1)^2 -\frac{8\lambda}{\kappa}  } )}{ \sqrt{   (\frac{4}{\kappa}-1)^2 -\frac{8\lambda}{\kappa}  }\cos^j(\pi \sqrt{   (\frac{4}{\kappa}-1)^2 -\frac{8\lambda}{\kappa}  })} \quad \text{ for }\lambda>\frac{ 3\kappa}{32} + \frac{2}{\kappa} -1
		\end{equation} for some constant $C_\kappa$.
		Setting $\lambda=0$ we get $C_\kappa=\frac{ (\frac{4}{\kappa}-1)\cos^j(\pi (\frac{4}{\kappa}-1))}{  \sin( \pi (1-\frac{\kappa}{4}))}$ as desired. This proves Theorem~\ref{thm:CLE-mod} for $\kappa\in (\frac{8}{3},4)$. The $\kappa=4$ case follows from sending $\kappa\to 4$. 
		{See \cite[Lemma A.5]{acsw-loop} for the needed continuity in $\kappa$.}
	\end{proof}

	\begin{remark}[Relation to Brownian motion]\label{rmk:BM}
		For $a\ge 0$, let $B^a$ be a Brownian motion starting from $a$ and ${Y_a}= \inf\{ t: |B^a_t| =1\}$ and $T_0 = \sup\{ t<{Y_0}: B^0_t=0  \} $.   Then as explained in e.g.~\cite{BPY-BM}, we have 
		\[
		\E[ e^{-\frac{\theta^2}{2}{Y_a}} ] = \frac{\cosh(a\theta)}{\cosh\theta}\quad  
		\textrm{and}\quad \E[ e^{-\frac{\theta^2}{2}T_0} ] = \frac{\tanh \theta}{\theta} \quad \textrm{for }\theta\in\R.
		\]
		For $j\ge 1$ let  $m_j(\rd\tau)$ be the measure such that $  \E[e^{-2\pi \lambda \mathrm{Mod}(\eta_j)}] =C_\gamma\int_0^\infty  e^{-2\pi \lambda \tau}  
		e^{\frac{\pi\kappa}{4} (\frac{4}{\kappa}-1)^2 \tau}  m_j(\rd\tau)$ as in~\eqref{eq:moment-k}.
		Let $M_j$ be  sampled from the probability measure proportional to  $m_j(\rd \tau)$. Then by~\eqref{eq:cm-Lap} we have the equality $2\pi M_1\overset{d}{=}  \frac{\kappa\pi^2}{4} T_0+ \frac{4\pi^2}{\kappa} {Y_{\frac{\kappa}{4}}}$ in law. For $\kappa=4$, since ${Y_1}=0$  we have   $M_1\overset{d}{=} \frac{\pi}{2} T_0$ as shown in \cite[{Theorem 1.3}]{ALS20}. Since $\E[ e^{-\frac{\theta^2}{2}{Y_0}} ] = \frac{1}{\cosh \theta }$, by~\eqref{eq:moment-k} we have $2\pi M_j\overset{d}{=} 2\pi M_1+\sum_{m=1}^{j-1}X_m$ where $X_m$'s are independent copies of ${Y_0}$ that are independent with $M_1$.
	\end{remark}
	
	\subsection{Joint law with the conformal radius: proof of Theorem~\ref{thm:mod-CR}}\label{subsec:CR}
	Let $\mathrm{Loop}_\kappa$ be the law of $\eta_1$, which is the outermost loop surrounding $i$ of a $\CLE_\kappa$ on $\H$ . 
	For a sample $\eta$ from $\mathrm{Loop}_\kappa$, let $\psi_\eta:\H\to D_\eta(i)$ be the unique conformal map with $\psi(i)=i$ and $\psi'(i)>0$. For $\alpha\in \R$, let  
	$\mathrm{Loop}_\kappa^\alpha$ be the  measure defined by reweighting $\mathrm{Loop}_\kappa$ as follows:   
	\eqb\label{eq-j-alpha}
	\frac{d\mathrm{Loop}_\kappa^\alpha}{d\mathrm{Loop}_\kappa}(\eta) = |\psi_\eta'(i)|^{2\Delta_\alpha - 2} \quad \textrm{where }\Delta(\alpha)=\frac\alpha2(Q-\frac\alpha2).
	\eqe
	Recall $\Md(\alpha)$ from Definition~\ref{def:QD}. We  use the following proposition to prove Theorem~\ref{thm:mod-CR}.
	\begin{proposition}\label{prop:weld-CR}
		For $\kappa\in (\frac{8}{3},4)$, $\gamma=\sqrt{\kappa}$ and $\alpha>\frac{\gamma}{2}$, let $(\phi, \eta_1)$ be sampled from $\LF^{\alpha, i}_\H\times \mathrm{Loop}_\kappa^\alpha$. Let $A_1$ be the annulus bounded by $\eta_1$ and $\partial \H$, and $D_1$ be the Jordan domain bounded by $\eta_1$. Then the joint law of   $(A_1,\phi)/{\sim_\gamma}$ and $(D_1,\phi,0)/{\sim_\gamma}$ is
		\begin{equation}\label{eq:QA-weld-k}
		\int_0^\infty b \QA^\gamma(a,b)\times \Md(\alpha;b) \,\rd b.
		\end{equation}
	\end{proposition}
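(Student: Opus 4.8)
The plan is to mirror the argument for Theorem~\ref{thm:QA-welding} (the $\alpha=\gamma$ case), but now tracking the conformal radius weighting. The key point is that the $\CLE_\kappa$ loop $\eta_1$ and the boundary-touching quantum annulus/disk picture is ultimately governed by the conformal welding of quantum surfaces, and reweighting the Liouville field insertion from $\gamma$ to a general $\alpha>\frac\gamma2$ changes the law of $(D_1,\phi,0)/{\sim_\gamma}$ from $\Md(\gamma;b)$ to $\Md(\alpha;b)$ while only reweighting the loop law on the $\CLE$ side. First I would recall from Theorem~\ref{thm:QA-welding} and~\eqref{eq:QA-weld} that, for $\phi$ sampled from $\LF^{\gamma,i}_\H(a)$ and $\Gamma$ an independent $\CLE_\kappa^\H$, with $\eta_1$ the outermost loop surrounding $i$, the joint law of $(A_1,\phi)/{\sim_\gamma}$ and $(D_1,\phi,0)/{\sim_\gamma}$ is $\int_0^\infty b\,\QA^\gamma(a,b)\times\Md(\gamma;b)\,\rd b$. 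Integrating $a$ out against $\rd a$ gives the statement of Proposition~\ref{prop:weld-CR} in the $\alpha=\gamma$ case with $\mathrm{Loop}_\kappa^\gamma$; note that $\Delta(\gamma)=\frac\gamma2(Q-\frac\gamma2)=1$, so the weight in~\eqref{eq-j-alpha} is trivial and $\mathrm{Loop}_\kappa^\gamma=\mathrm{Loop}_\kappa$, consistent with the base case.

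The main step is to pass from $\alpha=\gamma$ to general $\alpha>\frac\gamma2$. I would use the relation between $\LF^{(\alpha,i)}_\H$ and $\LF^{(\gamma,i)}_\H$ under the conformal map $\psi_\eta$ uniformizing $D_\eta(i)$ back to $\H$ fixing $i$. Concretely, sample $(\phi,\eta_1)$ from $\LF^{\gamma,i}_\H\times\mathrm{Loop}_\kappa$ and let $g=\psi_{\eta_1}^{-1}:\H\to D_1(i)$. The quantum surface $(D_1,\phi,0)/{\sim_\gamma}$ is represented by $(\H, \phi\circ g+Q\log|g'|, i)$; since $\phi$ has a $\gamma$-insertion at $i$ and $g$ fixes $i$, pulling back produces a field on $\H$ with a $\gamma$-insertion at $i$ up to the constant $|g'(i)|^{\,\cdot}$ shifts that are absorbed into the definition of $\Md(\gamma)$. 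To get an $\alpha$-insertion instead, I would invoke the standard Girsanov/coordinate-change identity (as used in~\cite{ARS-FZZ} and~\cite{AS-CLE}) showing that $\LF^{(\alpha,i)}_\H$ is obtained from $\LF^{(\gamma,i)}_\H$, after uniformizing the loop interior, by the reweighting factor $|\psi_{\eta_1}'(i)|^{2\Delta(\alpha)-2}$ — precisely the density defining $\mathrm{Loop}_\kappa^\alpha$ in~\eqref{eq-j-alpha} — together with replacing $\Md(\gamma;b)$ by $\Md(\alpha;b)$ on the disk side. The annulus side $(A_1,\phi)/{\sim_\gamma}$ is unaffected by the insertion at $i\in D_1$ because $\eta_1$ separates $i$ from $A_1$, so by the domain Markov property of the Liouville field (Lemma~\ref{lem:Markov-LFH}, Lemma~\ref{lem:local}) the field on $A_1$ is a GFF with mixed boundary conditions conditionally on $(\eta_1,\phi|_{D_1})$, exactly as in the proof of Theorem~\ref{thm:QA-welding}; hence the $\QA^\gamma(a,b)$ factor and the gluing factor $b$ carry over verbatim.

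Assembling: starting from $\LF^{\alpha,i}_\H\times\mathrm{Loop}_\kappa^\alpha$, undo the reweighting to land on $\LF^{\gamma,i}_\H\times\mathrm{Loop}_\kappa$ on the disk side while the annulus side still reads $\int_0^\infty b\,\QA^\gamma(a,b)\,\rd b$; then re-apply the coordinate change to convert $\Md(\gamma;b)$ into $\Md(\alpha;b)$; integrating over $\rd a$ yields~\eqref{eq:QA-weld-k}. I expect the main obstacle to be the bookkeeping of multiplicative constants and the Weyl-anomaly/Jacobian factors $|\psi_{\eta_1}'(i)|$ under the conformal map fixing $i$ — i.e.\ checking that the $|\psi_\eta'(i)|^{2\Delta_\alpha-2}$ appearing in~\eqref{eq-j-alpha} is exactly what is needed to turn the $\gamma$-insertion welding into the $\alpha$-insertion welding with no residual constant, and that the disintegration in $b$ (the quantum length of $\eta_1$) commutes with this reweighting. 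This is the same type of computation already carried out in~\cite[Sections 2--3]{AS-CLE} and~\cite[Theorem 3.4]{ARS-FZZ}, so the rest should be routine once the insertion change-of-variables is pinned down.
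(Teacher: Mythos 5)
Your proposal is correct and follows essentially the same route as the paper: the $\alpha=\gamma$ case is read off from Theorem~\ref{thm:QA-welding} (equivalently the $j=1$ case of Proposition~\ref{prop:QAk}, noting $\Delta(\gamma)=1$ so $\mathrm{Loop}_\kappa^\gamma=\mathrm{Loop}_\kappa$), and the general $\alpha$ is obtained by the same Girsanov/coordinate-change reweighting of the bulk insertion used in~\cite[Section 4.2]{ARS-FZZ}, with the factor $|\psi_{\eta_1}'(i)|^{2\Delta_\alpha-2}$ matching the density defining $\mathrm{Loop}_\kappa^\alpha$ and the annulus side untouched since the insertion lies in $D_1$. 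This is precisely the argument the paper invokes, so nothing further is needed.
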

	\begin{proof}
		When $\alpha=\gamma$, this is simply the $j=1$ case of Proposition~\ref{prop:QAk}.
		The general $\alpha$ case can be obtained from the $\alpha=\gamma$ case via a reweighting argument as in \cite[Section 4.2]{ARS-FZZ}. There the loop touches $\partial \H$ but the exact same argument works here.
	\end{proof}
	
	\begin{proof}[Proof of Theorem~\ref{thm:mod-CR}]
		Let ${f_\alpha}(\ell)=|\Md(\alpha; \ell)|\ell ={|\LF_\H^{\alpha, i}(\ell)|\ell}={C_{\alpha,\gamma}} \ell^{{\frac{2}{\gamma}(\alpha-Q)}}$ for some {$(\alpha,\gamma)$}-dependent constant {$C_{\alpha,\gamma}$}. By Proposition~\ref{prop:weld-CR}, the identity~\eqref{eq:m1} holds with $\gamma$-insertion replaced by a generic $\alpha$. Namely
		\begin{equation}\label{eq:malpha}
		\LF_{\H}^{(\alpha, i)}[{L e^{-L}}] \times  \mathrm{Loop}_\kappa^\alpha[\rho(\mathrm{Mod}(\eta_1))]=  \int_0^\infty \rho(\tau) \LF_\tau[L_1e^{-L_1}{f_\alpha}(L_0)]   m (\rd\tau).
		\end{equation} 
		where $m(\rd \tau)=1_{\tau>0}\cdot {\cos(\pi (\frac4{\gamma^2}-1))} \cdot \frac{\gamma}{4\pi } \theta_1(\frac{\gamma^2}{8}, \frac{\gamma^2}{4}i\tau)\rd \tau$] and  $\rho$ is a non-negative measurable function.
		
		Let $\E$ be the expectation for $\CLE_\kappa^\H$. 
		Let $X=e^{2\pi \mathrm{Mod}(\eta_1) }|\psi_\eta'(i)|$. Then for   $\alpha>\frac{\gamma}{2}$ we have 
		\begin{equation}\label{eq:CR-key}
		\E[X^{2\Delta_\alpha-2} \rho(\mathrm{Mod}(\eta_1))]=\mathrm{Loop}_\kappa^\alpha[ e^{2\pi \mathrm{Mod}(\eta_1)(2\Delta_\alpha-2) } \rho(\mathrm{Mod}(\eta_1))]=\int_0^\infty \rho(\tau) g(\tau, \alpha)m (\rd\tau)
		\end{equation}
		where 
		$g(\tau,\alpha)= \frac{ e^{2\pi(2\Delta_\alpha-2)\tau}\LF_\tau[L_1e^{-L_1}f(L_0)]}{\LF_{\H}^{(\alpha, i)}[{L e^{-L}}]}  $. Since by~\eqref{eq:fraction-real}, for $y \in (-1, \frac4{\gamma^2})$ we have
		\[
		\LF_\tau[L_1e^{-L_1} L_0^{y}]=    \frac{\pi\gamma x\Gamma(1+y)}{   2\sin(\frac{\gamma^2}{4} \pi y )}e^{ \frac{\pi\gamma^2 y^2\tau}{4}} \quad \textrm{and}\quad {e^{\frac{\pi\tau\gamma^2 }{4}(\frac{2}{
					\gamma}(Q-\alpha))^2 }e^{2\pi(2\Delta_\alpha-2)\tau}=e^{\pi\tau(Q^2-4)},}
		\]
		for $\alpha \in (\frac2\gamma, Q + \frac2\gamma)$ the function ${(\alpha,\tau)\mapsto} e^{2\pi(2\Delta_\alpha-2)\tau}\LF_\tau[L_1e^{-L_1}{L_0^{\frac{2}{\gamma}(\alpha-Q)}}]$ {can be factorized as a function of $\alpha$ times a function of $\tau$.} Therefore  $g(\alpha,\tau)$, and subsequently~\eqref{eq:CR-key}, can be factorized as a function of $\alpha$ {times} a function of $\tau$ for $\alpha$ in an open interval. 
		We conclude $X$ is independent of $\mathrm{Mod}(\eta_1)$. Since $X=e^{2\pi \mathrm{Mod}(\eta_1) }|\psi_\eta'(i)|$, by the definition of conformal radius we obtain Theorem~\ref{thm:mod-CR} with $j=1$ and $\kappa\in (8/3,4)$. 
		For $j\ge 2$, the identity~\eqref{eq:CR-key} holds with $m(\rd\tau)$ replaced by $m_j(\rd\tau)$ from Proposition~\ref{prop:QA-LF-k}. This gives Theorem~\ref{thm:mod-CR} for $\kappa\in (8/3,4)$. The $\kappa=4$ case follows  by sending  $\kappa\uparrow 4$ {as in the proof of Theorem~\ref{thm:CLE-mod}.}    
	\end{proof}
	
	\subsection{Nesting statistics: proof of Theorem~\ref{thm:CLE-moduli}}\label{subsec:nesting}
	By Theorem~\ref{thm:CLE-mod}, we have for 
	\begin{equation}\label{eq:beta}
	\sum_{j\ge1} \beta^{j-1}\E[e^{-2\pi \lambda \mathrm{Mod}(\eta_j)}]=\frac{(\frac{4}{\kappa}-1) \sin(\frac{\kappa}{4} \pi \sqrt{   (\frac{4}{\kappa}-1)^2 -\frac{8\lambda}{\kappa}  } )}{ \sin( \pi (1-\frac{\kappa}{4})) \sqrt{   (\frac{4}{\kappa}-1)^2 -\frac{8\lambda}{\kappa}  }} 
	\frac{\cos(\pi (\frac{4}{\kappa}-1))}{\cos(\pi \sqrt{   (\frac{4}{\kappa}-1)^2 -\frac{8\lambda}{\kappa}  })-\beta \cos(\pi (\frac{4}{\kappa}-1))}.
	\end{equation}
	Suppose $t=2\pi \lambda-\frac{\kappa\pi}{4}
	(\frac{4}{\kappa}-1)^2>0$. 
	Set $C_\kappa=  \frac{\sqrt{\kappa \pi}(\frac{4}{\kappa}-1)\cos(\pi (\frac{4}{\kappa}-1))}
	{ 2\sin( \pi (1-\frac{\kappa}{4})) }$. By~\eqref{eq:beta}, we have 
	\begin{equation*}
	\sum_{j\ge1} \beta^{j-1}\E [ e^{- t \mathrm{Mod}(\eta_j)} e^{  -\frac{\kappa\pi}{4}
		(\frac{4}{\kappa}-1)^2 \mathrm{Mod}(\eta_j)}]  =\frac{ \sinh(\frac{\kappa}{4} \pi \sqrt{\frac{4t}{\kappa \pi }    } )}{  \sqrt{\frac{4t}{\kappa \pi }  }} 
	\frac{C_\kappa}{\cosh(\pi \sqrt{ \frac{4t}{\kappa \pi }  })-\beta \cos(\pi (\frac{4}{\kappa}-1))}
	\end{equation*}
	Let $n=2\cos(\pi(1-\frac{4}{\kappa}))$.  For $n'\in [-2,2]$, let $\beta=\frac{n'}{n}$ so that 
	$\frac{n'}{2}= \beta \cos(\pi(1-\frac{4}{\kappa}))$. 
	Let $\chi'=-\arccos(\frac{n'}{2})$. Then for $f(\tau)=e^{- t \tau} e^{  -\frac{\kappa\pi}{4}
		(\frac{4}{\kappa}-1)^2 \tau}$, we have 
	\begin{equation}\label{eq:CLE-f}
	\E\left[ \sum_{j=1}^{\infty} \left(\frac{n'}{n}\right)^{j-1} f(\mathrm{Mod}(\eta_n))\right] = 
	\frac{C_\kappa \sinh(\sqrt{\kappa\pi t/4})}{{\sqrt {t}}({\cosh\sqrt{4\pi t/\kappa}-\cos \chi'})}.
	\end{equation} 
	Now Theorem~\ref{thm:CLE-moduli} is immediate from the following fact on the function $Z(\tau, \kappa, \chi')$ in~\eqref{eq:On}.
	\begin{lemma}\label{lem:Cardy-Lap}
		Let $\mathcal L[f(\tau)](t)=\int_0^\infty e^{-t\tau} f(\tau) d\tau$ 
		{be the Laplace transform}.  For $\kappa\in (\frac83,4]$ and $\chi'\in \R$, 
		\begin{align}
		\label{eq:Cardy-Lap}
		\mathcal L[ Z(\tau, \kappa, \chi') \eta(2i\tau)](t) = \frac{\sqrt{\pi}\sinh(\sqrt{\kappa\pi t/4})}{{\sqrt {2t}}({\cosh\sqrt{4\pi t/\kappa}-\cos \chi'})}\quad \textrm{for }t>0.
		\end{align} 
	\end{lemma}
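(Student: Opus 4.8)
The plan is to reduce the identity to an elementary Poisson‑summation computation. First I would rewrite the left‑hand side using the $\tilde q$‑form~\eqref{eq:ZOn-tilde} of $Z(\tau,\kappa,\chi')$ together with the product formula $\eta(2i\tau)=\tilde q^{1/12}\prod_{k\ge1}(1-\tilde q^{2k})$ with $\tilde q=e^{-2\pi\tau}$, which follows at once from the definition of the Dedekind eta function. The factor $\prod_{r\ge1}(1-\tilde q^{2r})^{-1}$ in $Z$ cancels exactly against the product in $\eta$, and using the central charge relation $c=1-6(1-g)^2/g$ from~\eqref{eq:central} (so that $\tfrac{1-c}{12}=\tfrac{(1-g)^2}{2g}$) the leftover powers of $\tilde q$ collapse to a single Gaussian exponent, giving the theta‑type series
\begin{equation*}
Z(\tau,\kappa,\chi')\,\eta(2i\tau)=\sqrt{\tfrac2g}\,\frac{1}{\sin\chi'}\sum_{m\in\mathbb Z}\sin\!\left(\tfrac{\chi'+2\pi m}{g}\right)e^{-\frac{(\chi'+2\pi m)^2}{\pi g}\tau},\qquad g=\tfrac4\kappa,
\end{equation*}
valid pointwise for every $\tau>0$ by the super‑exponential decay of the summands.

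Next I would integrate against $e^{-t\tau}$ term by term. Since $\sum_{m}\int_0^\infty e^{-t\tau}e^{-\frac{(\chi'+2\pi m)^2}{\pi g}\tau}\,\rd\tau=\sum_m\frac{\pi g}{\pi g t+(\chi'+2\pi m)^2}<\infty$ for $t>0$, Tonelli and Fubini justify the interchange (and in particular show $Z(\tau,\kappa,\chi')\eta(2i\tau)\in L^1(e^{-t\tau}\,\rd\tau)$), yielding
\begin{equation*}
\mathcal L[\,Z(\tau,\kappa,\chi')\,\eta(2i\tau)\,](t)=\sqrt{\tfrac2g}\,\frac{\pi g}{\sin\chi'}\sum_{m\in\mathbb Z}\frac{\sin\!\left(\tfrac{\chi'+2\pi m}{g}\right)}{\pi g t+(\chi'+2\pi m)^2}.
\end{equation*}

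The core step is to evaluate this rational‑function series. Set $b=\sqrt{\pi g t}$ and $\nu=1/g$, and note $\nu\in(\tfrac23,1]$ because $g\in[1,\tfrac32)$. The series is $\sum_m F(\chi'+2\pi m)$ with $F(x)=\sin(\nu x)/(b^2+x^2)$, and Poisson summation gives $\sum_m F(\chi'+2\pi m)=\frac1{2\pi}\sum_n\widehat F(n)e^{in\chi'}$. Using the standard transform $\int_{\mathbb R}\frac{\cos(ax)}{b^2+x^2}\,\rd x=\frac\pi b e^{-|a|b}$ one gets $\widehat F(\xi)=-\frac{i\pi}{2b}\left(e^{-|\nu-\xi|b}-e^{-|\nu+\xi|b}\right)$; substituting $n\mapsto-n$ in the second half of the sum and resolving $|\nu-n|$ via $0<\nu\le1$, the sum collapses to $\frac{\sinh(\nu b)}{b}\sum_{n\ge1}e^{-nb}\sin(n\chi')$, and the elementary identity $\sum_{n\ge1}r^n\sin(n\chi')=\frac{r\sin\chi'}{1-2r\cos\chi'+r^2}$ with $r=e^{-b}$ turns this into $\frac{\sinh(\nu b)\sin\chi'}{2b(\cosh b-\cos\chi')}$. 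Feeding this back and simplifying with $g=4/\kappa$ (so $b=\sqrt{4\pi t/\kappa}$, $\nu b=\sqrt{\kappa\pi t/4}$, and the prefactor $\sqrt{2/g}\cdot\pi g/(2b)$ reducing to $\sqrt\pi/\sqrt{2t}$) recovers~\eqref{eq:Cardy-Lap}. The endpoint $\kappa=4$ (i.e.\ $\nu=1$) is not special: the $n=1$ term $e^{-|\nu-1|b}=1$ enters exactly as the formula dictates, and the spurious singularity at $\sin\chi'=0$ is removable on both sides, so the identity extends to all $\chi'\in\mathbb R$ by continuity.

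I expect the only point requiring care to be the Poisson‑summation step — namely verifying its hypotheses (continuity of $F$, uniform convergence of $\sum_m|F(\cdot+2\pi m)|$, and $\sum_n|\widehat F(n)|<\infty$), all of which are immediate since $F=O(x^{-2})$ and $\widehat F$ decays exponentially. An entirely equivalent route, should one prefer to avoid Poisson summation, is to establish the Mittag–Leffler identity $\sum_m\frac{\sin(\nu(\chi'+2\pi m))}{b^2+(\chi'+2\pi m)^2}=\frac{\sin\chi'\sinh(\nu b)}{2b(\cosh b-\cos\chi')}$ directly by a residue computation on a suitable meromorphic kernel; the remaining bookkeeping is the same.
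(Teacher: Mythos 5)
Your computation is correct, but it is genuinely a different route from the paper's. You work in the closed-string channel: you start from the $\tilde q$-form \eqref{eq:ZOn-tilde} with $\tilde q=e^{-2\pi\tau}$, cancel the infinite products against $\eta(2i\tau)=\tilde q^{1/12}\prod_k(1-\tilde q^{2k})$ directly from the definition of $\eta$, take the (now trivial) Laplace transform of pure exponentials in $\tau$, and then evaluate the resulting sum of Lorentzians $\sum_m \sin(\nu(\chi'+2\pi m))/(b^2+(\chi'+2\pi m)^2)$ by Poisson summation plus the geometric identity $\sum_{n\ge1}r^n\sin(n\chi')=\frac{r\sin\chi'}{1-2r\cos\chi'+r^2}$; all intermediate formulas check out, including the prefactor bookkeeping and the $\kappa=4$ endpoint. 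The paper instead works in the open-string channel: it uses the $q$-form of $Z$ with $q=e^{-\pi/\tau}$ together with the modular transformation $\eta(2i\tau)=(2\tau)^{-1/2}q^{1/24}\prod_r(1-q^r)$ (see \eqref{eq:eta-modular}), so that after cancellation each term is of the form $\tau^{-1/2}e^{-a/(4\tau)}$, whose Laplace transform $\sqrt{\pi/t}\,e^{-\sqrt{at}}$ immediately produces exponentials that are summed as the same geometric series — no Poisson summation needed. The two proofs are dual to each other: your Poisson-summation step plays exactly the role that the eta modular transform (equivalently, the open-closed duality between \eqref{eq:On} and \eqref{eq:ZOn-tilde}) plays in the paper. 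What your route buys is that only the definition of $\eta$ and elementary one-dimensional Laplace transforms are used, at the cost of a (standard, and correctly justified) Poisson-summation or residue evaluation; what the paper's route buys is that every analytic step is a table lookup, at the cost of invoking $\eta(i/\tau)=\sqrt\tau\,\eta(i\tau)$. One small remark: since the lemma's $Z(\tau,\kappa,\chi')$ is given by both displayed expressions, your reliance on the $\tilde q$-form is a legitimate starting point, but be aware that if one took \eqref{eq:On} alone as the definition, your argument would implicitly be re-deriving the open-closed duality, which is essentially the same Poisson-summation fact; your treatment of $\sin\chi'=0$ by removable singularity and continuity is fine and is also left implicit in the paper.
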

	\begin{proof}
		By the modular property $\eta(i/\tau)=\sqrt{\tau}\eta(i \tau)$ of the  eta function, we have 
		\begin{equation}\label{eq:eta-modular}
		\eta(2i \tau)=(2\tau)^{-1/2}\eta(\frac{i}{2\tau})=(2\tau)^{-1/2}{q}^{\frac1{24}}\prod_{r=1}^\infty(1- q^r)\quad \textrm{with } q=e^{-\frac{\pi}{\tau}}.  
		\end{equation}
		By the representation~\eqref{eq:ZOn-tilde} of $Z(\tau, \kappa, \chi')$, for $g=\frac{4}{\kappa}$
		we have 
		\begin{equation}\label{eq:Z-eta}
		Z(\tau, \kappa, \chi') \eta(2i\tau)= \frac{1}{\sqrt{2\tau}} \sum_{p\in \mathbb Z} \frac{\sin(p+1)\chi'}{\sin \chi'}  q^{\frac{g}{4}(p-\frac{1-g}{g})^2}
		\end{equation}
		Since $\mathcal L[\frac{1}{\sqrt\tau} e^{-\frac{  a }{4\tau}}](t)= \frac{\sqrt{\pi}e^{- \sqrt{at}}}{\sqrt t}$ for each $a>0$, we have $\mathcal L[\frac{1}{\sqrt\tau}  q^{\frac{g}{4}(p-\frac{1-g}{g})^2}](t)= \frac{\sqrt{\pi}e^{-|p+1-g^{-1}|\sqrt{g\pi t}}}{\sqrt {t}}$ hence 
		\begin{equation}\label{eq:Zeta-Lap}
		\mathcal L[ Z(\tau, \kappa, \chi') \eta(2i\tau)](t)= \sqrt{\frac\pi2} \sum_{p\in \mathbb Z} \frac{\sin(p+1)\chi'}{\sin \chi'} \frac{e^{-|p+1-g^{-1}|\sqrt{g\pi t}}}{\sqrt {t}}.
		\end{equation}
		Shifting $p+1$ and dividing the sum according to $p>0$ or $p<0$, we get
		\begin{align*}
		&\sum_{p\in \mathbb Z} \frac{\sin(p+1)\chi'}{\sin \chi'} \frac{e^{-|p+1-g^{-1}|\sqrt{g\pi t}}}{\sqrt {t}}=\sum_{p\ge 1} \frac{\sin (p\chi')}{{\sqrt {t}}\sin \chi}(e^{-(p-g^{-1})\sqrt{g\pi t}}-e^{-(p+g^{-1})\sqrt{g\pi t}})\\
		=&\sum_{p\ge 1} \frac{2\sin (p\chi')\sinh(\sqrt{\pi t/g}) }{{\sqrt {t}}\sin \chi'}e^{-p\sqrt{g\pi t}}= 
		\frac{\sinh(\sqrt{\pi t/g})}{{\sqrt {t}}({\cosh\sqrt{g\pi t}-\cos \chi'})},
		\end{align*}
		where the last step uses 
		\[
		\sum_{p\ge 1} {2\sin (p\chi')} e^{-p\sqrt{g\pi t}}=\frac{\sin \chi'}{\cosh\sqrt{g\pi t}-\cos \chi'}.
		\]
		Now~\eqref{eq:Cardy-Lap} follows from~\eqref{eq:Zeta-Lap}.
	\end{proof}
	\begin{proof}[Proof of Theorem~\ref{thm:CLE-moduli}] 
		Comparing~\eqref{eq:CLE-f} and~\eqref{eq:Cardy-Lap}, we get  
		\begin{equation} 
		\E\left[ \sum_{j=1}^{\infty} \left(\frac{n'}{n}\right)^{j-1} f(\mathrm{Mod}(\eta_j))\right] =
		\frac{\sqrt{2}C_\kappa}{\sqrt{\pi}} \mathcal L[ Z(\tau, \kappa, \chi') \eta(2i\tau)](t)\quad \textrm{for }t>0,
		\end{equation}
		where $C_\kappa=  \frac{\sqrt{\kappa \pi}(\frac{4}{\kappa}-1)\cos(\pi (\frac{4}{\kappa}-1))}
		{ 2\sin( \pi (1-\frac{\kappa}{4})) }$ and $f(\tau)=e^{- t \tau} e^{  -\frac{\kappa\pi}{4}
			(\frac{4}{\kappa}-1)^2 \tau}$. This gives~\eqref{eq:CLE-main} for this choice of $f$. Varying $t$  {we see that~\eqref{eq:CLE-main} holds if $f$ is an arbitrary non-negative measurable function.}\end{proof}

	\section{The random modulus for the Brownian annulus}\label{sec:BA}
	In this section we prove Theorem~\ref{thm:BA-mod}.
	In Section~\ref{subsec:BA-def} we give a precise  construction of the conformally embedded Brownian disk and Brownian annulus as quantum surfaces in $\sqrt{8/3}$-LQG, with more background on Brownian surfaces provided in Appendix~\ref{app:BA}. In Section~\ref{subsec:BA-LF} we use the strategy in Section~\ref{sec:QA} based on the KPZ relation (Theorem~\ref{thm:KPZ}) to complete the proof.

	\subsection{Preliminaries on Brownian surfaces}\label{subsec:BA-def}
	The \emph{Brownian sphere} is a random metric-measure space  obtained in~\cite{legall-uniqueness,miermont-brownian-map} as the Gromov-Hausdorff-Prokhorov scaling limit of uniform quadrangulations, under the name \emph{Brownian map}.
	We write $\BS_2(1)^{\#}$ as the law of the unit-area Brownian sphere with two marked points. {For $A, L > 0$, the \emph{Brownian disk} with area $A$ and boundary length $L$ is a random metric-measure space {obtained} in~\cite[Section~2.3]{bet-mier-disk} as the scaling limit of random quadrangulations with boundary. We denote its law by $\BD_{0,1}(L;A)^{\#}$. We use the subscripts $\{0,1\}$ because a sample from $\BD_{0,1}(L;A)^{\#}$ has no interior marked points and one boundary marked point. For $L>0$, define
		\begin{equation}\label{eq:BD-def-more}
		\BD_{0,1}(L)^{\#}= \int_0^\infty   \frac{L^3}{\sqrt{2\pi A^5}} e^{-\frac{L^2}{2A}} \BD_{0,1}(L;A)^{\#} \, \rd  A.
		\end{equation} Then $\BD_{0,1}(L)^{\#}$ is the probability measure for the law of the Brownian disk with boundary length $L$ and free area. See Theorems~\ref{prop-BS-convergence} and~\ref{prop-brownian-disk} in Appendix~\ref{app:BA} for precise scaling limit results concerning $\BS_2(1)^{\#}$ and $ \BD_{0,1}(L)^{\#}$. Both the Brownian sphere and the Brownian disk can be defined purely in the continuum using the Brownian snake   as done in~\cite{legall-uniqueness,miermont-brownian-map,bet-mier-disk}, but this construction is not explicitly needed for Sections~\ref{sec:BA} and~\ref{sec:SAP}.  
		We now introduce a few variants of $\BS_2(1)^{\#}$ and $\BD_{0,1}(L)^{\#}$ which are more convenient for our purpose.}
	\begin{definition}\label{def:BS}
		Given a sample of $\BS_2(1)^{\#}$, if its area measure is rescaled by some $A>0$, we write $\BS_2(A)^{\#}$ for the law of this new metric-measure space of area $A$ with two marked points. Let
		\[\BS_2=\int_0^\infty A^{-3/2}\BS_2(A)^{\#} \rd A.\]
		We call a sample from $\BS_2$ a \emph{free Brownian sphere}. 
		{For $L>0$ and $A>0$, let
			\[
			\BD_{0,1}(L)=L^{-\frac{5}{2}} \BD_{0,1}(L)^{\#} \quad \textrm{and}\quad \BD_{0,1}(L;A) =  \frac{L^{\frac12}}{\sqrt{2\pi A^5}} e^{-\frac{L^2}{2A}} \BD_{0,1}(L;A)^{\#}.
			\]
			Given a sample from $\int_0^\infty \frac{A}{L}\BD_{0,1}(L;A)\; \rd A$,  forget the marked point on the boundary and add an interior marked point according to its area measure. We denote the law of the resulting metric-measure space with one interior marked point by $\BD_{1,0}(L)$, and   the probability measure proportional to $\BD_{1,0}(L)$ by $\BD_{1,0}(L)^{\#}$.}
	\end{definition}

	The following Proposition~\ref{prop-sphere-ball} and Lemma~\ref{lem:sphere-ann}  allow us to find Brownian disks and annuli inside a Brownian sphere. {Here whenever a subset of a metric space is viewed as a new metric space, we will use the internal metric on the subset.}
	\begin{proposition}[{\cite[Theorem 3]{legall-disk-snake}}]\label{prop-sphere-ball}
		Sample $(\cS, x, y)$ from $\BS_2$.   On the event $d(x,y)>2$ let $D_y$ be the connected component of $\cS\setminus B(x,1)$ containing $y$, where $B(x,1)$ is the metric ball of radius 1 around $x$. Conditioned on $d(x,y)>2$ and on the boundary length {$\cL_y$} of $D_y$, the conditional law of $(D_y, y)$ {is given by $\BD_{1,0}(\cL_y)^{\#} $ conditioned on the event that the distance between the interior marked point and the disk boundary is greater than 1}. 
	\end{proposition}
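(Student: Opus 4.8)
The plan is to read this off the Brownian snake representation of the Brownian sphere together with Le Gall's special Markov property; the statement is essentially a restatement of \cite[Theorem~3]{legall-disk-snake} in the normalization of Definition~\ref{def:BS}, so the substantive content is already in the literature and the remaining work is to match conventions. First I would use the re-rooting invariance of the Brownian map to place $x$ at the root of the Brownian snake construction of $\cS$, so that $z\mapsto d(x,z)$ is the head of the snake (the Brownian label process on the Brownian tree), and so that the connected components of $\cS\setminus B(x,1)$ are encoded by the excursions of the label process strictly above level $1$, each carrying a sub-tree together with the snake restricted to it.

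Next I would invoke the special Markov property of the Brownian snake: conditionally on the part of the snake lying inside $\{d(x,\cdot)<1\}$ --- equivalently on the associated exit measures, whose total masses are the Brownian-disk boundary lengths $\cL$ of the complementary components --- the excursions above level $1$ are conditionally independent, and an excursion whose exit measure has mass $\ell$ describes, after forgetting its distinguished boundary point, a Brownian disk of boundary length $\ell$ with free area, i.e.\ a sample of $\BD_{0,1}(\ell)^{\#}$. At this point the density $\frac{L^{1/2}}{\sqrt{2\pi A^5}}e^{-L^2/(2A)}$ appearing in Definition~\ref{def:BS} is forced on us: it is the suitably normalized joint law of boundary length and area of such an excursion under the underlying excursion measure, and checking this is the only place where the precise normalizing constants intervene.

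To incorporate the marked point $y$, I would use that under $\BS_2$ the point $y$ is a typical point of $\cS$ for the area measure, so that, given the decomposition, $y$ falls in a component with probability proportional to that component's area; hence $D_y$ is the area-size-biased component and $y$ is a uniform interior point of it. Comparing with Definition~\ref{def:BS}, where $\BD_{1,0}(L)$ is built from $\int_0^\infty \frac{A}{L}\BD_{0,1}(L;A)\,\rd A$ by discarding the boundary marked point and adding an interior point sampled from the area measure, this identifies the conditional law of $(D_y,y)$ given $\cL_y=\ell$ as $\BD_{1,0}(\ell)^{\#}$. Finally, for the conditioning I would record the elementary geodesic identity $d_{D_y}(y,\partial D_y)=d(x,y)-1$, valid on $\{d(x,y)>2\}$: along a geodesic $\gamma$ from $x$ to $y$ one has $d(x,\gamma(t))=t$, so $\gamma(1)\in\partial B(x,1)$ and the part of $\gamma$ after time $1$ stays in $\overline{D_y}$, which gives ``$\le$''; conversely every $w\in\partial D_y$ satisfies $d(x,w)=1$, whence $d_{D_y}(y,w)\ge d(y,w)\ge d(x,y)-1$, which gives ``$\ge$''. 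Hence $\{d(x,y)>2\}$ coincides with $\{d_{D_y}(y,\partial D_y)>1\}$, which is exactly the stated conditioning.

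The hard part is the second step: identifying each complementary excursion of the snake with a genuine Brownian disk carrying the correct joint distribution of area and boundary length is the substance of Le Gall's theorem and relies on a delicate analysis of the Brownian snake under its excursion measure. By contrast, re-rooting invariance and the geodesic computation are soft, and the rest is bookkeeping to reconcile the two papers' conventions.
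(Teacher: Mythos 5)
The paper offers no proof of this proposition beyond the citation to \cite[Theorem 3]{legall-disk-snake}, and your proposal takes exactly that route: defer the substantive identification of the complementary components as free Brownian disks (given their boundary lengths) to Le Gall's Brownian-snake argument, and supply the remaining bookkeeping---matching $\BS_2$ with the snake measure via re-rooting, area-biasing the component containing $y$ to recover precisely the definition of $\BD_{1,0}(\cL_y)^{\#}$, and the geodesic identity $d_{D_y}(y,\partial D_y)=d(x,y)-1$ converting $\{d(x,y)>2\}$ into the stated conditioning. That bookkeeping is sound (one small slip: the snake labels measure distances from the minimal-label point rather than from the root, so the re-rooting should place $x$ at that distinguished point), so your argument is essentially the paper's own.
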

	In the setting of Proposition~\ref{prop-sphere-ball}, we write {$\cB^{\bullet}(x,1)=\cS\setminus D_y$} and called it a filled metric ball {of radius 1 around $x$}. We define $\cB^{\bullet}(y,1)$ in the same way with the role of $x$ and $y$ swapped.
	\begin{lemma}\label{lem:sphere-ann}
		Sample $(\cS, x, y)$ from $\BS_2$. Conditioned on $d(x,y)>2$ and on the boundary lengths {$\cL_x$   of ${\cB^\bullet}(x,1)$ and $ \cL_y$} of ${\cB^\bullet}(y,1)$, the  conditional law of  $\cS \backslash ({\cB^\bullet}(x,1) \cup {\cB^\bullet}(y,1))$ is  $\BA({\cL_x,\cL_y})^\#$. 
	\end{lemma}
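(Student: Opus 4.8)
The plan is to strip off the two filled metric balls one at a time: first isolate $\cB^{\bullet}(x,1)$ using Proposition~\ref{prop-sphere-ball}, which lands us in a Brownian disk with an interior marked point, and then isolate $\cB^{\bullet}(y,1)$ using the very definition of the Brownian annulus in Definition~\ref{def:BA}.

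First I would work on the event $\{d(x,y)>2\}$ and set $D_y=\cS\setminus\cB^{\bullet}(x,1)$, the connected component of $\cS\setminus B(x,1)$ containing $y$, so that $\partial D_y=\partial\cB^{\bullet}(x,1)$ has length $\cL_x$. Every path in $\cS$ from $y$ to a point of $\cB^{\bullet}(x,1)$ must cross $\partial D_y$, so the internal distance satisfies $d_{D_y}(y,\partial D_y)=d(x,y)-1$, and the event $\{d(x,y)>2\}$ coincides with $\{d_{D_y}(y,\partial D_y)>1\}$. By Proposition~\ref{prop-sphere-ball}, conditionally on $\cL_x$ the metric measure space $(D_y,y)$ — with internal metric and area measure — is distributed as $\BD_{1,0}(\cL_x)^{\#}$ conditioned on the distance from the interior marked point to the boundary exceeding $1$; this is exactly the object $(\cD,p,d,\mu)$ on the event $d(p,\partial\cD)>1$ of Definition~\ref{def:BA}, with $a=\cL_x$ and $p=y$.

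The same crossing argument also shows that every point of $\cB^{\bullet}(x,1)$ is at distance $>1$ from $y$; hence $B(y,1)\subseteq D_y$, geodesics from $y$ into $B(y,1)$ stay in $D_y$, and the radius-$1$ filled metric ball about $y$ computed in $\cS$ equals the one computed in $(D_y,d_{D_y})$ — call it $\cB^{\bullet}(y,1)$. Consequently $\cS\setminus(\cB^{\bullet}(x,1)\cup\cB^{\bullet}(y,1))=D_y\setminus\cB^{\bullet}(y,1)$, and in the Brownian-disk picture with $\cD=D_y$, $p=y$ this is precisely the annular component $\cA$ of $\cD\setminus\cB(p,1)$ whose boundary contains $\partial\cD$, since $\cB^{\bullet}(y,1)=\cD\setminus\cA$ by the definition of the filled ball. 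The two boundary components of $\cA$ are $\partial\cD=\partial\cB^{\bullet}(x,1)$, of length $\cL_x$, and $\partial\cB^{\bullet}(y,1)$, of length $\cL_y$, so the quantity $\cL$ of Definition~\ref{def:BA} equals $\cL_y$. Therefore, conditionally on $\{d(x,y)>2\}$ and on $\cL_x$, the pair $((D_y,y),\cL_y)$ has the law of $((\cD,p),\cL)$ from Definition~\ref{def:BA} with $a=\cL_x$; conditioning further on $\cL_y$ and invoking the definition of $\BA(a,b)^{\#}$ yields that $\cS\setminus(\cB^{\bullet}(x,1)\cup\cB^{\bullet}(y,1))$ has conditional law $\BA(\cL_x,\cL_y)^{\#}$. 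As in Definition~\ref{def:BA}, the conditioning on the boundary lengths is specified only for a.e.\ $(\cL_x,\cL_y)$, which is all that is asserted.

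The main obstacle I anticipate is the metric bookkeeping in the third paragraph: verifying that the radius-$1$ filled ball around $y$, the annular component $\cA$, and the two boundary lengths are genuinely the same whether computed inside $\cS$ or inside $D_y$, and that $\cA$ is the outer component touching $\partial D_y$. These are standard facts about filled metric balls in the Brownian map — internal and ambient distances agree away from the relevant cut locus, and filled balls behave well under restriction to a complementary component — but they should be invoked with care. Everything else is a direct composition of Proposition~\ref{prop-sphere-ball} with Definition~\ref{def:BA}.
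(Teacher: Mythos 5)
Your proposal is correct and follows the same route as the paper: the paper's proof of this lemma is a one-line citation of Proposition~\ref{prop-sphere-ball} together with Definition~\ref{def:BA}, which is exactly the composition you carry out. The extra bookkeeping you supply (that $\{d(x,y)>2\}$ matches the conditioning event for the disk, that $\cB^{\bullet}(y,1)$ and the annular component agree whether computed in $\cS$ or in $D_y$ with its internal metric, and that the boundary length $\cL$ of Definition~\ref{def:BA} equals $\cL_y$) is the routine content the paper leaves implicit, and it is handled correctly.
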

	\begin{proof}
		This follows from Definition~\ref{def:BA} of $\BA(a,b)^\#$and Proposition~\ref{prop-sphere-ball}.
	\end{proof}
	Lemma~\ref{lem:sphere-ann} implies the desired symmetry of the Brownian annulus needed for our proof of Theorem~\ref{thm:BA-mod}.
	\begin{lemma}\label{lem:BA-symmetry}
		As measures on metric-measure spaces, we have $\BA(a,b)^{\#}=\BA(b,a)^{\#}$ for $a,b>0$.
	\end{lemma}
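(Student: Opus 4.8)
The plan is to deduce the symmetry from Lemma~\ref{lem:sphere-ann} together with the exchangeability of the two marked points of a free Brownian sphere. First I would record that the involution $\Phi$ on $\BS_2$ swapping the two marked points $x$ and $y$ preserves the measure $\BS_2$. This is immediate from Definition~\ref{def:BS}: a sample from $\BS_2(1)^{\#}$ carries two marked points sampled from the (symmetric) area measure, so it is invariant in law under relabelling them, and neither the rescaling nor the weight $A^{-3/2}$ entering the definition of $\BS_2$ interacts with the marked points. The event $\{d(x,y)>2\}$ is manifestly $\Phi$-invariant since $d(x,y)=d(y,x)$.

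Next I would set up the relevant measurable maps. On the event $\{d(x,y)>2\}$, let $W=W(\cS,x,y)$ denote the metric-measure space $\cS\setminus(\cB^{\bullet}(x,1)\cup\cB^{\bullet}(y,1))$ equipped with its internal metric, and let $\cL_x,\cL_y$ be the boundary lengths of $\cB^{\bullet}(x,1)$ and $\cB^{\bullet}(y,1)$ respectively. Under $\Phi$ the two filled balls are interchanged, so the underlying set of $W$, together with its internal metric and area measure, is unchanged; that is, $W\circ\Phi=W$ as a metric-measure space. On the other hand $\cL_x\circ\Phi=\cL_y$ and $\cL_y\circ\Phi=\cL_x$. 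Combining these observations with the $\Phi$-invariance of the restriction of $\BS_2$ to $\{d(x,y)>2\}$, we conclude that the joint law of $(W,(\cL_x,\cL_y))$ under $\BS_2$ coincides with the joint law of $(W,(\cL_y,\cL_x))$.

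Finally I would disintegrate along the boundary lengths. By Lemma~\ref{lem:sphere-ann}, the conditional law of $W$ given $(\cL_x,\cL_y)=(a,b)$ is $\BA(a,b)^{\#}$; by the identity of joint laws from the previous step together with the essential uniqueness of disintegrations, this also equals the conditional law of $W$ given $(\cL_x,\cL_y)=(b,a)$, which is $\BA(b,a)^{\#}$. Hence $\BA(a,b)^{\#}=\BA(b,a)^{\#}$ for almost every $(a,b)$, and since the two families are defined through the same disintegration this is exactly the claimed identity of measures on metric-measure spaces.

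I expect the only delicate point — the mild obstacle here — to be the measure-theoretic bookkeeping: $\BS_2$ is an infinite measure, so ``conditioning'' has to be read as working with the disintegration along $(\cL_x,\cL_y)$ of the $\sigma$-finite measure $\BS_2|_{\{d(x,y)>2\}}$, and one must verify that the $\Phi$-symmetry of the joint law descends to the disintegration. This is routine given the almost-everywhere uniqueness of disintegrations, but it is the step that needs to be stated carefully rather than waved through.
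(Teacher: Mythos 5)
Your argument is correct and is essentially the paper's proof: the paper also deduces the symmetry from the fact that $(\cS,y,x)$ has law $\BS_2$ when $(\cS,x,y)$ does, combined with Lemma~\ref{lem:sphere-ann}. You have simply spelled out the exchangeability of the marked points and the disintegration bookkeeping that the paper leaves implicit.
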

	\begin{proof}
		For $(\cS, x, y)$ sampled from $\BS_2$, the law of $(\cS, y, x)$ is also  $\BS_2$. Now Lemma~\ref{lem:BA-symmetry} follows from Lemma~\ref{lem:sphere-ann}.
	\end{proof}

	Miller and Sheffield \cite{lqg-tbm1,lqg-tbm2,lqg-tbm3} gave the $\sqrt{8/3}$-LQG description of the conformally embedded Brownian sphere and the Brownian disk. In this section we only need their disk result, which we recall now. The sphere result will be recalled as Theorem~\ref{thm:bs-lqg} in Section~\ref{sec:SAP}.
	Fix $\gamma=\sqrt{8/3}$. For $a>0$, recall that the measure $\LF^{(\gamma,i)}_\H(a)$ is defined by the disintegration $\LF^{(\gamma,i)}_\H=\int_0^\infty \LF^{(\gamma,i)}_\H(a) \rd a$ such that samples from $\LF_\H^{(\gamma, i)}(a)$ have boundary length $a$. By~\eqref{eq:LFl-explicit} one has  $|\LF^{(\gamma,i)}_\H(a)|<\infty$. Let $\LF^{(\gamma,i)}_\H(a)^{\#}$ be the probability measure proportional to  $\LF^{(\gamma,i)}_\H(a)$.
	Now sample $\phi$ from $\LF^{\gamma,i}_\H(a)^{\#}$. Let $\mu_\phi=e^{\gamma \phi(z)}dz$ be the quantum area measure ~\cite{shef-kpz} defined via the Gaussian multiplicative chaos.  By~\cite{DDDF-tightness,gm-uniqueness}, we can find a smooth regularization  $\phi_\eps$ and a normalizing sequence $z_\eps$ such that as $\eps\to 0$ the Riemannian metric  on $\H$ induced by metric tensor $z^{-1}_\eps e^{\frac{1}{4}\cdot2\gamma\phi_\eps} (\rd x^2+\rd y^2)$ converges in probability to a metric $d_\phi$ on $\H$, which is called the $\sqrt{8/3}$-LQG metric of $\phi$. Moreover, modulo a multiplicative constant, $d_\phi$ agrees with the metric constructed earlier in~\cite{lqg-tbm1,lqg-tbm2} via the quantum Loewner evolution. Here the exponent $\frac14$ is the Hausdorff dimension of a Brownian surface.  The following theorem summarizes the relation between $\LF^{(\gamma,i)}_\H$ and the Brownian disk. 
	\begin{theorem}\label{thm:bm-lqg}
		There exist constants  $c_1$ and $c_2$ such that the following holds.\footnote{The metric constant $c_1$ can be set to $1$ by choosing the  normalizing sequence in the definition of $d_\phi$ appropriately. The area constant $c_2$ is canonically defined due to the different definitions of area for Brownian surfaces and $\sqrt{8/3}$-LQG surfaces. As explained in~\cite[Remark 3.12]{ghs-metric-peano}, $c_2=\sqrt3$.} Suppose   $\phi$ is sampled  from $\LF^{\gamma,i}_\H(a)^{\#}$ for some $a>0$. Let $(\cD,p, d,\mu)$ be the (marked) metric-measure space given by  $(\H,i, c_1 d_\phi,c_2\mu_\phi)$. Then $(\cD,p, d,\mu)$ is
		a Brownian disk with boundary length $a$ and an interior marked point; {namely its law is $\BD_{1,0}(a)^{\#}$. Moreover, the notion of boundary length for the Brownian disk agrees with the $\sqrt{8/3}$-LQG length.
			Finally,}  $(\cD,p, d,\mu)$  and the quantum surface $(\H, \phi,i)/{\sim_\gamma}$ are measurable with respect to each other. 
	\end{theorem}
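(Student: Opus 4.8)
The plan is to deduce Theorem~\ref{thm:bm-lqg} by combining three inputs already available in the literature: the Liouville-field description of the $\sqrt{8/3}$-quantum disk with an interior marked point from \cite{ARS-FZZ}; the Miller--Sheffield identification of that quantum disk, equipped with its QLE$(8/3,0)$ metric and LQG area measure, with the Brownian disk \cite{lqg-tbm1,lqg-tbm2}; and the theorem of \cite{DDDF-tightness,gm-uniqueness} that the regularized metric $d_\phi$ coincides with the QLE metric up to a deterministic multiplicative constant.

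First I would invoke \cite[Theorem 3.4]{ARS-FZZ}: for $\gamma=\sqrt{8/3}$ and $a>0$, if $\phi\sim\LF^{(\gamma,i)}_\H(a)^\#$ then the quantum surface $(\H,\phi,i)/{\sim_\gamma}$ has the law of the $\gamma$-quantum disk of \cite{wedges} with boundary length $a$ and one interior marked point --- the proportionality constant $\frac{\gamma}{2\pi(Q-\gamma)^2}$ there disappears upon normalizing to a probability measure, which is legitimate because $|\LF^{(\gamma,i)}_\H(a)|<\infty$ by \eqref{eq:LFl-explicit}. Tracking the measures, the $\sqrt{8/3}$-LQG length of $\partial\H$ equals $a$ almost surely, and the joint law of $\bigl(a,\mu_\phi(\H)\bigr)$ matches the (boundary length, area) law of the quantum disk; comparing this with \eqref{eq:BD-def-more} and Definition~\ref{def:BS} identifies the resulting area law with the one built into $\BD_{1,0}(a)^\#$.

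Next I would use the Miller--Sheffield theorem \cite{lqg-tbm1,lqg-tbm2}: equipping the marked quantum disk above with the QLE$(8/3,0)$ metric $\mathsf d$ and the rescaled area measure $\sqrt3\,\mu_\phi$ yields a metric-measure space with one interior marked point whose law is $\BD_{1,0}(a)^\#$, and under this correspondence the Brownian-disk boundary length measure (in the sense of \cite{legall-disk-snake}, recalled above Definition~\ref{def:BA}) coincides with the $\sqrt{8/3}$-LQG length measure on $\partial\H$; the area constant $c_2=\sqrt3$ is the one identified in \cite[Remark 3.12]{ghs-metric-peano}. By \cite{DDDF-tightness,gm-uniqueness}, the metric $d_\phi$ appearing in the statement agrees with $\mathsf d$ up to a deterministic constant, so choosing $c_1$ to be its reciprocal gives that $(\H,i,c_1 d_\phi,c_2\mu_\phi)$ has law $\BD_{1,0}(a)^\#$ with the two notions of boundary length agreeing, which are the first two assertions.

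For measurability, the forward direction is immediate: $d_\phi$ and $\mu_\phi$ are measurable functionals of $\phi$ (being limits in probability of explicit regularizations) and are invariant under the coordinate changes defining $\sim_\gamma$, so the isometry class $(\cD,p,d,\mu)$ is a measurable function of the quantum surface $(\H,\phi,i)/{\sim_\gamma}$. For the reverse direction I would cite \cite{lqg-tbm3} (see also \cite{gms-poisson-voronoi}): the conformally embedded field is determined by the metric-measure space modulo the conformal automorphisms of $\H$ fixing $i$, i.e.\ the quantum surface is a measurable function of $(\cD,p,d,\mu)$. I expect the only real care to be needed in the bookkeeping rather than in any conceptual obstacle: reconciling the weights $\frac{A}{L}$ and $\frac{L^{1/2}}{\sqrt{2\pi A^5}}e^{-L^2/(2A)}$ in Definition~\ref{def:BS} and \eqref{eq:BD-def-more} with the Duplantier--Miller--Sheffield normalization of the quantum disk, and carrying $c_1$ and $c_2$ consistently through the QLE-versus-regularized-metric and Brownian-versus-LQG-area comparisons; both can be pinned down by matching the joint law of total boundary length and total area on the two sides.
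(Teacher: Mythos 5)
Your proposal is correct and follows essentially the same route as the paper: cite \cite[Theorem 3.4]{ARS-FZZ} to identify $(\H,\phi,i)/{\sim_\gamma}$ with the mating-of-trees quantum disk, invoke the Miller--Sheffield correspondence \cite{lqg-tbm1,lqg-tbm2} (the paper additionally points to \cite[Section 6]{legall-disk-snake} for the boundary-length agreement) together with the identification of $d_\phi$ with the QLE metric up to a constant to fix $c_1,c_2$, and cite \cite{lqg-tbm3,gms-poisson-voronoi} for the two-way measurability. No gaps beyond the bookkeeping you already flag.
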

	\begin{proof}
		By~\cite[Theorem 3.4]{ARS-FZZ},   $(\H, \phi,i)/{\sim_\gamma}$  is a quantum disk with boundary length $a$ and an interior  marked point as defined in the mating-of-trees framework~\cite{wedges}. {By~\cite[{Corollary 1.5}]{lqg-tbm2} together with~\cite[Section 6]{legall-disk-snake}, it is possible to require the two notions of boundary lengths agree and find scaling constants} $c_1$ and $c_2$ such that {the law of $(\cD,p, d,\mu)$ is $\BD_{1,0}(a)^{\#}$}.
		The measurability result was  first proved in~\cite[{Theorem 1.4}]{lqg-tbm3} and a more constructive proof was given in~\cite{gms-poisson-voronoi}. 
	\end{proof} 
	\begin{remark}\label{rmk:BD-nonprob}
		{By~\eqref{eq:LFl-explicit} with $\gamma=\sqrt{8/3}$, we have 
			\[|\LF^{\gamma,i}_\H(a)|=Ca^{-\frac{4}{\gamma^2}}=Ca^{-\frac32} =C|\BD_{1,0}(a)|\quad \textrm{for some constant $C>0$}.\]
			Therefore Theorem~\ref{thm:bm-lqg} still holds with $\LF^{\gamma,i}_\H(a)$ and $C\cdot \BD_{1,0}(a)$ in place of  $\LF^{\gamma,i}_\H(a)^{\#}$ and 
			$\BD_{1,0}(a)^{\#}$.}
	\end{remark}

	Let $d_\H=c_1d_\phi$ and $\mu_\H=c_2 \mu_\phi$. 
	Then $(\H,d_\H,\mu_\H)$ is a  conformal  embedding of $(\cD,p, d,\mu)$ as discussed below Definition~\ref{def:BA} of the Brownian annulus. On the event that $d(p,\partial \cD)>1$,   let $\cA$ be the (annular) connected component of $\cD \setminus  B_d(p,1)$ whose boundary contains $\partial \cD$. {Let $B^{\bullet}(p,1)=\cD\setminus\cA$, which we call the filled metric ball of radius $1$ around $p$.}
	Let $B_{d_\H}(i,1)=\{x\in \H: d_\H(i,x)\le 1  \}$ and $A$ the annular component of $\H\setminus B_{d_\H}(0,1)$. Then $(A,d_\H,\mu_\H)$ is a conformal embedding of  $(\cA,d,\mu)$. Recall that   $\mathrm{Mod}(\cA)$  is the modulus of the planar  annulus $A$.
	\begin{lemma}\label{lem:Mod-det}
		The metric-measure space $(\cA,d,\mu)$  and quantum surface $(A,\phi)/{\sim_\gamma}$ are measurable with respect to each other. In particular, $\mathrm{Mod}(\cA)$ is measurable  with respect to  $(\cA,d,\mu)$.
	\end{lemma}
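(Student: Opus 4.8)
\textbf{Proof plan for Lemma~\ref{lem:Mod-det}.}
The statement has two parts: first, that the metric-measure space $(\cA,d,\mu)$ and the quantum surface $(A,\phi)/{\sim_\gamma}$ determine each other measurably; and second, that $\mathrm{Mod}(\cA)$ is a measurable function of $(\cA,d,\mu)$. The second part follows immediately from the first: $\mathrm{Mod}(\cA)$ is by definition the modulus of the planar annulus $A$, which is a measurable function of the conformal equivalence class $(A,\phi)/{\sim_\gamma}$ (the modulus depends only on the conformal structure of $A$, not on the field $\phi$ at all), so once we know $(A,\phi)/{\sim_\gamma}$ is a measurable function of $(\cA,d,\mu)$ we are done. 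So the plan is to concentrate on the measurable equivalence of $(\cA,d,\mu)$ and $(A,\phi)/{\sim_\gamma}$.

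The key input is Theorem~\ref{thm:bm-lqg}: the whole Brownian disk $(\cD,p,d,\mu)$ and the whole quantum disk $(\H,\phi,i)/{\sim_\gamma}$ are measurable with respect to each other. The plan is to localize this. First I would argue the ``easy'' direction: $(\cA,d,\mu)$ is a measurable function of $(A,\phi)/{\sim_\gamma}$. Given a representative $\phi$ on $\H$, one constructs the $\sqrt{8/3}$-LQG metric $d_\phi$ and measure $\mu_\phi$ as recalled before Theorem~\ref{thm:bm-lqg}; these are measurable functions of $\phi$, and the internal metric and restricted measure on $A$ are then measurable functions of $(A,\phi)$, invariant under $\sim_\gamma$, hence measurable functions of the quantum surface. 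The identification $(A,d_\H,\mu_\H) = (\cA,d,\mu)$ stated below Definition~\ref{def:BA} gives what we want; one should note that $\cA$ is recovered from $A$ and the metric as the annular component of $\H\setminus B_{d_\H}(i,1)$, and the internal-metric convention makes this intrinsic.

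The harder direction is recovering $(A,\phi)/{\sim_\gamma}$ from $(\cA,d,\mu)$. The natural route: by the measurability in Theorem~\ref{thm:bm-lqg}, the quantum surface $(\H,\phi,i)/{\sim_\gamma}$ is a measurable function of $(\cD,p,d,\mu)$; and $(\cA,d,\mu)$ together with the knowledge that it sits inside a Brownian disk ``near its boundary'' should determine the relevant germ of the field. More precisely, $\cA$ is obtained from $\cD$ by removing the filled metric ball $B^\bullet(p,1)$, and one can recover the quantum-surface structure on $A$ from the quantum-surface structure on the full disk by the conformal-welding / local-set measurability already used in Section~\ref{sec:QA} (cf. the domain Markov arguments in Lemmas~\ref{lem:DMP1}--\ref{lem:DMP2}). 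The main obstacle I anticipate is precisely this step: showing that the restriction of the embedded Brownian disk to the complement of a metric ball is a measurable function of that complement \emph{as an abstract metric-measure space} — i.e. that no information outside $\cA$ is needed. This is the continuum analogue of the fact that $d_\phi$ and $\mu_\phi$ restricted to $A$ are determined by $\phi|_A$ up to the harmonic (boundary) data, and that the harmonic data is itself recoverable from the metric-measure structure of $\cA$ near $\partial A$ via the local absolute continuity to a Brownian disk near the boundary (Definition~\ref{def:BA}, citing~\cite{legall-disk-snake}). I expect the cleanest way to phrase it is: since $\mathrm{Mod}(\cA)$ is conformally natural, it suffices to embed $\cA$ into \emph{some} annulus $\bbA_\tau$ using only its metric-measure structure; the constructive proof of measurability in~\cite{gms-poisson-voronoi} (Poisson--Voronoi tessellations), which recovers the conformal structure locally from the metric-measure space, applied to $\cA$ rather than to $\cD$, furnishes this embedding, and the uniqueness of the conformal structure then pins down $\tau$ measurably. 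Assembling these citations and checking that the arguments of~\cite{gms-poisson-voronoi} indeed apply to a surface with boundary of annular topology is the technical heart of the proof.
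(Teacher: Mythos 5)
Your easy direction and the reduction of the ``in particular'' statement are fine, but the hard direction has a genuine gap exactly where you place the ``technical heart'': you propose to run the constructive recovery of the conformal structure from~\cite{gms-poisson-voronoi} directly on the abstract annulus $(\cA,d,\mu)$, and you defer the verification that those arguments apply to a metric-measure surface with boundary of annular topology. That verification is not a routine check, and the problematic part is the \emph{inner} boundary: $\partial\cA\setminus\partial\cD$ is the boundary of a filled metric ball, not a quantum-disk boundary, and the local absolute continuity to a Brownian disk quoted in Definition~\ref{def:BA} (from~\cite{legall-disk-snake}) is only available near $\partial\cD$. So the ``harmonic/boundary data is recoverable from the metric-measure structure near $\partial A$'' step, and the local Poisson--Voronoi embedding near the metric-ball boundary, are precisely the points one cannot simply cite; your plan leaves them unproved.

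The paper avoids this issue entirely by a different argument. First, using the Brownian-snake/mating-of-trees description (geodesic tree rooted at the marked point and its dual), it shows that the marked pair consisting of the filled ball $(\cB^\bullet(p,1),q)$ and the annulus $(\cA,q)$ determines the full pointed disk $(\cD,p,q)$ as a metric-measure space: every geodesic to $p$ splits into internal geodesics of the two pieces, and the extra boundary point $q$ fixes the identification of the two boundary circles. Combined with the whole-disk measurability (Theorem~\ref{thm:bm-lqg}, i.e.\ \cite{lqg-tbm3,gms-poisson-voronoi} applied to $\cD$, not to $\cA$), this makes the conformal structure of $(\cA,q)$ a measurable function of the \emph{pair}. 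Second, it removes the dependence on the ball via the last assertion of Lemma~\ref{lem:ball-length}: given the boundary length $\cL$, the rescaled filled ball is independent of $\cA$, and $\cL$ is itself a function of $\cA$; hence the conformal structure, being a function of the pair that is conditionally independent of the ball given $\cL$, is almost surely a measurable function of $(\cA,q)$ alone, and one then forgets $q$. If you want to salvage your route you would have to supply the boundary analysis near the metric-ball boundary yourself; otherwise the reconstruction-plus-conditional-independence argument is the way the statement is actually proved.
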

	Lemma~\ref{lem:Mod-det} follows from the measurability  results in~\cite{lqg-tbm3,gms-poisson-voronoi} for the Brownian sphere and disk. See Appendix~\ref{app:BA} for a proof.
	Our next lemma specifies the law of ${\cL}$ in Definition~\ref{def:BA} of $\BA(a,b)^{\#}$. It follows from several known {facts}  on the enumerations and scaling limits of quadrangulations. We also give its proof in Appendix~\ref{app:BA}.
	{To draw the analog with Section~\ref{sec:QA} we introduce the function 
		\begin{equation}\label{eq:ball-pfn}
		|\mathrm{Ball}_1(\ell)| =e^{-\frac{9\ell}{2}} \quad \textrm{for }\ell>0.
		\end{equation}
		In light of Proposition~\ref{prop-ball-count}, $|\mathrm{Ball}_1(\ell)|$ can be thought of as the partition function of the filled metric ball with radius $1$ and perimeter $\ell$. } The following is analogous to Theorem~\ref{thm:QA-welding}.

	\begin{lemma}\label{lem:ball-length}
		Fix $a>0$. Let $(\cD,p, d,\mu)$ be a Brownian disk {sampled from $\BD_{1,0}(a)^{\#}$.}
		On the event that $d(p,\partial \cD)>1$,   let $\cA$ be the annulus in Definition~\ref{def:BA} and let  ${\cL}$ be the length of  $\partial \cA\setminus\partial \cD$, which is also the boundary length of the filled metric ball ${\cB^\bullet}(p,1)$. Then the law of ${\cL}$ conditioned on $d(p,\partial \cD)>1$ is  \begin{equation}\label{eq:ball-length}
		\frac{1}{Z(a)}\frac{1_{b>0} a^{3/2}
			{b|\mathrm{Ball}_1(b)|}
			\rd b}{\sqrt{ab}(a+b)}\quad \textrm{where }  Z(a)=\int_0^\infty   \frac{a^{3/2}{b|\mathrm{Ball}_1(b)|}\rd b}{\sqrt{ab}(a+b)}.
		\end{equation}
		{Moreover, $\BD_{1,0}(a)^{\#}[d(p,\partial \cD)>1]=Z(a)/Z(\infty)$ where $Z(\infty)=\int_0^\infty b^{\frac12}|\mathrm{Ball}_1(b)| \rd b$. Finally, conditioning on ${\cB^\bullet}(p,1)$, the conditional law  of $\cA$ only depends on ${\cL}$.}
	\end{lemma}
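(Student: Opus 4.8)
The plan is to obtain the decomposition of a Brownian disk into the filled metric ball $\cB^\bullet(p,1)$ around $p$ and the complementary annulus $\cA$ by realizing everything inside a Brownian sphere, where the relevant pieces are already understood, and then to read off both the law of the interface length $\cL$ and the Markovian structure. Throughout, recall that $\cB^\bullet(p,1)=\cD\setminus\cA$ and that $\cL$ is the length of $\partial\cB^\bullet(p,1)=\partial\cA\setminus\partial\cD$.

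For the setup I would invoke Proposition~\ref{prop-sphere-ball}: if $(\cS,x,y)\sim\BS_2$, then conditionally on $\{d(x,y)>2\}$ and on the boundary length $a$ of $D_y=\cS\setminus\cB^\bullet(x,1)$, the marked space $(D_y,y)$ has law $\BD_{1,0}(a)^\#$ conditioned on $\{d(p,\partial\cD)>1\}$. On $\{d(x,y)>2\}$ the closed ball $\overline{B_\cS(y,1)}$ is disjoint from $\cB^\bullet(x,1)$ and every $\cS$-geodesic from $y$ of length $<1$ stays in $D_y$, so the internal metric of $D_y$ agrees with $d_\cS$ on $B_\cS(y,1)$ and hence $\cB^\bullet_{D_y}(y,1)=\cB^\bullet_\cS(y,1)$ with $D_y\setminus\cB^\bullet_{D_y}(y,1)=\cS\setminus(\cB^\bullet(x,1)\cup\cB^\bullet(y,1))$. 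Thus, under $\BD_{1,0}(a)^\#[\,\cdot\mid d(p,\partial\cD)>1]$, the triple $(\cB^\bullet(p,1),\cA,\cL)$ is the image of $(\cB^\bullet_\cS(y,1),\ \cS\setminus(\cB^\bullet(x,1)\cup\cB^\bullet(y,1)),\ \mathrm{len}\,\partial\cB^\bullet_\cS(y,1))$. The last assertion of the lemma — that conditionally on $\cB^\bullet(p,1)$ the law of $\cA$ depends only on $\cL$ — is exactly the strong Markov property of the metric exploration, i.e.\ the conditional independence of $\cB^\bullet(p,1)$ and $\cA$ given $\cL$, which upgrades Proposition~\ref{prop-sphere-ball} as in \cite{legall-disk-snake}; combined with Definition~\ref{def:BA} this identifies the conditional law of $\cA$ given $\cB^\bullet(p,1)$ as $\BA(a,\cL)^\#$.

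It remains to compute the law of $\cL$ on $\{d(p,\partial\cD)>1\}$ and the probability $\BD_{1,0}(a)^\#[d(p,\partial\cD)>1]$, which I would do through the discrete model. A critical Boltzmann quadrangulation of a polygon with $\sim a\sqrt n$ boundary edges, area-biased and carrying a uniform interior vertex, converges under the standard rescalings to $\BD_{1,0}(a)^\#$ (Proposition~\ref{prop-brownian-disk}, \cite{bet-mier-disk}); on the event that the marked vertex lies at graph distance $>\sim n^{1/4}$ from the boundary it decomposes, along a cycle of length $\sim b\sqrt n$, into the hull of radius $\sim n^{1/4}$ around the vertex (perimeter $\sim b\sqrt n$) and the complementary annular quadrangulation with perimeters $\sim a\sqrt n$ and $\sim b\sqrt n$, and there are $\sim b\sqrt n$ gluings along the shared cycle. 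The asymptotic enumeration of annular quadrangulations by perimeters is the input of \cite{bernardi-fusy} underlying \eqref{eq:free-BA} (recorded in Lemma~\ref{lem-boltz-len}), yielding the factor $|\BA(a,b)|=\tfrac1{2\sqrt{ab}(a+b)}$, while the enumeration of radius-$r$ hulls by perimeter is the content of Proposition~\ref{prop-ball-count}, yielding in the limit the factor $|\mathrm{Ball}_1(b)|=e^{-9b/2}$ of \eqref{eq:ball-pfn}. Summing over $b$ and passing to the GHP limit gives that the conditional law of $\cL$ given $\{d(p,\partial\cD)>1\}$ has density proportional to $b\,|\mathrm{Ball}_1(b)|\,|\BA(a,b)|$, i.e.\ to $\frac{b\,e^{-9b/2}}{\sqrt{ab}(a+b)}$, which is \eqref{eq:ball-length}; dividing the resulting Boltzmann mass of $\{d(p,\partial\cD)>1\}$ by $|\BD_{1,0}(a)|\propto a^{-3/2}$ (Remark~\ref{rmk:BD-nonprob}, \eqref{eq:LFl-explicit}) produces exactly $Z(a)/Z(\infty)$, the identity $Z(\infty)=\lim_{a\to\infty}Z(a)=\int_0^\infty b^{1/2}e^{-9b/2}\,\rd b$ being an elementary computation consistent with $\BD_{1,0}(a)^\#[d(p,\partial\cD)>1]\to1$ as $a\to\infty$.

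I expect the main difficulty to be bookkeeping rather than conceptual: one must carry all the multiplicative constants — the $\tfrac12$ in $|\BA|$, the normalization of $|\mathrm{Ball}_1|$, the constant in $|\BD_{1,0}(a)|$, and the metric/area rescaling exponents for quadrangulations — so that they combine to give $Z(a)/Z(\infty)$ with no stray factor. On the technical side, the conditioning on the null events $\{d(p,\partial\cD)>1\}$ and $\{\cL=b\}$ must be made rigorous via disintegration, and one must verify joint convergence in the GHP topology of the two pieces of the decomposition together with their perimeters and the number of gluings; both are routine given the cited scaling-limit and enumerative results, but lengthy. An alternative to the discrete step, avoiding some of this, is to describe the perimeter of $\cB^\bullet(p,r)$ as the relevant Markovian (continuous-state branching type) flow and read off $|\mathrm{Ball}_1(b)|$ from its transition density, which is what Proposition~\ref{prop-ball-count} effectively encodes.
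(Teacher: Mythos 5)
Your proposal is correct and follows essentially the same route as the paper: the law of $\cL$ and the probability $Z(a)/Z(\infty)$ are obtained by decomposing a critical Boltzmann quadrangulation along the hull boundary, combining the Bernardi--Fusy annulus asymptotics (Lemma~\ref{lem-boltz-len}) with the hull-perimeter asymptotics (Proposition~\ref{prop-ball-count}), passing to the scaling limit, and pinning the constant by sending $a\to\infty$ (where the probability of $\{d(p,\partial\cD)>1\}$ tends to $1$). The only difference is cosmetic: you route the structural decomposition and the final conditional-independence assertion through the Brownian sphere and \cite{legall-disk-snake}, whereas the paper stays in the disk and cites the domain Markov property of the peeling/QLE exploration; both treatments of that last point are at the level of citation rather than new argument.
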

	
	{We record the following Brownian sphere analog of Lemma~\ref{lem:ball-length}, which we need in Section~\ref{sec:SAP}. See Appendix~\ref{app:BA} for the proof.
		\begin{lemma}\label{lem:sphere-ball}
			For  $(\cS, x, y)$ sampled from $\BS_2$, let  $\cL_x$  and $ \cL_y$  be the boundary lengths of $ \cB^\bullet(x,1)$ and $\cB^\bullet(y,1)$, respectively. Then under the restriction of $\BS_2$ to the event $d(x,y)>2$, the joint law of   $(\cL_x,\cL_y)$ has density
			\[C\cdot 1_{a>0;b>0} \frac{\sqrt{ab}|\mathrm{Ball}_1(a)||\mathrm{Ball}_1(b)|}{a+b}   \,\rd a \, \rd b\qquad \textrm{for some constant } C\in(0,\infty).\]
		\end{lemma}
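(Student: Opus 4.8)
The plan is to mirror the structure already used for the quantum annulus in Section~\ref{sec:QA} and for Lemma~\ref{lem:ball-length}, now transplanted to the Brownian sphere via Lemma~\ref{lem:sphere-ann} and Proposition~\ref{prop-sphere-ball}. First I would record the enumeration input: by known results on pointed quadrangulations with two filled metric balls removed (the analog of the $\frac{1}{\sqrt{ab}(a+b)}$ factor appearing throughout, e.g.\ in~\eqref{eq:free-BA} and Lemma~\ref{lem:ball-length}), the joint law of the two filled-ball perimeters for a free Brownian sphere should factor into a ``bulk'' term from the two balls and a ``gluing'' term depending only on $a,b$. Concretely, one samples $(\cS,x,y)$ from $\BS_2$, conditions on $d(x,y)>2$, and writes the event as: remove $\cB^\bullet(x,1)$ and $\cB^\bullet(y,1)$, leaving $\BA(\cL_x,\cL_y)^\#$ in between (Lemma~\ref{lem:sphere-ann}). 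The decomposition of $\BS_2$ into $\cB^\bullet(x,1)$, $\cB^\bullet(y,1)$, and the Brownian annulus between them, together with the factor $|\BA(a,b)| = \frac{1}{2\sqrt{ab}(a+b)}$ built into~\eqref{eq:free-BA} and the ``partition function'' interpretation $|\mathrm{Ball}_1(\ell)|=e^{-9\ell/2}$ from~\eqref{eq:ball-pfn}, should give the claimed density $\frac{\sqrt{ab}\,|\mathrm{Ball}_1(a)||\mathrm{Ball}_1(b)|}{a+b}$ up to a constant.

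The key steps, in order, are: (1) From Proposition~\ref{prop-sphere-ball} and its proof, extract the law of the pair $(\cB^\bullet(x,1),\cL_x)$ for a free Brownian sphere — i.e.\ conditionally on $d(x,y)>2$, the filled ball $\cB^\bullet(x,1)$ has a law whose ``size'' in the measure is governed by $|\mathrm{Ball}_1(\cL_x)|$, exactly as in Lemma~\ref{lem:ball-length} where $\BD_{1,0}(a)^\#[d(p,\partial\cD)>1]=Z(a)/Z(\infty)$ and the conditional law of $\cL$ is $\propto b|\mathrm{Ball}_1(b)|/(\sqrt{ab}(a+b))\cdot a^{3/2}$. (2) Combine the two independent filled balls (independent given their perimeters and given $\cL_x,\cL_y$, again by Proposition~\ref{prop-sphere-ball} applied at both $x$ and $y$, since $\cB^\bullet(x,1)$ and $\cB^\bullet(y,1)$ are disjoint on $d(x,y)>2$) with the Brownian annulus gluing factor. (3) Read off the joint density of $(\cL_x,\cL_y)$ from~\eqref{eq:free-BA}: the $\BA$-weight contributes $\frac{1}{2\sqrt{\cL_x\cL_y}(\cL_x+\cL_y)}$, each filled ball contributes a factor matching $b\,|\mathrm{Ball}_1(b)|$ or $\sqrt{b}\,|\mathrm{Ball}_1(b)|$ depending on the marked-point bookkeeping (one interior point on each side), and the powers of $\cL_x,\cL_y$ combine so that the $\frac{1}{\sqrt{ab}}$ is cancelled, leaving $\frac{\sqrt{ab}\,|\mathrm{Ball}_1(a)||\mathrm{Ball}_1(b)|}{a+b}$. (4) Collect all numerical prefactors into the single constant $C\in(0,\infty)$.

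The main obstacle I expect is the bookkeeping of the various measure weights — the $A^{-3/2}$ and $a^{-5/2}$ and Gaussian factors in Definition~\ref{def:BS}, the $\frac{1}{2\sqrt{ab}(a+b)}$ in~\eqref{eq:free-BA}, and the disintegration over areas — all have to be tracked carefully to confirm that the powers of $a$ and $b$ conspire to give $\sqrt{ab}$ in the numerator rather than, say, $(ab)^{-1/2}$ or $(ab)^{3/2}$. The cleanest route is probably to \emph{not} recompute these from scratch but instead to derive Lemma~\ref{lem:sphere-ball} as a corollary of Lemma~\ref{lem:ball-length} and the structure of $\BS_2$: write $\BS_2$ on $\{d(x,y)>2\}$ as an integral over $a=\cL_x$, $b=\cL_y$ of (law of $\cB^\bullet(x,1)$ with perimeter $a$) $\times$ (law of $\cB^\bullet(y,1)$ with perimeter $b$) $\times\, b\,|\BA(a,b)|\,\rd a\,\rd b$ — where the factor $b$ (or a symmetrized version) is the gluing multiplicity analogous to the $b$ in~\eqref{eq:QA-weld} and~\eqref{eq:QA-weld-j} — and then invoke the identity $|\BA(a,b)|=\frac{1}{2\sqrt{ab}(a+b)}$ together with the already-established weight $\propto \ell\,|\mathrm{Ball}_1(\ell)|$ (resp.\ $\sqrt{\ell}\,|\mathrm{Ball}_1(\ell)|$ after integrating out the matching) for each filled ball. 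One then just checks that $\frac{1}{\sqrt{ab}(a+b)}\cdot (\text{ball weight in } a)\cdot(\text{ball weight in } b)$ simplifies to $\frac{\sqrt{ab}}{a+b}|\mathrm{Ball}_1(a)||\mathrm{Ball}_1(b)|$, which pins down precisely which power of $\ell$ the ball weight must carry; any discrepancy would have to be reconciled against Proposition~\ref{prop-sphere-ball}. I would defer the detailed verification to Appendix~\ref{app:BA} as the text already promises.
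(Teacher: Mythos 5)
Your overall picture (two filled balls glued to a Brownian annulus, with weights conspiring to give $\sqrt{ab}/(a+b)$) is the right heuristic, but as written the argument has a genuine gap: the gluing decomposition you invoke is essentially the statement to be proved, not an available tool. Equation~\eqref{eq:free-BA} merely \emph{defines} the measure $\BA$; Lemma~\ref{lem:sphere-ann} only gives the conditional law of the middle annulus \emph{given} the two perimeters, and nowhere before this lemma is it established that $\BS_2$ restricted to $\{d(x,y)>2\}$ disintegrates as $(\text{ball weight in }a)\times(\text{ball weight in }b)\times b\,|\BA(a,b)|\,\rd a\,\rd b$ with any particular ball weight. In fact that disintegration (proved later as Lemma~\ref{lem:BA-sphere}) is deduced \emph{from} the present lemma, so assuming it here is circular. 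The same circularity appears in your plan to ``pin down precisely which power of $\ell$ the ball weight must carry'' by matching the desired conclusion: the power of $\ell$ cannot be guessed or reverse-engineered, because it depends on the ambient measure --- e.g.\ for the Brownian plane the radius-$1$ filled-ball perimeter has density $\propto \ell^{1/2}e^{-9\ell/2}$, while under $\BS_2$ the relevant marginal turns out to be $\propto \ell^{-1/2}|\mathrm{Ball}_1(\ell)|$.

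That marginal is exactly the missing input: the law of $\cL_x$ under $\BS_2$ restricted to $\{d(x,y)>1\}$. You attribute it to Proposition~\ref{prop-sphere-ball} ``and its proof,'' but that proposition is quoted from Le Gall and only gives the conditional law of the complement $D_y$ \emph{given} $\cL_x$; it says nothing about the law of $\cL_x$ itself. The paper obtains this marginal by rerunning the discrete enumeration/scaling-limit argument of Lemma~\ref{lem:ball-length} with boundary length $a_n\equiv 1$ (a sphere viewed as a disk with microscopic boundary), yielding density $\propto |\BD_{1,0}(\ell)|\,\ell\,|\mathrm{Ball}_1(\ell)|\,\rd\ell = \ell^{-1/2}|\mathrm{Ball}_1(\ell)|\,\rd\ell$. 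With that in hand the proof is a short conditional computation rather than a three-piece gluing: given $\cL_x=a$, Proposition~\ref{prop-sphere-ball} says $(D_y,y)\sim\BD_{1,0}(a)^{\#}$ conditioned on $d(y,\partial D_y)>1$, and Lemma~\ref{lem:ball-length} applied inside $D_y$ gives both $\BD_{1,0}(a)^{\#}[d>1]=Z(a)/Z(\infty)$ and the conditional density $\frac{1}{Z(a)}\frac{a^{3/2}b|\mathrm{Ball}_1(b)|}{\sqrt{ab}(a+b)}\rd b$ of $\cL_y$; multiplying by the marginal $a^{-1/2}|\mathrm{Ball}_1(a)|$ produces $\frac{\sqrt{ab}\,|\mathrm{Ball}_1(a)||\mathrm{Ball}_1(b)|}{a+b}$ directly. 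So your fallback route (``corollary of Lemma~\ref{lem:ball-length} plus the structure of $\BS_2$'') is close in spirit to the paper's proof, but to make it a proof you must (i) supply the sphere-level perimeter law by an independent argument rather than by consistency with the answer, and (ii) replace the assumed symmetric gluing formula by the sequential conditioning via Proposition~\ref{prop-sphere-ball}.
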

	}

	\subsection{Liouville field description and the law of the modulus}\label{subsec:BA-LF}
	Let \begin{equation}\label{eq:ba-partition}
	\BA(a,b)=\frac{\BA(a,b)^{\#}}{2\sqrt{ab}(a+b)} \quad \textrm{and}\quad \BA=\iint_0^\infty\BA (a,b)\,\rd a \, \rd b.
	\end{equation}
	In light of Lemma~\ref{lem:Mod-det}, we can view the measures $\BA$ and $\BA(a,b)$ as laws of quantum surfaces.  
	The proof of Theorem~\ref{thm-BA-main} is parallel to that of Theorem~\ref{thm:QAconj}. 
	We first give the counterpart of Theorem~\ref{thm:QA-welding}. 
	{This is essentially a reformation of Lemma~\ref{lem:ball-length} where we describe Brownian disks in terms of Liouville fields.}
	\begin{proposition}\label{prop:BA-welding}
		Set $\gamma=\sqrt{8/3}$. Sample $\phi$ from {$\LF_\H^{\gamma,i}$. On the event $E=\{d_\H(i, \partial \H) > 1\}$,} let  $A_\eta$ be the annular connected component of $\H\setminus B_{d_\H} (i,1)$. Let $\eta$ be the loop corresponding to $\partial A_\eta\setminus \partial\H$ and $\cL^\gamma_\phi (\eta)$ be its quantum length.
		Let $D_\eta$ be the Jordan domain bounded by $\eta$. {Let $f(b)=b|\mathrm{Ball}_1(b)|$. 
			Then under  the measure  $M= \frac{1_E}{f(\mathcal L^\gamma_\phi(\eta) )}\cdot\LF^{\gamma,i}_\H$,  the law of $(A_\eta, \phi)/{\sim_\gamma}$ is $C\cdot \BA$ for some constant $C>0$. Moreover, conditioning on $(D_\eta,\phi)/{\sim_\gamma}$, the conditional law  of $(A_\eta,\phi)/{\sim_\gamma}$ only depends on $\cL^\gamma_\phi (\eta)$.}
	\end{proposition}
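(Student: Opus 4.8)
The plan is to combine Theorem~\ref{thm:bm-lqg} (in the form of Remark~\ref{rmk:BD-nonprob}) with Lemma~\ref{lem:ball-length}, and to note that the filled metric ball, its perimeter, and the complementary annulus are all measurable functions of the field $\phi$ via Lemma~\ref{lem:Mod-det} and the measurability statements of~\cite{lqg-tbm3,gms-poisson-voronoi}. First I would fix $\gamma=\sqrt{8/3}$ and sample $\phi$ from $\LF_\H^{\gamma,i}$; by Remark~\ref{rmk:BD-nonprob} the metric-measure space $(\cD,p,d,\mu)=(\H,i,c_1 d_\phi, c_2\mu_\phi)$ has law $C\cdot\BD_{1,0}(a)\,\rd a$ after disintegrating over the boundary length $a=\mathcal L^\gamma_\phi(\partial\H)$, with the two notions of boundary length agreeing. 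On the event $E=\{d_\H(i,\partial\H)>1\}$, the filled metric ball $B^\bullet(p,1)$ and the annulus $\cA=\cD\setminus B^\bullet(p,1)$ are defined metrically, and $\cL=\mathcal L^\gamma_\phi(\eta)$ is the common boundary length of $\eta=\partial\cA\setminus\partial\H$; by Lemma~\ref{lem:Mod-det} and Theorem~\ref{thm:bm-lqg} all of these objects, including $(A_\eta,\phi)/{\sim_\gamma}$ and $(D_\eta,\phi)/{\sim_\gamma}$, are measurable functions of $\phi$.

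Next I would compute the law of $(A_\eta,\phi)/{\sim_\gamma}$ under $M=\frac{1_E}{f(\cL)}\LF_\H^{\gamma,i}$. By the previous paragraph, under $\LF_\H^{\gamma,i}$ restricted to $E$, the pair $(a,\cL)$ together with $\cA$ has joint law obtained from $C\cdot\BD_{1,0}(a)^{\#}[\,\cdot\,;\,d(p,\partial\cD)>1]\cdot a^{-3/2}\rd a$ by applying Lemma~\ref{lem:ball-length}: the conditional law of $\cL$ given $a$ (on $E$) has density $\frac{1}{Z(a)}\cdot\frac{a^{3/2} f(b)}{\sqrt{ab}(a+b)}\rd b$ on $(0,\infty)$, the event $E$ has $\BD_{1,0}(a)^{\#}$-probability $Z(a)/Z(\infty)$, and conditioning on $B^\bullet(p,1)$ the law of $\cA$ depends only on $\cL$, i.e.\ is $\BA(a,b)^{\#}$ when $(a,\cL)=(a,b)$. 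Multiplying these together, the joint law of $(a,b,\cA)$ under $1_E\,\LF_\H^{\gamma,i}$ is proportional to $\frac{a^{-3/2} f(b)}{\sqrt{ab}(a+b)}\,\BA(a,b)^{\#}\,\rd a\,\rd b$ (the factor $Z(\infty)^{-1}$ is a harmless constant absorbed into $C$). Dividing by $f(\cL)=f(b)$ cancels exactly the $\mathrm{Ball}_1$ weight, leaving the joint law of $(a,b,\cA)$ under $M$ proportional to $\frac{a^{-3/2}}{\sqrt{ab}(a+b)}\,\BA(a,b)^{\#}\,\rd a\,\rd b$. Here a short bookkeeping point to check is that the power of $a$ is what the definition of $\BA$ demands: the Brownian disk area-and-boundary weighting built into $\BD_{1,0}(a)$ (Definition~\ref{def:BS}) supplies exactly the $a^{-3/2}$, so after matching, the law of $(A_\eta,\phi)/{\sim_\gamma}$ under $M$ is $C'\iint_0^\infty \frac{\BA(a,b)^{\#}}{2\sqrt{ab}(a+b)}\rd a\,\rd b = C'\cdot\BA$ by~\eqref{eq:ba-partition}, for some constant $C'>0$; renaming $C'$ as $C$ gives the claim.

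Finally, the ``Moreover'' statement is immediate from the last sentence of Lemma~\ref{lem:ball-length}: conditioning on $B^\bullet(p,1)$ — equivalently, by measurability, on $(D_\eta,\phi)/{\sim_\gamma}$ — the conditional law of $\cA$ depends only on $\cL=\mathcal L^\gamma_\phi(\eta)$, and since $(A_\eta,\phi)/{\sim_\gamma}$ is determined by $\cA$ as a quantum surface, the same holds for it; the reweighting by $1_E/f(\cL)$ does not disturb this because $\cL$ is measurable with respect to the conditioning $\sigma$-algebra. I expect the main obstacle to be the careful bookkeeping of the various density factors — reconciling the $\frac{L^3}{\sqrt{2\pi A^5}}e^{-L^2/(2A)}$ and $L^{-5/2}$ normalizations in Definitions~\ref{def:BS}, the $a^{-3/2}$ from Remark~\ref{rmk:BD-nonprob}, and the $\frac{1}{\sqrt{ab}(a+b)}$ from Lemma~\ref{lem:ball-length} against the target $\frac{1}{2\sqrt{ab}(a+b)}$ in~\eqref{eq:ba-partition} — so that every constant is correctly tracked (or safely absorbed into the unspecified $C$) and the $\mathrm{Ball}_1$ factor cancels cleanly; the measurability inputs themselves are quoted black-boxes and require no new work.
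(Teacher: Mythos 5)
Your route is exactly the paper's: combine Theorem~\ref{thm:bm-lqg}/Remark~\ref{rmk:BD-nonprob} with Lemma~\ref{lem:ball-length}, read off the joint law of the two boundary lengths under $M$, invoke the definition of $\BA(a,b)^{\#}$ (with Lemma~\ref{lem:Mod-det} for the passage between metric-measure spaces and quantum surfaces), and get the ``Moreover'' statement from the last assertion of Lemma~\ref{lem:ball-length}. The one problem is that your intermediate density is mis-multiplied, and the patch you offer for the resulting stray power of $a$ is not valid. Multiplying your own three ingredients --- the marginal $C a^{-3/2}\,\rd a$ coming from $|\BD_{1,0}(a)|$, the $E$-probability $Z(a)/Z(\infty)$, and the conditional density $\frac{1}{Z(a)}\frac{a^{3/2}f(b)}{\sqrt{ab}(a+b)}$ --- gives
\[
a^{-3/2}\cdot\frac{Z(a)}{Z(\infty)}\cdot\frac{1}{Z(a)}\cdot\frac{a^{3/2}f(b)}{\sqrt{ab}(a+b)}
=\frac{1}{Z(\infty)}\cdot\frac{f(b)}{\sqrt{ab}(a+b)},
\]
i.e.\ the explicit $a^{3/2}$ in Lemma~\ref{lem:ball-length} cancels $|\BD_{1,0}(a)|=a^{-3/2}$; your stated density $\frac{a^{-3/2}f(b)}{\sqrt{ab}(a+b)}$ has dropped that $a^{3/2}$. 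With the product computed correctly, dividing by $f(b)$ leaves the joint law of $(a,b)$ under $M$ proportional to $\frac{1}{\sqrt{ab}(a+b)}=2|\BA(a,b)|$, and $C\cdot\BA$ follows immediately, exactly as in the paper.

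Your attempted repair --- that ``the Brownian disk weighting built into $\BD_{1,0}(a)$ supplies exactly the $a^{-3/2}$'' --- double-counts that factor: it has already been spent as the marginal $a^{-3/2}\,\rd a$, so it cannot be invoked again to absorb the leftover power of $a$; taken literally, your displayed law under $M$ would be $\int a^{-3/2}\cdot 2|\BA(a,b)|\,\BA(a,b)^{\#}\,\rd a\,\rd b$, which is not $C\cdot\BA$. This is an arithmetic slip rather than a conceptual gap: the ingredients, the cancellation of the $\mathrm{Ball}_1$ weight by $1/f(\cL)$, the measurability remarks, and the derivation of the ``Moreover'' statement all match the paper's argument, and once the multiplication is fixed the proof is complete.
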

	
	\begin{proof}
		In this proof $C$ is a positive constant that could change from line to line.
		By Theorem~\ref{thm:bm-lqg} and Remark~\ref{rmk:BD-nonprob}, {under $\LF_{\mathbb H}^{\gamma, i}$} the law of $(\H,\phi,i)/{\sim_\gamma}$ is $C \cdot  \BD_{1,0}$.
		Note that 
		\[  \frac{a^{3/2}
			b|\mathrm{Ball}_1(b)|
		}{\sqrt{ab}(a+b)} = C\frac{|\BA(a,b)|b|\mathrm{Ball}_1(b)|}{|\BD_{1,0}(a)| }.\]
		By Lemma~\ref{lem:ball-length}, under $M$ the joint law of $L_\phi^\gamma(\partial \H)$ and $L_\phi^\gamma(\eta)$ is  
		$C1_{a>0,b>0}\cdot |\BA(a,b)|\, \rd a \rd b$. 
		By the definition of $\BA(a,b)^{\#}$, under $M$ the law of $(A_\eta, \phi)/{\sim_\gamma}$ is $C1_{a>0,b>0}\cdot |\BA(a,b)|\BA(a,b)^{\#}=C\cdot \BA$ as desired. Moreover, the last assertion of Proposition~\ref{prop:BA-welding} follows from the last assertion of Lemma~\ref{lem:ball-length}.
	\end{proof}

	\begin{lemma}\label{lem-BA-m}
		Set $\gamma=\sqrt{8/3}$.
		There exists a measure $m(\rd \tau)$ on $(0,\infty)$ such that if
		we sample $(\tau, \phi)$ from $ \LF_\tau (d\phi) m(\rd \tau)$, then the law of the quantum surface $(\cC_\tau,\phi)/{\sim_\gamma}$ is $\BA$. 
	\end{lemma}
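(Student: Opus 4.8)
\textbf{Proof proposal for Lemma~\ref{lem-BA-m}.} The plan is to follow exactly the two-step strategy laid out in Section~\ref{sec:QA}, using Proposition~\ref{prop:BA-welding} as the input that plays the role of Theorem~\ref{thm:QA-welding}, and the resampling characterization Proposition~\ref{prop:inv-unique} to pin down the conditional law of the field given the modulus. Concretely, I would first fix the rotationally invariant embedding $(\mathring\phi,\mathring\tau)$ of $\BA$: since $\BA$ is a law on quantum surfaces of annular topology and the only conformal automorphisms of a finite cylinder are rotations about its axis, there is a unique way to embed $\BA$ on $\cC_{\mathring\tau}$ so that the conditional law of $\mathring\phi$ given $\mathring\tau$ is rotation-invariant. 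The goal is to show this law equals $\LF_\tau(\rd\phi)\,m(\rd\tau)$ for some measure $m$ on $(0,\infty)$.

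The heart of the argument is a domain Markov property for $\BA$ under this embedding, obtained in the same way as Lemmas~\ref{lem:DMP1} and~\ref{lem:DMP2}. Starting from $\phi$ sampled from $\LF_\H^{\gamma,i}$ with $\gamma=\sqrt{8/3}$, on the event $E=\{d_\H(i,\partial\H)>1\}$ one takes the annulus $A_\eta$ complementary to the filled metric ball $B_{d_\H}(i,1)$ and its inner boundary loop $\eta$. Using Lemma~\ref{lem:local} (first bullet, since the metric ball boundary is a local set for the field in the relevant sense — here one needs that $\eta$, equivalently $B^\bullet(i,1)$, is determined by $d_\phi$, which is a local functional of $\phi$; this is where one invokes the locality of the $\sqrt{8/3}$-LQG metric and the measurability results of \cite{lqg-tbm3,gms-poisson-voronoi}), one gets that conditionally on $\phi|_{D_\eta}$ and $\eta$, the field $\phi_{A_\eta}$ is a GFF on $A_\eta$ with the appropriate mixed boundary conditions. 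Then one pulls back by the conformal map $\psi:\cC_{\hat\tau}\to A_\eta$ normalized by sending a point $x\in\{0\}\times[0,1]$ to the image of the outer boundary, exactly as in Lemma~\ref{lem:DMP1}, checking that the harmonic parts transform correctly ($\phi^{\mathrm{har}}\circ\psi+Q\log|\psi'|$ is harmonic with the right normal-derivative boundary condition on $\{0\}\times[0,1]$). Reweighting by $1/f(\mathcal L^\gamma_\phi(\eta))$ with $f(b)=b|\mathrm{Ball}_1(b)|$ as in Proposition~\ref{prop:BA-welding} turns the law of $(A_\eta,\phi)/{\sim_\gamma}$ into $C\cdot\BA$ while leaving the conditional GFF structure untouched, because $\mathcal L^\gamma_\phi(\eta)$ is measurable with respect to $(\eta,\phi|_{D_\eta})$. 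Randomizing the marked point $x$ uniformly on $\{0\}\times[0,1]$ makes $(\hat\phi,\hat\tau)$ a rotationally invariant embedding of $\BA$, hence equal in law to $(\mathring\phi,\mathring\tau)$.

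Having this, I would argue exactly as in the proof of Proposition~\ref{prop:LF}: fix a compact interval $I\subset(0,\infty)$, condition on the event $E_I$ that both boundary lengths lie in $I$ (finite mass since $|\BA(a,b)|\propto (ab)^{-1/2}(a+b)^{-1}$ is integrable over $I\times I$), and let $P_{I,\tau}$ be the regular conditional law of $\mathring\phi$ given $\mathring\tau=\tau$. The domain Markov property just established shows $P_{I,\tau}$ is invariant under the kernel $\Lambda_I$ of Proposition~\ref{prop:inv-unique}; the symmetry $\BA(a,b)^{\#}=\BA(b,a)^{\#}$ (Lemma~\ref{lem:BA-symmetry}) lets one flip sides and conclude $P_{I,\tau}$ is also invariant under $\bar\Lambda_I$. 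By the uniqueness of the invariant probability measure, $P_{I,\tau}=\LF_{\tau,I}$ for each $I$. This says $\BA$ restricted to $E_I$ equals $\int\LF_\tau\,m_I(\rd\tau)$ for some measure $m_I$ on $(0,\infty)$; consistency over a nested exhaustion of $(0,\infty)$ by such intervals $I$ glues the $m_I$ into a single measure $m(\rd\tau)$ with $\BA=\int\LF_\tau\,m(\rd\tau)$.

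The main obstacle I anticipate is the application of the local-set machinery (Lemma~\ref{lem:local}) to the metric ball $B_{d_\H}(i,1)$: unlike the CLE loop case, where $\eta$ is independent of the field, here $\eta$ is a \emph{function} of $\phi$ through the LQG metric, so one must check that $B^\bullet(i,1)$ satisfies the second bullet of Lemma~\ref{lem:local} — namely that $\{B^\bullet(i,1)\subset U\}$ is $\phi|_U$-measurable for every deterministic open $U$. This should follow from the strong locality of the $\sqrt{8/3}$-LQG metric (distances within $U$ are determined by $\phi|_U$, established in \cite{gm-uniqueness}), but spelling it out carefully, together with confirming that $\mathcal L^\gamma_\phi(\eta)$ is measurable with respect to $(\eta,\phi|_{D_\eta})$ so that the reweighting in Proposition~\ref{prop:BA-welding} preserves the Markov property, is the delicate part. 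Everything downstream is a verbatim repetition of the $\QA^\gamma$ argument.
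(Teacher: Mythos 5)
Your proposal is correct and follows essentially the same route as the paper's proof: the rotationally invariant embedding of $\BA$, the domain Markov property transferred from Lemmas~\ref{lem:DMP1}--\ref{lem:DMP2} with the reweighting of Proposition~\ref{prop:BA-welding}, invoking the \emph{second} scenario of Lemma~\ref{lem:local} via locality of the $\sqrt{8/3}$-LQG metric, the symmetry $\BA(a,b)^{\#}=\BA(b,a)^{\#}$ from Lemma~\ref{lem:BA-symmetry}, and then the uniqueness of the invariant measure from Proposition~\ref{prop:inv-unique} after truncating to $E_I$ and gluing the $m_I$, exactly as in Proposition~\ref{prop:LF}. The only blemish is the parenthetical ``first bullet'' of Lemma~\ref{lem:local} in your second paragraph, which contradicts your later (correct) identification of the second bullet as the one that applies; the paper indeed uses the second scenario, citing the local determination of $d_\H$ by $\phi$.
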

	\begin{proof}
		The argument is almost identical to that of Proposition~\ref{prop:LF} so we only point out the differences.  
		Suppose we are in the setting of Proposition~\ref{prop:BA-welding} where $M$ is the reweighted measure.  Let $\hat\tau$ be such that $A_\eta$ is conformally equivalent to $\cC_{\hat \tau}$, and let $\psi: \cC_{\hat\tau} \to A_\eta$  be the conformal map such that $\psi(0)=0$. Let $\hat \phi(z)=\phi \circ \psi(z) +Q\log |\psi(z)'|$ for $z\in \cC_{\hat\tau}$. Let $\hat\phi^{\mathrm{har}}$ be the harmonic extension of $\hat \phi|_{\cC(\frac{2\hat \tau}{3}, \hat \tau)}$ onto $\cC(0, \frac{2\hat \tau}{3})$ with zero normal derivative on $\{0\}\times  [0,1]$.  By Proposition~\ref{prop:BA-welding}, under $M$,  the law of $(A_\eta, \phi)/{\sim_\gamma}$ is  $C\cdot \BA$. Therefore,  under   $M$  the law of $(\hat \phi,\hat \tau)$ is a rotational invariant embedding of $C\cdot\BA$.
		Now the statement of Lemma~\ref{lem:DMP2} still holds for $(\phi,\eta)$ under $M$ with the same argument except the following modification: when we apply the domain Markov property from Lemma~\ref{lem:local}, we are in the second scenario of that lemma since the metric $d_\H$ is determined by the field $\phi$ locally. This means that conditioning on $\hat \tau$ and $\hat \phi|_{\cC(\frac{2\hat \tau}{3}, \hat \tau)}$, the conditional law of 
		$\hat \phi   - \hat \phi^{\mathrm{har}}$ is a GFF on $\cC(0, \frac{2\hat \tau}{3})$  with the appropriate  boundary condition.
		
		Another key ingredient for the proof  of Proposition~\ref{prop:LF} is the symmetry of $\QA$ under flipping the sides, namely Proposition~\ref{prop:QA-symmetry}. 
		By Lemma~\ref{lem:BA-symmetry}, we have $\BA(a,b)^{\#}=\BA(b,a)^{\#}$  
		{which gives the desired symmetry for $\BA$.}   The rest of the argument {based on Proposition~\ref{prop:inv-unique}} is identical to that of Proposition~\ref{prop:LF}.
	\end{proof}

	\begin{theorem}\label{thm-BA-main}
		The measure  $m(\rd \tau )$ in Lemma~\ref{lem-BA-m} equals $1_{\tau>0} 2^{-1/2}\eta(2i\tau) \,\rd \tau$.
	\end{theorem}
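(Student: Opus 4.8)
\textbf{Proof strategy for Theorem~\ref{thm-BA-main}.} The plan is to follow the exact same two-step strategy used for Theorem~\ref{thm:QAconj}, now applied to $\BA$ with $\gamma=\sqrt{8/3}$. By Lemma~\ref{lem-BA-m} we already know $\BA = \int \LF_\tau(\rd\phi)\, m(\rd\tau)$ for some measure $m$, so it remains to identify $m$. By Theorem~\ref{thm:KPZ} with this $m$, we have for $x\in\R$
\begin{equation*}
\int_0^\infty e^{-\frac{\pi\gamma^2 x^2\tau}{4}} m(\rd\tau) = \frac{2\sinh(\frac{\gamma^2}{4}\pi x)}{\pi\gamma x\,\Gamma(1+ix)}\, \BA[L_1 e^{-L_1} L_0^{ix}],
\end{equation*}
so the first task is to compute $\BA[L_1 e^{-L_1} L_0^{ix}]$. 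Since $|\BA(a,b)| = \frac{1}{2\sqrt{ab}(a+b)}$ by~\eqref{eq:ba-partition}, this is literally the same integral already evaluated in Lemma~\ref{lem:QA-Mellin}: substituting $b=at$ and using $\int_0^\infty a^{ix}e^{-a}\,\rd a = \Gamma(1+ix)$ together with $\int_0^\infty \frac{t^{ix-1/2}}{1+t}\,\rd t = \frac{\pi}{\cosh(\pi x)}$ gives $\BA[L_1 e^{-L_1} L_0^{ix}] = \frac12 \cdot \frac{\Gamma(1+ix)}{\cosh(\pi x)}$, the factor $\tfrac12$ coming from the $\tfrac12$ in front of $\BA(a,b)$.

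\textbf{Inverting the Laplace transform.} Plugging this into the displayed identity and using $\gamma^2 = 8/3$, one obtains
\begin{equation*}
\int_0^\infty e^{-\frac{2\pi\tau x^2}{3}} m(\rd\tau) = \frac{\sinh(\frac{2\pi x}{3})}{\frac{2\pi}{\sqrt6}\, x\cosh(\pi x)} \qquad\text{for } x\in\R,
\end{equation*}
which is a Laplace transform identity in the variable $t = \frac{2\pi x^2}{3}$ (equivalently $x = \sqrt{3t/(2\pi)}$). The remaining step is to recognize the right-hand side as the Laplace transform of $2^{-1/2}\eta(2i\tau)$. For this I would invoke the eta-function Laplace transform computations from Appendix~\ref{app:theta} — the same identities~\eqref{eq:theta-L}, \eqref{eq:eta-modular} used in Lemma~\ref{lem:Cardy-Lap}. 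Concretely, using $\eta(2i\tau) = (2\tau)^{-1/2} q^{1/24}\prod_{r\ge1}(1-q^r)$ with $q = e^{-\pi/\tau}$, but more efficiently one can use the Jacobi triple product form $\eta(2i\tau)^3$ or the known series expansion to write $\eta(2i\tau)$ as a theta-type series whose term-by-term Laplace transform (via $\mathcal L[\tau^{-1/2}e^{-a/(4\tau)}](t) = \sqrt{\pi}\,t^{-1/2}e^{-\sqrt{at}}$) collapses to $\frac{\sinh(\cdot)}{\sqrt t(\cosh(\cdot)-\cos\chi')}$ at the special value $\chi' = \pi/3$ — noting $\cos(\pi/3) = 1/2$ and that $\kappa = 8/3$ gives $\chi = (1-\tfrac\kappa4)\pi = \pi/3$, so this is exactly the $n = n' = 1$ specialization already handled in Lemma~\ref{lem:Cardy-Lap}. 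Matching constants via the value at $x = 0$ (equivalently $t\to 0$) pins down the multiplicative constant to be $2^{-1/2}$, giving $m(\rd\tau) = 1_{\tau>0}\, 2^{-1/2}\eta(2i\tau)\,\rd\tau$.

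\textbf{Main obstacle.} The genuinely substantive steps (the KPZ relation, the uniqueness/domain-Markov identification of the Liouville field, and the Brownian-surface welding input) are all already established as Theorem~\ref{thm:KPZ}, Lemma~\ref{lem-BA-m}, and Proposition~\ref{prop:BA-welding}. So the only real work here is bookkeeping: carefully carrying through the inverse Laplace transform and matching it against the eta function. The mild subtlety I would watch for is justifying the inverse Laplace transform step — one should check that $m$ is determined by its Laplace transform on $\R_{>0}$ (it is, being a positive measure with suitable decay, which follows from the $O(e^{-\pi\gamma^2\tau/16})$ decay of the relevant eta/theta expression as in the proof of Theorem~\ref{thm:CLE-mod}) and that the series manipulations converge. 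None of this should present a real difficulty given the appendix lemmas; the theorem then immediately yields Theorem~\ref{thm:BA-mod} by combining with Theorem~\ref{thm:GMC-ratio} as described in the introduction.
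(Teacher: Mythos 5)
Your overall route is exactly the paper's --- Lemma~\ref{lem-BA-m} gives $\BA=\int \LF_\tau(\rd\phi)\,m(\rd\tau)$, Theorem~\ref{thm:KPZ} reduces everything to computing $\BA[L_1e^{-L_1}L_0^{ix}]$, and $m$ is then read off from an inverse Laplace transform against the eta function --- but your execution has constant-level errors, and the exact constant is the entire content of the theorem. By \eqref{eq:integral} the correct value is $\BA[L_1e^{-L_1}L_0^{ix}]=\frac{\pi\Gamma(1+ix)}{2\cosh(\pi x)}$; you dropped the $\pi$ coming from $\int_0^\infty \frac{t^{ix-1/2}}{1+t}\,\rd t=\frac{\pi}{\cosh(\pi x)}$. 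Feeding the correct value into \eqref{eq:KPZ} gives $\int_0^\infty e^{-\frac{2\pi x^2\tau}{3}}m(\rd\tau)=\frac{\sinh(\frac{2\pi x}{3})}{\gamma x\cosh(\pi x)}$ with $\gamma=\sqrt{8/3}=\frac{4}{\sqrt6}$, whereas your displayed identity has $\frac{2\pi}{\sqrt6}x=\frac{\pi\gamma}{2}x$ in the denominator --- off by a factor $\frac{2}{\pi}$ from the truth (and by a further factor $2$ from what your own mis-stated moment would produce). This is not harmless, because $m$ is uniquely determined by the identity; it is not a probability law that can be re-normalized, so ``matching constants at $x=0$'' cannot repair it: carried through, your numbers yield $m(\rd\tau)=\frac{\sqrt2}{\pi}\eta(2i\tau)\,\rd\tau$ rather than $2^{-1/2}\eta(2i\tau)\,\rd\tau$. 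The paper simply compares with \eqref{eq:eta-L}, which gives $\int_0^\infty e^{-\frac{2\pi x^2\tau}{3}}\eta(2i\tau)\,\rd\tau=\frac{\sqrt3}{2}\,\frac{\sinh(\frac{2\pi x}{3})}{x\cosh(\pi x)}$, and the constant $\frac{1/\gamma}{\sqrt3/2}=2^{-1/2}$ falls out.

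Your proposed shortcut through Lemma~\ref{lem:Cardy-Lap} also picks the wrong specialization. Pure gravity is the $O(0)$ point, not $O(1)$: at $\kappa=8/3$ the parameter correspondence gives $g=\frac32$ and $n=2\cos\bigl((1-\frac4\kappa)\pi\bigr)=0$, and the value of $\chi'$ for which $Z(\tau,\kappa,\chi')\equiv 1$ --- so that \eqref{eq:Cardy-Lap} reduces to the Laplace transform of $\eta(2i\tau)$ alone, via Euler's pentagonal identity applied to \eqref{eq:Z-eta} --- is $\chi'=\frac{\pi}{2}$, i.e.\ $\cos\chi'=0$. With your $\chi'=\frac{\pi}{3}$ the denominator in \eqref{eq:Cardy-Lap} is $\cosh\sqrt{4\pi t/\kappa}-\frac12$, which never matches the target $\cosh(\pi x)$, so the matching step of your plan would fail as stated. (Lemma~\ref{lem:Cardy-Lap} is moreover stated only for $\kappa\in(\frac83,4]$, so $\kappa=\frac83$ is a boundary case.) The clean fix is the paper's: quote \eqref{eq:eta-L} directly and skip the $O(n)$ machinery. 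With the moment corrected and the comparison done against \eqref{eq:eta-L}, the rest of your outline (Lemma~\ref{lem-BA-m}, Proposition~\ref{prop:BA-welding}, and the deduction of Theorem~\ref{thm:BA-mod}) coincides with the paper's argument.
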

	
	\begin{proof}
		The argument is essentially the same as that of Theorem~\ref{thm:QAconj}. {By~\eqref{eq:ba-partition}
			\begin{align*}
			\BA[L_1 e^{-L_1} L_0^{ix} ]=\iint_0^\infty ae^{-a} b^{ix} |\BA(a,b)|\,\rd a\,\rd b=\frac{1}2 \iint_0^\infty \frac {ae^{-a} b^{ix}}{\sqrt{ab} (a+b)}\,\rd a\,\rd b.
			\end{align*} 
			By~\eqref{eq:integral}  we have $\BA[L_1 e^{-L_1} L_0^{ix} ]=\frac{\pi\Gamma(1+ix)}{2\cosh(\pi x)}$.
			Now the KPZ relation (Theorem~\ref{thm:KPZ}) yields that}
		\[ \int_0^\infty e^{-\pi \gamma^2 x^2\tau /4} m(\rd \tau) = \frac{2\sinh(\frac{\gamma^2}4 \pi x)}{\pi \gamma x \Gamma(1+ix)} \BA[L_1e^{-L_1} L_0^{ix}] 
		= \frac{\sinh(\frac{\gamma^2}4 \pi x)}{\gamma x \cosh(\pi x)} \quad \text{for }x \in \R.
		\]
		By \eqref{eq:eta-L} in  Appendix~\ref{app:theta}, we have
		\[
		\int_0^{\infty} 
		e^{ -\frac{\pi\gamma^2 x^2\tau}{4}} \eta(2i \tau)   \,\rd \tau= \frac{\sqrt3}2 \frac{\sinh(\frac{2\pi x}3)}{\cosh(\pi x)}
		\quad \textrm{for }x\in \R.
		\]
		Since $\gamma = \sqrt{8/3}$, {comparing the last two equations} we get $m(\rd\tau)=1_{\tau>0} 2^{-1/2}\eta(2i\tau) \rd\tau$.
	\end{proof}
	
	\begin{proof}[Proof of Theorems~\ref{thm:BA-mod} and~\ref{thm:BAconj}]
		By Theorem~\ref{thm-BA-main} we get Theorem~\ref{thm:BAconj}. 
		{For  Theorem~\ref{thm:BA-mod}, note that $X_\tau$ agrees  in law with $\frac{\cL_h^\gamma(\partial_1\cC_\tau)}{\cL_h^\gamma(\partial_0\cC_\tau)}$ in Theorem~\ref{thm:GMC-ratio} with $\gamma=\sqrt{\frac83}$. By Lemma~\ref{lem:GMC}, when $\gamma=\sqrt{\frac83}$ we have
			\[\LF_\tau[f(\mathcal L^\gamma_\phi (\partial_0\cC_\tau)) g(\mathcal L^\gamma_\phi (\partial_1\cC_\tau))]=  \frac{2}{\gamma}\iint_0^\infty \ell^{-1}f(\ell)  g (\ell r) \rho_\tau(r)  \,\rd r\,\rd\ell= \frac{2}{\gamma}\iint_0^\infty a^{-2}f(a)  g (b) \rho_\tau(\frac{b}a)  \,\rd b \, \rd a.\] 
			Therefore the joint density of the two boundary lengths $(L_0,L_1)$ under $\LF_\tau$ is $ \frac{2}{\gamma}a^{-2} \rho_\tau(\frac{b}a) \,\rd a \,\rd b$. Now by Theorem~\ref{thm:BAconj}, the law of $\mathrm{Mod}(
			\cA)$ under $\BA(a,b)^{\#}$ is proportional to $1_{\tau>0}\rho_\tau(\frac{b}a) \eta(2i\tau) \,\rd \tau$.} 
	\end{proof}

	{
		\begin{remark}[$\BA$ as the limit of $\QA$]
			Note that $\lim_{\gamma\to \sqrt{8/3}} \frac{\pi}{2\cos(\pi(\frac{4}{\gamma^2}-1))}|\QA^\gamma(a,b)|=|\BA(a,b)|$ for $\QA^\gamma$ from Section~\ref{subsec:QA-LF}.
			In fact Theorems~\ref{thm:BAconj} and~\ref{thm:QAconj} show that $\frac{\pi}{2\cos(\pi(\frac{4}{\gamma^2}-1))}\QA^\gamma(a,b)$ converges to $\BA(a,b)$ at the quantum surface level, since both the law of the modulus and the field under the $\cC_\tau$ embedding converge.
		\end{remark}
	}

	{
		\section{The annulus partition function of the SLE$_{8/3}$ loop}\label{sec:SAP}
		In this section we prove  Theorem~\ref{thm-werner-partition}.
		In Section~\ref{subsec:loop-zipper} we recall the coupling between the SLE$_{8/3}$ loop and the conformally embedded Brownian sphere. In Section~\ref{subsec:SAP-proof}  we use this coupling and  the KPZ relation from Theorem~\ref{thm:KPZ} to complete the proof.

		\subsection{The SLE$_{8/3}$ loop zipper on the Brownian sphere}\label{subsec:loop-zipper}
		We first recall the $\sqrt{8/3}$-LQG construction of Miller and Sheffield \cite{lqg-tbm1,lqg-tbm2,lqg-tbm3} for the conformally embedded Brownian sphere, and  Zhan's construction of the $\SLE_{8/3}$ loop measure~\cite{zhan-loop-measures}. These two constructions (i.e.\ Theorem~\ref{thm:bs-lqg} and Definition~\ref{def:Zhan}) are included mainly for concreteness. The only input we need is the {conformal welding} result {from~\cite{AHS-loop}} for their coupling, which we recall as Theorem~\ref{thm:loop-zipper}.
		\begin{theorem}[\cite{lqg-tbm1,lqg-tbm2,lqg-tbm3}]\label{thm:bs-lqg}
			Fix $\gamma=\sqrt{8/3}$ and $Q=\frac{2}{\gamma}+\frac{\gamma}{2}$. Let $(B_s)_{s \geq 0}$ be a standard Brownian motion  conditioned on $B_{s} - (Q-\gamma)s<0$ for all $s>0$, and let $(\wt B_s)_{s \geq 0}$ be an independent copy of $(B_s)_{s \geq 0}$. Let 
			\[Y_t =
			\left\{
			\begin{array}{ll}
			B_{t} - (Q -\gamma)t  & \mbox{if } t \geq 0 \\
			\wt B_{-t} +(Q-\gamma) t & \mbox{if } t < 0
			\end{array}
			\right. .\] 
			Let $\cC=\R \times [0,2\pi]/\sim$ be a horizontal cylinder. Let $h^1(z) = Y_{\mathrm{Re} z}$ for each $z \in \cC$.
			Let $h^2_\cC$ be independent of $h^1$  and have the law of the lateral component of the Gaussian free field on $\cC$. Let $\hat h=h^1+h^2_\cC$.
			Let  $\mathbf c\in \R$ be sampled from $ \frac\gamma2 e^{2(\gamma-Q)c}dc$ independently of $\hat h$ and set $h=\hat h+\mathbf c$. Let $(\cS,x, y)$ be the (marked) metric-measure space given by  $(\cC,\infty,-\infty, c_1 d_h,c_2\mu_h)$, with the constants $c_1$ and $c_2$ from Theorem~\ref{thm:bm-lqg}. 
			Then there exists a constant $C\in (0,\infty)$ such that the law of $(\cS,x, y)$ is $C\cdot \BS_2$. Moreover,  $(\cS,x, y)$ and the quantum surface $(\cC, h,+\infty,-\infty)/{\sim_\gamma}$ are measurable with respect to each other. 
		\end{theorem}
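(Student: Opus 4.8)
The plan is to obtain Theorem~\ref{thm:bs-lqg} by assembling, from the Miller--Sheffield trilogy \cite{lqg-tbm1,lqg-tbm2,lqg-tbm3}, three facts: an explicit cylindrical embedding of the free $\gamma$-quantum sphere, the identification of the $\sqrt{8/3}$-LQG metric measure space with the Brownian sphere, and the measurability of the conformal embedding. In this paper the statement is used purely as a black box, so a ``proof'' really amounts to explaining how these inputs fit together and tracking the normalizing constants.

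First I would identify the field $h = \hat h + \mathbf c = h^1 + h^2_\cC + \mathbf c$ as a particular representative of the free $\gamma$-quantum sphere with two marked points, in the normalization of \cite{wedges}. By construction the radial (circle-average) part $Y_{\mathrm{Re}\,z}$ is a two-sided process which, read off from each of the two marked points $\pm\infty$, is a Brownian motion with drift $-(Q-\gamma)$ conditioned to stay below its drift line for all positive time; this is exactly the law of the projection onto constants of the doubly-marked $\gamma$-quantum sphere field, embedded on the cylinder $\cC$ with the marked points sent to $\pm\infty$ and the horizontal coordinate pinned by $Y_0 = 0$, with $h^2_\cC$ the corresponding independent lateral GFF. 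The only remaining freedom is the additive constant, and the claim is that the weight $\tfrac\gamma2 e^{2(\gamma-Q)c}\,dc$ is the one turning this into the ``free'' measure: under $h \mapsto h+c$ the quantum area scales by $e^{\gamma c}$, and since $2(\gamma-Q) = \gamma - 4/\gamma$ equals $-\gamma/2$ precisely when $\gamma^2 = 8/3$, the change of variables $A = e^{\gamma c} A_0$ converts $\tfrac\gamma2 e^{2(\gamma-Q)c}\,dc$ into a multiple of $A^{-3/2}\,dA$, matching $\BS_2 = \int_0^\infty A^{-3/2}\BS_2(A)^{\#}\,dA$ up to an overall finite constant $C$.

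Next I would invoke the main theorem of \cite{lqg-tbm1,lqg-tbm2}: on a unit-area $\sqrt{8/3}$-quantum sphere the metric $d_h$ (the quantum Loewner evolution metric of \cite{lqg-tbm1,lqg-tbm2}, equivalently the Liouville metric of \cite{DDDF-tightness,gm-uniqueness}), after rescaling the metric by $c_1$ and the area by $c_2 = \sqrt3$, is a unit-area Brownian sphere, with $+\infty$ and $-\infty$ playing the role of the two marked points. Transporting this along the scaling by $\mathbf c$ (metric by $e^{\gamma \mathbf c/4}$, area by $e^{\gamma \mathbf c}$, which is exactly the Brownian-sphere scaling since the Hausdorff dimension is $4$) carries the $A^{-3/2}\,dA$-weighted family of unit-area quantum spheres to the correspondingly weighted family of Brownian spheres, i.e.\ to $C\cdot \BS_2$, with $x = +\infty$ and $y = -\infty$. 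The measurability assertion is then the content of \cite{lqg-tbm3} (reproved constructively via Poisson--Voronoi tessellations in \cite{gms-poisson-voronoi}): the conformal embedding of a Brownian surface, i.e.\ the quantum surface $(\cC, h, +\infty, -\infty)/{\sim_\gamma}$, is a measurable function of the metric measure space, while the reverse direction is immediate from the construction of $d_h$ and $\mu_h$ from $h$.

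The genuine difficulty of the statement is entirely contained in the imported results --- above all the equivalence ``$\sqrt{8/3}$-LQG $=$ Brownian map'' and the measurability of the embedding, each a substantial theorem. Within the present write-up the only work is the bookkeeping: one must check that the specific weight $\tfrac\gamma2 e^{2(\gamma-Q)c}\,dc$ together with $c_2 = \sqrt3$ reproduces the $A^{-3/2}\,dA$ weighting of $\BS_2$ and introduces no spurious residual $\mathbf c$-dependent factor, which is the step I would expect to require the most care.
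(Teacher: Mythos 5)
Your proposal is correct and matches how the paper treats this statement: the theorem is imported as a black box from \cite{lqg-tbm1,lqg-tbm2,lqg-tbm3} (with the field identified as the two-pointed quantum sphere of \cite{wedges} and the measurability from \cite{lqg-tbm3,gms-poisson-voronoi}), and the paper offers no further argument, so the only content beyond citation is exactly the bookkeeping you carry out. That bookkeeping is right: $2(\gamma-Q)=-\gamma/2$ precisely at $\gamma^2=8/3$, so the change of variables $A=e^{\gamma \mathbf c}\mu_{\hat h}(\cC)$ turns $\frac\gamma2 e^{2(\gamma-Q)c}\,dc$ into a constant multiple of $A^{-3/2}\,dA$, matching the weighting in $\BS_2$, and the scaling of metric by $e^{\gamma\mathbf c/4}$ and area by $e^{\gamma\mathbf c}$ is consistent with Brownian-sphere scaling.
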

		
		The quantum surface $(\cC, h,+\infty,-\infty)/{\sim_\gamma}$ is the \emph{two-pointed quantum sphere} defined in~\cite{wedges}. For more details of its definition such as how to take the zero-probability conditioning when defining $Y$, and the definition of the lateral component of the GFF on $\cC$, we refer to~\cite[Section 4]{wedges}. See also the preliminary sections of~\cite{ahs-disk-welding,AHS-SLE-integrability}.
		For Zhan's construction of the $\SLE_{8/3}$ loop measure, we first recall that for $\kappa\in(0,8)$ and two distinct points $p,q\in\C$, the \emph{two-sided whole plane SLE$_\kappa$} is the probability measure on pairs of curves $(\eta_1,\eta_2)$ on $\C$ connecting $p$ and $q$ where $\eta_1$ is a so-called whole-plane $\SLE_\kappa(2)$ from $p$ to $q$, and conditioning on $\eta_1$, the curve $\eta_2$ is a chordal $\SLE_\kappa$ on  the complement of $\eta_1$. 
		We can view $\SLE^{p \rightleftharpoons q}_\kappa$ as a measure on oriented loops by reversing the direction of $\eta_2$. Given a loop $\eta$ sampled from $\SLE^{p \rightleftharpoons q}_\kappa$, 
		with probability 1  its $d_\kappa$-dimensional Minkowski content  $\mathrm{Cont}(\eta)$ exists~\cite{lawler-rezai-nat}, where $d_\kappa := 1+\frac{\kappa}{8}$ .
		\begin{definition}\label{def:Zhan}
			Fix $\kappa=\frac{8}3$. Let $\SLE^{p \rightleftharpoons q}_\kappa$ be two-sided whole plane SLE$_\kappa$ between $p,q$. Zhan's $\SLE_{8/3}$ loop measure is the infinite measure on unmarked oriented loops defined by 
			\begin{equation}\label{eq:zhan-loop}
			\mu_\C(\rd\eta)= \mathrm{Cont}(\eta)^{-2} \iint_{\C\times \C}  |p-q|^{-2(2-d_\kappa)} \SLE^{p \rightleftharpoons q}_\kappa(\rd\eta) \,\rd p\, \rd q.
			\end{equation}
			For $\tau>0$,  let $Z_{8/3}(\tau) = \mu_{\C}[E_\tau]$ where $E_\tau$ is the set of non-contractible loops in $\bbA_\tau=\{ e^{-2\pi \tau}<|z|<   1\}$. 
		\end{definition}
		Zhan's construction works for all $\kappa\in(0,8)$ and it was shown in~\cite{zhan-loop-measures} that modulo a multiplicative constant, the $\kappa=\frac{8}{3}$ case as in Definition~\ref{def:Zhan} gives the loop measure defined by Werner~\cite{werner-loops}; see~\cite[Remark 5.3]{zhan-loop-measures}. In particular, it has the following conformal restriction property.
		\begin{theorem}[\cite{werner-loops,zhan-loop-measures}]\label{thm:conf-res}
			For each domain $\Omega\subset \C$, let $\mu_\Omega$ be the restriction of $\mu_\C$ to loops in $\Omega$.   Then for any conformal map 
			$f: \Omega \to \Omega'$ between two domains,  
			we have $f\circ \mu_\Omega = \mu_{\Omega'}$. Namely, for $\eta$ sampled from $\mu_\Omega$, the law of $f\circ \eta$ is  $\mu_{\Omega'}$.
		\end{theorem}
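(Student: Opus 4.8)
The plan is to prove the restriction property directly from Zhan's representation~\eqref{eq:zhan-loop}, the decisive special ingredient being the full (one-sided) conformal restriction property of chordal $\SLE_{8/3}$, with restriction exponent $5/8$, together with the vanishing of the $\SLE$ central charge at $\kappa=8/3$. Write $\kappa=8/3$, so $d_\kappa=1+\kappa/8=4/3$ and the exponent in~\eqref{eq:zhan-loop} is $2(2-d_\kappa)=4/3$. Since $\mu_\Omega$ is by definition $\mathbf 1[\eta\subset\Omega]\,\mu_\C$, the assertion is that the push-forward of $\mathbf 1[\eta\subset\Omega]\,\mu_\C$ by a conformal $f\colon\Omega\to\Omega'$ is $\mathbf 1[\eta\subset\Omega']\,\mu_\C$. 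I would first reduce, using the Markov property of $\SLE$, the symmetry $f\leftrightarrow f^{-1}$, and a standard approximation of an arbitrary domain by finitely connected domains with smooth boundary, to two generating classes of conformal maps: Möbius transformations of $\hat\C$, and maps of the form $D\to D\setminus A$ that excise a hull $A$ from a domain $D$.

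For a Möbius $\varphi$, two-sided whole-plane $\SLE_{8/3}$ is Möbius covariant, $\varphi_*\SLE^{p\rightleftharpoons q}_{8/3}=\SLE^{\varphi(p)\rightleftharpoons\varphi(q)}_{8/3}$, which follows from the rotation- and scale-invariance of whole-plane $\SLE_\kappa(2)$ and the conformal invariance of chordal $\SLE_\kappa$ in its complement. Combining this with the identities $|\varphi(p)-\varphi(q)|^2=|p-q|^2\,|\varphi'(p)|\,|\varphi'(q)|$ and $\rd(\varphi(p))=|\varphi'(p)|^2\rd p$, and with the conformal covariance of the $d_\kappa$-dimensional Minkowski content, $\mathrm{Cont}(\varphi\circ\eta)=\int_\eta|\varphi'|^{d_\kappa}\,\rd\mathrm{Cont}_\eta$ (where $\mathrm{Cont}_\eta$ is the Minkowski content measure on $\eta$), one sees that after integrating the roots $(p,q)$ out of~\eqref{eq:zhan-loop} all powers of $|\varphi'|$ cancel; the exponents $-2$ on $\mathrm{Cont}(\eta)$ and $-2(2-d_\kappa)$ on $|p-q|$ are precisely what makes this happen. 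The bookkeeping is organised through Zhan's rooting lemma, which disintegrates $\SLE^{p\rightleftharpoons q}_\kappa(\rd\eta)\,\rd p\,\rd q$ over the unrooted loop $\eta$ and identifies the conditional law of $(p,q)$ with a density against $\mathrm{Cont}_\eta\otimes\mathrm{Cont}_\eta$ (an $\SLE$ two-point Green's function), the $|p-q|^{-2(2-d_\kappa)}$ being exactly the factor needed to make the double root integral converge near the diagonal and to have the correct conformal weight. This yields a Möbius-invariant extension $\mu_{\hat\C}$ of $\mu_\C$ to loops on the sphere, restricting to $\mu_\C$ on $\{\eta\subset\C\}$.

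For an excision map $f\colon D\to D\setminus A$ the structure is the same, with the restriction property of chordal $\SLE_{8/3}$ in place of Möbius covariance. With $p,q\in D\setminus A$, the two-sided whole-plane $\SLE_{8/3}$ from $p$ to $q$ restricted to loops in $D\setminus A$ and pulled back by $f$ is absolutely continuous with respect to the analogous object in $D$; applying the one-sided restriction property to each of the two strands making up the loop, the Radon--Nikodym derivative is an explicit product of restriction-exponent factors evaluated at $p$ and at $q$, with no factor attached to the loop itself because the central charge vanishes at $\kappa=8/3$ (for other $\kappa$ there is a Brownian-loop-measure correction, and one would obtain only conformal covariance rather than invariance). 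One then verifies that this derivative, multiplied by the $f\times f$-Jacobian of $|p-q|^{-4/3}\rd p\,\rd q$ and the $f$-Jacobian of $\mathrm{Cont}(\eta)^{-2}$ supplied by content covariance, integrates to $1$ over the roots; this last identity is the computation (specific to $\kappa=8/3$) that pins down the exponents in~\eqref{eq:zhan-loop}. Combining the two generating cases gives $f_*\mu_\Omega=\mu_{\Omega'}$ for all conformal $f$.

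I expect the main obstacle to be exactly the rooting lemma and its compatibility with conformal maps: one needs simultaneous quantitative control of the $\SLE$ two-point density against $\mathrm{Cont}_\eta\otimes\mathrm{Cont}_\eta$ and of $\mathrm{Cont}(f\circ\eta)$, so that the Jacobians can be matched pointwise in $(p,q)$ before integrating, and the non-simply-connected case requires in addition the finitely connected approximation together with the fact that $\mu_\C$-a.e.\ loop is compact and stays at positive distance from $\partial\Omega$. A shorter alternative would be to invoke the identification (cited in the excerpt, from~\cite{zhan-loop-measures}, going back to~\cite{werner-loops}) of $\mu_\C$ with a constant multiple of Werner's loop measure, which is built from the conformally restriction-invariant Brownian loop measure by taking outer boundaries, so that Theorem~\ref{thm:conf-res} follows from the classical conformal restriction of the Brownian loop measure and the conformal naturality of the outer-boundary operation; but establishing that identification requires work of comparable difficulty, so I would present the direct argument above as the primary route.
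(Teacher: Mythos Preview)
The paper does not prove this theorem at all: it is stated with attribution to \cite{werner-loops,zhan-loop-measures} and used as a black box, with no argument given. Your proposal is therefore not comparable to anything in the paper; you are attempting to reprove a cited result rather than one the authors establish themselves.

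That said, a brief comment on your approach. The route you sketch --- pushing the conformal restriction property of chordal $\SLE_{8/3}$ through Zhan's two-rooted representation~\eqref{eq:zhan-loop} and cancelling Jacobians against the exponents --- is morally the content of Zhan's paper, but the technical difficulty you flag (the rooting lemma and the pointwise matching of Jacobians under conformal maps, especially for non-simply-connected targets) is real and substantial. The ``shorter alternative'' you mention at the end is in fact how the result is established in the literature: Werner constructs the measure from Brownian loop outer boundaries, where conformal restriction is inherited from the Brownian loop measure, and Zhan shows his construction agrees with Werner's up to a constant. For the purposes of this paper that identification is exactly what is being cited, so your primary route is more work than required.
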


		We now recall a  {conformal welding result} for $\mu_\C$ {proved in~\cite{AHS-loop}}. This particular variant was proved as \cite[Proposition 6.5]{acsw-loop}
		based on~\cite{AHS-loop}. 
		\begin{theorem}\label{thm:loop-zipper}
			Fix $\gamma=\sqrt{8/3}$. Let $\mathbb F_\C$ be the law of the field  $\phi = h \circ \log - Q \log |\cdot|$ on $\C$ where $h$ as in Theorem~\ref{thm:bs-lqg} and  $\log$ is the conformal map from $\C$ to $\cC$.
			Let $\hat\mu_\C$ be the restriction of $\mu_\C$ to the set of loops separating $0$ and $\infty$. Now 
			sample $(\phi,\eta)$ from $\mathbb F_\C\times \hat\mu_\C$. Let $D_0$ and $D_\infty$ be the two connected components of $\C\setminus \eta$  such that $0\in D_0$.  
			Then the joint law of $(D_0, \phi, 0)/{\sim_\gamma}$ and $(D_\infty, \phi,\infty)/{\sim_\gamma}$ is
			\[C \int_0^\infty \ell \BD_{1,0}(\ell) \times  \BD_{1,0}(\ell)\, \rd \ell \quad \text{ for some constant } C \in (0,\infty).\]
			Here $\BD_{1,0}(\ell)$ is identified as a law of quantum surface in $\sqrt{8/3}$-LQG using Theorem~\ref{thm:bm-lqg}.
		\end{theorem}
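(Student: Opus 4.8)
The plan is to deduce Theorem~\ref{thm:loop-zipper} from the conformal welding theorem of~\cite{AHS-loop} for the $\SLE_{8/3}$ loop, together with the dictionary between Brownian surfaces and mating-of-trees (equivalently Liouville) quantum surfaces supplied by Theorems~\ref{thm:bs-lqg} and~\ref{thm:bm-lqg}. The statement as worded is the variant recorded as~\cite[Proposition 2.20]{AS-CLE}, and the argument is a reduction: re-express both sides of the desired welding identity in the language of~\cite{AHS-loop}, invoke that result, and then translate back.

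First I would record the two translations. By Theorem~\ref{thm:bs-lqg}, for $\phi$ sampled from $\mathbb F_\C$ the marked metric-measure space $(\C,0,\infty,c_1 d_\phi,c_2\mu_\phi)$ has law $C\cdot\BS_2$, and the underlying quantum surface $(\C,\phi,0,\infty)/{\sim_\gamma}$ is the two-pointed quantum sphere of~\cite{wedges}, embedded on $\C$ with its marked points at $0$ and $\infty$. On the disk side, Theorem~\ref{thm:bm-lqg} and Remark~\ref{rmk:BD-nonprob} identify $\BD_{1,0}(\ell)$ with $C'\cdot\Md(\gamma;\ell)$ as measures on quantum surfaces with one interior marked point and boundary length $\ell$ (see Definition~\ref{def:QD} for $\Md$); the constant $C'$ is independent of $\ell$ precisely because $|\BD_{1,0}(\ell)|$ and $|\Md(\gamma;\ell)|$ are both proportional to $\ell^{-3/2}$ by Remark~\ref{rmk:BD-nonprob} and~\eqref{eq:LFl-explicit}.

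Next comes the key input. The version of the main theorem of~\cite{AHS-loop} with one interior marked point per disk states, for $\gamma=\sqrt{8/3}$, that conformally welding two independent quantum disks along their entire boundaries, where the pair is sampled from $\int_0^\infty \ell\,\Md(\gamma;\ell)\times\Md(\gamma;\ell)\,\rd\ell$, produces the two-pointed quantum sphere decorated by an \emph{independent} loop whose law is a constant multiple of the restriction of Zhan's $\SLE_{8/3}$ loop measure (Definition~\ref{def:Zhan}) to loops separating the two marked points; equivalently, after transporting through Theorem~\ref{thm:bs-lqg}, the welding map pushes $\int_0^\infty \ell\,\Md(\gamma;\ell)\times\Md(\gamma;\ell)\,\rd\ell$ forward to $C''\cdot\mathbb F_\C\times\hat\mu_\C$. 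The operation in Theorem~\ref{thm:loop-zipper} — sampling $(\phi,\eta)$ from $\mathbb F_\C\times\hat\mu_\C$, cutting $\C$ along $\eta$, and recording the pieces $(D_0,\phi,0)/{\sim_\gamma}$ and $(D_\infty,\phi,\infty)/{\sim_\gamma}$ — is the inverse of this welding map, so the joint law of these two quantum surfaces is $(C'')^{-1}\int_0^\infty \ell\,\Md(\gamma;\ell)\times\Md(\gamma;\ell)\,\rd\ell$.

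Finally I would substitute $\Md(\gamma;\ell)=(C')^{-1}\BD_{1,0}(\ell)$, which leaves the $\ell$-weighting untouched, and collect $C'$, $C''$ and the normalization of $\mu_\C$ into a single constant $C\in(0,\infty)$, obtaining $C\int_0^\infty \ell\,\BD_{1,0}(\ell)\times\BD_{1,0}(\ell)\,\rd\ell$ as claimed. The genuine content is entirely in the welding theorem of~\cite{AHS-loop}, which rests on the integrability of quantum disks and the characterization of the $\SLE_{8/3}$ loop by conformal restriction (Theorem~\ref{thm:conf-res}); taking that as given, the only point requiring care in the reduction is that the correct object on the disk side is the quantum disk with \emph{one interior marked point} and prescribed boundary length, and that the $\ell$-weight — morally the number of ways to glue along an interface of quantum length $\ell$ — is the same on both sides and survives the conversion between the Brownian and mating-of-trees normalizations.
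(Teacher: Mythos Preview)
Your proposal is correct and aligned with the paper's treatment: the paper does not prove Theorem~\ref{thm:loop-zipper} at all but simply cites it as the welding result of~\cite{AHS-loop}, recorded in the form needed here as~\cite[Proposition~2.20]{AS-CLE}. Your sketch of the reduction---identifying the two-pointed quantum sphere with $\mathbb F_\C$ via Theorem~\ref{thm:bs-lqg}, identifying $\BD_{1,0}(\ell)$ with $\Md(\gamma;\ell)$ up to a constant via Theorem~\ref{thm:bm-lqg} and Remark~\ref{rmk:BD-nonprob}, and then invoking the welding theorem of~\cite{AHS-loop}---is exactly the content behind that citation.
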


		\subsection{The annulus partition function from the KPZ relation}\label{subsec:SAP-proof}
		The {idea behind  our proof of Theorem~\ref{thm-werner-partition} is that if we glue together two  samples of $\BA$, this gives a sample of $\BA$ decorated by a non-contractible $\SLE_{8/3}$ loop, where the law of the modulus of the annulus is reweighted by the partition function $Z_{8/3}$. To make it rigorous, we realize the Brownian annulus as a subset of the Brownian sphere and apply~Theorem~\ref{thm:loop-zipper}.  
			\begin{lemma}\label{lem:BA-sphere}
				Let $\phi$ be a sample from $\mathbb F_\C$ in Theorem~\ref{thm:loop-zipper}. Let $E$ be the event that $d(0,\infty)>2$, where $d=c_1d_\phi$ is the Brownian metric. On the event $E$, let $B^\bullet(0,1)$ and $B^\bullet(\infty,1)$ be the filled metric ball of radius 1 around $0$ and $\infty$, respectively. Let $\cL_1$ and $\cL_\infty$ be the boundary lengths of $B^\bullet(0,1)$ and $B^\bullet(\infty,1)$, respectively. 
				Let $A=\C\setminus (B^\bullet(0,1))\cup B^\bullet(\infty,1))$. 
				Let  $f(b)=b|\mathrm{Ball}_1(b)|$ as in Proposition~\ref{prop:BA-welding}.  
				Let $M$ be $\frac{1}{f(\cL_0)f(\cL_\infty)} \mathbb F$ restricted to $E$. Then under $M$ the law of $(A,\phi)/{\sim_\gamma}$ is $C\cdot \BA$ for some constant $C>0$.
			\end{lemma}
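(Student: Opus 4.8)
The plan is to identify a sample of $\mathbb F_\C$ with a conformally embedded free Brownian sphere, realize the Brownian annulus $A$ as the complement of two filled metric balls inside it, and then match the reweighting $1/(f(\cL_0)f(\cL_\infty))$ against the explicit joint density of the two ball perimeters. First I would note that $\log\colon\C\to\cC$ is conformal with $Q\log|(\log)'(z)|=-Q\log|z|$, so the field $\phi=h\circ\log-Q\log|\cdot|$ of Theorem~\ref{thm:loop-zipper} satisfies $(\C,\phi,0,\infty)/{\sim_\gamma}=(\cC,h,-\infty,+\infty)/{\sim_\gamma}$; combined with Theorem~\ref{thm:bs-lqg} and the conformal covariance of the $\sqrt{8/3}$-LQG metric and area measure, this shows the marked metric-measure space $(\cS,x,y):=(\C,0,\infty,c_1 d_\phi,c_2\mu_\phi)$ sampled from $\mathbb F_\C$ has law $C\cdot\BS_2$ (here and below $C>0$ may change from line to line). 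With $d=c_1 d_\phi$, the event $E=\{d(0,\infty)>2\}$ is $\phi$-measurable, the filled balls $B^\bullet(0,1),B^\bullet(\infty,1)$ coincide with $\cB^\bullet(x,1),\cB^\bullet(y,1)$ from Proposition~\ref{prop-sphere-ball} and Lemma~\ref{lem:sphere-ann}, and $\cL_0,\cL_\infty$ equal $\cL_x,\cL_y$.

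Next I would combine Lemma~\ref{lem:sphere-ann}, giving the conditional law of $A=\cS\setminus(\cB^\bullet(x,1)\cup\cB^\bullet(y,1))$ on $E$ given $(\cL_x,\cL_y)=(a,b)$ as $\BA(a,b)^\#$, with Lemma~\ref{lem:sphere-ball}, giving the joint density of $(\cL_x,\cL_y)$ on $E$ as $C\frac{\sqrt{ab}|\mathrm{Ball}_1(a)||\mathrm{Ball}_1(b)|}{a+b}\rd a\,\rd b$. By disintegration, under $C\cdot\BS_2$ restricted to $E$ the law of the metric-measure space $A$ (which records $(\cL_0,\cL_\infty)$) is
\[
C\iint_0^\infty \frac{\sqrt{ab}\,|\mathrm{Ball}_1(a)|\,|\mathrm{Ball}_1(b)|}{a+b}\,\BA(a,b)^\#\,\rd a\,\rd b.
\]
Since $\cL_0,\cL_\infty$, hence $f(\cL_0)f(\cL_\infty)$, are measurable functions of the metric-measure space $A$, reweighting by $1/(f(a)f(b))=1/(ab\,|\mathrm{Ball}_1(a)|\,|\mathrm{Ball}_1(b)|)$ turns this into $C\iint_0^\infty \frac{1}{\sqrt{ab}\,(a+b)}\BA(a,b)^\#\,\rd a\,\rd b=C\cdot\BA$, using $\BA(a,b)=\BA(a,b)^\#/(2\sqrt{ab}(a+b))$ and $\BA=\iint\BA(a,b)\rd a\,\rd b$ from~\eqref{eq:ba-partition}. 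To upgrade from metric-measure spaces to quantum surfaces, I would invoke the measurability results of~\cite{lqg-tbm3,gms-poisson-voronoi} as used in Lemma~\ref{lem:Mod-det}: $(A,\phi)/{\sim_\gamma}$ and the metric-measure space $A$ determine one another, so the identity of laws transfers, giving that under $M$ the law of $(A,\phi)/{\sim_\gamma}$ is $C\cdot\BA$.

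The main obstacle is precisely this last transfer: the one genuinely delicate point is the \emph{local} measurability allowing $\phi|_A$ to be reconstructed, modulo coordinate change, from $(A,d|_A,\mu|_A)$ — since the cited results are stated for the whole Brownian sphere or disk rather than for a sub-annulus — which I would justify exactly as in the proof of Lemma~\ref{lem:Mod-det}. A minor secondary check is the $\sigma$-finiteness of $M$: the exponential growth of $1/f(b)=e^{9b/2}/b$ is exactly compensated by the exponential decay $|\mathrm{Ball}_1(b)|=e^{-9b/2}$ appearing in Lemma~\ref{lem:sphere-ball}, leaving the density $C/(\sqrt{ab}(a+b))$, which is consistent with $\BA$ being an infinite but $\sigma$-finite measure (finite on configurations with both boundary lengths in a fixed compact subset of $(0,\infty)$).
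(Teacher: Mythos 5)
Your argument is correct and is essentially the paper's: the paper proves this lemma in one line by citing Lemmas~\ref{lem:sphere-ann} and~\ref{lem:sphere-ball} and "the same argument as in Proposition~\ref{prop:BA-welding}", which is precisely the identification of $\mathbb F_\C$ with $C\cdot\BS_2$ via Theorem~\ref{thm:bs-lqg}, the disintegration over the ball perimeters, and the cancellation of $f(\cL_0)f(\cL_\infty)$ against the density of Lemma~\ref{lem:sphere-ball} that you spell out. Your closing remarks on the metric-measure/quantum-surface identification and on $\sigma$-finiteness match how the paper handles these points (via Lemma~\ref{lem:Mod-det} and the explicit form of $|\BA(a,b)|$), so no gap remains.
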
 
			\begin{proof}
				The result follows from Lemmas~\ref{lem:sphere-ann} and~\ref{lem:sphere-ball} via the same argument as in Proposition~\ref{prop:BA-welding}.
			\end{proof}

			Recall from  Lemma~\ref{lem-BA-m} and  Theorem~\ref{thm-BA-main} that  if
			we sample $(\phi,\tau)$ from $ \LF_\tau (\rd\phi) m(\rd \tau)$ with 
			\begin{equation}\label{eq:BA-mod-recall}
			m(\rd \tau )=1_{\tau>0} 2^{-1/2}\eta(2i\tau) \,\rd\tau,
			\end{equation} then the law of the quantum surface $(\cC_\tau,\phi)/{\sim_\gamma}$ is $\BA$. 
			\begin{lemma}\label{lem:reweight}
				Let $(\phi,\eta)$ be a sample from $\mathbb F_\C\times \hat \mu_\C$ in Theorem~\ref{thm:loop-zipper}. Let $E=\{d(0,\infty)>2\}$,  $f(\ell)=\ell|\mathrm{Ball}_1(\ell)|$, and $A=\C\setminus (B^\bullet(0,1))\cup B^\bullet(\infty,1))$ as in Lemma~\ref{lem:BA-sphere}. Let $F$ be the event that $\eta \subset A$ and $\eta$ is non-contractible within $A$. Let $\hat M$ be the measure $\frac{1}{f(\cL_0)f(\cL_\infty)} \mathbb F\times \hat\mu_\C$ restricted to $E\cap F$. Then under $\hat M$ the law of  $(A,\phi)/{\sim_\gamma}$ is that of $(\cC_\tau,\phi)/{\sim_\gamma}$ with $(\phi,\tau)$ sampled from $C\LF_\tau(\rd \phi) Z_{8/3}(\tau) m(\rd \tau)$  for some constant $C>0$.
			\end{lemma}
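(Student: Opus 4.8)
The plan is to combine Lemma~\ref{lem:BA-sphere} with the conformal welding identity of Theorem~\ref{thm:loop-zipper}, exploiting the fact that on the event $E\cap F$ the loop $\eta$ separates the two filled metric balls. First I would observe that on $E\cap F$, the annulus $A=\C\setminus(B^\bullet(0,1)\cup B^\bullet(\infty,1))$ is cut by the non-contractible loop $\eta$ into two sub-annuli $A_0$ and $A_\infty$, where $A_0$ is the component of $A\setminus\eta$ containing $\partial B^\bullet(0,1)$ and $A_\infty$ the one containing $\partial B^\bullet(\infty,1)$. Writing $D_0,D_\infty$ for the two components of $\C\setminus\eta$ as in Theorem~\ref{thm:loop-zipper}, we have $A_0=D_0\setminus B^\bullet(0,1)$ and $A_\infty=D_\infty\setminus B^\bullet(\infty,1)$, so that the quantum surface $(A,\phi)/{\sim_\gamma}$ with its marked loop $\eta$ is obtained by gluing $(A_0,\phi)/{\sim_\gamma}$ and $(A_\infty,\phi)/{\sim_\gamma}$ along $\eta$ according to quantum length.

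Next I would apply Theorem~\ref{thm:loop-zipper}: under $\mathbb F_\C\times\hat\mu_\C$, the joint law of $(D_0,\phi,0)/{\sim_\gamma}$ and $(D_\infty,\phi,\infty)/{\sim_\gamma}$ is $C\int_0^\infty \ell\,\BD_{1,0}(\ell)\times\BD_{1,0}(\ell)\,d\ell$. On this pair, the further decomposition of each $\BD_{1,0}(\ell)$ into a filled metric ball of radius $1$ and the surrounding annulus is governed by Lemma~\ref{lem:ball-length} (equivalently Proposition~\ref{prop:BA-welding}): conditioning on the filled ball $B^\bullet$, the law of the surrounding annulus depends only on its boundary length, and the reweighting by $\frac{1}{f(\cL_0)f(\cL_\infty)}$ with $f(\ell)=\ell|\mathrm{Ball}_1(\ell)|$ exactly converts $\ell\,\BD_{1,0}(\ell)$-weighting on each side into the Brownian-annulus-with-free-area weighting, up to the combinatorial factor $\ell$ counting the gluing interface. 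Matching constants, under $\hat M$ the joint law of $(A_0,\phi)/{\sim_\gamma}$ and $(A_\infty,\phi)/{\sim_\gamma}$ is $C\int_0^\infty b\,\BA'(\cdot;b)\times\BA'(\cdot;b)\,db$ for the appropriate length-$b$-disintegrated Brownian annulus measures, where I use the welding identity $\BA(a,b)=\BA(b,a)^\#/(2\sqrt{ab}(a+b))$-type bookkeeping and Lemma~\ref{lem:BA-sphere} to identify the glued surface $(A,\phi)/{\sim_\gamma}$ with $C\cdot\BA$.

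Finally I would transfer to the cylinder coordinate. By Lemma~\ref{lem-BA-m} and Theorem~\ref{thm-BA-main}, $\BA$ in the rotationally invariant $\cC_\tau$-embedding is $\LF_\tau(\rd\phi)m(\rd\tau)$ with $m$ as in~\eqref{eq:BA-mod-recall}. The marked loop $\eta$, being non-contractible in $A$ and sampled from (the restriction of) $\mu_\C$, enjoys the conformal restriction property of Theorem~\ref{thm:conf-res}; hence after pushing forward to $\cC_\tau$, conditionally on $(\phi,\tau)$ the loop $\eta$ is distributed as $\mu_{\cC_\tau}$ restricted to non-contractible loops, and the total mass of that restriction is exactly $Z_{8/3}(\tau)$ by Definition~\ref{def:Zhan} and conformal invariance of $\bbA_\tau\cong\cC_\tau$. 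Therefore the $\hat M$-law of $(A,\phi)/{\sim_\gamma}$, where the extra mass comes from the event $F=\{\eta\subset A,\ \eta\text{ non-contractible}\}$, is that of $(\cC_\tau,\phi)/{\sim_\gamma}$ with $(\phi,\tau)$ sampled from $C\,\LF_\tau(\rd\phi)\,Z_{8/3}(\tau)\,m(\rd\tau)$, as claimed.

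The main obstacle I anticipate is the careful bookkeeping in the gluing step: one must verify that the two "$b$-weighting" factors (one from each side's interface in Theorem~\ref{thm:loop-zipper}, versus the single interface of length $b$ in the final annulus) are reconciled correctly, that the reweighting by $1/(f(\cL_0)f(\cL_\infty))$ in $\hat M$ precisely cancels the filled-metric-ball partition functions on both sides simultaneously, and that the independence structure (loop $\eta$ versus field $\phi$, and the conditional independence of the two filled balls given their boundary lengths) is preserved under all the conditionings on $E$ and $F$. A secondary subtlety is ensuring the event $F$ is measurable and that restricting $\mu_\C$ to $\{\eta\subset A\}$ followed by pushforward to $\cC_\tau$ genuinely produces $\mu_{\cC_\tau}$ restricted to non-contractible loops — this is where Theorem~\ref{thm:conf-res} together with the fact that $\eta$ being non-contractible in $A$ forces it to separate the two balls must be invoked cleanly.
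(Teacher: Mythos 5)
Your third paragraph is, in essence, the paper's proof: since the law of $(\phi,\eta)$ is the product $\mathbb F_\C\times\hat\mu_\C$, one conditions on $\phi$ (hence on $A$) and integrates out $\eta$; by the conformal restriction property (Theorem~\ref{thm:conf-res}) the $\hat\mu_\C$-mass of $F$ given $A$ is exactly $Z_{8/3}(\mathrm{Mod}(A))$, and combining this with Lemma~\ref{lem:BA-sphere} (marginal law of $(A,\phi)/{\sim_\gamma}$ under $M$ is $C\cdot\BA$) and the identification $\BA=\LF_\tau(\rd\phi)\,m(\rd\tau)$ from Lemma~\ref{lem-BA-m} and Theorem~\ref{thm-BA-main} gives the claim. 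So the operative argument is correct and coincides with the paper's.

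The detour in your second paragraph, however, is both unnecessary and loosely argued. Theorem~\ref{thm:loop-zipper} and the cutting of $A$ along $\eta$ into two sub-annuli are not needed for this lemma at all (in the paper they enter only in Lemma~\ref{lem:SAP-length}, to compute the joint law of the boundary lengths under $\hat M$). Moreover, as stated, your intermediate claim that Lemma~\ref{lem:BA-sphere} identifies the glued surface $(A,\phi)/{\sim_\gamma}$ \emph{under $\hat M$} with $C\cdot\BA$ conflates $M$ and $\hat M$: Lemma~\ref{lem:BA-sphere} concerns $M$ (no loop, no restriction to $F$), whereas under $\hat M$ the marginal of $(A,\phi)/{\sim_\gamma}$ already carries the weight $Z_{8/3}(\mathrm{Mod}(A))$ coming from the mass of $F$. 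If one took that intermediate claim literally, the $Z_{8/3}$ factor would be dropped (or risk being counted twice once you reintroduce it in the last step). The clean statement of the argument introduces the $Z_{8/3}$ weight exactly once, via the conditional $\hat\mu_\C$-mass of $F$ given $A$, which is what your final paragraph does; I would simply delete the welding step and the sub-annuli decomposition from the proof of this lemma.
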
 
			\begin{proof}
				By the conformal  restriction property Theorem~\ref{thm:conf-res}, given $A$, the $\hat \mu_\C$-mass of the event $F$ is $Z_{8/3}(\mathrm{Mod}(A))$, where $\mathrm{Mod}(A)$ is the modulus of $A$. By Lemma~\ref{lem:BA-sphere}, we obtain the desired marginal law of $(A,\phi)/{\sim_\gamma}$ under $\hat M$ after  integrating over $\eta$.  
			\end{proof}
			Similar to $|\QA^\gamma_2(a,b)|$ in~\eqref{eq:def-QAn}, for $a>0$ and $b>0$ we let \begin{equation}
			|\BA_2(a,b)|=\int_0^\infty |\BA(a,\ell)|\ell |\BA(\ell,b)|\,\rd \ell.
			\end{equation}
			Now the following lemma together with Lemma~\ref{lem:reweight} will allow us to apply the KPZ relation in Theorem~\ref{thm:KPZ}. 
			\begin{lemma}\label{lem:SAP-length}
				In the setting of Lemma~\ref{lem:reweight},  the joint law of $(\cL_0,\cL_\infty)$ under $\hat M$ has density  
				\[C\cdot 1_{a>0;b>0} |\BA_2(a,b)| \,\rd a \, \rd b\qquad \textrm{for some constant } C\in(0,\infty).\]
			\end{lemma}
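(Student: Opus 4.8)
The plan is to produce the annulus $A$ together with the Brownian‑disk data on either side of the loop $\eta$ from the conformal welding of Theorem~\ref{thm:loop-zipper}, and then to read off the joint law of $(\cL_0,\cL_\infty)$ from the single‑disk statement in Lemma~\ref{lem:ball-length}.

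First I would pin down the geometry on the event $E\cap F$. Since any path in $\C$ from $0$ to a point of $\eta$ lies, up to its first hitting of $\eta=\partial D_0$, inside $\ol{D_0}$, one has $d(0,\eta)=d_{D_0}(0,\partial D_0)$, where $d=c_1d_\phi$ is the ambient Brownian metric and $d_{D_0}$ is the internal metric of $D_0$; the analogous identity holds at $\infty$. On $F$ the ball $B^\bullet(0,1)$ misses $\eta$, so $d(0,\eta)>1$ and likewise $d(\infty,\eta)>1$, and since $\eta$ separates $0$ from $\infty$ every path between them crosses $\eta$, so $d(0,\infty)>2$; hence $F\subseteq E$ and $\hat M=\frac{1_F}{f(\cL_0)f(\cL_\infty)}\,\mathbb F_\C\times\hat\mu_\C$. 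Moreover, on $F$ a geodesic from $0$ of length at most $1$ cannot reach $\eta$ and thus stays in $D_0$; it follows that $B^\bullet(0,1)$ and $\cL_0$ coincide with the radius‑$1$ filled metric ball around $0$ and its boundary length computed intrinsically in the Brownian disk $D_0$, that $\{\eta\cap B^\bullet(0,1)=\emptyset\}$ equals $\{d_{D_0}(0,\partial D_0)>1\}$, and that all of these are measurable with respect to the quantum surface $(D_0,\phi,0)/{\sim_\gamma}$ (using Theorem~\ref{thm:bm-lqg} and the locality of the LQG metric). The same holds on the $\infty$‑side, so $F=F_0\cap F_\infty$ with $F_0$ depending only on $(D_0,\phi,0)/{\sim_\gamma}$ and $F_\infty$ only on $(D_\infty,\phi,\infty)/{\sim_\gamma}$, while $\cL_0$ (resp.\ $\cL_\infty$) is a function of the former (resp.\ latter).

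Next I would invoke Theorem~\ref{thm:loop-zipper}: under $\mathbb F_\C\times\hat\mu_\C$ the joint law of $(D_0,\phi,0)/{\sim_\gamma}$ and $(D_\infty,\phi,\infty)/{\sim_\gamma}$ is $C\int_0^\infty\ell\,\BD_{1,0}(\ell)\times\BD_{1,0}(\ell)\,\rd\ell$, with $\ell=\cL^\gamma_\phi(\eta)$ the common boundary length. Writing $\BD_{1,0}(\ell)=C\ell^{-3/2}\BD_{1,0}(\ell)^{\#}$ (Remark~\ref{rmk:BD-nonprob}) and using $|\BA(\ell,a)|=\frac{1}{2\sqrt{\ell a}(\ell+a)}$, Lemma~\ref{lem:ball-length} rewrites, for a Brownian disk from $\BD_{1,0}(\ell)$ with $\cL$ the boundary length of the radius‑$1$ filled ball about the marked point, as
\[\BD_{1,0}(\ell)\big[\,d(p,\partial)>1,\ \cL\in\rd a\,\big]=C\,a\,|\mathrm{Ball}_1(a)|\,|\BA(\ell,a)|\,\rd a=C\,f(a)\,|\BA(\ell,a)|\,\rd a,\]
with $C$ independent of $\ell$ and $a$. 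Plugging Step~1's product structure into $\hat M$ and using the conditional independence of $D_0,D_\infty$ given $\ell$, one gets for non-negative measurable $g$
\[\hat M[g(\cL_0,\cL_\infty)]=C\int_0^\infty\ell\iint_0^\infty\frac{g(a,b)}{f(a)f(b)}\,f(a)|\BA(\ell,a)|\,f(b)|\BA(\ell,b)|\,\rd a\,\rd b\,\rd\ell,\]
and the $f(a),f(b)$ cancel. Interchanging the integrals and using $|\BA(\ell,a)|=|\BA(a,\ell)|$ and the definition $|\BA_2(a,b)|=\int_0^\infty|\BA(a,\ell)|\,\ell\,|\BA(\ell,b)|\,\rd\ell$ gives $\hat M[g(\cL_0,\cL_\infty)]=C\iint_0^\infty g(a,b)\,|\BA_2(a,b)|\,\rd a\,\rd b$, which is the assertion.

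The step needing the most care is the geometric identification in the second paragraph: one must verify that the filled metric balls and interface lengths, a priori defined through the ambient metric on $\C$, agree on $F$ with the objects intrinsic to the Brownian disks $D_0$ and $D_\infty$, and that $F$ splits as a product event matched to the loop‑zipper decomposition; the identity $d(0,\eta)=d_{D_0}(0,\partial D_0)$ and the locality of the LQG metric are what make this work. (An alternative route through Lemma~\ref{lem:BA-sphere} and Lemma~\ref{lem:reweight} would instead require the non‑obvious ``gluing'' identity $\int_0^\infty Z_{8/3}(\tau)\,\LF_\tau[L_0\in\rd a,L_1\in\rd b]\,m(\rd\tau)=C\,|\BA_2(a,b)|\,\rd a\,\rd b$, so the direct geometric route is preferable.) Once this is in place, the rest is routine bookkeeping of the $\ell$‑independent constants via Remark~\ref{rmk:BD-nonprob}.
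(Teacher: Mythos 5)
Your proposal is correct and follows essentially the same route as the paper's proof, which deduces the joint law of $(\cL_0,\cL_\infty)$ under $\mathbb F_\C\times\hat\mu_\C$ restricted to $E\cap F$ from Theorem~\ref{thm:loop-zipper} and Lemma~\ref{lem:ball-length} (giving density $C\,|\mathrm{Ball}_1(a)|\,a\,|\BA_2(a,b)|\,b\,|\mathrm{Ball}_1(b)|$) and then cancels the weights $f(\cL_0)f(\cL_\infty)$. The only difference is that you spell out the geometric identification of the ambient filled balls and interface lengths with the intrinsic ones in $D_0,D_\infty$ and the factorization of $F$, which the paper leaves implicit; this is a correct and welcome elaboration, not a different argument.
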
 
			\begin{proof} By Theorem~\ref{thm:loop-zipper} and Lemma~\ref{lem:ball-length}, under   $\mathbb F\times \hat\mu_\C$ restricted to $E\cap F$ the joint law of $(\cL_0,\cL_\infty)$ is 
				\[C\cdot 1_{a>0;b>0} |\mathrm{Ball}_1(a)|a|\BA_2(a,b)| b|\mathrm{Ball}_1(b)|  \,\rd a\, \rd b\qquad \textrm{for some constant } C\in(0,\infty).\]
				Since $f(\ell)=\ell|\mathrm{Ball}_1(\ell)|$, we get the desired law of $(\cL_0,\cL_\infty)$ under $\hat M$.
			\end{proof} 
			
			\begin{proof}[Proof of  Theorem~\ref{thm-werner-partition}.]
				By Theorem~\ref{thm:KPZ}  and Lemmas~\ref{lem:reweight}--~\ref{lem:SAP-length}, we have
				\begin{align}\label{eq:BA-simple} 
				\int_0^\infty e^{ -\frac{\pi\gamma^2 x^2\tau}{4}} Z_{8/3}(\tau)m(\rd \tau )=
				\frac{2\sinh(\frac{\gamma^2}{4} \pi x  )}{\pi\gamma x\Gamma(1+ix)} \iint_0^\infty ae^{-a}b^{ix} |\BA_2(a,b)| \,\rd a \, \rd b \quad \textrm{for }x\in \R,
				\end{align} 
				where $\gamma=\sqrt{8/3}$ and $m(\rd \tau )=1_{\tau>0} 2^{-1/2}\eta(2i\tau) \,\rd \tau$ is as  in~\eqref{eq:BA-mod-recall}. 
				By the same calculation in the proof of Lemma~\ref{lem:QA-Mellin-k}  we get 
				\[
				\iint_0^\infty ae^{-a}b^{ix} |\BA_2(a,b)| \rd a\rd b  = \frac{C_1}{\cosh(\pi x)} \BA[L_1 e^{-L_1} L_0^{ix}]  \quad \textrm{for some constant }C_1>0.
				\]
				Recall from the proof of Theorem~\ref{thm-BA-main} that   $\BA[L_1 e^{-L_1} L_0^{ix} ]=\frac{\pi\Gamma(1+ix)}{2\cosh(\pi x)}$. Therefore 
				\begin{equation}\label{eq:lap-sap}
				\int_0^\infty e^{- \frac{2\pi x^2}3 \tau} \eta(2i\tau) Z_{8/3} (\tau) \, \rd\tau = C_2 \frac{\sinh(\frac{2\pi x}3)}{x \cosh(\pi x)^2} \qquad \text{ for }x \in \R \textrm{ and }C_2=\gamma^{-1}C_1.
				\end{equation} 
				Now we take the inverse Laplace transform to compute $Z_{\mathrm{8/3}}(\tau)$. Let $\cL$ and $\cL^{-1}$ denote the Laplace transform and its inverse. Let $F(s) := \frac{\sinh(\alpha \sqrt s)}{\sqrt s\cosh (\sqrt s)^2}$ with $\alpha = \frac23$.	Since $\frac{1}{\cosh (u)^2}= \sum_{n=1}^{\infty} (-1)^{n-1} 4n  e^{-2n u}$, we have
				\[
				F(s)= \frac{e^{\alpha \sqrt s}-e^{-\alpha \sqrt s}}{2\sqrt s \cosh (\sqrt s)^2}= 2 \sum_{n=1}^{\infty} (-1)^{n-1} n \frac{e^{-(2n-\alpha)\sqrt s}-e^{-(2n+\alpha)\sqrt s}}{\sqrt s}.
				\]
				Since $ \mathcal L^{-1}[\frac{e^{-a\sqrt s}}{\sqrt s}](t) = \frac{e^{-\frac{a^2}{4t}}}{\sqrt{\pi t}}$
				for $a\ge 0$, we have 
				\[
				\mathcal L^{-1}[F(s)](t) = 2 \sum_{n=1}^{\infty} (-1)^{n-1} n \frac{e^{-\frac{(2n-\alpha)^2}{4t}}-e^{-\frac{(2n+\alpha)^2}{4t}}}{\sqrt{\pi t}}= \frac{2}{\sqrt{\pi t}} \sum_{k\in \mathbb Z}^{\infty} k(-1)^{k-1}    {e^{-\frac1t(k-\frac13)^2}}.\] 
				By~\eqref{eq:lap-sap} we have $\cL[ \eta(2i\tau) Z_{8/3}(\tau)] (\frac {2s}{3\pi}) = C_2 F(s)$. Therefore
				\begin{equation}\label{eq-SAW-0}
				\eta(2i\tau) Z_{8/3}(\tau) = \frac{2C_2}{3\pi} \cL^{-1}[F(s)] (\frac{2\tau}{3\pi}) = \frac{4C_2}{\pi \sqrt3}  \cdot  (2\tau)^{-1/2}\sum_{k \in \mathbb Z} k (-1)^{k-1} e^{-\frac{3\pi}{2\tau} (k-\frac13)^2 } .
				\end{equation}
				On the other hand, recall from~\eqref{eq:eta-modular} that $\eta(2i \tau)=(2\tau)^{-1/2} {q}^{\frac1{24}}\prod_{r=1}^\infty(1- q^r)$ with $q=e^{-\pi/\tau}$.
				Therefore 
				\begin{equation}\label{eq-SAW-1}
				Z_{\mathrm{Cardy}} (q) \eta(2i \tau)=(2\tau)^{-1/2}{q}^{\frac1{24}}\sum_{k\in \mathbb Z} k (-1)^{k-1} q^{\frac{3k^2}2-k+\frac18}=(2\tau)^{-1/2} \sum_{k\in \mathbb Z} k (-1)^{k-1} q^{\frac{3}2(k-\frac13)^2}.
				\end{equation}
				Comparing~\eqref{eq-SAW-0} and~\eqref{eq-SAW-1}, we see that $Z_{8/3}=CZ_{\mathrm{Cardy}}$ where $C=\frac{4C_2}{\pi\sqrt3}$. 	
			\end{proof} 
		}

		\appendix
		
		\section{Background on special functions}\label{app:theta}
		The Dedekind eta function with imaginary argument is given by
		\begin{equation}
		\eta(i\tau)=e^{-\frac{\pi\tau}{12}} \prod_{k=1}^\infty (1-e^{-2\pi k\tau}) \quad \textrm{with }\tau>0.
		\end{equation}
		We use the following convention for the Jacobi theta function   with imaginary argument:
		\begin{equation}\label{eq:theta}
		\theta_{1}\!\left(x , i\tau \right) := -i {e}^{-\pi \tau / 4} \sum_{n=-\infty}^{\infty} {\left(-1\right)}^{n} {e}^{-n \left(n + 1\right)\pi  \tau} {e}^{(2 n + 1)\pi i x} \quad \textrm{for }x\in \mathbb R \textrm{ and }\tau\in (0,\infty).
		\end{equation} 
		By Euler's  pentagonal  identity  \(\prod_{k=1}^\infty(1- x^k)=\sum_{n\in \mathbb Z} (-1)^n x^{n(3n-1)/2}\), we have  \(\theta_1(\frac{1}{3},i\tau) = {\sqrt 3\eta(3i\tau)}\).  
		
		By e.g.~\cite[Section 11, Appendix 2]{bm-handbook}, we have the following Laplace transforms:
		\begin{align}
		\label{eq:theta-L}
		\int_{0}^{\infty} {e}^{-a \tau} \theta_{1}\!\left(x , i b \tau\right) \, d\tau &= \sqrt{\frac{\pi}{a b}} \frac{\sinh\!\left(2 x \sqrt{\frac{\pi a}{b}}\right)}{\cosh\!\left(\sqrt{\frac{\pi a}{b}}\right)} \quad &&\textrm{for } a > 0, b > 0 \;\mathbin{\operatorname{and}}\; x \in  \left[-1/2, 1/2\right].\\ 
		\label{eq:eta-L}
		\int_{0}^{\infty} {e}^{-a \tau} \eta(i\tau) d\tau &= \sqrt{\frac{\pi}{a}} \frac{\sinh\!\left(2 \sqrt{\frac{\pi a}{3}}\right)}{\cosh\!\left(\sqrt{3\pi a}\right)} &&\textrm{for }a>0.
		\end{align}
		\section{Background on Brownian surfaces}\label{app:BA}
		In this appendix we provide more background on Brownian surfaces and give proofs for Lemmas~\ref{lem:Mod-det}--~\ref{lem:sphere-ball}. {Some of these lemmas may already exist in the literature, but we cannot locate them hence include a proof for completeness.}
		We first recall
		a scaling limit result for the Brownian sphere due to \cite{legall-uniqueness,miermont-brownian-map}.
		\begin{theorem}[\cite{legall-uniqueness,miermont-brownian-map}]\label{prop-BS-convergence}
			Let $\cS_n$ be uniformly sampled from the set of quadrangulations of the sphere with $n$ faces and two distinguished directed edges. View $\cS_n$ as a two-pointed metric-measure space as follows. The metric is the graph distance on the vertex set rescaled by $(\frac98)^{1/4}n^{-1}$. The measure is the probability measure on its vertex set where  the weight of a vertex is proportional to its degree. Then $\cS_n$ converges in law to the \emph{unit area Brownian sphere}, with respect to the Gromov-Hausdorff-Prokhorov  topology as compact metric-measure spaces with two marked points.
		\end{theorem}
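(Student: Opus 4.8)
\textbf{Proof proposal for Theorem~\ref{prop-BS-convergence}.}

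The statement is the Le~Gall--Miermont scaling limit theorem, so the "proof" amounts to assembling and packaging the two independent constructions and their quantitative comparison. The plan is to first recall the Bouttier--Di~Francesco--Guitter (BDG) bijection between rooted pointed quadrangulations with $n$ faces and well-labeled mobiles (or equivalently, Schaeffer's bijection with labeled trees), which encodes the graph metric of $\cS_n$ in terms of the label function on a size-$n$ random plane tree via the Cori--Vauquelin--Schaeffer formula. Under this bijection the rescaled label process and the contour process of the tree jointly converge, by Aldous's theorem on the convergence of conditioned Galton--Watson trees to the CRT together with the convergence of the discrete label (``snake'') process, to the head of the Brownian snake driven by a normalized Brownian excursion. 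This is the step that produces a candidate limiting pseudo-metric $D^\circ$ on the quotient of $[0,1]$ by the snake.

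Second, I would establish tightness of the laws of $\cS_n$ in the Gromov--Hausdorff(--Prokhorov) topology: this follows from uniform control of the diameter and of the metric-entropy (covering numbers) of $\cS_n$, which in turn reduces to Hölder-type bounds on the discrete snake that are uniform in $n$. Any subsequential limit is then a geodesic metric-measure space that is a quotient of the Brownian snake quotient, sandwiched between $D^\circ$ and the induced intrinsic metric. The genuinely hard part --- and historically the crux of both Le~Gall's and Miermont's papers --- is the \emph{uniqueness} of this limit: one must show that the subsequential limit metric is determined, i.e. that $D^\circ$ and the maximal such pseudo-metric coincide, so that the limit does not depend on the subsequence. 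Le~Gall's approach identifies the pairs of points at distance zero (the ``cactus'' / re-rooting invariance argument plus the key lemma that geodesics to the distinguished vertex are essentially unique), while Miermont's approach uses the two marked points and an exchangeability/Bertoin--Curien--Kortchemski-type argument. I would cite whichever of these is cleanest for the present exposition rather than reproduce it.

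Third, once uniqueness is in hand, one checks that the limiting measure is the pushforward of Lebesgue measure on $[0,1]$ under the snake quotient projection, matching the ``area measure'' normalization in the statement (the degree-biased vertex measure on $\cS_n$ converges to this because the BDG bijection sends the degree-weighted vertex counting measure to the uniform measure on the tree's corners up to negligible correction). The precise constant $(9/8)^{1/4}$ is exactly the scaling constant that makes the discrete label increments match the Brownian snake normalization; I would simply record it, referring to the computation of the variance of the label increments in the mobile. Finally, I would note that the two-marked-point structure is retained automatically since the root and the distinguished vertex converge to two i.i.d.\ (area-measure) points after re-rooting invariance, which is the form needed for $\BS_2(1)^{\#}$. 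The main obstacle, to reiterate, is the uniqueness/identification step, and the proposal is to invoke it from~\cite{legall-uniqueness} or~\cite{miermont-brownian-map} rather than reprove it.
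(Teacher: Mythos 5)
The paper does not prove this statement at all: it is quoted as an external result of Le~Gall and Miermont (Theorem~\ref{prop-BS-convergence} carries the citation \cite{legall-uniqueness,miermont-brownian-map}), so the ``proof'' in the paper is simply the citation. Your outline is a faithful summary of the argument in those works (Schaeffer/BDG encoding, snake convergence, tightness, the uniqueness step, and the identification of the measure and the constant $(9/8)^{1/4}$), and since you too invoke the crucial uniqueness step from the same references, your route is essentially the same as the paper's.
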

		
		{We now recall a scaling limit result for $\BD_{0,1}(L)^{\#}$ proved by Gwynne and Miller~\cite{gwynne-miller-simple-quad}.
			The topology of convergence is the natural one for compact metric-measure spaces decorated with a continuous curve, which is called the Gromov-Hausdorff-Prohkorov-uniform (GHPU) topology; see~\cite[Section~1.2.3]{gwynne-miller-simple-quad} for its definition.} To state the result, for a positive integer $a$ we let $\cQ^{\disk}_{0,1}(2a)$ be the set of quadrangulations with simple boundary, having boundary length $2a$, and having a marked directed boundary edge. {For later use} we also let {$\cQ^{\disk}_{1,1}(2a)$} be the set of such quadrangulations where we further mark a directed interior edge (i.e.\ an edge whose endpoints are both interior vertices).
		\begin{theorem}[{Theorem 1.4 in} \cite{gwynne-miller-simple-quad}] \label{prop-brownian-disk}
			Let $L>0$ and let ${(a_n)}$ be a sequence of integers satisfying $\lim_{n\to\infty} \frac{{2a_n}}{3n^2} = L$. {Let $\BD^n_{0,1}(2a_n)^{\#}$ be the probability measure on  $\cQ^{\disk}_{0,1}(2a_n)$ where each map with $v$ vertices is assigned mass proportional to $\rho^v$ with $\rho = \frac1{12}$.
				Let {$(\cD_n,e_n)$} be a sample from  $\BD^n_{0,1}(2a_n)^{\#}$ where $e_n$ is the boundary marked edge of $\cD_n$.}
			View $\cD_n$ as a metric measure space where the metric is graph distance rescaled by $n^{-1}$, and the mass at each vertex $v$ with degree $\mathrm{deg}(v)$ is $\frac29 n^{-4}\mathrm{deg}(v)$. {We also view the boundary of $\cD_n$ as a continuous curve starting and ending at $e_n$.}
			Then with respect to the {GHPU} topology, as $n \to \infty$, {the law of $(\cD_n,e_n)$ converges weakly to $\BD_{0,1}(L)^{\#}$ defined in~\eqref{eq:BD-def-more}.} 
		\end{theorem}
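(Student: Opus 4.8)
\textbf{Proof proposal for Theorem~\ref{prop-brownian-disk}.} This is a scaling limit statement for Boltzmann-weighted quadrangulations of a polygon with simple boundary, and I would prove it by invoking and assembling the existing literature rather than reproving a metric-space convergence from scratch. The plan is to reduce the statement to the main theorem of Gwynne--Miller~\cite{gwynne-miller-simple-quad}, which establishes precisely this kind of GHPU convergence, and then to reconcile the normalization conventions appearing there with the ones used in~\eqref{eq:BD-def-more}. First I would recall that~\cite{gwynne-miller-simple-quad} proves that a uniformly random quadrangulation with simple boundary, conditioned to have a fixed number of faces and a fixed (even) boundary length, converges in the GHPU topology to the Brownian disk of the corresponding area and boundary length once the graph distances are rescaled by $n^{-1}$ and the area measure by a constant multiple of $n^{-4}$ times vertex degree. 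The Boltzmann-weighted version with free area is then obtained by summing over the number of faces against the weight $\rho^v$ with $\rho=\tfrac1{12}$, which is exactly the critical Boltzmann weight; one checks via the known asymptotics of the partition function of $\cQ^{\disk}_{0,1}(2a)$ (see e.g.~\cite{bet-mier-disk,bernardi-fusy}) that this produces the area density $\frac{L^3}{\sqrt{2\pi A^5}}e^{-L^2/(2A)}$ appearing in~\eqref{eq:BD-def-more}.

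The key steps, in order, are: (i) state the precise enumeration result giving the large-$n$ asymptotics of $|\cQ^{\disk}_{0,1}(2a_n)|$ with the Boltzmann weight $\rho^v$, under the scaling $2a_n/(3n^2)\to L$; this gives the local limit of the area profile and identifies the limiting area law as the inverse-Gaussian-type density in~\eqref{eq:BD-def-more}. (ii) Condition on the number of faces (equivalently, on the rescaled area being near a value $A$): Gwynne--Miller's theorem gives GHPU convergence of the conditioned map to $\BD_{0,1}(L;A)^{\#}$, uniformly enough in $A$ over compact sets. (iii) Combine (i) and (ii): the unconditioned Boltzmann law is a mixture over $A$ of the conditioned laws with mixing density from (i), and the conditioned laws converge; a standard tightness-plus-mixture argument (dominated convergence for the mixing measure, using that GHPU convergence is preserved under mixing when the family is tight) upgrades this to convergence of the unconditioned law to $\int_0^\infty \frac{L^3}{\sqrt{2\pi A^5}}e^{-L^2/(2A)}\,\BD_{0,1}(L;A)^{\#}\,\rd A = \BD_{0,1}(L)^{\#}$. (iv) Check the boundary-curve component of the GHPU convergence: the boundary of $\cD_n$, traced from $e_n$, converges as a curve to the boundary of the Brownian disk; this is already part of the GHPU statement in~\cite{gwynne-miller-simple-quad}, so it only requires matching the marked-edge conventions.

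I expect the main obstacle to be step~(iii), namely making the passage from the conditioned (fixed-area) convergence to the free-area convergence fully rigorous. One must control the area tails of the discrete models uniformly in $n$ — both the small-area and large-area regimes — to justify interchanging the limit with the integral over $A$, and one must ensure the enumeration asymptotics in step~(i) hold with enough uniformity (a local central limit theorem for the number of faces, rather than just a bare asymptotic) to identify the limiting mixing density exactly rather than up to a constant. The relevant uniform estimates are implicit in the machinery of~\cite{bet-mier-disk,gwynne-miller-simple-quad,aasw-type2}, but extracting them in the precise form needed and confirming that the normalizing constants (the factor $\frac29 n^{-4}$ on the area and the matching of the ``$2a$'' boundary-length convention with the continuum boundary length $L$ via the factor $3$ in $2a_n/(3n^2)\to L$) line up with~\eqref{eq:BD-def-more} is the delicate bookkeeping. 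Once these normalizations are pinned down, the remaining arguments are routine applications of the cited scaling-limit theorems together with dominated convergence.
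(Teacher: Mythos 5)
The paper offers no proof of this statement at all: it is a direct citation, and the theorem as stated is essentially the main result of~\cite{gwynne-miller-simple-quad} verbatim, namely that the \emph{free Boltzmann} quadrangulation with simple boundary of perimeter $2a_n$ (random area built in from the start) converges in the GHPU topology to the free Boltzmann Brownian disk, whose area law given perimeter $L$ is exactly the density $\frac{L^3}{\sqrt{2\pi A^5}}e^{-L^2/(2A)}$ appearing in~\eqref{eq:BD-def-more}. The only reconciliation genuinely required is the trivial one: for a quadrangulation with simple boundary of perimeter $2a$ and $n$ internal faces, Euler's formula gives $\#\text{vertices}=n+a+1$, so weighting by $\rho^{v}$ with $\rho=\tfrac1{12}$ and fixed boundary length is proportional to the per-face weight $12^{-n}$ used in~\cite{gwynne-miller-simple-quad}, and the metric/measure/boundary-curve normalizations ($n^{-1}$, $\tfrac29 n^{-4}\deg$, $2a_n/(3n^2)\to L$) are those of that paper.

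Your route is therefore both heavier than necessary and built on a misattribution: in step~(ii) you credit Gwynne--Miller with GHPU convergence of the \emph{fixed-area}, fixed-perimeter simple-boundary quadrangulation to $\BD_{0,1}(L;A)^{\#}$, uniformly in $A$, but that is not their theorem; their theorem is the free-area statement you are trying to reach. The fixed-area scaling limits in the literature (Bettinelli--Miermont) concern quadrangulations with general, non-simple boundary and the GHP (not GHPU) topology, so your steps (i)--(iii) — local limit theorem for the face count, uniform-in-$A$ conditioned convergence, tightness and dominated convergence for the mixture — would all have to be established separately, which is substantial work that the direct citation makes unnecessary. If you insist on the mixture route it could in principle be carried out, but as written the key input you lean on does not exist in the form you assume; the correct and intended proof is simply to quote~\cite{gwynne-miller-simple-quad} and check the per-vertex versus per-face weight translation and the scaling conventions.
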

		
		Let $\cQ_\ann(2a, 2b)$ be the set of annular quadrangulations with labelled boundaries, {where the number of edges on the two boundaries are  $2a$ and $ 2b$, respectively}. The weak limit of {the critical Boltzman measure on $\cQ_\ann (a,b)$ should be $\BA(a,b)$.} 
		Although this scaling limit result is not needed for our paper, we will use the following enumeration asymptotic, {which explains why we set $|\BA(a,b)|$ to be proportional to $\frac{1}{\sqrt{ab}(a+b)}$.}
		
		\begin{lemma}\label{lem-boltz-len}
			Let $\rho = \frac1{12}$, $\theta = \frac1{54}$.  Let $\cQ^\disk_{1,1}(2a;v)\subset  \cQ^\disk_{1,1}(2a)$ be the subset of maps with $v$ interior vertices. Then
			\[\sum_{v \geq 2} \# {\cQ^{\disk}_{1,1}}(2a;v) \rho^v\theta^a \sim \frac1{\sqrt{27\pi a}}, \quad \textrm{where  $\sim$ indicates the ratio tends to 1 as } a\to \infty. \]
			Likewise, let $\cQ^2_\ann(2a,2b)$ be the set of such quadrangulations in $\cQ_\ann(2a,2b)$ where each boundary has a marked edge. Let $\cQ_\ann(2a, 2b;v)\subset \cQ_\ann(2a, 2b)$ and $\cQ^2_\ann(2a, 2b;v)\subset \cQ^2_\ann(2a, 2b)$ be the subset of maps with $v$ interior vertices, respectively.
			Then with $\sim$ indicating the ratio tends to 1 as $a,b \to \infty$, we have
			\[\sum_{v \geq 0} \#\cQ_\ann(2a, 2b;v) \rho^v \theta^{a+b} \sim \frac{1}{\pi\sqrt{ab}(a+b)},\quad \textrm{and} \quad \sum_{v \geq 0} \#\cQ^2_\ann(2a, 2b;v) \rho^v \theta^{a+b} \sim \frac{\sqrt{ab}}{\pi(a+b)}.\]
		\end{lemma}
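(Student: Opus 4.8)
The plan is to derive both asymptotics from explicit enumeration formulas for quadrangulations with boundaries together with singularity analysis. For the disk formula, one uses the classical perimeter generating function for quadrangulations of a $2a$-gon with simple boundary (Bouttier--Guitter; see also the forms recorded in \cite{bet-mier-disk,gwynne-miller-simple-quad}): writing $W(x,\rho)=\sum_{a\ge1}x^a\sum_{v}\#\cQ^{\disk}_{0,1}(2a;v)\,\rho^v$, this is an algebraic function whose relevant critical point is $(x,\rho)=(\theta,\tfrac1{12})=(\tfrac1{54},\tfrac1{12})$, with $\rho=\tfrac1{12}$ the critical vertex (equivalently face) weight for quadrangulations and $\theta^{-1}=54$ the exponential growth rate in the half-perimeter $a$. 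At $\rho=\tfrac1{12}$ the singular expansion has the form $W(x,\tfrac1{12})=P(x)+c_0\,(1-x/\theta)^{3/2}+o((1-x/\theta)^{3/2})$ with $P$ analytic, so by the transfer theorem of Flajolet and Odlyzko one gets $\theta^a\sum_v\#\cQ^{\disk}_{0,1}(2a;v)\rho^v\sim C\,a^{-5/2}$, the disk exponent $-\tfrac52$. Passing from $\cQ^{\disk}_{0,1}$ to $\cQ^{\disk}_{1,1}$ adds a marked directed interior edge; since the number of oriented edges joining two interior vertices equals $2v+O(a)$ up to a subdominant term (the count of maps with a marked interior--boundary edge), this is, at the level of generating functions, equivalent to applying $4\rho\,\partial_\rho$ plus a perimeter-polynomial correction, or more conveniently to working with the classical pointed disk generating function, which is algebraic with a $(1-x/\theta)^{-1/2}$ singularity. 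Hence the $a$-th coefficient now behaves like $C'\theta^{-a}a^{-1/2}$, and computing $C'$ from the explicit expansion --- equivalently, checking that the $\rho$-weighted mean number of interior vertices is $\asymp a^2$ --- yields the constant $\tfrac1{\sqrt{27\pi}}$. This is consistent with the heuristic that a critical Brownian disk of boundary length $\propto a$ has area $\propto a^2$, converting the $a^{-5/2}$ free-disk weight into $a^{-1/2}$.

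For the annular formulas I would begin from the exact generating function for cylindrical quadrangulations with two labelled simple boundaries, available from Bernardi--Fusy \cite{bernardi-fusy} (the source already cited for the $\tfrac1{\sqrt{ab}(a+b)}$ normalization): set $W^{\ann}(x,y)=\sum_{a,b\ge1}x^ay^b\sum_v\#\cQ_\ann(2a,2b;v)\,\rho^v$ at $\rho=\tfrac1{12}$. This is algebraic in $(x,y)$, and its behaviour as $x,y\to\theta$ is governed by a term of the shape $\arctan$ or $\log$ of $\sqrt{(1-x/\theta)/(1-y/\theta)}$ --- the universal signature of a cylinder partition function --- which is exactly what produces the non-power factor $(a+b)^{-1}$. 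Extracting $[x^ay^b]W^{\ann}$ by two-variable singularity analysis gives $\theta^{a+b}\sum_v\#\cQ_\ann(2a,2b;v)\rho^v\sim\tfrac1{\pi\sqrt{ab}(a+b)}$ as $a,b\to\infty$. The statement for $\cQ^2_\ann$ then follows by an elementary rooting argument: marking an edge on each boundary multiplies the $(a,b)$-coefficient by factors asymptotic to $a$ and to $b$ (the proportionality constants being fixed by the rooting convention implicit in $\cQ_\ann$), which turns $\tfrac1{\pi\sqrt{ab}(a+b)}$ into $\tfrac{ab}{\pi\sqrt{ab}(a+b)}=\tfrac{\sqrt{ab}}{\pi(a+b)}$.

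The main obstacle will be the annular \emph{diagonal} asymptotics: unlike the disk, where a one-variable transfer theorem is enough, here one must estimate $[x^ay^b]W^{\ann}$ uniformly as \emph{both} variables approach the common singularity, and the $(a+b)^{-1}$ factor --- coming from the $\arctan/\log$ singularity rather than from a pure power --- makes a naive product of one-variable estimates insufficient. I would handle this by fixing the ratio $a/b=r$, reducing to a one-variable transfer/Tauberian estimate along each ray, and then proving the error is uniform for $r$ in compact subsets of $(0,\infty)$ while separately controlling the degenerations $r\to0$ and $r\to\infty$. An alternative, avoiding exact bivariate formulas, is to cut the cylinder along a geodesic joining the two boundary roots so as to reduce to a disk enumeration --- the $(a+b)$ then entering as the perimeter of the cut-open polygon --- at the cost of controlling the law of the slit length. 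Throughout, the identifications $\rho=\tfrac1{12}$ and $\theta=\tfrac1{54}$ with the critical parameters are standard for quadrangulations and for quadrangulations with boundary and may simply be quoted, and the scaling-limit results of Theorems~\ref{prop-BS-convergence}--\ref{prop-brownian-disk}, while not logically required, provide independent consistency checks on all three constants.
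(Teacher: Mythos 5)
Your route (bivariate singularity analysis of perimeter generating functions) is genuinely different from the paper's, but as written it has concrete gaps, and they sit exactly where the content of the lemma lies: in the explicit constants and in the joint $a,b\to\infty$ behaviour. For the annulus you yourself flag the diagonal asymptotics as the ``main obstacle'' and only sketch a remedy (fix $a/b=r$, one-variable transfer along rays, uniformity on compacts, separate treatment of $r\to0,\infty$); none of this is carried out, and it is really needed: the constants $\frac1{\pi\sqrt{ab}(a+b)}$ and $\frac{\sqrt{ab}}{\pi(a+b)}$ are explicit, and where the lemma is used (the proofs of Lemmas~\ref{lem:ball-length} and~\ref{lem:sphere-ball}) one integrates over all boundary-length ratios, so ray-wise limits without locally uniform control and tail bounds do not suffice. (Also, a generating function with a log/arctan-type singularity cannot be algebraic, a small internal inconsistency in your description of $W^{\ann}$.) For the disk, the passage from $\cQ^{\disk}_{0,1}$ to $\cQ^{\disk}_{1,1}$ is not ``$4\rho\,\partial_\rho$ up to a perimeter polynomial'': $\rho\partial_\rho$ marks an interior vertex, whereas the number of directed edges with both endpoints interior equals $4v+6a-4$ minus twice the number of boundary-incident edges, which is not a deterministic function of $(a,v)$; you would need a Boltzmann-averaged estimate showing this correction is of lower order, plus a justification for pushing a singular expansion through $\partial_\rho$ at the critical weight. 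Even granting all that, the constant is never computed: knowing that the mean number of interior vertices is of order $a^2$ gives the exponent $a^{-1/2}$ but not $\frac1{\sqrt{27\pi}}$. The same looseness occurs in your last step: marking an (undirected) edge on each boundary gives exactly $2a\cdot 2b$ choices modulo automorphisms, so the conversion factor between $\#\cQ_\ann$ and $\#\cQ^2_\ann$ cannot be left ``fixed by the rooting convention''; it has to be pinned down, since the two stated asymptotics differ by a definite factor.

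All of this machinery can be bypassed, and that is what the paper does: Theorem~1.2 of \cite{bernardi-fusy} (the very reference you invoke) gives a closed formula for $\#\cQ^2_\ann(2a,2b;v)$ for every $(a,b,v)$, and at $\rho=\frac1{12}$ the sum over $v$ evaluates in closed form via standard generating-function identities (cf.\ \cite[(5.68), (5.70)]{concrete-mathematics}), giving $\sum_{v}\#\cQ^2_\ann(2a,2b;v)\rho^v=\frac{4ab}{3(a+b)}\,8^{a+b}\binom{3a}a\binom{3b}b$. Stirling's formula then yields both annular asymptotics at once, uniformly in the ratio $a/b$, with no singularity analysis; and the disk statement follows from the same identity specialized to $b=1$, by viewing the marked directed interior edge as a boundary face of perimeter $2$, which also sidesteps the $\rho\partial_\rho$ issue entirely. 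I recommend restructuring your argument around this exact formula; the transfer-theorem approach could in principle be completed, but it would take substantially more work to produce the stated constants and the uniformity the rest of the paper relies on.
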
 
		\begin{proof}
			The key is the following enumeration from \cite[Theorem 1.2]{bernardi-fusy}:
			\begin{equation}\label{eq-bernardi-fusy}
			\# \cQ^2_\ann(2a,2b;v) = \frac{3^v (3(a+b)+2v-1)!}{v!(3(a+b)+v)!} \cdot 4ab \binom{3a}a\binom{3b}b.
			\end{equation}
			We claim that 
			\[
			\sum_{v \geq 0} \#\cQ^2_\ann(2a, 2b;v) \rho^v = \frac{4ab}{3(a+b)} 8^{a+b} \binom{3a}a \binom{3b}b , \quad\textrm{and}\quad
			\sum_{v \geq 2} \# {\cQ^{\disk}_{1,1}}(2a;v) \rho^v = \frac{2a}{9(a+1)} 8^a \binom{3a}a.
			\]
			The first identity follows from \eqref{eq-bernardi-fusy} and \cite[(5.68), (5.70)]{concrete-mathematics}. The second is obtained by identifying the interior marked edge as a boundary of length 2, then using the first identity with $b = 1$. Stirling's approximation then gives both asymptotics. Finally the asymptotic for $\#\cQ_\ann$ follow from that of $\#\cQ^2_\ann$ {by dropping the boundary marked edges}.
		\end{proof}

		We now turn to the proof of Lemmas~\ref{lem:ball-length} and~\ref{lem:sphere-ball}.
		We first state the asymptotic of quadrangulation metric balls that we extract from \cite{curien-legall-peeling}.
		For positive integers $r,a$ we
		let $\cQ_\ball(r;2a)$ be the set of quadrangulations with the disk topology with boundary length $2a$, with a distinguished oriented edge called the \emph{root edge}, such that the four vertices of every quadrangle sharing an edge with the boundary are at distances $r-1,r,r+1,r$  from the root vertex, respectively.
		\begin{proposition}
			\label{prop-ball-count}{Let $\rho=\frac1{12}$ and $\theta=\frac1{54}$ as in Lemma~\ref{lem-boltz-len}. Let $\P^{\ball}(n)$ be the probability measure on $\bigcup_{a = 1}^\infty \cQ_\ball(n; 2a)$ where each element is assigned weight  proportional to $\rho^V (\rho/\theta)^{L}$, where $V$ is the number of internal vertices and $L$ is the number of boundary vertices.  
				Let $\cL_n$ be the number of boundary vertices of a sample from $\P^{\ball}(n)$.
				Then $(3n^2)^{-1} \cL_n$ converges in law as $n \to \infty$ to the random variable with density proportional to $1_{\ell>0} |\mathrm{Ball}_1(\ell)|d\ell$, where $|\mathrm{Ball}_1(\ell)| =e^{-\frac{9\ell}{2}}$ as defined in~\eqref{eq:ball-pfn}.}
		\end{proposition}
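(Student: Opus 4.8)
\textbf{Proof proposal for Proposition~\ref{prop-ball-count}.} This is a citation to~\cite{curien-legall-peeling}, so the task is to explain how the stated convergence follows from the peeling-process analysis of metric balls in that reference, phrased in the normalization used here. The plan is to identify the measure $\P^{\ball}(n)$ with the hull of a metric ball of radius $n$ in the Uniform Infinite Planar Quadrangulation (UIPQ), or equivalently with a Boltzmann-weighted quadrangulation conditioned to have a metric ball of exactly radius $n$, and then invoke the scaling limit for the perimeter of such hulls.

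First I would recall the setup in~\cite{curien-legall-peeling}: the process $(\#\partial B_r)_{r\ge 1}$ of boundary lengths of the successive hulls $B_r$ of metric balls in the UIPQ (or in a critical Boltzmann quadrangulation with the weights $\rho=\tfrac1{12}$, $\theta=\tfrac1{54}$, which are exactly the critical weights making the generating functions in Lemma~\ref{lem-boltz-len} converge) is a Markov chain whose scaling limit is an explicit continuous-state branching-type process. Specifically, Curien and Le Gall show that $(3n^2)^{-1}\#\partial B_n$ converges in distribution to a nondegenerate positive random variable, and moreover identify the structure that matches the boundary length of the complement of the filled metric ball $\cB^\bullet(x,1)$ of radius $1$ in the Brownian sphere/disk. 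The second step is bookkeeping with the normalizations: the graph distance is rescaled by $(9/8)^{1/4}n^{-1}$ in Theorem~\ref{prop-BS-convergence} and perimeters scale like $n^2$ with the prefactor $3$ appearing (this is the same $3n^2 \leftrightarrow 2a$ relation already used in Theorem~\ref{prop-brownian-disk} and Lemma~\ref{lem-boltz-len}), so a radius-$n$ ball in the discrete model corresponds to a radius-$1$ filled ball in the continuum after rescaling. Feeding these together gives that $(3n^2)^{-1}\cL_n$ converges to the law of the boundary length of $\cB^\bullet(\cdot,1)$.

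The third step is to check that this limiting law has density proportional to $e^{-9\ell/2}\,d\ell$. Here I would cross-reference the explicit density computed in~\cite{curien-legall-peeling} (their formula for the law of the perimeter of the hull of radius $r$, specialized to $r=1$ in the appropriate units) and match the exponential rate to the constant $9/2$; the exponential form is exactly what one expects since, by the results recalled in Section~\ref{subsec:BA-def}, the perimeter of a filled metric ball of fixed radius has an exponential tail, and the reweighting in $\P^{\ball}(n)$ by $(\rho/\theta)^L = 6^{-L}$ (boundary vertices) is precisely what produces the critical-with-exponential-tilt law in the limit. I would also note that the constant $9/2$ can be pinned down, if desired, by consistency with the Brownian-sphere two-point function: Lemma~\ref{lem:sphere-ball} together with the known law of $d(x,y)$ under $\BS_2$ forces the rate, providing an independent check rather than relying solely on tracking constants through~\cite{curien-legall-peeling}.

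The main obstacle is the second step: carefully matching the discrete normalizations (the $(9/8)^{1/4}$ distance rescaling, the $\tfrac29 n^{-4}\deg(v)$ mass rescaling, and the perimeter rescaling $2a \sim 3n^2$) with the continuum normalization so that ``radius $n$ discrete'' really does go to ``radius $1$ continuum'' and the perimeter constant comes out to exactly $9/2$. Since~\cite{curien-legall-peeling} works with slightly different conventions (UIPQ, possibly a different distance normalization), some of this matching requires either re-deriving the relevant generating-function asymptotics from Lemma~\ref{lem-boltz-len} directly or invoking the known equivalence between the Boltzmann and UIPQ local limits; I expect this to be the only genuinely delicate point, the rest being a faithful transcription of~\cite[Section 6 or 7]{curien-legall-peeling}.
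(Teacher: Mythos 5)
Your overall route---transferring Curien--Le Gall's hull-perimeter scaling limit to the measure $\P^{\ball}(n)$---is the same one the paper takes, but the proposal has a genuine gap at its very first step: $\P^{\ball}(n)$ is \emph{not} the law of the radius-$n$ hull of the UIPQ (nor of a Boltzmann quadrangulation conditioned on the ball having radius exactly $n$), and treating it as such yields the wrong limit density. The UIPQ hull law $\P^{\ball}_{\mathrm{UIPQ}}(n)$ is obtained from $\P^{\ball}(n)$ by reweighting by the partition function of the quadrangulations that fill in the exterior of the hull, i.e.\ by a factor proportional to $\theta^{\cL_n/2}C^\square(\cL_n/2)$, where $C^\square(a)$ is the constant from the asymptotics $\#\cQ^\disk_{0,1}(2a;v)\sim C^\square(a)\rho^{-v}v^{-5/2}$ and satisfies $\theta^a C^\square(a)\asymp\sqrt a$. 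Consequently the result you propose to quote from \cite{curien-legall-peeling}---the limit of $(3n^2)^{-1}$ times the UIPQ hull perimeter---has density proportional to $\ell^{1/2}e^{-9\ell/2}$, not $e^{-9\ell/2}$; the extra $\ell^{1/2}$ must be divided out using precisely the asymptotics of $\theta^aC^\square(a)$, which is the heart of the paper's proof and is absent from yours. Your remark that the boundary weight $(\rho/\theta)^{L}$ ``produces the exponential tilt'' does not close this hole: that weight is part of the definition of $\P^{\ball}(n)$ itself, not the Radon--Nikodym factor relating it to the UIPQ hull (and, incidentally, $\rho/\theta=\tfrac{9}{2}$, not $6^{-1}$).

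Two further points. The normalization matching you single out as the delicate step is in fact the easy part: the UIPQ statement in \cite{curien-legall-peeling} (Theorem 2 for the UIPT, extended to the UIPQ in their Section 6.2) is formulated exactly for the rescaling $(3n^2)^{-1}$ of the hull perimeter and identifies the limit as the perimeter of the filled unit ball in the Brownian plane, so no re-derivation of distance constants is needed; the genuinely delicate point is the exterior-partition-function comparison above. Also, your proposed ``independent check'' of the rate $9/2$ via Lemma~\ref{lem:sphere-ball} would be circular in the paper's logical order, since that lemma is itself deduced from Proposition~\ref{prop-ball-count} (through the argument of Lemma~\ref{lem:ball-length}); it can only serve as an a posteriori consistency check.
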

		\begin{proof}
			{Recall that the {uniform infinite planar quadrangulation} (UIPQ) is the local limit of uniform quadrangulations; see e.g.~\cite[Section 6.2]{curien-legall-peeling}. Let $\P^{\ball}_{\mathrm{UIPQ}}(n)$ be the law of the filled metric ball around the root at radius $n$ on a UIPQ. Here a filled metric {ball} is the union of the ball itself  and all its bounded complementary components.
				Let $\cQ^\disk_{0,1}(2a;v)$  be the set of quadrangulations with simple boundary, $2a$ boundary vertices and $n$ interior vertices, and a  marked boundary edge.  As explained in~\cite[Section 6.2]{curien-legall-peeling},
				\[
				\#\cQ^\disk_{0,1}(2a;v)=\frac{3^{v-1}(3a)!(3a-3+2v)!}{v!a!(2a-1)!(v+3a-1)!}
				\qquad \textrm{and}\qquad  \#\cQ^\disk_{0,1}(2a;v) \sim C^\square(a)\rho^{-v} v^{-5/2} \textrm{ as }v\to \infty
				\]
				for $ C^\square(a) = \frac{8^{a-1} (3a)!}{3\sqrt\pi a! (2a-1)!}$.
				Therefore, by the definition of UIPQ, the probability measure $\P^{\ball}_{\mathrm{UIPQ}}(n)$ is proportional to $\theta^{\cL_n/2}C^\square(\cL_n/2)\P^{\ball}(n)$.
				By the main result of~\cite{curien-legall-peeling}, the law of $(3n^2)^{-1}\cL_n$ under $\P^{\ball}_{\mathrm{UIPQ}}(n)$ weakly converges to the probability measure  with density proportional to  $1_{\ell>0}\ell^{1/2}e^{-9\ell/2}d\ell$, which is the law of the boundary length of the filled metric ball of radius 1 around the root on the Brownian plane. This statement for triangulations (namely UIPT) was proved as Theorem 2 in~\cite{curien-legall-peeling} and then extended to the UIPQ case  in Section 6.2 there. Since $C^\square(a) \sim \frac1{8\sqrt3\pi}\theta^{-a} \sqrt a$,  the law of $(3n^2)^{-1}\cL_n$ under $\P^{\ball}(n)$ weakly converges to the probability measure  with density proportional to  $1_{\ell>0} \ell^{-1/2}\cdot \ell^{1/2}e^{-9\ell/2}\,\rd \ell=1_{\ell>0} |\mathrm{Ball}_1(\ell)|\,\rd \ell$.}
		\end{proof}
		
		\begin{proof}[Proof of Lemma~\ref{lem:ball-length}]
			Let $\rho=\frac1{12}$ and $\theta=\frac1{54}$ as in Lemma~\ref{lem-boltz-len} and Proposition~\ref{prop-ball-count}. Let $a_n = \lfloor \frac32n^2 a \rfloor$ and  $\BD^n_{1,1}(2 a_n)^{\#}$ be the probability measure on $\cQ^{\disk}_{1,1}(2 a_n)$ where each element is assigned weight proportional to $\rho^V$, with $V$ being the number of internal vertices. Let $(\cD_n,p_n,e_n)$ be a sample from $\BD^n_{1,1}(2 a_n)^{\#}$, where $p_n$ is the interior marked edge and $e_n$ is the boundary one.   Let $E^n$ be the event that the  graph distance between $p_n$ and $\partial \cD_n$ is at least $n$. 
			On the event $E^n$, let $\cB^\bullet_1(p_n)$ be  the filled metric ball of $\cD_n$ around $p_n$ of radius $n$. Here a filled metric {ball} is the union of the ball itself  and all its complementary components except the one touching the boundary. Let $\cL_n$ be the number of edges on the boundary of $\cB^\bullet_1(p_n)$. {Let $M^{\ball}_{\disk}(n;2a_n)$} be the (non-probability) measure describing the law of $\cB^\bullet_1(p_n)$ under the restriction of $\BD^n_{1,1}(2 a_n)^{\#}$ to $E^n$. Let
			$Z_n= \sum \rho^V (\rho/\theta)^{L}$ where the summation is over  $\bigcup_{x = 1}^\infty \cQ_\ball(n; 2x)$ as in the definition of $\P^{\ball}(n)$ in Proposition~\ref{prop-ball-count}. 
			Then with $\cQ_{\ann}^{2}(2a_n,\cL_n;v)$ and  $ \cQ^{\disk}_{1,1}(2a_n;v)$ defined in Lemma~\ref{lem-boltz-len}, we have
			\begin{align}\label{eq:RN-ball}
			\mathbb M^{\ball}_{\disk}(n;2a_n)
			= \frac{\sum_{v}\# \cQ_{\ann}^2(2a_n,\cL_n;v) \rho^v\theta^{a_n + b}}{\sum_{v} \#  \cQ^{\disk}_{1,1}(2a_n;v) \rho^v\theta^{a_n}}  \cdot Z_n \P^{\ball}(n).
			\end{align}
			By definition the probability $\BD^n_{1,1}(2 a_n)^{\#}(E^n)$ is the total mass $|\mathbb M^{\ball}_{\disk}(n;2a_n)|$ of $\mathbb M^{\ball}_{\disk}(n;2a_n)$. By Theorem~\ref{prop-brownian-disk}, $\BD^n_{1,1}(2 a_n)^{\#}(E^n)$ converges to $\BD_{1,1}(a)^{\#}[d(p,\partial \cD)>1]$. Recall that $|\BA(a,b)|=\frac{1}{2\sqrt{ab}(a+b)}$ and $|\BD_{1,1}(a)|=a^{-1/2}$.
			By the asymptotics in Lemma~\ref{lem-boltz-len} and Proposition~\ref{prop-ball-count},  we have  
			\[
			|\mathbb M^{\ball}_{\disk}(n;2a_n)|=C_1 (1+o_n(1))Z_n \int_0^\infty  \frac{ab|\BA(a,b)| |\mathrm{Ball}_1(b)|}{|\BD_{1,1}(a)|} \rd b
			\]
			for some constant $C_1>0$. Therefore  $Z_n$ converges to a constant $C_2>0$ such that
			\[
			\BD_{1,1}(a)^{\#}[d(p,\partial \cD)>1]=C_1C_2\int  \frac{ab|\BA(a,b)| |\mathrm{Ball}_1(b)|}{|\BD_{1,1}(a)|} \rd b=C Z(a)
			\]
			for some constant $C>0$ not depending on $a$ and $Z(a)$ as defined in Lemma~\ref{lem:ball-length}. Therefore  
			\[
			\BD_{1,0}(a)^{\#}[d(p,\partial \cD)>1]=BD_{1,0}(a)^{\#}[d(p,\partial \cD)>1]=CZ(a).
			\]
			Sending $a\to \infty$ we get $C=Z(\infty)^{-1}$ as desired. 
			Now combining~\eqref{eq:RN-ball}, the asymptotics in Lemma~\ref{lem-boltz-len} and Proposition~\ref{prop-ball-count}, and that $Z_n\to C_2$, we see that the law of $(3n^2)^{-1}\cL_n$  under the probability measure proportional to $M^{\ball}_{\disk}(n;2a_n)$ weakly converges to the law of $\cL$ described in Lemma~\ref{lem:ball-length}. Finally, the conditional independence of the annulus $\cA$ and the filled metric ball $\cB^{\bullet}(p,1)$ is an instance of the domain Markov property of the peeling process; see e.g.~\cite[Corollary 9]{LeGall-Star}.
		\end{proof}
		\begin{proof}[Proof of Lemma~\ref{lem:sphere-ball}]
			{We claim that under $\BS_2$ restricted to $d(x,y)>1$, the law of $\cL_x$ has density 
				$$1_{\ell>0} |\BD_{1,0}(\ell)| \ell|\mathrm{Ball}_1(\ell)|\,\rd \ell.$$
				This follows from the same argument as in the proof of Lemma~\ref{lem:ball-length} with $a_n$ being $1$ for all $n$ instead. We omit the detailed proof. Now Lemma~\ref{lem:sphere-ball} follows from Proposition~\ref{prop-sphere-ball} and Lemma~\ref{lem:ball-length}.}
		\end{proof}
		We finally explain how the measurability result Lemma~\ref{lem:Mod-det} follows from known results.
		\begin{proof}[Proof of Lemma~\ref{lem:Mod-det}]
			Suppose $(\mathcal S,x,y)$ is a Brownian sphere with two marked points conditioned on the event that $d(x,y)>2$. Sample a point $q$ on $\partial \mathcal B^\bullet (x,1)$ according to the boundary length measure. We claim that the filled metric ball $(\cB^\bullet (x,1),q)$ and the Brownian disk $(\mathcal S\setminus \cB^\bullet (x,1),y, q )$ as marked metric-measure spaces together determine $(\mathcal S, x,y,q)$. To see this, from the Brownian snake construction of the Brownian map, $(\mathcal S,x)$ is determined by the geodesic tree  $\mathcal T_x$ rooted at $x$ and its dual tree $\tilde{\mathcal T}_x$ mated together as a pair of rooted planar trees. Here $\tilde{\mathcal T}_x$ is a Brownian continuum random tree  (CRT) and the contour function of $\mathcal T_x$ is a Brownian snake indexed by the CRT.
			For each point $z\in \mathcal S$, if $z\in B^\bullet (x,1)$, then the geodesic from $z$ to $x$ must be inside  $\cB^\bullet (x,1)$. Otherwise, the geodesic from $z$ to $x$ is a concatenation of two internal geodesics, one in  $\mathcal S\setminus \cB^\bullet (x,1)$ and the other in $\cB^\bullet (x,1)$. Therefore, knowing $(\cB^\bullet (x,1),q)$ and $(\mathcal S\setminus \cB^\bullet (x,1),q)$, we can determine how $\mathcal T_x$ and $\tilde {\mathcal T}_x$ are mated together to form  $(\mathcal S,x)$. The point $q$ is added to fix how boundary points on  $\cB^\bullet (x,1)$ and $\mathcal S\setminus \cB^\bullet (x,1)$ are identified.
			
			The Brownian snake construction of the Brownian sphere was extended to the Brownian disk case; see~\cite[{Theorem 1}]{legall-disk-snake}. 
			On the event $d(p,\partial \mathcal D)>1$, we let $\mathcal A$ be the annulus bounded by $\partial \cB^\bullet(p,1)$ and $\partial \mathcal D$. Sample a point $q$ on $\partial \cB^\bullet(p,1)$ from its length measure. Then the same argument as above shows that  $(\mathcal A,q)$ and $(\partial \cB^\bullet(p,1),q)$ together determine $(\mathcal D,p,q)$. This further determines the conformal structure of  $(\mathcal D,p,q)$ and hence that of $(\mathcal A,q)$. On the other hand, let $\cL$ be the boundary length of $( \cB^\bullet (p,1),q)$. By the last assertion of Lemma~\ref{lem:ball-length}, once we rescale the metric and measure of $(\cB^\bullet (p,1),q)$ by $\cL^{-1/2}$ and $\cL^{-2}$ respectively, then the rescaled metric measure space is independent of
			the conformal structure of $(\mathcal A,q)$. Therefore the conformal structure of $(\mathcal A,q)$ is determined by the metric-measure structure of $(\mathcal A,q)$ alone. Forgetting the extra marked point $q$ we are done. 
		\end{proof}

		\bibliographystyle{alpha}
		\def\cprime{$'$}

	\end{document}